\numberwithin{equation}{section}
\DeclareFontFamily{U}{BOONDOX-calo}{\skewchar\font=45 }
\DeclareFontShape{U}{BOONDOX-calo}{m}{n}{
  <-> s*[1.05] BOONDOX-r-calo}{}
\DeclareFontShape{U}{BOONDOX-calo}{b}{n}{
  <-> s*[1.05] BOONDOX-b-calo}{}
\DeclareMathAlphabet{\mathcalboondox}{U}{BOONDOX-calo}{m}{n}
\SetMathAlphabet{\mathcalboondox}{bold}{U}{BOONDOX-calo}{b}{n}
\DeclareMathAlphabet{\mathbcalboondox}{U}{BOONDOX-calo}{b}{n}
\newcommand{\mcb}[1]{{\mathcalboondox #1}}
\tikzset{
    place/.style={
        circle,
        thick,
        draw=black,
        fill=gray!50,
        minimum size=20mm,
    },
        state/.style={
        circle,
        thick,
        draw=blue!75,
        fill=blue!20,
        minimum size=20mm,
    },
}
\tikzset{
    cross/.pic = {
    \draw[rotate = 45] (-0.2,0) -- (0.2,0);
    \draw[rotate = 45] (0,-0.2) -- (0, 0.2);
    }
}
\newtheorem{thm}{Theorem}[section]
\newtheorem{lem}[thm]{Lemma}
\newtheorem{cor}[thm]{Corollary}
\newtheorem{prop}[thm]{Proposition}
\newtheorem{definition}[thm]{Definition}
\newtheorem{hyp}{Hypothesis}
\newtheorem{rem}[thm]{Remark}
\newcommand\supp{\mathrm{supp}}
\title[{Regional Fractional Stochastic Burgers from random interactions}
]{Regional Fractional Stochastic Burgers\\ from random interactions}
\author{Pedro Cardoso, Patr\'icia   Gon\c calves}
\begin{document}
\subjclass[2010]{60K35, 35R11, 35S15}
\begin{abstract}

The purpose of this article is to derive the crossover from the Ornstein-Uhlenbeck process  to energy solutions of the stochastic Burgers equation with characteristic operators given in terms of fractional operators, such as the regional fractional Laplacian. The approach is to consider a boundary driven exclusion process with long jumps and asymmetric jump rates. Depending on the strength of the asymmetry we prove the convergence to stationary solutions of either the Ornstein-Uhlenbeck equation, or the stochastic Burgers equation. In the later setting, the convergence in some regimes is guaranteed by the recent proof of uniqueness of energy solutions derived in \cite{GPP}.

\end{abstract}
\maketitle


\section{Introduction}

\subsection{The macroscopic level} \label{intmac}
Over the last years there has been a tremendous development in the field of singular stochastic partial differential equations (SPDEs). One remarkable example is the KPZ equation, which was proposed in 86' by three physicists, Kardar, Parisi and Zhang in \cite{KPZ} as a universal law ruling the evolution of the
profile of a randomly growing interface, and can be described as follows.
For $t \geq 0$ and $x \in \mathbb{R}$, if $ h(t,x)$ denotes the height of that interface at time $t$
and position $x$, then the KPZ equation reads as
\begin{equation}\label{eq:KPZ}
\partial_t h(t,x) = A\partial_x^2 h(t,x) dt + B[\partial_x h(t,x)]^2 dt + \sqrt C \mathcal W_t,
\end{equation}
 where  $A,B$ and $C$ are constants that depend on the thermodynamical quantities of the interface (see for example \cite{GJ4}) and $\mathcal W_t$ is a space-time white noise. There is another singular SPDE related to the KPZ equation, that is known as the Stochastic Burgers equation (SBE). Its solution can be obtained, at least formally, from the solution $h$ of the  KPZ equation by taking its space derivative, namely, defining 
 $Y_t=\partial_x h_t$. In this case, $Y_t$ solves the following equation
 \begin{equation} \label{SBEintro}
dY_t = A \partial_x^2Y_t dt + B \partial_x Y_t^2 dt + \sqrt C \partial_x \mathcal W_t.
 \end{equation}
We stress that both \eqref{eq:KPZ} and \eqref{SBEintro} are singular due to the fact that they contain a non-linear term, and their solutions are not functions, but rather random distributions. For instance, the second term on the right-hand side of \eqref{eq:KPZ} is quadratic, but defining the product of random distributions in a precise way is often nontrivial. Some advances in this direction were achieved by the pioneering work of Hairer in \cite{H1,H2,H3} with the theory of regularity structures, which opened a way to make sense of several singular SPDEs, such as the KPZ equation. 

Another approach is the later work of Gubinelli and Perkwoski, who developed the theory of paracontrolled in \cite{GP}. There, it was possible to show the well-posedness not only for \eqref{eq:KPZ} but also for a multitude of SPDEs with more general characteristic operators. We highlight that the notion of solutions to the KPZ equation   used by Gubinelli and Perkwoski is based on the definition of energy solutions, which was first cooked up by Gon\c calves and Jara in \cite{asymjara} (and afterwards refined in \cite{GuJ}); this was done by looking at the fluctuations of  a collection of weakly asymmetric exclusion processes in a one-dimensional lattice and at equilibrium. 

An energy solution of a SPDE such as the KPZ equation is a random distribution $Y_t$, which is continuous in time and  satisfies a martingale problem that contains an integral term corresponding to the non-linear term of the equation. Some energy estimates ensure that the aforementioned non-linear term is well-defined. The reader is invited to look at Definition \ref{defspdefbe} for more details. 

This notion of solution is well adapted to the derivation of fluctuations from microscopic random systems; they are governed by energy solutions to the KPZ equation for various models described in many recent works, see \cite{GHS} and references therein. This has provided advances for proving the weak KPZ universality conjecture, which states that the KPZ equation (or its companion, the SBE) is an universal law ruling the fluctuations of several random growth interfaces close to a stationary state.  

A natural question that arose afterwards is regarding the derivation of other singular SPDEs from scaling limits of random microscopic systems. This was done in \cite{jarafluc}, where it was obtained an energy solution to a fractional stochastic Burgers equation (or its companion the fractional KPZ equation) from an exclusion dynamics that allows long jumps. There, the characteristic operators of the equation replace the Laplacian and the derivative operators (which are present in the classical KPZ equation) by their fractional versions. Since the particles were evolving in the set of integers $\mathbb Z$, the SPDEs in \cite{jarafluc} were stated without boundary conditions. This was not the case in \cite{GPS}, where it was possible to produce energy solutions to the KPZ/SBE equation with Neumann/Dirichlet boundary conditions.

At this point it worth to note that by taking $B=0$ in \eqref{SBEintro}, one obtains the Ornstein-Uhlenbeck  equation. This corresponds to a Gaussian process that is typically obtained in the fluctuations of a variety of microscopic systems whose asymmetry is missing (such as  the symmetric simple exclusion process), or negligible. Some different notions of solutions for this process were also described in \cite{jarafluc, GPS}, among another aforementioned works. 

In this article we derive for the first time an energy solution to the KPZ/SBE equation with characteristic operators given in terms of fractional operators such as the regional fractional Laplacian, whose precise definition is given in \eqref{deflapfracreg}. As this operator is non-local and it may a priori not be well-defined at the boundary, some Neumann/Dirichlet boundary conditions are required for our test functions, depending on the values of some parameters ruling the dynamics; this is illustrated in Figure \ref{fig:test_functions}. And similarly as in other previous works, when the asymmetry of our model is not strong enough we obtain solutions to  the Ornstein-Uhlenbeck equation, as expected. All of this is done by applying a scaling limit to a particular interacting particle system, which is described in the following subsection.

\subsection{The microscopic level}
We consider the model introduced in \cite{byronsdif,HLstefano} which consists of an exclusion process evolving on the one-dimensional discrete set of points $\Lambda_n:=\{1,\cdots, n-1\}$ that we call \textit{bulk}. The transition probability is given by \eqref{prob}, so it  depends on the size of the  jump but, and contrarily to the setting of \cite{byronsdif,HLstefano},  it may not be symmetric; its moments are regulated by a parameter $\gamma \in (0,2)$. Moreover at each site $x,y\in\mathbb Z$ satisfying  $x\leq 0$ or $y\geq n$ we add  a reservoir that can inject or remove particles in the system. As in \cite{byronsdif,HLstefano}, the symmetric part of the dynamics is tuned by parameters $\alpha>0$ and $\beta \in \mathbb{R}$ in the boundary, but not in the bulk. On the other hand, the antisymmetric part of the dynamics is tuned by \textit{both} by parameters $\alpha_a>0$ and $\beta_a \geq 0$. All of this is illustrated in Figure \ref{fig:dyn}. Depending on the range of the aforementioned parameters, we {conclude} that the fluctuations at equilibrium of our {models} are either given in terms of the Ornstein-Uhlenbeck process or energy solutions to the SBE. We do not present the analogous results for the KPZ equation but we observe that one could follow the same strategy as in \cite{asymjara} and restate our results for the KPZ instead of the SBE. In order to do so, it is only necessary to replace the density fluctuation by the height density field, as an object of study. We leave this to the interested reader. 

\subsection{Strategy of the proof, most technical aspects and final comments}

 Now  we note that the notion of energy solutions that we obtain is reminiscent of the notions of energy solutions to the SBE/KPZ equation referred to in Section \ref{intmac}, see Definition \ref{defspdefbe}. 
  There are basically four important ingredients  to derive: the Dynkin's martingales, the characterization of the initial condition as a spatial white-noise, the study of the process reversed in time and the most important one, the energy estimate which guarantees the existence of the non-linear term in the martingale formulation corresponding to the non-linear term of the SPDE. 
  
  {At the microscopic level, the} major difficulty that we faced in our proof is the need of obtaining a closed formula for the martingale in terms of the solution to the SBE, requiring a second-order Boltzmann-Gibbs Principle to replace non-linear terms appearing in the Markovian evolution by terms written in terms of the solution of the SPDE. This is obtained by a development of a sophisticated multi-scale analysis that consists in replacing iteratively local functions (typically occupation variables in certain domains) by their averages in microscopic boxes whose sizes increase at each step of the iteration procedure. {Due to the presence of the boundary dynamics, an additional technical issue is that we either had to choose boxes that are either right-directed or left-directed, depending on whether the reference site is closer to $1$ or to $n-1$. At the final step the average is performed at a box of a large microscopic size, which is sufficient to close the expression of the martingale in terms of macroscopic objects. This procedure is much based on previous proofs of the second order Boltzmann-Gibbs principle and it has its roots in Theorem 2.6 of \cite{G2008}. 
 
 Though, since now we deal with non-local operators and since we had to ensure that the density fluctuation field can be defined by an approximation procedure, see Remark \ref{remapr} for details. This requires that our test functions, after being acted on by fractional operators, are elements of $L^{2}([0,1])$, which is not immediate due to the non-locality of the regional fractional Laplacian. This motivates the presence of various space of test functions different boundary conditions, see Figure \ref{fig:test_functions}.
 
 Going back to the general strategy, we prove that our sequence of density fluctuation fields is tight, which then by Prohorov's theorem gives the weak convergence through subsequences. Since the density fluctuation fields are linear functionals that act on the respective space of test functions (all of them being nuclear Fr\'echet spaces), we may apply  the Mitoma's criterion for the proof of tightness. It is not absolutely mandatory to make use of this criterion but we do so in order to simplify our arguments.  We are then capable to obtain  limiting
martingale problems in all the regimes that we analysed. Nevertheless, for some of them we do not know if the uniqueness of the solutions to our martingale problems holds.  This issue is present even in the simpler case of the Ornstein-Uhlenbeck process because, up to our knowledge, we do not have enough information  about the spectrum of the regional fractional Laplacian operator. Regarding the more challenging martingale problems associated to the SBE, in some regimes, our notion of energy solutions match those recently studied in \cite{GPP}, where the authors already proved the uniqueness; this is crucial to ensure the convergence of the sequence of density fluctuation fields.} We have highlighted along the article the exact values of the parameters where the uniqueness {is lacking}. 
 
We observe that the recent article \cite{GPP} not only proves the existence and uniqueness to  energy solutions of the equation that we derive, namely \eqref{spdefbe}, but also for a much more general version of the equation, in which the noise is more irregular and presents bilinear nonlinearities. Their approach is not based on Fourier series expansion  and this allows treating the uniqueness problem for a much larger class of characteristic operators, as the regional fractional Laplacian,  a wide range of domains and boundary conditions. We believe that the results in \cite{GPP} can be applied in many more contexts, aside from the ones of our work.
 
At this point it is worth to make a few comments about possible extensions of our model. We note that here we considered a Markov process whose transition rate is quite specific, but it should be possible to extend our results to more microscopic dynamics, with corresponding macroscopic operators given in terms of other kernels. A particular feature of our dynamics is the exclusion rule, which allows at most one particle per site. This restriction is absolutely not mandatory, but has the advantage of avoiding additional technical issues. And given that in some passages our arguments are already quite demanding to follow in their present form, we opted for the exclusion process as our object of study. Taking everything into account, our work is still a significant contribution to the literature, due to the fact that this is the first article regarding the derivation of SPDE written in terms of the regional fractional Laplacian in bounded domains from particle systems.

Lastly we remark that the Ornstein-Uhlenbeck equation and the SBE here obtained (or their analogous counterparts) should also arise as as the scaling limit of equilibrium fluctuations in several other dynamics, including the case of multi-component models. In this direction, we refer the reader to the articles of \cite{Spo2,SS,PSS,PSSS} where it is conjectured that the fluctuations for such models are described by a number of limit SPDEs. However, very little has been done in a rigorous way to prove those conjectures. Quite possibly the results in \cite{GPP} are an important step in that direction.

\vspace{0.3cm}
\textbf{Outline of the article:} 

In Section \ref{secstat} we state our main theorems, i.e. Theorems \ref{clt} and \ref{clt2}. In order to do so, in Subsection \ref{subsec:SPDE} we present all the relevant definitions at the \textit{macroscopic} level. This includes the characteristic operators that are present in our SPDEs, and the definitions of the notions of solutions that are referred to in our main results. On the other hand, in Subsection \ref{subsec:micro_sys} we define the dynamics of our model (which is illustrated in Figure \ref{fig:dyn}) and all the relevant objects at a \textit{microscopic} level. After  this, Theorems \ref{clt} and \ref{clt2} are presented in Subsection \ref{sec:mainres}.

Section \ref{tightfluc} is then devoted to the proof of the tightness of the sequence of density fluctuation fields, which is crucial to ensure at least a weak convergence through subsequences. 

In Section \ref{sec:nonlinear} we present our main estimates to deal with the microscopic counterpart of the non-linear term appearing in the SPDE, which is defined exactly in \eqref{defAtnG}. 

The characterization of the limit point is done in Section \ref{seccharac}; in some regimes of the tuning parameters we are able to prove that such limit point is unique, and therefore we have convergence for the sequence of density fields. In the remaining regimes, we lack the uniqueness of the solutions to our corresponding martingale problem, therefore we were able to prove only convergence through subsequences.  

Finally in Appendices \ref{propfrac}, \ref{secdiscconv} and \ref{useful}, we presented a series of technical results which are completely independent of our dynamics, in order to make our text as self contained as possible.

\section{Statement of results} \label{secstat}

The goal of this work is to obtain the Ornstein-Uhlenbeck equation and the  stochastic Burgers equation, both given in terms of the regional fractional Laplacian defined in $(0,1)$. 
These equations will be obtained by means of a scaling limit of an interacting particle system when looking at the equilibrium fluctuations around its mean.

\subsection{Stochastic Partial Differential Equations}\label{subsec:SPDE}

\subsubsection{The space of test functions}
For any $q \geq 1$, we denote by $L^q$ the space of measurable functions $G: (0,1) \rightarrow \mathbb{R}$ such that
$
\| G \|^q_{L^q} := \int_0^1 |G(u)|^q \, du < \infty.
$
Moreover, if $G: (0,1) \rightarrow \mathbb{R}$ is bounded, we denote $\| G \|_{\infty}:=\sup_{u \in (0,1)} |G(u)|$.  We denote by $C_c^{\infty}(\mathbb{R})$ the space of functions $H: \mathbb{R} \rightarrow \mathbb{R}$ which are infinitely differentiable and have compact support. Moreover, we denote by $C^{\infty}([0,1])$ the space of the infinitely differentiable functions $G:(0,1) \rightarrow \mathbb{R}$ that can be extended to some $\hat{G} \in C_c^{\infty}(\mathbb{R})$. For any $u \in (0,1)$ and $k \geq 1$, we denote the $k$-th derivative of $G$ at $u$ by $G^{(k)}(u)$ and for $k=0$, we define $G^{(0)}(u):=G(u)$. We also denote $G^{(1)}$ (resp. $G^{(2)}$) by $\nabla G$ (resp. $\Delta G$).
Furthermore, for any $G \in C^{\infty}([0,1])$ and $k \geq 0$, the following limits are well-defined:
$
G^{(k)}(0):= \lim_{u \rightarrow 0^+} G^{(k)}(u); \quad G^{(k)}(1):= \lim_{u \rightarrow1^-} G^{(k)}(u).
$
With this convention, for every $d \geq 1$, by performing Taylor expansions on $G$ of order $d$ around $0$ and $1$ we get
\begin{align*}
\forall u \in (0,1), \quad    \sum_{j=0}^{d-1} \frac{u^j}{j!}  G^{(j)}(0) + \frac{u^d}{d!}  G^{(d)}(\xi_u) = G(u) = \sum_{j=0}^{d-1} \frac{(1-u)^j}{j!}  G^{(j)}(1) + \frac{(1-u)^d}{d!}  G^{(d)}(\tilde{\xi}_u).
\end{align*}
Above, $\xi_u\in (0,u)$ and $\tilde{\xi}_u\in(u,1)$. In particular, if there exists $d \geq 1$ such that $G^{(j)}(0) = 0 = G^{(j)}(1)$ for every $j \in \{0, 1, \ldots, d-1\}$, then  doing Taylor expansions on $G$ of order $d$ around $0$ and $1$  we obtain
\begin{equation} \label{expGSd}
\forall u \in (0,1), \quad |G(u)| \leq \frac{\| G^{(d)} \|_{\infty}}{d!} u^d \quad \text{and} \quad  |G(u)| \leq \frac{\| G^{(d)} \|_{\infty}}{d!} (1-u)^d.
\end{equation}

\begin{rem} \label{remnuc}
It is well known that $C^{\infty}([0,1])$ is a nuclear space. Moreover, from Chapter 3, Section 7) of \cite{nuclear}  every arbitrary subspace of $C^{\infty}([0,1])$ is also a nuclear space). 
\end{rem}
Following Theorem V.5 in \cite{ReedSimon}, the natural topology in $C^{\infty}([0,1])$ is given by
\begin{align*}
\forall G_1, G_2 \in C^{\infty}([0,1]), \quad  \| G_2 - G_1 \|_{C^{\infty}([0,1])}:= \sum_{k \in \mathbb{N}} 2^{-k} \frac{\| G_2^{(k)} - G_1^{(k)}  \|_{ \infty }}{1+\| G_2^{(k)} - G_1^{(k)}  \|_{ \infty}}.
\end{align*}
Above and in what follows, $\mathbb{N}:=\{0, 1, 2, \ldots\}$. With this topology, it is well known that $C^{\infty}([0,1])$ is a Fr\'echet space (this can be proved with the same arguments applied to obtain Theorem V.9 in \cite{ReedSimon}). 

\begin{rem} \label{remfre}
From Chapter 2 Section 4 of \cite{nuclear}, every closed subspace of $C^{\infty}([0,1])$ is also a Fr\'echet space. 
\end{rem}
Given $\mathcal{H} \subset C^{\infty}([0,1])$, a metric space $(N, \| \cdot \|_N)$ and an operator $\mathcal{P}:   \mathcal{H} \rightarrow N$, we say that $\mathcal{P}$ is \textit{continuous} if for any sequence ($H_j)_{j \geq 1} \subset \mathcal{H}$ and any $G \in \mathcal{H}$, it holds
\begin{align*}
 \forall k \geq 0, \quad \lim_{j \rightarrow \infty} \| H_j^{(k)} - G^{(k)}  \|_{ \infty } =0 \Rightarrow   \lim_{j \rightarrow \infty} \| H_j - G  \|_{ N } =0.
\end{align*} 
Let $ \mcb S_k:= \{G \in C^{\infty}([0,1]): G^{(k)}(0) = 0 = G^{(k)}(1) \}$, then  $\mcb S:=\cap _{k\geq 0} \mcb S_k$, $\mcb S_{Dir}=\mcb S_0$ and $\mcb S_{Neu}=\mcb S_1$.

 For every $(\beta, \gamma) \in R_0:=\mathbb{R} \times (0, \;  2)$, we denote our space of test functions by $\mcb {S}_{\beta,\gamma}$, where
\begin{equation} \label{defsbetagamma}
\mcb {S}_{\beta,\gamma}:=
\begin{cases}
\mcb S, \quad & \gamma \in (0,  2), \beta < 0; \\
C^{\infty}([0,1]), \quad & \gamma \in (0, \,  1) , \beta \geq 0 \quad \textrm{or}\quad  \gamma \in [1, \,  3/2) , \beta > \gamma - 1 ; \\
 \mcb S_{Dir}, \quad & \gamma \in [1, \, 3/2) , \beta = 0  \quad \textrm{or}\quad  \gamma \in (1, \,  3/2) , \beta \in (0, \gamma - 1]; \\
\mcb S_{Dir} \cap \mcb S_{Neu}, \quad & \gamma \in [3/2, \,  2),   \beta \in [0, \gamma - 1]; \\
 \mcb S_{Neu}, \quad & \gamma \in [3/2, \,  2) ,  \beta > \gamma - 1.
\end{cases}
\end{equation}

\begin{figure}[htb!]
\centering
	\begin{tikzpicture}[scale=0.25]
	\fill[color=red!35] (-25,-5) rectangle (5,-20);
	\fill[color=cyan!20] (-25,-5) -- (-10,-5) -- (-2,0) -- (-2,10) -- (-25,10) -- cycle;
	\fill[color=magenta] (-2,0.4) -- (5,5.4) -- (5,10)-- (-2,10) -- cycle;
	\fill[color=violet!30] (-10,-5) -- (-2,-5) -- (-2,0.6)-- cycle;
	\fill[color=magenta!20] (-2,-5)--(5,-5) -- (5,5.4)-- (-2,0.6) -- cycle;

	\draw[-,=latex,cyan!20,ultra thick] (-25,-5) -- (-10, -5) node[midway, below, sloped] {\footnotesize{{\textbf{\textcolor{cyan!20}{$C^\infty$ }}}}};
		\draw[-,=latex,violet!30,ultra thick] (-10,-5) -- (-2, -5) node[midway, below, sloped] {\footnotesize{{\textbf{\textcolor{violet!80}{$\mcb S_{Dir}$ }}}}};
	\draw[dotted, ultra thick, white] (5,10) -- (5,-20);
	\draw[dotted, ultra thick, white] (-25,10) -- (-25,-20);
	\draw[dotted, ultra thick, white] (-10,10) -- (-10,-20);
	
	\node[right, black] at (-18,5) {\textbf{\small{\textcolor{black}{$C^\infty$}}}} ;
	\node[right, black] at (-2,-3) {\textbf{\small{\textcolor{black}{$\mcb S_{Dir}\cap\mcb S_{Neu}$}}}} ;

\node[right, black] at (-0.5,6) {\textbf{\small{\textcolor{black}{$\mcb S_{Neu}$}}}} ;

\node[right, black] at (-7.2,-3) {\textbf{\small{\textcolor{black}{$\mcb S_{Dir}$}}}} ;

	\node[right, black] at (-12.4,-12) {\textbf{\small{\textcolor{black}{$\mcb S$}}}} ;
	\node[rotate=270, above] at (5,1) {\textcolor{black}{$\gamma = 2$}};
	\node[rotate=270, above] at (-10,1) {\textcolor{black}{$\gamma = 1$}};
	\node[rotate=270, above] at (-25,1) {\textcolor{black}{$\gamma = 0$}};
	\node[left] at (-25,-5) {\textcolor{black}{$\beta=0$}};
	\draw[white,fill=white] (5,5) circle (1.5ex);
	\draw[white,fill=white] (5,-5) circle (1.5ex);
	\draw[white,fill=white] (-10,-5) circle (1.5ex);
	
	\draw[<-,black] (-3.3, -0.1) -- (-5,3) node[above] {\textcolor{black}{$\theta=\gamma-1$}};

	\end{tikzpicture}
	\label{fig:test_functions} 	\caption{The spaces of test functions}
\end{figure}
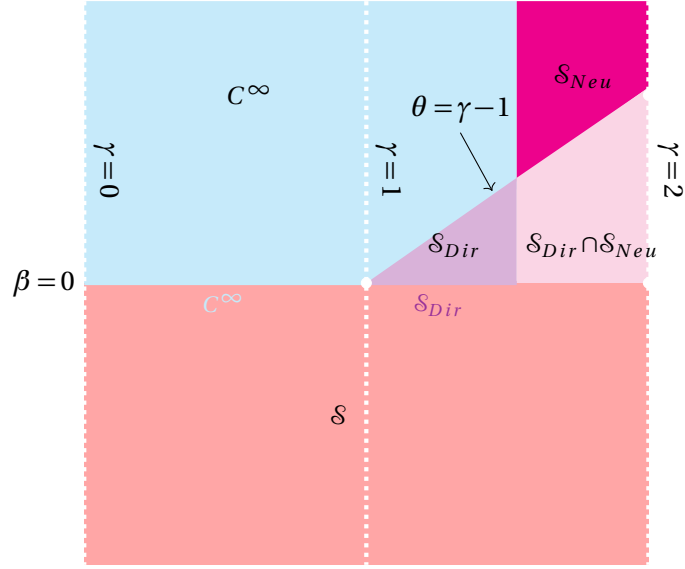

\begin{rem} \label{remnucfre}
For every $(\beta, \gamma) \in R_0$, $\mcb {S}_{\beta,\gamma}$ is a nuclear Fr\'echet space. The choices above are motivated by the goal of selecting the largest space of test functions (e.g., $ C^{\infty}([0,1])$) whenever it is possible. For $\gamma \in (0,  2)$ and $\beta <0$, the choice $\mcb {S}_{\beta,\gamma}= \mcb S$ is important to ensure that the density fluctuation defined below is well-defined, see Remark \ref{remsbetaneg} for more details. For $\beta=0$ and $\gamma \in [1, \, 2)$, the choice $\mcb {S}_{\beta,\gamma} \subset \mcb S_{Dir}$ is motivated by Proposition \ref{contlemA2stef}. For $\gamma \in (1,2)$ and $\beta \in (0, \; \gamma-1)$, the choice $\mcb {S}_{\beta,\gamma} \subset \mcb S_{Dir}$ is motivated by the third assumption in Proposition \ref{prop3assum}. Finally, for $\beta \geq 0$ and $\gamma \in [3/2, 2)$, the choice $\mcb {S}_{\beta,\gamma} \subset \mcb S_{Neu}$ is motivated by Proposition \ref{Lqreg}. 
\end{rem}

\subsubsection{Fractional operators and semi-inner products}

In this subsection we present $\mathbb{L}^{\gamma/2}$, which is equal (up to a multiplicative constant) to the regional fractional Laplacian  on $(0,1)$, see Definition 2.1 in \cite{reflected}. For every $G \in C^{\infty}([0,1])$, let
\begin{equation} \label{defdeltaepsilon}
	\forall u \in (0,1), \quad \big(\mathbb{L}_{\varepsilon}^{\gamma/2} G\big)(u):= \frac{c^{+}+c^{-}}{2}  \int_{0}^1 \mathbbm{1}_{\{ |u-v| \geq \varepsilon\} } \frac{G(v)-G(u)}{|v-u|^{1+\gamma}} \, dv.
\end{equation}
Next, for every $G \in C^{\infty}([0,1])$, we define $\mathbb{L}^{\gamma/2} G:(0,1) \rightarrow \mathbb{R}$ by 
\begin{align} \label{deflapfracreg}
\forall u \in (0,1), \quad  \big(\mathbb{L}^{\gamma/2} G\big)(u):= \lim_{\varepsilon \rightarrow 0^+} \big(\mathbb{L}_{\varepsilon}^{\gamma/2} G\big)(u).
\end{align} 
From Proposition 2.2 in \cite{reflected}, $\mathbb{L}^{\gamma/2} G$ is well-defined for any $G \in C^{\infty}([0,1])$, for every $\gamma \in (0,2)$. Next we define the semi inner-product $\langle\cdot,\cdot\rangle_{\gamma/2}$ by
\begin{equation*}
\forall f,g \in C^{\infty}([0,1]), \quad  \langle f, g \rangle_{\gamma/2} :=  \frac{c^{+}+c^{-}}{4} \iint_{(0,1)^2} \frac{[f(v) -f(u)] [g(v) -g(u)]}{|v-u|^{1+\gamma}} \; du \; dv.
\end{equation*}  
Finally, we define the corresponding semi-norm $\|\cdot \|_{\gamma/2}$ by
\begin{align} \label{seminorm}
\forall f \in C^{\infty}([0,1]), \quad  \| f^2 \|^2_{\gamma/2}:=  \langle f, f \rangle_{\gamma/2} =  \frac{c^{+}+c^{-}}{4} \iint_{(0,1)^2} \frac{[f(v) -f(u)]^2}{|v-u|^{1+\gamma}} \; du \; dv.
\end{align}

For  $(\alpha, \beta, \gamma) \in (0, \infty) \times \mathbb{R} \times (0,2)$, we define the operator $\mathbb{L}_{\alpha,\beta}^{\gamma/2}$ on $G $ by
\begin{equation} \label{deltaab}
\mathbb{L}_{\alpha,\beta}^{\gamma/2} G: = \mathbbm{1}_{ \{ \beta \geq 0 \} } \mathbb{L}^{\gamma/2} G + \mathbbm{1}_{ \{ \beta \leq 0 \} }  \alpha (r^{-} + r^{+}) G,  
\end{equation}
where $r^{-}:(0,1) \rightarrow \mathbb{R}$ and $r^{+}:(0,1) \rightarrow \mathbb{R}$ are given by
\begin{equation} \label{defrpm}
\forall u \in (0,1), \quad r^{-}(u):=  \frac{c^{+}+c^{-}}{2\gamma u^{\gamma}}  \quad \text{and} \quad r^{+}(u):= \frac{c^{+}+c^{-}}{2\gamma (1-u)^{\gamma}}.
\end{equation}
\begin{rem}\label{remsbetaneg}
Note that if $G\in \mcb S$ then  $\mathbb{L}_{\alpha,\beta}^{\gamma/2} G \in \mcb S$. Indeed, in this case 
 $\mathbb{L}_{\alpha,\beta}^{\gamma/2} G = \alpha (r^{-} + r^{+}) G$. Now, for every $k \in \mathbb{N}$, 
\begin{align*}
\forall u \in \mathbb{R}, \quad  (r^{-} G)^{(k)}(u) = \sum_{j=0}^k (r^{-})^{j}(u) G^{(k-j)}(u) \lesssim \sum_{j=0}^k u^{-\gamma-j} G^{(k-j)}(u).
\end{align*}
Therefore, in order to prove that $r^{-} G \in \mcb S$, it is enough to prove that given $k \in \mathbb{N}$ and $j \in \{0, 1, \ldots, k\}$, it holds  $\lim_{u \rightarrow 0^+} u^{-\gamma-j} G^{(k-j)}(u)=0$. In order to do so, from the fact that $G^{(i)}(0)=0$ for every $i \in \mathbb{N}$ (due to the assumption that $G \in \mcb S$), we can perform a Taylor expansion or order $j+2$ on $G^{(k-j)}$ around $0$. Thus, \eqref{expGSd} leads to $|G^{(k-j)}(u)| \leq \|G^{(k+2)} \|_{\infty} u^{j+2}$, for every $u \in (0,1)$, leading to $\lim_{u \rightarrow 0^+} u^{-\gamma-j} G^{(k-j)}(u)=0$.
Using the fact that $G^{(i)}(1)=0$ for every $i \in \mathbb{N}$ and \eqref{expGSd}, with an analogous argument we conclude that $r^{+} G \in \mcb S$, which ends the proof.
\end{rem}

However, when  $(\alpha,\beta,\gamma)\in (0,\infty)\times[0,\infty)\times (0,2)$ it is no longer true that $\mathbb{L}_{\alpha,\beta}^{ \gamma /2 } G\in \mcb S_{\alpha, \beta}$.  Nevertheless, in any space of test functions we can make an approximation in $L^2([0,1])$, see the corollary below, which is a direct consequence of Propositions \ref{Lqreg} and \ref{contlemA2stef}.
\begin{cor} \label{corL2ab}
	For every $G\in \mcb S_{\alpha,\beta}$  it holds $\mathbb{L}_{\alpha,\beta}^{ \gamma /2 } G \in  L^q( [0,1] )$, for $q \in \{1,2\}$. Moreover, the operator $\mathbb{L}_{\alpha,\beta}^{ \gamma /2 }: \mcb S_{\alpha,\beta} \rightarrow L^2(\mathbb{R})$ is continuous.
\end{cor}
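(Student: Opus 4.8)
The plan is to treat the two summands in the definition \eqref{deltaab} separately. Since zero-extension identifies $L^2([0,1])$ with $L^2(\mathbb{R})$, it suffices to show that each of $\mathbb{L}^{\gamma/2}G$ and $\alpha(r^-+r^+)G$ lies in $L^1([0,1])\cap L^2([0,1])$ with a norm controlled by finitely many of the seminorms $\|G^{(k)}\|_\infty$; such a control automatically yields the sequential continuity criterion introduced before \eqref{defsbetagamma}. I organize the argument by the sign of $\beta$, which decides which indicators in \eqref{deltaab} are active, and by the corresponding space $\mcb S_{\alpha,\beta}$ prescribed in \eqref{defsbetagamma}.

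First, consider the regional term $\mathbb{L}^{\gamma/2}G$, which is present whenever $\beta\geq 0$. For $\gamma\in(0,1)$ no boundary condition is needed: splitting the singular integral defining $\mathbb{L}^{\gamma/2}G$ into the near-diagonal region $\{|v-u|<\delta\}$, where $|G(v)-G(u)|\leq\|\nabla G\|_\infty|v-u|$ renders $|v-u|^{-\gamma}$ integrable, and its complement, where $|G(v)-G(u)|\leq 2\|G\|_\infty$ and $|v-u|^{-1-\gamma}$ is integrable, shows that $\mathbb{L}^{\gamma/2}G$ is bounded, hence in $L^1\cap L^2$, with norm controlled by $\|G\|_\infty+\|\nabla G\|_\infty$. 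For $\gamma\in[1,3/2)$ the relevant space lies in $\mcb S_{Dir}$ and I invoke Proposition \ref{contlemA2stef}; for $\gamma\in[3/2,2)$ it lies in $\mcb S_{Neu}$ (or $\mcb S_{Dir}\cap\mcb S_{Neu}$) and I invoke Proposition \ref{Lqreg}. In each case these propositions supply both the $L^q$ membership and the continuity of $G\mapsto\mathbb{L}^{\gamma/2}G$.

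Next, consider the escape term $\alpha(r^-+r^+)G$, present whenever $\beta\leq 0$. When $\beta<0$ we have $G\in\mcb S$, and Remark \ref{remsbetaneg} already shows $(r^-+r^+)G\in\mcb S$, so it is bounded and lies in $L^1\cap L^2$, the bound there being explicit in terms of the $\|G^{(k)}\|_\infty$. When $\beta=0$ both indicators are active, so $(r^-+r^+)G$ must also be controlled, and here the boundary conditions built into $\mcb S_{\alpha,\beta}$ are exactly what is required. Using \eqref{defrpm} together with the Taylor estimate \eqref{expGSd}: for $\gamma\in[1,3/2)$, where $G\in\mcb S_{Dir}$ gives $|G(u)|\lesssim u\wedge(1-u)$, one has $(r^-+r^+)(u)\,|G(u)|\lesssim u^{1-\gamma}+(1-u)^{1-\gamma}$, which is square-integrable precisely because $\gamma<3/2$; for $\gamma\in[3/2,2)$, where $G\in\mcb S_{Dir}\cap\mcb S_{Neu}$ gives $|G(u)|\lesssim u^2\wedge(1-u)^2$, one gets $(r^-+r^+)(u)\,|G(u)|\lesssim u^{2-\gamma}+(1-u)^{2-\gamma}$, square-integrable for every $\gamma<2$. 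The $L^1$ bounds are milder and follow the same lines.

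Finally, for continuity, in every case above the $L^2$ norm of each summand is dominated by a finite linear combination of the seminorms $\|G^{(k)}\|_\infty$, either directly or through the continuity statements of Propositions \ref{Lqreg} and \ref{contlemA2stef}; hence $\|H_j^{(k)}-G^{(k)}\|_\infty\to 0$ for all $k$ forces $\|\mathbb{L}_{\alpha,\beta}^{\gamma/2}(H_j-G)\|_{L^2}\to 0$, which is the required criterion. The main obstacle is the regime $\beta=0$, where both terms coexist and one must reconcile simultaneously the near-boundary singularity $|v-u|^{-1-\gamma}$ of the regional kernel and the $u^{-\gamma}$, $(1-u)^{-\gamma}$ blow-up of $r^\mp$ against the controlled vanishing of $G$ at the endpoints. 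It is exactly this matching of boundary decay to kernel singularity that dictates the Dirichlet/Neumann prescriptions in \eqref{defsbetagamma}, and the delicate part of it is precisely what is packaged into the two cited propositions.
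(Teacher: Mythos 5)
Your overall route is exactly the paper's: the corollary is stated there as ``a direct consequence of Propositions \ref{Lqreg} and \ref{contlemA2stef}'', i.e.\ one splits $\mathbb{L}_{\alpha,\beta}^{\gamma/2}$ according to \eqref{deltaab}, handles the regional term via Proposition \ref{Lqreg} and the reaction term via Proposition \ref{contlemA2stef}; your hand-made Taylor estimates for $(r^-+r^+)G$ (with $|G(u)|\lesssim u\wedge(1-u)$, resp.\ $u^2\wedge(1-u)^2$) are just the proof of Proposition \ref{contlemA2stef} unrolled for $d=1$ and $d=2$, and your near-diagonal/far-field computation for $\gamma\in(0,1)$ re-derives part of Proposition \ref{Lqreg}. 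One attribution, however, is wrong and reflects a misreading: for the regional term $\mathbb{L}^{\gamma/2}G$ with $\gamma\in[1,3/2)$ you invoke Proposition \ref{contlemA2stef} and assert that ``the relevant space lies in $\mcb S_{Dir}$''. Proposition \ref{contlemA2stef} says nothing about the regional fractional Laplacian --- it bounds $\int_0^1[r^-(u)+r^+(u)]^q\,G^2(u)\,du$ --- and the assertion about the space is false when $\beta>\gamma-1$, where \eqref{defsbetagamma} gives $\mcb S_{\beta,\gamma}=C^{\infty}([0,1])$. The correct tool is Proposition \ref{Lqreg}, which covers the whole range $\gamma\in(0,3/2)$ with no boundary condition at all; the Dirichlet condition imposed for $\gamma\in[1,3/2)$, $\beta\in[0,\gamma-1]$ is needed only for the reaction term.

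There is also a genuine gap: at $\beta=0$ both indicators in \eqref{deltaab} are active, and your treatment of $(r^-+r^+)G$ covers only $\gamma\in[1,3/2)$ and $\gamma\in[3/2,2)$, while \eqref{defsbetagamma} also allows $\beta=0$ with $\gamma\in(0,1)$, where $\mcb S_{\beta,\gamma}=C^{\infty}([0,1])$ carries no boundary decay whatsoever. For generic such $G$ one has $(r^-+r^+)(u)|G(u)|\sim u^{-\gamma}+(1-u)^{-\gamma}$ near the endpoints, which lies in $L^1$ (since $\gamma<1$) but is square-integrable only for $\gamma<1/2$ --- precisely the constraint $q\gamma<2d+1$ with $d=0$, $q=2$ in Proposition \ref{contlemA2stef}. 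Indeed, for $G\equiv 1$ and $\gamma\in[1/2,1)$ one has $\mathbb{L}^{\gamma/2}G=0$ while $\|r^-+r^+\|_{L^2}=\infty$, so no argument can close the $q=2$ claim in that corner; your blanket sentence that ``the boundary conditions built into $\mcb S_{\alpha,\beta}$ are exactly what is required'' silently skips the one regime in which none are built in. In fairness, the paper's one-line derivation inherits the same blind spot (its cited proposition only yields $q=1$ there), but a proof claiming the statement for all $(\beta,\gamma)$ and $q\in\{1,2\}$ must at least record this restriction rather than omit the case.
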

Now we introduce a sequence of mollifiers which will regularize $\mathbb{L}_{\alpha,\beta}^{ \gamma /2 } G$. A classical example from \cite{brezis2010functional} is $(\phi_j)_{j \geq 1} \subset C_c^{\infty}(\mathbb{R})$ given by
\begin{equation} \label{defphij}
	\forall j \geq 1, \; \forall u \in \mathbb{R}, \quad 0 \leq \phi_j(u):= \mathbbm{1}_{ \{ |u|< j^{-1}  \} }  C_{\phi} \exp \big\{ (j^2u^2-1)^{-1} \big\} \leq C_{\phi},
\end{equation}
where $C_{\phi}>0$ is such that $\int_{\mathbb{R}} \phi_j(u) du = 1$, for any $j \geq 1$. Next, we fix a sequence $(\psi_j)_{j \geq 1} \subset C_c^{\infty}(\mathbb{R})$ such that
\begin{equation} \label{defpsij}
	\forall j \geq 1, \; \forall u \in \mathbb{R}, \quad 0 \leq \psi_j (u) \leq 1 \; \; \text{and} \; \; \psi_j(u):=
	\begin{cases} 
		0, \quad & u \in (-\infty,j^{-1}) \cup (1 - j^{-1}, \infty); \\
		1, \quad & 2j^{-1} \leq u \leq 1 - 2j^{-1}.
	\end{cases}
\end{equation}
Moreover, for every $ \tilde{G}: (0,1) \rightarrow \mathbb{R}$, we define the (generally not smooth) extension $A_{\tilde{G}}: \mathbb{R} \rightarrow \mathbb{R}$ by
\begin{equation} \label{extL1}
	\forall u \in \mathbb{R}, \quad A_{\tilde{G}}(u):=\tilde{G}(u)\textbf{1}_{(0,1)]}(u)
\end{equation}
Furthermore, for every $\widetilde{G} \in L^2 ( [0,1] )$ and for every $j \geq 1$, we define $H^{\tilde{G}}_j: (0,1) \rightarrow \mathbb{R}$ by
\begin{equation} \label{restraprox}
H^{\widetilde{G}}_j(u):= [\psi_{j} \cdot (\phi_{j}* A_{\widetilde{G}} ) ](u) = \psi_{j}(u) \int_{\mathbb{R}} A_{\widetilde{G}} (v) \phi_j(u-v) dv, \quad u \in (0,1). 
\end{equation}

Now we present a result whose proof is postponed to Appendix \ref{useful}.
\begin{prop} \label{aproxL2gen}
	Let $\widetilde{G} \in L^2 ( [0,1] )$, and for every $j \geq 1$, let $H^{\widetilde{G}}_j$ be given by \eqref{restraprox}. Then for every $j \geq 1$, it holds $H_j^{\widetilde{G}} \in \mcb S $. Moreover, $H^{\widetilde{G}}_j$ converges to $\widetilde{G}$ in $L^2([0,1])$ as $j \rightarrow \infty$. 
\end{prop}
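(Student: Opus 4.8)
The plan is to treat the two assertions separately, exploiting that the mollifier $\phi_j$ is responsible for smoothness and for the $L^2$-approximation, while the cut-off $\psi_j$ is precisely what forces membership in $\mcb S$.

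First I would establish that $H_j^{\widetilde G}\in \mcb S$. Since $\widetilde G\in L^2([0,1])$ and its zero-extension $A_{\widetilde G}$ from \eqref{extL1} is supported on $(0,1)$, Cauchy--Schwarz on the finite-measure set $(0,1)$ gives $A_{\widetilde G}\in L^1(\mathbb R)\cap L^2(\mathbb R)$. Convolving the $L^1$ function $A_{\widetilde G}$ against the compactly supported smooth kernel $\phi_j$ from \eqref{defphij} yields a $C^\infty$ function, because one may differentiate under the integral sign: $(\phi_j * A_{\widetilde G})^{(k)}(u)=\int_{\mathbb R} A_{\widetilde G}(v)\,\phi_j^{(k)}(u-v)\,dv$ is well-defined and continuous in $u$ for every $k\ge 0$, using that $\phi_j^{(k)}$ is bounded and $A_{\widetilde G}\in L^1(\mathbb R)$. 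Multiplying by $\psi_j\in C_c^\infty(\mathbb R)$ from \eqref{defpsij} then gives a $C^\infty$ function whose support is contained in $\supp \psi_j \subseteq [j^{-1},1-j^{-1}]$; hence $H_j^{\widetilde G}$ extends to an element of $C_c^\infty(\mathbb R)$ and so belongs to $C^\infty([0,1])$. The decisive point is that $\psi_j$ vanishes identically on $(-\infty,j^{-1})$ and on $(1-j^{-1},\infty)$, so by \eqref{restraprox} we have $H_j^{\widetilde G}\equiv 0$ on $(0,j^{-1})$ and on $(1-j^{-1},1)$. Consequently the one-sided limits defining $\big(H_j^{\widetilde G}\big)^{(k)}(0)$ and $\big(H_j^{\widetilde G}\big)^{(k)}(1)$ all vanish, i.e. $H_j^{\widetilde G}\in \mcb S_k$ for every $k\ge 0$, which is exactly $H_j^{\widetilde G}\in \mcb S$.

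For the convergence, I would invoke the standard mollification theorem (e.g. Theorem 4.22 in \cite{brezis2010functional}), which gives $\phi_j * A_{\widetilde G}\to A_{\widetilde G}$ in $L^2(\mathbb R)$; restricting to $(0,1)$, where $A_{\widetilde G}=\widetilde G$, this reads $\phi_j * A_{\widetilde G}\to \widetilde G$ in $L^2([0,1])$. I then split
\begin{equation*}
H_j^{\widetilde G}-\widetilde G = \psi_j\big(\phi_j * A_{\widetilde G}-\widetilde G\big) + (\psi_j-1)\widetilde G .
\end{equation*}
The first term is controlled using $0\le \psi_j\le 1$, so its $L^2([0,1])$-norm is at most $\|\phi_j * A_{\widetilde G}-\widetilde G\|_{L^2([0,1])}\to 0$. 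For the second term, $\psi_j\equiv 1$ on $[2j^{-1},1-2j^{-1}]$ forces $(\psi_j-1)\widetilde G$ to be supported in $(0,2j^{-1})\cup(1-2j^{-1},1)$, whence
\begin{equation*}
\|(\psi_j-1)\widetilde G\|_{L^2([0,1])}^2 \le \int_{(0,\,2j^{-1})\cup(1-2j^{-1},\,1)} |\widetilde G(u)|^2\,du \xrightarrow[\;j\to\infty\;]{} 0
\end{equation*}
by absolute continuity of the Lebesgue integral, since $\widetilde G\in L^2([0,1])$ and the measure of the integration set tends to $0$. Adding the two bounds yields $H_j^{\widetilde G}\to \widetilde G$ in $L^2([0,1])$.

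Since both ingredients are classical, I do not expect a genuine obstacle; the only point requiring care, and the conceptual heart of the statement, is the interplay between the two regularizations. One must verify that introducing the boundary cut-off $\psi_j$ (needed to land in $\mcb S$ rather than merely in $C^\infty([0,1])$) does not destroy the $L^2$-approximation, and this is precisely what the second estimate guarantees: the $L^2$ mass of $\widetilde G$ in the shrinking boundary layers $(0,2j^{-1})\cup(1-2j^{-1},1)$ vanishes as $j\to\infty$. The minor technical check is the $C^\infty$ regularity of $\phi_j * A_{\widetilde G}$, which follows from differentiation under the integral sign together with $A_{\widetilde G}\in L^1(\mathbb R)$.
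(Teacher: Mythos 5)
Your proof is correct and follows essentially the same route as the paper's: smoothness of $\phi_j * A_{\widetilde G}$ plus the cut-off $\psi_j$ (vanishing near $0$ and $1$) to get $H_j^{\widetilde G}\in\mcb S$, then the standard mollification theorem (Theorem 4.22 of \cite{brezis2010functional}) combined with the fact that $\psi_j\equiv 1$ away from shrinking boundary layers, where the $L^2$ mass of $\widetilde G$ vanishes by absolute continuity of the integral. Your explicit decomposition $H_j^{\widetilde G}-\widetilde G=\psi_j(\phi_j*A_{\widetilde G}-\widetilde G)+(\psi_j-1)\widetilde G$ is merely a slightly tidier packaging of the same estimates the paper assembles via its equations for the middle region and H\"older's inequality on the boundary pieces.
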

For $(\alpha, \beta, \gamma) \in (0, \infty) \times \mathbb{R} \times (0, 2)$ and  $G\in \mcb S_{\alpha,\beta}$, we define  the operator  $ \mathcal{P}^{\gamma}_{\alpha, \beta}$ on $G$ by
\begin{align} \label{defPalbetgam}
	\mathcal{P}^{\gamma}_{\alpha, \beta} G: = \frac{c^{+}+c^{-}}{2} \mathbbm{1}_{  \{ \beta \geq 0 \} } \| G \|_{\gamma/2}^2 + \alpha \mathbbm{1}_{  \{ \beta \leq 0 \} } \int_0^1 [r^{-}(u) + r^{+}(u)]G^2 (u) \, du.
\end{align}
Now we present another result whose proof is postponed to Appendix \ref{useful}.

\begin{prop} \label{norcont}
	For every $(\alpha, \beta, \gamma) \in (0, \infty) \times \mathbb{R} \times (0, 2)$,  the operator $\mathcal{P}^{\gamma}_{\alpha, \beta}: \mcb S_{\alpha,\beta} \rightarrow \mathbb{R}$
is continuous.
\end{prop}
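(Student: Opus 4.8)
The plan is to first reduce to two scalar functionals and then treat each one separately. Since $\beta$ is fixed, the indicators $\mathbbm{1}_{\{\beta\ge0\}}$ and $\mathbbm{1}_{\{\beta\le0\}}$ in \eqref{defPalbetgam} are constants, so $\mathcal{P}^{\gamma}_{\alpha,\beta}$ is a fixed nonnegative linear combination of
\[
\mathcal{Q}_1(G):=\|G\|_{\gamma/2}^2 \quad\text{and}\quad \mathcal{Q}_2(G):=\int_0^1[r^-(u)+r^+(u)]\,G^2(u)\,du,
\]
so it suffices to prove that $\mathcal{Q}_1$ (relevant when $\beta\ge0$) and $\mathcal{Q}_2$ (relevant when $\beta\le0$) are each continuous on $\mcb S_{\beta,\gamma}$. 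Throughout I fix $(H_j)_{j\ge1}\subset\mcb S_{\beta,\gamma}$ and $G\in\mcb S_{\beta,\gamma}$ with $\|H_j^{(k)}-G^{(k)}\|_\infty\to0$ for every $k\ge0$, and set $F_j:=H_j-G$. As $\mcb S_{\beta,\gamma}$ is a linear space, $F_j$ and $H_j+G$ also lie in it; moreover $\|F_j^{(k)}\|_\infty\to0$ for all $k$, while $\|H_j+G\|_\infty$ and $\|\nabla(H_j+G)\|_\infty$ stay bounded in $j$. The estimates below will at the same time show that $\mathcal{Q}_1,\mathcal{Q}_2$ are finite-valued, so that they genuinely map into $\mathbb{R}$.

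For $\mathcal{Q}_1$ I would use the polarization identity $\mathcal{Q}_1(H_j)-\mathcal{Q}_1(G)=\langle F_j,\,H_j+G\rangle_{\gamma/2}$, valid since $\langle\cdot,\cdot\rangle_{\gamma/2}$ is a symmetric bilinear form, together with the Cauchy--Schwarz inequality for this positive semidefinite form, giving $|\mathcal{Q}_1(H_j)-\mathcal{Q}_1(G)|\le\|F_j\|_{\gamma/2}\,\|H_j+G\|_{\gamma/2}$. The single estimate that carries the argument is $|\Phi(v)-\Phi(u)|\le\|\nabla\Phi\|_\infty|v-u|$, which inserted into \eqref{seminorm} yields
\[
\|\Phi\|_{\gamma/2}^2\le\frac{c^++c^-}{4}\,\|\nabla\Phi\|_\infty^2\iint_{(0,1)^2}|v-u|^{1-\gamma}\,du\,dv,
\]
the double integral being finite precisely because $\gamma<2$ renders $|v-u|^{1-\gamma}$ integrable across the diagonal. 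Taking $\Phi=F_j$ gives $\|F_j\|_{\gamma/2}\to0$, and taking $\Phi=H_j+G$ gives a bound uniform in $j$, so the product vanishes. This uses no boundary condition and hence covers every space $\mcb S_{\beta,\gamma}$ arising for $\beta\ge0$.

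For $\mathcal{Q}_2$ I again factor $H_j^2-G^2=F_j\,(H_j+G)$ and bound
\[
|\mathcal{Q}_2(H_j)-\mathcal{Q}_2(G)|\le\int_0^1[r^-(u)+r^+(u)]\,|F_j(u)|\,|H_j(u)+G(u)|\,du.
\]
The main obstacle is that by \eqref{defrpm} the weights $r^-,r^+$ blow up like $u^{-\gamma}$ at $u=0$ and like $(1-u)^{-\gamma}$ at $u=1$, so convergence depends on matching these singularities against the boundary vanishing encoded in \eqref{defsbetagamma}; this forces a short split on $\gamma$. If $\gamma\in(0,1)$, where $\mcb S_{\beta,\gamma}$ is $C^\infty([0,1])$ or $\mcb S$, the singularity is already integrable, so the crude bound $|F_j||H_j+G|\le\|F_j\|_\infty\|H_j+G\|_\infty$ combined with $\int_0^1 u^{-\gamma}\,du<\infty$ closes the $r^-$ contribution, and symmetrically the $r^+$ one. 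If $\gamma\in[1,2)$ and $\beta\le0$, then $\mcb S_{\beta,\gamma}\subseteq\mcb S_{Dir}$, so $F_j(0)=F_j(1)=0$ yields $|F_j(u)|\le\|\nabla F_j\|_\infty\,u$; inserting this in the $r^-$ term leaves $\int_0^1 u^{1-\gamma}\,du<\infty$ (finite since $\gamma<2$), while the bound $|F_j(u)|\le\|\nabla F_j\|_\infty(1-u)$ handles $r^+$. For $\beta<0$ the even stronger vanishing from \eqref{expGSd} would also suffice. In every case the right-hand side is a fixed finite constant times $\|F_j^{(k)}\|_\infty\,\|H_j+G\|_\infty$ with $k\in\{0,1\}$, which tends to $0$. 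Together with the previous paragraph this establishes the continuity of $\mathcal{P}^{\gamma}_{\alpha,\beta}$.
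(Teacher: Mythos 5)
Your proof is correct and follows essentially the same route as the paper: your Mean Value Theorem bound on $\|\cdot\|_{\gamma/2}^2$ is exactly the paper's estimate \eqref{semnormfrac}, and your treatment of the weighted term $\int_0^1[r^-+r^+]G^2$ (crude bound for $\gamma\in(0,1)$, Dirichlet vanishing $|F_j(u)|\le\|\nabla F_j\|_\infty\min\{u,1-u\}$ for $\gamma\in[1,2)$ with $\beta\le0$) reproduces the content of Proposition \ref{contlemA2stef} with $q=1$ and $d\in\{0,1\}$, which is the paper's second ingredient. The only difference is that you spell out the polarization and Cauchy--Schwarz step converting the quadratic bounds into continuity, which the paper's two-line proof leaves implicit.
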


\subsubsection{Martingale problems for SPDEs}

In order to state our main results, we present a series of definitions in what follows.
\begin{definition} \label{defstat}

We say that a $\mcb S_{\alpha,\beta}'$-valued stochastic process $(\mathcal{Z}_t)_{ 0 \leq t \leq T}$ defined on some probability space $(X, \mathcal{F}, P)$ is \textit{stationary} if for any $t \in [0,T]$, the $\mcb S_{\alpha,\beta}'$-valued random variable $\mathcal{Z}_t$ is a white-noise of variance $\chi(b)$. In particular, there exists a constant $K_0>0$ such that 
\begin{align} \label{condUSC}
\forall G \in \mcb S_{\alpha,\beta}, \; \forall t \in [0, T], \quad \ \mathbb{E}_P \big[ \big( \mathcal{Z}_t(G) \big)^2 \big] \leq K_0 \| G \|^2_{L^2}.
\end{align}
\end{definition}

\begin{rem} \label{remapr}
Let $(\mathcal{Z}_t)_{ 0 \leq t \leq T}$ be a $\mcb S_{\alpha,\beta}'$-valued stochastic process. Then for every $G \in \mcb S_{\alpha,\beta}$ and every $(\alpha, \beta, \gamma) \in (0, \infty) \times \mathbb{R} \times (0, 2)$,  the process $\big(\mathcal{I}_t(G) \big)_{0 \leq t \leq T}$ given by
\begin{equation} \label{intprocess}
\mathcal{I}_t(G):= \lim_{j \rightarrow \infty}   \int_0^{t} \mathcal{Z}_s( H^{\mathbb{L}_{\alpha,\beta}^{ \gamma /2 } G}_j )ds
\end{equation}
is well defined, due to Corollary \ref{corL2ab} and Proposition \ref{aproxL2gen}. Above, $H^{\mathbb{L}_{\alpha,\beta}^{ \gamma /2 } G}_j$ is defined by replacing $\widetilde{G}$ in \eqref{restraprox} by $\mathbb{L}_{\alpha,\beta}^{ \gamma /2 } G$.
\end{rem}
Next, we present below the martingale problem for the Ornstein-Uhlenbeck equation that we will derive. 
\begin{definition} \label{defspde}
Let $(\alpha, \beta, \gamma) \in (0, \infty) \times \mathbb{R} \times (0,  2)$. We say that a process $(\mathcal{Z}_t)_{ 0 \leq t \leq T}$ is a stationary solution of {the Ornstein-Uhlenbeck equation}
\begin{equation} \label{spde}
d \mathcal{Z}_t = \mathbb{L}_{\alpha,\beta}^{ \gamma /2 } \mathcal{Z}_t dt + \sqrt{2 \chi(b) \mathcal{P}^{\gamma}_{\alpha, \beta} } d \mathcal{W}_t
\end{equation}
if the process $(\mathcal{Z}_t)_{ 0 \leq t \leq T}$ is stationary and
\begin{enumerate}
\item
 for any $G \in \mcb S_{\alpha,\beta}$, the processes $\mathcal{M}_t(G)$ and $\mathcal{N}_t(G)$ given by
\begin{equation}  \label{MartNou}
\mathcal{M}_t(G) := \mathcal{Z}_t(G) - \mathcal{Z}_0(G) - \mathcal{I}_t(G), \quad \mathcal{N}_t(G) :=[ \mathcal{M}_t(G)]^2  -  2 \chi(b) t    \mathcal{P}^{\gamma}_{\alpha, \beta} G,
\end{equation}
are $\mathcal{F}_t$-martingales, where for each $t \in [0,T]$, $\mathcal{F}_{t}:= \sigma ( \mathcal{Z}_s (G): (s,G) \in [0,t] \times \mcb {S}_{\beta,\gamma})$.
\item
$\mathcal{Y}_0$ is a mean zero Gaussian field with covariance given on $G_1,G_2 \in \mcb {S}_{\beta,\gamma}$ by
\begin{equation} \label{covinifie}
\mathbb{E}_{P} [ \mathcal{Z}_0(G_1) \mathcal{Z}_0(G_2) ] = \chi(b) \int_{0}^1  G_1(u) G_2(u) \, du. 
\end{equation}
\end{enumerate}
\end{definition}
The random element $\mathcal{Z}$ is a generalized Ornstein-Uhlenbeck process. From \eqref{MartNou} and Levy's Theorem on the martingale characterization of Brownian motion, for every $G \in \mcb {S}_{\beta,\gamma}$ fixed, the process
$
\mathcal M_t (G) \big[ \sqrt{2 \chi(b)}  \mathcal{P}^{\gamma}_{\alpha, \beta} G \big]^{-1}
$
is a standard Brownian motion. 

Next, we state a result regarding the uniqueness in law for stationary solutions of \eqref{spde}.
\begin{prop} \label{uniqlaw}
Let $\alpha >0$, $\gamma \in (0, 2)$ and $\beta \in (-\infty,0) \cup (0, \infty)$. Then two stationary solutions of \eqref{spde} have the same law.
\end{prop}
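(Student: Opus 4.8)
The plan is to prove uniqueness in law by showing that, for any stationary solution $(\mathcal{Z}_t)_{0\le t\le T}$ of \eqref{spde}, all of its finite-dimensional distributions are centered Gaussian vectors whose covariance is completely determined by the data of the equation, namely by $\chi(b)$, by the semigroup generated by $\mathbb{L}_{\alpha,\beta}^{\gamma/2}$, and by the form $\mathcal{P}^{\gamma}_{\alpha,\beta}$. Since $(\mathcal{Z}_t)_t$ is $\mcb S_{\alpha,\beta}'$-valued, its law on path space is determined by the joint laws of the real random vectors $(\mathcal{Z}_{t_1}(G_1),\dots,\mathcal{Z}_{t_k}(G_k))$ with $0\le t_1<\dots<t_k\le T$ and $G_1,\dots,G_k\in\mcb S_{\alpha,\beta}$, so it suffices to identify these. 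The only structural input is the martingale problem in Definition \ref{defspde}: the fact that the bracket of $\mathcal{M}_t(G)$ in \eqref{MartNou} is \emph{deterministic} is exactly what forces Gaussianity, through Lévy's characterization, as already remarked after Definition \ref{defspde}.

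The main tool is a time-dependent, \emph{backward}, version of the martingale problem. Fix $0\le a<b\le T$ and $G\in\mcb S_{\alpha,\beta}$, let $(P_\tau)_{\tau\ge0}$ denote the semigroup generated by $\mathbb{L}_{\alpha,\beta}^{\gamma/2}$, and set $F_r:=P_{b-r}G$ for $r\in[a,b]$, so that $\partial_r F_r=-\mathbb{L}_{\alpha,\beta}^{\gamma/2}F_r$ and $F_b=G$. First I would upgrade the one-parameter martingale $\mathcal{M}_t(G)$ to this time-dependent family: using the approximation \eqref{intprocess} together with Corollary \ref{corL2ab} and Proposition \ref{aproxL2gen}, the process
\[
N_r:=\mathcal{Z}_r(F_r)-\mathcal{Z}_a(F_a)-\int_a^r\mathcal{Z}_\rho\big(\partial_\rho F_\rho+\mathbb{L}_{\alpha,\beta}^{\gamma/2}F_\rho\big)\,d\rho
\]
is an $\mathcal{F}_r$-martingale on $[a,b]$. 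By the choice of $F_r$ the integrand vanishes, so $N_r=\mathcal{Z}_r(F_r)-\mathcal{Z}_a(F_a)$ is a martingale whose bracket equals the deterministic quantity $\int_a^r 2\chi(b)\,\mathcal{P}^{\gamma}_{\alpha,\beta}(F_\rho)\,d\rho$, finite by Proposition \ref{norcont}. By Lévy's theorem, conditionally on $\mathcal{F}_a$ the increment $\mathcal{Z}_b(G)-\mathcal{Z}_a(P_{b-a}G)$ is then a centered Gaussian, independent of $\mathcal{F}_a$, with this explicit variance.

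With this conditional Gaussianity in hand, I would compute the joint characteristic function $\mathbb{E}\big[\exp\big(\imaginary\sum_{j=1}^k\lambda_j\mathcal{Z}_{t_j}(G_j)\big)\big]$ by conditioning successively on $\mathcal{F}_{t_{k-1}},\dots,\mathcal{F}_{t_1}$. At the top step the increment $\mathcal{Z}_{t_k}(G_k)-\mathcal{Z}_{t_{k-1}}(P_{t_k-t_{k-1}}G_k)$ is Gaussian and independent of $\mathcal{F}_{t_{k-1}}$, so integrating it out replaces $\lambda_k\mathcal{Z}_{t_k}(G_k)$ by the $\mathcal{F}_{t_{k-1}}$-measurable term $\lambda_k\mathcal{Z}_{t_{k-1}}(P_{t_k-t_{k-1}}G_k)$ at the cost of an explicit deterministic Gaussian factor. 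Repeating down to $t_1$ and finally using that $\mathcal{Z}_0$ (equivalently $\mathcal{Z}_{t_1}$, by stationarity) is the centered Gaussian field with covariance \eqref{covinifie}, one obtains a closed expression for the characteristic function in terms of $\chi(b)$, $P$ and $\mathcal{P}^{\gamma}_{\alpha,\beta}$ alone. As none of these depends on the particular solution, any two stationary solutions share all finite-dimensional distributions, hence the same law. Equivalently, the same computation yields the two-time covariance $\mathbb{E}[\mathcal{Z}_t(G)\mathcal{Z}_s(H)]=\chi(b)\langle P_{t-s}G,H\rangle_{L^2}$ for $s\le t$ and shows the process is Gaussian, which already pins down the law.

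The crux, and the only place where $\beta\neq0$ is used, is to give rigorous meaning to $(P_\tau)$ and to the backward family $F_r=P_{b-r}G$, so that the martingale problem applies to $F_r$ and $\tau\mapsto P_\tau G$ is differentiable with derivative $\mathbb{L}_{\alpha,\beta}^{\gamma/2}P_\tau G$. Here the dichotomy in \eqref{deltaab} is decisive. For $\beta<0$, Remark \ref{remsbetaneg} shows that $\mathbb{L}_{\alpha,\beta}^{\gamma/2}$ is the explicit multiplication operator $G\mapsto\alpha(r^-+r^+)G$ preserving $\mcb S$; its semigroup acts by pointwise multiplication and the covariance evolution decouples in the spatial variable, so the associated Cauchy problem is solved explicitly, the boundary blow-up of $r^{\pm}$ being controlled by the rapid vanishing \eqref{expGSd} of elements of $\mcb S$. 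For $\beta>0$, $\mathbb{L}_{\alpha,\beta}^{\gamma/2}$ is the regional fractional Laplacian $\mathbb{L}^{\gamma/2}$, the generator of a Markovian, hence contraction, $C_0$-semigroup on $L^2(0,1)$; combined with the a priori bound $|\mathbb{E}[\mathcal{Z}_t(G)\mathcal{Z}_s(H)]|\le\chi(b)\|G\|_{L^2}\|H\|_{L^2}$ that follows from stationarity and Cauchy--Schwarz, this forces the covariance Cauchy problem to have a unique bounded solution even though the explicit spectrum of $\mathbb{L}^{\gamma/2}$ is unknown. The excluded case $\beta=0$ is precisely the one in which $\mathbb{L}_{\alpha,\beta}^{\gamma/2}$ couples the non-local part to the singular boundary potential $\alpha(r^-+r^+)$, and neither its spectrum nor its semigroup is under control; this is the obstruction flagged in the introduction and the reason $\beta=0$ is omitted. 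I expect the verification of the action of the semigroup on the test-function space, together with the attendant boundary estimates, to be the main technical difficulty.
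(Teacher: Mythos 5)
Your proposal is correct and takes essentially the same route as the paper: the paper's proof simply invokes, for $\beta<0$, the explicit multiplication-operator argument of Section 5.2 of \cite{flucstefano} (where $\mathbb{L}_{\alpha,\beta}^{\gamma/2}=\alpha(r^-+r^+)\cdot$ preserves $\mcb S$ by Remark \ref{remsbetaneg}), and, for $\beta>0$, the identification of $\mathbb{L}^{\gamma/2}$ with the generator of a strongly continuous contraction semigroup on $L^2([0,1])$ (Theorems 6.3--6.4 of \cite{reflected}) together with the extension of the field to $L^2$ test functions via Proposition \ref{aproxL2gen}, followed by the semigroup duality, deterministic-bracket/L\'evy, and successive-conditioning characteristic-function argument of Appendix C of \cite{jarafluc} --- which is precisely the backward-martingale computation you write out. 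One minor imprecision: for $\beta>0$ the finiteness of the bracket $\mathcal{P}^{\gamma}_{\alpha,\beta}(P_\rho G)$ does not follow from Proposition \ref{norcont} (which concerns only test functions in $\mcb{S}_{\beta,\gamma}$), but rather from $P_\rho G$ lying in the domain of the generator and hence in the domain of the associated Dirichlet form.
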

\begin{proof}
For $\alpha >0$, $\gamma \in (0,2)$ and $\beta <0$, the result follows by applying exactly the same arguments described in Section 5.2 of \cite{flucstefano}. On the other hand, for $\alpha >0$, $\gamma \in (0,2)$ and $\beta >0$, the result follows by applying exactly the same arguments described in Appendix C of \cite{jarafluc}.

On the other hand, for $\alpha >0$, $\gamma \in (0,2)$ and $\beta >0$, we get from \eqref{deltaab} that $\mathbb{L}_{\alpha,\beta}^{\gamma/2}=\mathbb{L}^{\gamma/2}$. Thus, combining the definition of $\mcb S_{\beta,\gamma}$ in \eqref{defsbetagamma} with Theorems 6.3 and 6.4 (for $p=2$) of \cite{reflected}, we have that $\mathbb{L}_{\alpha,\beta}^{\gamma/2}=\mathbb{L}^{\gamma/2}$ can be identified with the generator $\big(L, \mathcal{D}(L) \big)$ of a strongly continuous contraction semigroup $(P_t)_{t \geq 0}$ on $L^2([0,1])$. More exactly, $\mcb S_{\beta,\gamma} \subset \mathcal{D}(L)$ and $L G = \mathbb{L}_{\alpha,\beta}^{\gamma/2} G$ a.e. on $[0, 1]$, for every $G \in \mcb S_{\beta,\gamma}$. Moreover, we observe that $P_s G \in L^2([0,1])$ for any $s \geq 0$ and any $G \in \mcb S_{\beta,\gamma}$. 
Note that thanks to Proposition \ref{aproxL2gen}, for every $(\alpha, \beta, \gamma) \in (0, \infty) \times [0, \infty) \times (0,2)$, we can define
$\forall t \in [0, T]$ and $\forall \tilde{G} \in L^2([0, 1])$ $ \mathcal{Y}_t (\tilde{G}) := \lim_{j \rightarrow \infty} \mathcal{Y}_t ( H^{\tilde{G}}_j). 
$

Thus,   $\mathcal{Y}_t^n ( P_s G )$ is well-defined for every $s,t \in [0, T]$ and every $G \in \mcb S_{\beta,\gamma}$. The result follows by applying exactly the same arguments described in Appendix C of \cite{jarafluc}.
\end{proof}
\begin{rem}
In order to prove the uniqueness when $\alpha >0$, $\gamma \in (0,2)$ and $\beta=0$, we could follow the arguments described in  Section 5.3 of \cite{flucstefano}, nevertheless we do not have enough information about the spectrum of the  regional fractional Laplacian defined in $(0, 1)$.  The uniqueness in the other regimes is left as an open problem. 
\end{rem}

\begin{definition}\label{defEE}
Let $( \beta, \gamma) \in R_0$ and $(\mathcal{Z}_t)_{0 \leq t \leq T}$ be a given $\mcb {S}_{\beta,\gamma}'$-valued stochastic process $(\mathcal{Z}_t)_{ 0 \leq t \leq T}$ defined on some probability space $(X, \mathcal{F}, P)$. Define for $\varepsilon \in (0, 1/2)$, $s < t \in [0,T]$ and $G \in \mcb {S}_{\beta,\gamma}$,
\begin{equation} \label{defprocA}
\mathcal{A}_{s,t}^{\varepsilon}(G):= \int_s^t \Big[ \int_0^{1/2} [ \mathcal{Z}_r*\iota_{\varepsilon}^{+} (u) ]^2 \nabla G(u) du + \int_{1/2}^{1} [ \mathcal{Z}_r*\iota_{\varepsilon}^{-} (u) ]^2 \nabla G(u) du \Big] dr. 
\end{equation}
Above, $\iota_{\varepsilon}^{+}$ and $\iota_{\varepsilon}^{-}$ are given by
\begin{align} \label{defiota}
\iota_{\varepsilon}^{+}(u):= \frac{1}{\varepsilon} \mathbbm{1}_{ \{ (0, \varepsilon ] \} }(u); \quad \quad   \iota_{\varepsilon}^{-}(u):= \frac{1}{\varepsilon} \mathbbm{1}_{ \{ [- \varepsilon, 0) \} }(u).
\end{align} 
We say that $(\mathcal{Z}_t)_{0 \leq t \leq T}$ satisfies an \textit{Energy Estimate} (EE) if there exist $\kappa_0 >0$, $\omega \in (0,1)$ such that
\begin{equation} \label{eqdefEE}
\mathbb{E}_P \big[ \big( \mathcal{A}_{s,t}^{\varepsilon}(G) - \mathcal{A}_{s,t}^{\delta}(G)  \big)^2 \big] \leq \kappa_0 \varepsilon^{\omega} (t-s) \| \nabla G \|^2_{L^2} 
\end{equation}
for any $G \in \mcb {S}_{\beta,\gamma}$, any $0 < \delta < \varepsilon < 1$ and any $0 \leq s < t \leq T$. 
\end{definition}
We observe that the energy estimate tells us that the sequence  $(\mathcal{A}_{s,t}^{\varepsilon}(G) )_\varepsilon$
 is a Cauchy sequence in $L^2(P)$ and as a consequence of the completeness of $L^2(P)$ this implies  the existence of the limit
$
\mathcal{A}_{s,t}(G):= \lim_{\varepsilon \rightarrow 0^+} \mathcal{A}_{s,t}^{\varepsilon}(G)
$
in $L^2(P)$ for any $0 \leq s < t \leq T$ and any $G \in \mcb {S}_{\beta,\gamma}$. This process can be understood as an integrated version of $-( \nabla \mathcal{Z}_t)^2$:
\begin{align*}
\mathcal{A}_{s,t}(G)= - \int_s^t ( \nabla \mathcal{Z}_t)^2(G) ds.
\end{align*}
Moreover, if the process $(\mathcal{Z}_t)_{0 \leq t \leq T}$ is stationary and satisfies \eqref{eqdefEE}, the process $(\mathcal{A}_t)_{0 \leq t \leq T}$ given by
$
\mathcal{A}_t(G):= \lim_{\varepsilon \rightarrow 0^+} \mathcal{A}_{0,t}^{\varepsilon}(G)
$
is also well defined, see Proposition 2.10 in \cite{jarafluc} and Theorem 2.2 in \cite{asymjara} for more details. 

\begin{definition} \label{defspdefbe}
Let $(\alpha, \beta, \gamma) \in [0, \infty) \times(0,\infty)\times [\frac 32,  2)$ and $\kappa_1 \in \mathbb{R}$. We say that a process $(\mathcal{Z}_t)_{ 0 \leq t \leq T}$ is a stationary {energy} solution of the stochastic regional fractional Burgers equation
\begin{equation} \label{spdefbe}
d \mathcal{Z}_t = \mathbb{L}_{\alpha,\beta}^{ \gamma /2 } \mathcal{Z}_t dt +\kappa_1 (\nabla \mathcal{Z}_t)^2 +  \sqrt{2 \chi(b) \mathcal{P}^{\gamma}_{\alpha, \beta} } d \mathcal{W}_t
\end{equation}
if the process $(\mathcal{Z}_t)_{ 0 \leq t \leq T}$ is stationary, satisfies an energy estimate and
\begin{enumerate}
\item
 for any $G \in \mcb {S}_{\beta,\gamma}$, the processes $\mathcal{M}_t(G)$ and $\mathcal{N}_t(G)$ given by
\begin{align*}
& \mathcal{M}_t(G) := \mathcal{Z}_t(G) - \mathcal{Z}_0(G) - \mathcal{I}_t(G) + \kappa_1 \mathcal{A}_t(G),  \quad \mathcal{N}_t(G) :=[ \mathcal{M}_t(G)]^2  -  2 \chi(b) t    \mathcal{P}^{\gamma}_{\alpha, \beta} G,
\end{align*} 
 are {continuous} $\mathcal{F}_t$-martingales, where for each $t \in [0,T]$, $\mathcal{F}_{t}:= \sigma ( \mathcal{Z}_s (G): (s,G) \in [0,t] \times \mcb {S}_{\beta,\gamma})$.
\item
$\mathcal{Z}_0$ is a mean zero Gaussian field with covariance given on $G_1,G_2 \in \mcb {S}_{\beta,\gamma}$ by \eqref{covinifie}.
\item the reversed process $\{\widehat{Z}_{t}:= Z_{T-t}: t\in[0,T]\}$ satisfies item (1) with $\kappa_1$ replaced by $-\kappa_1$ and 
$\widehat{\mathcal{A}}_t:= \mathcal{A}_T-\mathcal{A}_{T-t}$. 
\end{enumerate}
\end{definition}

\begin{rem}\label{rem:uniq}
We observe that recently in Theorem 3.7 of \cite{GPP} the uniqueness of energy solutions in the sense as given above has been derived, if $(\alpha, \beta, \gamma) \in (0, \infty) \times (0,+\infty) \times [\frac 32,  2)$. We refer the interested reader to Lemma 2.19 of \cite{GPP} as well as Example 4.4 in that article, where the notion of energy solution in our specific case is analysed. We note that uniqueness of energy solutions for the regime $(\alpha,\beta,\gamma)\in(0,+\infty)\times \{0\}\times [\frac 32, 2)$ is left as an open problem, because there is an additional term in our SPDE that comes from the boundary dynamics and appear as a reaction term see \eqref{deltaab}. 
\end{rem}

\subsection{The microscopic models}
\label{subsec:micro_sys}

For any $n \geq 2$, we analyse the evolution of the exclusion process in $\Lambda_n:=\{1, \ldots, n-1\}$. This is an IPS which allows at most one particle per site, therefore our space state is $\Omega_n:=\{0,1\}^{\Lambda_n}$. The elements of $\Lambda_n$ are called \textit{sites} and will be denoted by Latin letters, such as $x$, $y$ and $z$. The elements of $\Omega_n$ are called \textit{configurations} and will be denoted by Greek letters, such as $\eta$. Moreover, we denote the number of particles at a site $x$ according to a configuration $\eta$ by $\eta(x)$; this means that the site $x$ is \textit{empty} if $\eta(x)=0$ and it is \textit{occupied} if $\eta(x)=1$. A particle jumps from $x$ to $y$ with rate $p(y-x)$, where $p: \mathbb{Z} \rightarrow [0,1]$ is a transition probability given,  for every $ x \in \mathbb{Z}$ by
\begin{equation} \label{prob}
 p( x ) := 
\frac{c^{+}}{|x|^{\gamma+1}}\textbf{1}_{x >0}+\frac{c^{-}}{|x|^{\gamma+1}}\textbf{1}_{x <0}
\end{equation}
where $\gamma \in (0, 2)$ and $c^-, c^+$ are two non-negative constants such that $c^-+c^+>0$. Since $\gamma >0$, we get $\sum_{x} p(x) = (c^-+c^+) \sum_{x=1}^{\infty} x^{-\gamma-1} < \infty$. Nevertheless, we do not impose last sum to be equal to one, therefore $p(\cdot)$ \textit{may not be} a probability measure. We also define the symmetric and antisymmetric parts of $p$ by
\begin{align} \label{defsa}
\forall x \in \mathbb{Z}, \quad s(x):=\frac{p(x)+p(-x)}{2}=s(-x), \quad \quad a(x)=:\frac{p(x)-p(-x)}{2}=-a(-x).
\end{align}
If $\gamma \in (1,2)$, we observe that $|m_a| < \infty$, where
\begin{align} \label{defma}
m_a:= \sum_{x=1}^{\infty} x a(x) = \sum_{x=1}^{\infty} (-x) a(-x) = \sum_{x=-\infty}^{-1} x a(x). 
\end{align}
Next, we fix the parameters $b \in (0,1)$, $\alpha >0$ and $\beta \in \mathbb{R}$, which only concern the symmetric part of the dynamics and we also fix two parameters $\alpha_a >0$ and $\beta_a \in \mathbb{R}$, that only concern the antisymmetric part of the dynamics. In this way, we can define 
the exclusion process with infinitesimal generator given by $\mcb L_n:=\mcb L_n^{0}+ \mcb L_n^{l} + \mcb L_n^{r}$, where
\begin{equation}  \label{generatorsasym}
\begin{split}
(\mcb L^{0}_n f)(\eta) :=& \sum_{x,y } [ s(y-x) + \alpha_a n^{-\beta_a} a(y-x) ] \eta(x)[1-\eta(y)] [ f(\eta^{x,y}) -f(\eta)],\\
(\mcb L_n^{l} f)(\eta) :=& \sum_x \sum_{y = - \infty}^0 [\alpha n^{-\beta} s(y-x) + \alpha_a n^{-\beta_a} a(y-x) ] \eta(x)[1-b] [f(\eta^x) - f(\eta)] \\
+& \sum_x \sum_{y = - \infty}^0 [\alpha n^{-\beta} s(x-y) + \alpha_a n^{-\beta_a} a(x-y) ] b [1 - \eta(x) ]  [f(\eta^x) - f(\eta)], \\
(\mcb L_n^{r} f)(\eta) :=& \sum_x \sum_{y = n}^{\infty} [\alpha n^{-\beta} s(y-x) + \alpha_a n^{-\beta_a} a(y-x) ] \eta(x)[1-b] [f(\eta^x) - f(\eta)] \\
+& \sum_x \sum_{y = n}^{\infty} [\alpha n^{-\beta} s(x-y) + \alpha_a n^{-\beta_a} a(x-y) ] b [1 - \eta(x) ]  [f(\eta^x) - f(\eta)],
\end{split}
\end{equation}
where  the exchange transformation is 
$(\eta^{x,y})(z) = \eta(z)\textbf{1}( z \ne x,y)+ 
		\eta(y)\textbf{1}(z=x)+ 
		\eta(x)\textbf{1}(z=y)$ and the Glauber transformation is given by $(\eta^x)(z)= 
 \eta(z)\textbf{1}( z \ne x)+
		(1-\eta(x))\textbf{1}(z=x).$
The dynamics can be described as follows: 
\begin{enumerate}
\item 
given  $x,y \in \Lambda_n$, a jump  from $x$ to $y$ occurs at rate $[ s(y-x) + \alpha_a n^{-\beta_a} a(y-x) ] \eta(x) [1 - \eta(y) ]$;
\item 
a reservoir at $y \leq 0$ creates (resp. destroys) a particle at  $x \in \Lambda_n$ with rate $[\alpha n^{-\beta} s(x-y) + \alpha_a n^{-\beta_a} a(x-y) ] b [1 - \eta(x) ]$ (resp.  $[\alpha n^{-\beta} s(y-x) + \alpha_a n^{-\beta_a} a(y-x) ] \eta(x)[1-b]$);
\item
a reservoir at $y \geq n$ creates (resp. destroys) a particle at $x$ with rate $[\alpha n^{-\beta} s(x-y) + \alpha_a n^{-\beta_a} a(x-y) ] b [1 - \eta(x) ]$ (resp.   $[\alpha n^{-\beta} s(y-x) + \alpha_a n^{-\beta_a} a(y-x) ] \eta(x)[1-b]$).
\end{enumerate}
\begin{figure}
\begin{tikzpicture}[scale=0.88][htb!]
\fill[cyan!20] (-1, -1) rectangle (3, 3);
\fill[cyan!20] (13, -1) rectangle (17, 3);
\fill[cyan!20, opacity=0.2] (3, -1) rectangle (13, 3);
\foreach \i in {0, 1, 2} {
    \foreach \j in {0, 0.4, 0.8, 1.2, 1.6} {
        \node[circle, draw, fill=violet!30] at (\i,\j) {};
    }
}
\foreach \i in {14, 15, 16} {
    \foreach \j in {0, 0.4, 0.8, 1.2} {
        \node[circle, draw, fill=violet!30] at (\i,\j) {};
    }
}
\foreach \i in {4, 5, 7, 9, 10, 12} {
    \foreach \j in {0} {
        \node[circle, draw, fill=violet!30] at (\i,\j) {};
    }
}
\foreach \i in { 3, 6, 8, 11,13} {
    \foreach \j in {0} {
        \node[circle, draw, fill=white] at (\i,\j) {};
    }
}

\draw[thick] (-1, 0) -- (17, 0);

\foreach \x in { 0, 1, 2, 4, 5, 7, 9, 10, 12,  14, 15, 16} {
    \draw[thick] (\x, 0) -- (\x, -0.3);
}

\node[below] at (8, -0.2) {$x_5$};
\node[below] at (0, -0.2) {$x_1$};
\node[below] at (4, -0.2) {$x_2$};
\node[below] at (7, -0.2) {$x_4$};
\node[below] at (13, -0.2) {\small{$N-1$}};
\node[below] at (3, -0.2) {\small{$1$}};
\node[below] at (5, -0.2) {$x_3$};
\node[below] at (16, -0.2) {$x_7$};
\node[below] at (9, -0.2) {$x_6$};
\draw[thick, ->, >=Stealth] (0,2) to [out=60, in=120] (4, 0.5);
\draw[thick, ->, >=Stealth] (5,0.5) to [out=60, in=120] (8, 0.5);
\draw[thick, ->, >=Stealth] (9,0.5) to [out=60, in=120] (16, 1.6);

\node at (4, 2) {\small{$[\alpha n^{-\beta} s(x_2-x_1) + \alpha_a n^{-\beta_a} a(x_2-x_1) ] b $}};
\node at (7, 1) {\small $[ s(x_5-x_3) + \alpha_a n^{-\beta_a} a(x_5-x_3) ] $};
\node at (13, 1.6) {\small $[\alpha n^{-\beta} s(x_6-x_7) + \alpha_a n^{-\beta_a} a(x_6-x_7) ] [1-b]$};
\end{tikzpicture}
	\label{fig:dyn}
	\caption{The dynamics }
\end{figure}
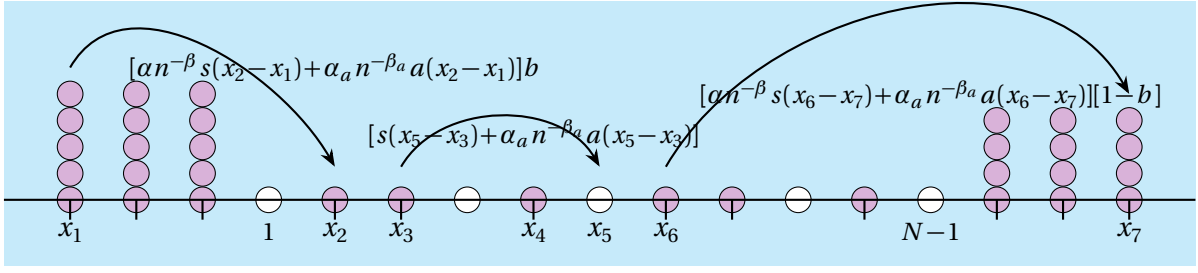

Above and in what follows, unless it is stated differently, we will assume that our discrete variables in a summation always range over $\Lambda_n$. Observe that $(\eta^{x,y})^{x,y}=\eta$ for any $x,y \in \Lambda_n$ and any $\eta \in \Omega_n$. Moreover, $(\eta^{x})^{x}=\eta$ for any $x \in \Lambda_n$ and any $\eta \in \Omega_n$. 
Since the rates must be non-negative, we will require $\alpha_a n^{-\beta_a} |a(y-x)| \leq \min\{1, \alpha n^{-\beta} \} s(y-x)$, for every $x,y \in \mathbb{Z}$ and every $n \geq 2$. Two sufficient conditions for this can be obtained by making the choice: 
\begin{equation} \label{condrate}
\beta_a \geq \max \{0, \; \beta\}; \quad \alpha_a \leq \mathbbm{1}_{ \{ c^{+} \neq c^{-} \}} \frac{ c^{+}+c^{-} }{ | c^{+} - c^{-} |} \min \{1, \; \alpha \}. 
\end{equation}

\subsubsection{The time scales}

In order to observe a non-trivial macroscopic limit, we will consider the Markov processes  accelerated in time by a factor $\Theta(n)= \Theta(n, \beta)$, given by
\begin{equation} \label{timescale}
\Theta(n, \beta):=
n^{\gamma + \beta}\textbf{1}_{\beta <0}+
n^{\gamma}\textbf{1}_{\beta \geq 0}.
\end{equation}
In particular, it holds
\begin{equation} \label{tiscbound}
\forall n \geq 2, \; \forall \beta \in \mathbb{R}, \; \forall \gamma \in (0,2), \quad \Theta(n) n^{-\beta} \leq n^{\gamma}.
\end{equation}
The role of the parameter $\beta $ is to tune the intensity of the reservoirs dynamics and to make it slower (resp. faster) when $\beta >0$ (resp. $\beta <0$ with respect to the bulk dynamics). To have a non trivial contribution we are also forced to speed up the process in the time scale $t\Theta(N)$. As a consequence, macroscopically, when the reservoirs are \textit{slow} we only see  a  flow of mass through the bulk, but when they are fast the contribution comes  purely from the action of the reservoirs' dynamics. When  $\beta=0$ both dynamics give a contribution at the macroscopic level.
Our goal is to analyse the evolution of the density for the process $\eta_t^n(\cdot):=\eta_{t \Theta(n)}(\cdot)$, whose infinitesimal generator is given by $\Theta(n) \mcb {L}_n$.

\subsubsection{Invariant measures}

Fix a parameter $b \in (0,1)$. We  denote by $\nu_b^n$ the Bernoulli product measure with marginals given by 
\begin{align} \label{defberprod}
\forall x \in  \Lambda_n, \quad  1 - \nu_b^n \big( \eta: \eta(x) = 0 \big) = \nu_b^n \big(\eta: \eta(x) = 1 \big) = b.
\end{align}
which is an invariant measure for the dynamics introduced above.

Fix $T>0$. Given a metric space $(N, \; \| \cdot \|_N)$ we denote by $ \mcb{D} ( [0,T], N)$ the space of c\`adl\`ag functions endowed with the Skorokod topology. Moreover, we denote by $\mathbb{P} _{\nu_b^n}$ be the probability measure on $\mcb D ([0,T],\Omega_n)$ induced by the Markov process $\{\eta_{t}^{n};{t\geq 0}\}$ and the initial distribution $\nu_b^n$. Furthermore, we denote the expectation with respect to $\mathbb{P}_{\nu_b^n}$ by $\mathbb{E}_{\nu_b^n}$.

\subsubsection{Density fluctuation field}
We denote by $\mcb {S}_{\beta,\gamma}'$  the space of real valued bounded linear functionals defined in $\mcb {S}_{\beta,\gamma}$. Moreover, $ \mcb{C} ( [0,T],    \mcb {S}_{\beta,\gamma}')$ is the space of continuous $\mcb {S}_{\beta,\gamma}'$- valued functions. Our object of study is the \textit{density fluctuation field} $\mathcal{Y}_t^n$, defined as the $\mcb {S}_{\beta,\gamma}'$-valued process $\{ \mathcal{Y}_t^n: t \in [0,T] \}$ given on $G \in \mcb {S}_{\beta, \gamma}$ by
\begin{equation}  \label{eqdensfie}
\mathcal{Y}_t^n (G) :=  \frac{1}{\sqrt{n-1}} \sum_{x} G ( \tfrac{x}{n} ) \bar{\eta}_t^n(x),
\end{equation}
for any $t \in [0,T]$ and any $n \geq 2$. Above $\bar{\eta}_t^n(x):= \eta_t^n(x)-b$. Note that since $\nu_b^n$ is Bernoulli product we have
\begin{equation} \label{densfield}
\begin{split} 
\forall G \in \mcb {S}_{\beta, \gamma} \subsetneq L^2 ( [0,1] ), \quad &\lim_{n \rightarrow \infty} \mathbb{E}_{\nu_b^n} \big[ \big(\mathcal{Y}_t^n (G)  \big)^2 \big] = \chi(b) \| G \|^2_{L^2} < \infty. 
\end{split}
\end{equation}
     
In order to state the main results of this article, we will make use of two hypotheses, which are motivated by Propositions \ref{convAtnG} and \ref{tightAtnG} below. We begin by defining
\begin{equation} \label{defrs}
r_a:=
(\gamma + \beta-\beta_a)\textbf{1}_{\beta < 0}+(
\gamma - \beta_a)\textbf{1}_{\beta \geq 0}.
\end{equation}
Recall from \eqref{condrate} that $\beta_a \geq \max \{0, \beta\}$. Thus, from \eqref{defrs} we get
\begin{equation} \label{comprsgam}
r_a \leq \gamma. 
\end{equation}
\begin{hyp}
We say that (H1) is satisfied if one of the next  assumptions holds:
\begin{itemize}
\item [(I)]
$c^{+} = c^{-}$ or $r_a < 1$;
\item [(II)]
$r_a \geq 1 = \gamma$;
\item [(III)]
 $\beta \geq 0$ and $2 r_a < 3$;
\item [(IV)]
 $\beta < 0$  and $2 r_a \leq 3 + \beta$.
\end{itemize}
\end{hyp}

\begin{hyp}
We say that (H2) is satisfied if $\beta \geq 0$, $r_a =3/2$ and $c^{+} \neq c^{-}$.
\end{hyp}

\begin{rem} \label{remH2}
Under (H2), from \eqref{defsbetagamma} and \eqref{comprsgam}, we conclude that $\mcb {S}_{\beta,\gamma}= \mcb S_{Dir} \cap \mcb S_{Neu}$.
\end{rem}
Finally we can state the main results of this work.

\subsection{Main results}\label{sec:mainres}

\begin{thm}  \label{clt}
Let $(\alpha, \beta, \gamma) \in (0, \infty) \times \mathbb{R} \times [\frac{3}{2},  2)$. Assume that either $c^{+}=c^{-}$, or $b=1/2$. Moreover, assume (H1). Consider the Markov process $\{ \eta_{t}^n : t \in [0,T] \}:=\{ \eta_{t \Theta(n)} : t \in [0,T] \}$  with generator given by \eqref{generatorsasym} with $\Theta(n)$  given by \eqref{timescale} and  suppose that it starts from the invariant state $\nu_b^n$. 

With respect to the Skorohod topology of $ \mcb{D} ( [0,T],  \mcb {S}_{\beta,\gamma}' )$, the sequence $( \mathcal{Y}_t^n)_{n \geq 1}$

 \begin{itemize}
\item 
converges in distribution, as $n \rightarrow \infty$,  to the unique stationary solution of \eqref{spde} if  $(\alpha, \beta, \gamma) \in (0, \infty) \times (\mathbb{R}-\{0\}) \times (0,  2)$;
\item  
 is tight  and any of its limit points is a stationary solution of \eqref{spde},  if $(\alpha, \beta, \gamma) \in (0, \infty) \times \{0\} \times (0,  2)$. \end{itemize}
\end{thm}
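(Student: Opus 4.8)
The plan is to run the Gon\c calves--Jara martingale scheme: build a Dynkin martingale for the fluctuation field, establish tightness through Mitoma's criterion, identify every subsequential limit as a stationary solution of \eqref{spde} in the sense of Definition \ref{defspde}, and then upgrade this to convergence in distribution by invoking uniqueness in law. For each fixed $G \in \mcb{S}_{\beta,\gamma}$, Dynkin's formula makes
\[
\mathcal{M}_t^n(G) := \mathcal{Y}_t^n(G) - \mathcal{Y}_0^n(G) - \int_0^t \Theta(n)\,\mcb{L}_n \mathcal{Y}_s^n(G)\,ds
\]
a martingale with predictable bracket $\langle \mathcal{M}^n(G)\rangle_t = \int_0^t \Theta(n)\big[\mcb{L}_n(\mathcal{Y}_s^n(G))^2 - 2\mathcal{Y}_s^n(G)\,\mcb{L}_n\mathcal{Y}_s^n(G)\big]\,ds$. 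Since $\nu_b^n$ is invariant, the whole family is stationary, so \eqref{densfield} supplies the uniform bound \eqref{condUSC}, any limit point $\mathcal{Z}$ has each $\mathcal{Z}_t$ a white noise of variance $\chi(b)$, and the classical central limit theorem for Bernoulli product measures gives that $\mathcal{Y}_0^n$ converges to a mean-zero Gaussian field with covariance \eqref{covinifie}; this already yields stationarity and item (2) of Definition \ref{defspde}.

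Next I would analyse the drift. Decomposing $\Theta(n)\,\mcb{L}_n\mathcal{Y}_s^n(G)$ according to \eqref{generatorsasym}, the symmetric rates acting on $\eta(x)$ produce the gradient $\sum_y s(y-x)(\eta(y)-\eta(x))$, whose quadratic part cancels; a summation by parts moves the dynamics onto $G$ and yields the discrete regional fractional Laplacian, which converges to $\mathbb{L}^{\gamma/2}_{\alpha,\beta}G$ in $L^2([0,1])$ by the discrete-to-continuous estimates of Appendix \ref{secdiscconv}, while the symmetric reservoir terms reproduce the reaction terms $\alpha(r^{-}+r^{+})G$ of \eqref{deltaab} in the fast regime $\beta \le 0$; the time-scale bookkeeping \eqref{timescale} is exactly what selects the surviving piece of \eqref{deltaab} according to the sign of $\beta$. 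Combined with Corollary \ref{corL2ab}, Proposition \ref{aproxL2gen} and the mollification in Remark \ref{remapr}, the linear part of the drift converges to $\mathcal{I}_t(G)$. The antisymmetric rates $\alpha_a n^{-\beta_a}a$ are the delicate contribution: after centering $\eta = \bar\eta + b$, they generate a transport term whose coefficient is proportional to $1-2b$ together with a genuinely quadratic field, the microscopic nonlinearity $\mathcal{A}_t^n(G)$ of \eqref{defAtnG}. The standing assumption removes the transport term — when $c^{+}=c^{-}$ one has $a\equiv 0$ and the entire antisymmetric contribution disappears, while $b=1/2$ kills the factor $1-2b$ — and under (H1) Proposition \ref{convAtnG} shows that $\mathcal{A}_t^n(G)$ vanishes in $L^2(\mathbb{P}_{\nu_b^n})$, so that the limiting drift is precisely $\mathcal{I}_t(G)$.

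For the bracket, a direct computation of the carr\'e du champ expresses $\langle \mathcal{M}^n(G)\rangle_t$ as a time integral of a quadratic functional of $\bar\eta_s^n$; its expectation under $\nu_b^n$, coming from bulk jumps and reservoir events respectively, converges to the two pieces of $\mathcal{P}^{\gamma}_{\alpha,\beta}G$ in \eqref{defPalbetgam} (the Gagliardo seminorm $\|G\|_{\gamma/2}^2$ and the boundary integral against $r^{-}+r^{+}$), the continuity being guaranteed by Proposition \ref{norcont}. Stationarity then replaces the integrand by its mean, producing the bracket $2\chi(b)\,t\,\mathcal{P}^{\gamma}_{\alpha,\beta}G$ appearing in \eqref{MartNou}. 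Together with the drift analysis, this shows that along any subsequential limit $\mathcal{Z}$ the processes $\mathcal{M}_t(G)$ and $\mathcal{N}_t(G)$ of \eqref{MartNou} are $\mathcal{F}_t$-martingales, i.e. $\mathcal{Z}$ is a stationary solution of \eqref{spde}.

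Finally, tightness of $(\mathcal{Y}^n)_n$ in $\mcb{D}([0,T],\mcb{S}_{\beta,\gamma}')$ reduces, by Mitoma's criterion, to tightness of each real-valued $(\mathcal{Y}^n(G))_n$, which follows from the martingale decomposition together with \eqref{condUSC} and Proposition \ref{tightAtnG}; Prohorov's theorem then gives subsequential limits, each a stationary solution of \eqref{spde} by the above. When $(\alpha,\beta,\gamma)\in(0,\infty)\times(\mathbb{R}-\{0\})\times[\tfrac32,2)$, Proposition \ref{uniqlaw} gives uniqueness in law, hence all limit points coincide and the full sequence converges in distribution, which is the first bullet; when $\beta=0$ uniqueness is unavailable, so one obtains only tightness together with the characterization of every limit point as a stationary solution, which is the second bullet. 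I expect the main obstacle to be the second-order Boltzmann--Gibbs principle underpinning Proposition \ref{convAtnG}: the iterative multi-scale replacement of local functions by box averages must use boxes directed away from the closer endpoint of $\Lambda_n$, and the non-locality of the regional fractional Laplacian makes both the discrete-to-continuous convergence of the operator and the $L^2$-approximation of the drift near $\{0,1\}$ the most demanding points, precisely the reason the restrictions encoded in $r_a$, \eqref{comprsgam} and (H1) are imposed.
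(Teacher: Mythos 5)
Your plan is the paper's own strategy: the Dynkin decomposition \eqref{decomp} into initial field, martingale, integral, error and asymmetric terms; Mitoma's criterion applied term by term; vanishing of $A_t^n(G)$ under (H1) via the multiscale Proposition \ref{convAtnG}; identification of every limit point with the martingale problem of Definition \ref{defspde}; and Proposition \ref{uniqlaw} to upgrade subsequential convergence to full convergence when $\beta \neq 0$, with only tightness plus characterization surviving at $\beta = 0$. Two local points in your write-up do not survive scrutiny, though. First, you should not invoke Proposition \ref{tightAtnG} in the proof of Theorem \ref{clt}: that proposition is proved only under (H2) (i.e.\ $\beta \geq 0$, $r_a = 3/2$, $c^{+} \neq c^{-}$) and belongs to the Burgers regime of Theorem \ref{clt2}. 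Under (H1) the tightness of the asymmetric term is instead a consequence of Proposition \ref{convAtnG} combined with Proposition \ref{conv0tight}: convergence to zero of $\mathbb{E}_{\nu_b^n}\big[\sup_{t \in [0,T]}(A_t^n(G))^2\big]$ already yields tightness, and no separate tightness statement for $A_t^n(G)$ is available or needed in this regime.

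Second, the sentence ``stationarity then replaces the integrand by its mean'' is not a proof. Stationarity only identifies $\mathbb{E}_{\nu_b^n}[\langle \mathcal{M}^n(G)\rangle_t]$ as $t$ times a constant converging to $2\chi(b)\,\mathcal{P}^{\gamma}_{\alpha,\beta}G$ (the paper's Lemma \ref{lemconvmartterm1}); to conclude that the bracket itself converges to this deterministic limit one must show that its fluctuation around the mean vanishes, which is the content of Lemma \ref{lemconvmartterm2} and rests on a genuinely quantitative estimate: the centered carr\'e du champ is controlled by the quantity $B_n(\hat G)$, shown to vanish by adapting Appendix D of \cite{jarafluc}, while the boundary contributions are killed via Proposition \ref{lem61stefano} --- none of this follows from stationarity alone. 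Relatedly, passing the martingale property of $\mathcal{M}_t^n(G)$ and especially of $\mathcal{N}_t^n(G) = [\mathcal{M}_t^n(G)]^2 - \langle \mathcal{M}^n(G)\rangle_t$ to the limit requires uniform integrability, which the paper secures through second and fourth moment bounds on the bracket (Corollary \ref{corquadvar} and Lemma \ref{lem4mom}, the latter resting on a fourth-moment martingale inequality); your sketch asserts the limiting martingale property without this step. With these two repairs your proposal coincides with the paper's proof.
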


 We note that once the uniqueness of stationary solutions of \eqref{spde} is proved we obtain then convergence of  $( \mathcal{Y}_t^n)_{n \geq 1}$  when   $\alpha >0$, $\beta =0$ and $\gamma \in (0,2)$.

\begin{thm}  \label{clt2}
Let $(\alpha, \beta, \gamma) \in (0, \infty) \times [0,+\infty)\times [\frac 3 2,  2)$. Assume $b=1/2$ and (H2). Consider the Markov process $\{ \eta_{t}^n : t \in [0,T] \}:=\{ \eta_{t \Theta(n)} : t \in [0,T] \}$  with generator given by \eqref{generatorsasym} with $\Theta(n)$  given by \eqref{timescale} and starting from  $\nu_{\frac 12}^n$. With respect to the Skorohod topology of $ \mcb{D} ( [0,T],  \mcb {S}_{\beta,\gamma}' )$ the sequence $( \mathcal{Y}_t^n)_{n \geq 1}$ 

 \begin{itemize}
\item 
converges in distribution, as $n \rightarrow \infty$,  to the unique stationary energy solution of \eqref{spdefbe} with $\kappa_1=2 \alpha_a m_a$, given by \eqref{defma}, if $(\alpha, \beta, \gamma) \in (0, \infty) \times (0,+\infty) \times [\frac 32,  2)$;
\item  
 is tight  and any of its limit points is a stationary energy solution of \eqref{spdefbe} with $\kappa_1=2 \alpha_a m_a$,  if $(\alpha, \beta, \gamma) \in (0, \infty) \times \{0\} \times [\frac 32,  2)$. \end{itemize}

\end{thm}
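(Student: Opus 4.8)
The plan is to follow the standard scheme for deriving singular SPDEs from equilibrium fluctuations of interacting particle systems: first establish tightness of the sequence $(\mathcal{Y}_t^n)_{n\geq 1}$, then characterize all subsequential limit points as stationary energy solutions of \eqref{spdefbe} in the sense of Definition \ref{defspdefbe}, and finally invoke the uniqueness result of \cite{GPP} (recalled in Remark \ref{rem:uniq}) to upgrade tightness plus subsequential convergence into genuine convergence in distribution, in the regime $\beta>0$ where uniqueness is available. Since $\mcb S_{\beta,\gamma}$ is a nuclear Fr\'echet space (Remark \ref{remnucfre}), by Mitoma's criterion tightness in $\mcb{D}([0,T],\mcb S_{\beta,\gamma}')$ reduces to tightness of the real-valued processes $(\mathcal{Y}_t^n(G))_{n\geq 1}$ for each fixed $G\in\mcb S_{\beta,\gamma}$; this will be obtained in Section \ref{tightfluc} through the Dynkin martingale decomposition
\[
\mathcal{M}_t^n(G) = \mathcal{Y}_t^n(G) - \mathcal{Y}_0^n(G) - \int_0^t \Theta(n) \mcb L_n \mathcal{Y}_s^n(G)\, ds,
\]
by controlling separately the quadratic variation of $\mathcal{M}^n$ and the integral drift term.

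The next step is to compute the action of $\Theta(n)\mcb L_n$ on $\mathcal{Y}_s^n(G)$ and, after a discrete integration by parts, to split the drift into a linear part and a nonlinear part. The linear part, involving the symmetric kernel acted on by the discrete analogue of $\mathbb{L}_{\alpha,\beta}^{\gamma/2}$, should converge to the process $\mathcal{I}_t(G)$ of Remark \ref{remapr}; its control and the requisite $L^2$-approximation rely on Corollary \ref{corL2ab} and Proposition \ref{aproxL2gen}. The nonlinear part is built from products of occupation variables across distinct sites, weighted by the antisymmetric kernel. \emph{The main obstacle of the whole argument is taming this nonlinear term.} Under (H2), namely $\beta\geq 0$, $r_a=3/2$ and $c^+\neq c^-$, the asymmetry survives at precisely the critical scale, so that the nonlinear contribution neither vanishes nor blows up but converges to the Burgers term $\kappa_1\mathcal{A}_t(G)$ with $\kappa_1 = 2\alpha_a m_a$ given by \eqref{defma}.

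Controlling this requires the second-order Boltzmann--Gibbs principle, Proposition \ref{convAtnG}, established via the multi-scale replacement analysis described in the introduction: one iteratively replaces local occupation functions by their averages over microscopic boxes whose sizes grow at each step, selecting right-directed or left-directed boxes according to whether the reference site is closer to $1$ or to $n-1$, until a scale is reached at which the product can be written in terms of the macroscopic objects $[\mathcal{Z}_r*\iota_\varepsilon^{\pm}]^2$ of \eqref{defprocA}. The same analysis yields the energy estimate \eqref{eqdefEE} (Proposition \ref{tightAtnG}), which guarantees that the limit $\mathcal{A}_t(G)=\lim_{\varepsilon\to 0^+}\mathcal{A}_{0,t}^{\varepsilon}(G)$ is well-defined for the limiting field. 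With tightness in hand, I would extract by Prohorov's theorem a subsequential weak limit $\mathcal{Z}$; its initial condition is a mean-zero Gaussian field with covariance \eqref{covinifie} because $\nu_{1/2}^n$ is a Bernoulli product measure (cf.\ \eqref{densfield}), and stationarity of $\mathcal{Z}$ is inherited from the invariance of $\nu_{1/2}^n$. Passing to the limit in the Dynkin decomposition and using the Boltzmann--Gibbs convergence shows that $\mathcal{M}_t(G)$ and $\mathcal{N}_t(G)$ are continuous $\mathcal{F}_t$-martingales, yielding item (1) of Definition \ref{defspdefbe}; item (3) on the time-reversed process follows by running the identical analysis on the reversed dynamics, which flips the sign of the antisymmetric part and hence of $\kappa_1$.

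Therefore every limit point is a stationary energy solution of \eqref{spdefbe}. In the regime $(\alpha,\beta,\gamma)\in(0,\infty)\times(0,\infty)\times[\tfrac32,2)$, the uniqueness of such energy solutions provided by Theorem 3.7 of \cite{GPP}, as recorded in Remark \ref{rem:uniq}, forces the whole sequence to converge in distribution to that unique solution, establishing the first bullet. When $\beta=0$, uniqueness is unavailable because of the additional boundary reaction term in $\mathbb{L}_{\alpha,\beta}^{\gamma/2}$ appearing in \eqref{deltaab}, so one concludes only that the sequence is tight and that every limit point is a stationary energy solution, which is exactly the content of the second bullet.
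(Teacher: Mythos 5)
Your proposal follows the paper's own proof essentially verbatim: Mitoma's criterion combined with the Dynkin decomposition \eqref{decomp} for tightness, the multi-scale second-order Boltzmann--Gibbs analysis with directed boxes for the nonlinear term and the energy estimate \eqref{eqdefEE}, the martingale characterization of limit points in the sense of Definition \ref{defspdefbe} (including the time-reversal argument for item (3)), and the uniqueness result of \cite{GPP} to upgrade subsequential convergence to full convergence when $\beta>0$, with only tightness at $\beta=0$. The one slip is a label: under (H2) the nonlinear term is controlled by Proposition \ref{tightAtnG} together with \eqref{limsupcharac}, which identifies the limit as $2\alpha_a m_a\,\mathcal{A}_t(G)$, whereas Proposition \ref{convAtnG} is the \emph{vanishing} statement under (H1) used for Theorem \ref{clt}; since the same multi-scale lemmas underlie both, this does not affect the correctness of your plan.
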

We note that the last bullet point in  Theorem \ref{clt2} is only in the sense of convergence through subsequences since we lack the proof of uniqueness of energy solutions in that regimes, see Remark \ref{rem:uniq}.

\medskip

In Sections \ref{tightfluc}, \ref{secconvAtnG} and \ref{sectightAtnG}, we prove tightness of $( \mathcal{Y}_t^n)_{n \geq 1}$, which implies that  $( \mathcal{Y}_t^n)_{n \geq 1}$ converges (weakly) along subsequences. Finally, in Section \ref{seccharac}, we characterize the limit points of $( \mathcal{Y}_t^n)_{n \geq 1}$ as random elements satisfying the conditions conditions stated in Definition \ref{spde} or Definition \ref{spdefbe}, depending on whether (H1) or  (H2) is satisfied. Finally, we present some auxiliary results that are independent of the dynamics in Appendices \ref{propfrac}, \ref{secdiscconv} and \ref{useful}.

\section{Tightness of the  sequence of  density fluctuation fields} \label{tightfluc}

From Dynkin's formula (see Appendix 1.5 of \cite{kipnis1998scaling}), for functions $F:  \Omega_n \rightarrow \mathbb{R}$, the process
\begin{align*}
F( \eta_t^n) - F( \eta_0^n) - \int_0^t  \Theta(n) \mcb L_n  F( \eta_s^n) ds 
\end{align*}
is a martingale with respect to  $\mcb F_t^n := \sigma ( \eta_s^n: 0 \leq s \leq t)$, with quadratic variation given by
\begin{align} \label{quadvar0}
\Theta(n) \int_0^t \big[ \mcb L_n \big(  F^2 ( \eta_s^n) \big) - 2 F ( \eta_s^n) \mcb L_n F ( \eta_s^n) \big] ds.
\end{align}
Recall  \eqref{eqdensfie} and fix $G \in \mcb {S}_{\beta,\gamma}$. The process $\{ \mathcal{M}_t^n(G); \; t \in [0,T] \}$ defined by
\begin{equation} \label{defMtngfluc}
 \mathcal{M}_t^n(G) := \mathcal{Y}_t^n (G) - \mathcal{Y}_0^n (G) - \int_{0}^t  \Theta(n)  \mcb{L}_n  \mathcal{Y}_s^n (G) ds
\end{equation}
is a martingale with respect to $\mcb F_t^n$. The rightmost term on the right-hand side of \eqref{defMtngfluc} is known in the literature of IPS as the \textit{integral term}.  This is the first time to analyse.

To that end, let $r_n^{-}$ and $r_n^{+}$ be given by
\begin{align} \label{defrnpm}
\forall n \geq 2, \;  \forall x \in \Lambda_n, \quad & r_n^{-}(\tfrac{x}{n}):= \sum_{y=x}^{\infty} s(y), \quad  r_n^{+}(\tfrac{x}{n}):= \sum_{y=-\infty}^{x-n} s(y) =  \sum_{y=n-x}^{\infty} s(y). 
\end{align}
A simple computation shows that  for every $1 \leq k  \in \mathbb{N}$, it holds
\begin{equation} \label{Lbeta}
\begin{split}
\forall z \in \Lambda_n, \quad (\mcb L_n )  \big(\eta^k(z) \big) =& \sum_{y \in \Lambda_n}  s(y-z) [ \eta(y) -  \eta(z)] - \alpha_a n^{-\beta_a} \sum_{y \in \Lambda_n}  a(y-z)[ \eta(y) -  \eta(z)  ]^2 \\
+ & \alpha n^{-\beta} [ b  - \eta(z) ] [r_n^{-} \left( \tfrac{z}{n}\right) + r_n^{+} \left( \tfrac{z}{n}\right)] \\
-&  \alpha_a n^{-\beta_a} [ b   + \eta(z) (1 - 2 b) ] \Big[ \sum_{y \leq 0}   a(y-z)  + \sum_{y \geq n} a(y-z) \Big].
\end{split}
\end{equation}
Above we used \eqref{generatorsasym} and \eqref{defsa}.
As a consequence, for every $s \in [0, T]$, it holds
\begin{align}
\sqrt{n-1} \mcb{L}_n \big( \mathcal{Y}_s^n (G) \big)
= & \sum_{x,y} G( \tfrac{x}{n} ) s(y-x) [ \bar{\eta}_t^n(y) - \bar{\eta}_t^n(x) ] - \alpha n^{-\beta} \sum_{x} G( \tfrac{x}{n} ) \bar{\eta}_t^n(x) ] [ r_n^{+} \left( \tfrac{x}{n}\right) + r_n^{-} \left( \tfrac{x}{n}\right)] \nonumber \\
-& \frac{\alpha_a}{n^{\beta_a}}  \sum_{x,y }    G \left( \tfrac{x}{n}  \right)     a(y-x)   \bar{\eta}_s^n(y)(1 - 2 b) + 2 \frac{\alpha_a}{n^{\beta_a}} \sum_{x,y }    G \left( \tfrac{x}{n}  \right)     a(y-x)   \bar{\eta}_s^n(x) \bar{\eta}_s^n(y)  . \label{nullterm}
\end{align}

In the setting of both Theorems \ref{clt} and \ref{clt2}, we have that either $c^{-}=c^{+}$ or $b=1/2$, thus the first double sum in \eqref{nullterm} is equal to zero. Multiplying the last display by $\Theta(n)/\sqrt{n-1}$ and integrating from $0$ to $t$, we get from \eqref{defsa} and \eqref{timescale} that
\begin{align} \label{intrewr}
\int_{0}^t  \Theta(n)  \mcb{L}_n  \mathcal{Y}_s^n (G) ds = - A_t^n(G) + \int_0^t \frac{\Theta(n)}{\sqrt{n-1}} \sum_x \big( \mcb{K}_{n} G ( \tfrac{ x }{n} ) - \alpha n^{-\beta}  [ r_n^{+} \left( \tfrac{x}{n}\right) + r_n^{-} \left( \tfrac{x}{n}\right)] \big) \bar{\eta}_s^n(x) \; ds,
\end{align}
where the operator $\mcb{K}_{n}$ is defined on $G \in C^{\infty}([0,1])$ by 
\begin{align} \label{op_Kn} 
\forall n \geq 2, \;  \forall x \in \Lambda_n, \quad & \mcb{K}_{n} G ( \tfrac{ x }{n} ): = \sum_{  y} [ G( \tfrac{y }{n}) -G( \tfrac{ x}{n}) ] s( y -x)=\frac{c^{+}+c^{-}}{2} \sum_{  y} \mathbbm{1}_{ \{y \neq x \} } \frac{G( \tfrac{y }{n}) -G( \tfrac{ x}{n})}{|y-x|^{\gamma+1}} ,  
\end{align}
and $A_t^n(G)$ comes from the asymmetric part of $p(\cdot)$, being given by
\begin{equation} \label{defAtnG}
A_t^n(G):=  \alpha_a \int_{0}^t \frac{n^{r_a}}{ \sqrt{ n-1}} \sum_{x, y} [G \left( \tfrac{y}{n}  \right) - G \left( \tfrac{x}{n}  \right)]    a(y-x)  \bar{\eta}_s^n(x) \bar{\eta}_s^n(y) ds.
\end{equation}
In the last line, $r_a$ is given in \eqref{defrs}. 

The integral on the right-hand side of \eqref{intrewr} can be rewritten as $\mathcal{I}_t^n(G)+\mathcal{E}_t^n(G)$, where
\begin{align}
\mathcal{I}_t^n(G): =&   \int_0^t  \mathcal{Y}_s^n ( \mathbb{L}_{\alpha,\beta}^{ \gamma /2 }   G )ds, \label{princnbfluc} \\
 \mathcal{E}_t^n(G):=&  \int_{0}^{t} \frac{1}{\sqrt{n-1}} \sum _{x } \big[ \Theta(n)  \mcb {R}_{n, \alpha, \beta} G \left( \tfrac{x}{n} \right) + \Theta(n)  \mcb {K}_{n, \alpha, \beta} G \left( \tfrac{x}{n} \right) - \mathbb{L}_{\alpha,\beta}^{ \gamma /2 } G\left( \tfrac{x}{n} \right) \big]   \bar{\eta}_{s}^{n}(x) ds, \label{extranbfluc} 
\end{align}
and   $\mcb {K}_{n, \alpha, \beta}$ and $\mcb {R}_{n, \alpha, \beta}$ are given  by
\begin{align}
\mcb{K}_{n, \alpha, \beta} G : =&  \mathbbm{1}_{ \{ \beta \geq 0 \} } \mcb{K}_{n} G+ \alpha n^{-\beta} \mathbbm{1}_{ \{ \beta \leq 0 \} } \sum_{x} [r_n^{-} (\tfrac{x}{n})  + r_n^{+} (\tfrac{x}{n}) ] G (\tfrac{x}{n}),  \label{op_Knb} \\
\mcb{R}_{n, \alpha, \beta} G : = & \alpha n^{-\beta} \mathbbm{1}_{ \{ \beta > 0 \} } \sum_{x} [r_n^{-} (\tfrac{x}{n})  + r_n^{+} (\tfrac{x}{n}) ] G (\tfrac{x}{n})  + \mathbbm{1}_{ \{ \beta < 0 \} } \mcb{K}_{n} G.  \label{op_Rnb}
\end{align}
Note that  \eqref{princnbfluc} is well defined, due to Corollary \ref{corL2ab}. 

We expect the process $\mathcal{I}_t^n(G)$ to converge in law to \eqref{intprocess}. On the other hand the remainder  $\mathcal{E}_t^n(G)$  converges to zero as a consequence of the next  result.
\begin{prop} \label{convL2error}
Let $(\alpha, \beta, \gamma) \in (0, \infty) \times \mathbb{R} \times (0,  2)$ and $G \in \mcb {S}_{\beta,\gamma}$. Then, 
\begin{align*}
\lim_{n \rightarrow \infty} \mathbb{E}_{\nu_b^n} \big[ \sup_{t \in [0,T]} \big(  \mathcal{E}_t^n(G) \big)^2 \big] =0.
\end{align*}
\end{prop}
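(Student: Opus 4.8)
The plan is to reduce the claim, via the invariance of $\nu_b^n$ and a Cauchy--Schwarz argument in time, to a purely analytic estimate: that the discrete $L^2$-norm of the discretization error of the operators vanishes. Introduce the shorthand
$$\Phi_n(x) := \big[ \Theta(n)\, \mcb{R}_{n,\alpha,\beta} G + \Theta(n)\, \mcb{K}_{n,\alpha,\beta} G - \mathbb{L}_{\alpha,\beta}^{\gamma/2} G \big]\big(\tfrac{x}{n}\big), \qquad g_s^n := \tfrac{1}{\sqrt{n-1}} \sum_x \Phi_n(x)\, \bar{\eta}_s^n(x),$$
so that $\mathcal{E}_t^n(G) = \int_0^t g_s^n \, ds$. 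Since $|\mathcal{E}_t^n(G)| \leq \int_0^T |g_s^n|\, ds$ for every $t \in [0,T]$, Cauchy--Schwarz in time gives $\sup_{t \in [0,T]} (\mathcal{E}_t^n(G))^2 \leq T \int_0^T (g_s^n)^2 \, ds$. Taking expectations, interchanging $\mathbb{E}_{\nu_b^n}$ and the time integral (the integrand being nonnegative), and using that $\nu_b^n$ is invariant, so the one-time law of $\eta_s^n$ equals $\nu_b^n$ for every $s$, reduces the problem to the static second moment of $g_0^n$ under $\nu_b^n$.

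Because $\nu_b^n$ is a Bernoulli product measure, $\mathbb{E}_{\nu_b^n}[\bar{\eta}(x)\bar{\eta}(y)] = \chi(b)\, \mathbbm{1}_{\{x=y\}}$, so all cross terms vanish and $\mathbb{E}_{\nu_b^n}[(g_0^n)^2] = \tfrac{\chi(b)}{n-1}\sum_x \Phi_n(x)^2$. Collecting the above,
$$\mathbb{E}_{\nu_b^n}\big[\sup_{t \in [0,T]} (\mathcal{E}_t^n(G))^2\big] \;\leq\; T^2\, \frac{\chi(b)}{n-1}\sum_x \Phi_n(x)^2 ,$$
and hence it suffices to prove
\begin{equation} \label{eq:starL2}
\frac{1}{n-1}\sum_x \Phi_n(x)^2 \longrightarrow 0 \qquad \text{as } n \to \infty .
\end{equation}

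To establish \eqref{eq:starL2} I would split according to the sign of $\beta$, following the definitions \eqref{deltaab}, \eqref{op_Knb} and \eqref{op_Rnb}. For $\beta > 0$ one has $\Theta(n) = n^\gamma$ and $\mathbb{L}_{\alpha,\beta}^{\gamma/2} G = \mathbb{L}^{\gamma/2} G$, so $\Phi_n$ is the sum of the genuine discretization error $n^\gamma \mcb{K}_n G(\tfrac{x}{n}) - \mathbb{L}^{\gamma/2} G(\tfrac{x}{n})$ of the regional fractional Laplacian plus a reservoir contribution scaling like $n^{-\beta}(r^- + r^+)G$, which is negligible. For $\beta < 0$ one has $\Theta(n) = n^{\gamma+\beta}$ and $\mathbb{L}_{\alpha,\beta}^{\gamma/2} G = \alpha(r^- + r^+)G$; here the $\mcb{K}_n$-part is of order $n^\beta \to 0$ and the main term is the convergence $\alpha\, n^\gamma[r_n^-(\tfrac xn) + r_n^+(\tfrac xn)]G(\tfrac xn) \to \alpha(r^- + r^+)G(\tfrac xn)$. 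For $\beta = 0$ both discretization errors appear at once. In each regime the pointwise convergence on compact subsets of $(0,1)$ together with an $L^2$-dominating bound is furnished by the discrete-convergence estimates of Appendix \ref{secdiscconv} (using also that $\mathbb{L}_{\alpha,\beta}^{\gamma/2} G \in L^2$ from Corollary \ref{corL2ab}), after which \eqref{eq:starL2} follows by a dominated-convergence argument for Riemann sums.

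The hard part is \eqref{eq:starL2} itself, specifically controlling $\Phi_n$ near the two endpoints $x \approx 1$ and $x \approx n-1$. Because the regional fractional Laplacian is non-local, the rescaled discrete operator $n^\gamma \mcb{K}_n G(\tfrac xn)$ develops boundary layers where the naive error is not uniformly small, and the reservoir weights $r_n^\pm(\tfrac xn)$ blow up like $u^{-\gamma}$ and $(1-u)^{-\gamma}$ as $u \to 0^+, 1^-$. This is exactly where the choice of test-function space $\mcb{S}_{\beta,\gamma}$ in \eqref{defsbetagamma} enters: the Dirichlet/Neumann vanishing conditions imposed there guarantee, through the Taylor bounds \eqref{expGSd}, that $G$ and the relevant derivatives decay fast enough to offset these singular weights and to keep the boundary-layer contribution to \eqref{eq:starL2} summable and vanishing. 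Quantifying this decay against the singular kernels uniformly in $n$ is the technical heart of the argument and is where the estimates of Appendix \ref{secdiscconv} do the real work.
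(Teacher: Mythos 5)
Your reduction via Cauchy--Schwarz in time, stationarity, and the product structure of $\nu_b^n$ is correct as an inequality, and it is exactly how the paper treats \emph{part} of the error: the piece $\Theta(n)\mcb K_{n,\alpha,\beta}G-\mathbb{L}^{\gamma/2}_{\alpha,\beta}G$ is handled in Proposition \ref{convrem1} by precisely this argument (\eqref{CSFub} plus the static discrete-$L^2$ convergence of Corollary \ref{convknbeta}), and your treatment of the case $\beta<0$ matches the corresponding case of Proposition \ref{convrem2}. The genuine gap is your final reduction to the vanishing of $\frac{1}{n-1}\sum_x \Phi_n(x)^2$ applied to the \emph{full} error $\Phi_n$: when $\beta>0$, $\Phi_n$ contains the reservoir remainder $\alpha n^{\gamma-\beta}\big[r_n^-(\tfrac xn)+r_n^+(\tfrac xn)\big]G(\tfrac xn)$, which you dismiss as negligible, but whose static discrete $L^2$ norm does \emph{not} vanish in a whole sub-range of the admissible parameters. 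Indeed, by \eqref{boundrnpm} the boundary sites contribute to $\frac{1}{n-1}\sum_x\Phi_n(x)^2$ a term of order $n^{2\gamma-2\beta-1}\sum_x x^{-2\gamma}G^2(\tfrac xn)$; for $\gamma\in(\tfrac12,1)$ and $0<\beta\le\gamma-\tfrac12$, or for $\gamma\in(1,\tfrac32)$ and $\gamma-1<\beta\le\gamma-\tfrac12$, the space $\mcb S_{\beta,\gamma}$ in \eqref{defsbetagamma} is all of $C^{\infty}([0,1])$, so $G$ need not vanish at the endpoints, the sum over $x$ converges, and the prefactor $n^{2\gamma-2\beta-1}$ diverges (e.g.\ $\gamma=0.9$, $\beta=0.1$ gives order $n^{0.6}$). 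No dominated-convergence patch is available, since $[r^-+r^+]^2G^2\sim u^{-2\gamma}$ is not integrable near the endpoints once $\gamma\ge\tfrac12$ and $G$ does not vanish there.

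This is exactly why the paper does not run the static argument on the full error. It first splits $\mathcal E_t^n(G)$ into the $\mcb K_{n,\alpha,\beta}$ and $\mcb R_{n,\alpha,\beta}$ pieces via \eqref{CSdiscexp}, and for $\beta>0$ it bounds the reservoir piece with the \emph{dynamical} estimate of Proposition \ref{lem61stefano} (Lemma 6.1 of \cite{flucstefano}, a Kipnis--Varadhan-type bound exploiting the boundary Dirichlet form), applied with $c_n=\alpha\Theta(n)n^{-\beta}(\sqrt{n-1})^{-1}$ and $q=1$. The key gain is that the resulting bound carries $r_n^{\pm}$ to the \emph{first} power together with the factor $n^{\beta}/(\alpha\Theta(n))$, which reduces the problem to Proposition \ref{prop3assum}, i.e.\ to showing $\alpha n^{\gamma-\beta-1}\sum_x[r^-+r^+](\tfrac xn)\,G^2(\tfrac xn)\to0$; this first-power statement does hold in all the stated regimes (in the example above it is of order $n^{-0.1}$). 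So your proposal must be repaired by isolating the $\beta>0$ reservoir remainder and replacing the crude time-Cauchy--Schwarz step by this sharper estimate; the remaining pieces of your argument then go through essentially as written.
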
 
We postpone the proof of Proposition \ref{convL2error} to Section \ref{tighterror}.
 
Now, combining \eqref{defMtngfluc} with \eqref{intrewr}, we conclude that 
\begin{equation} \label{decomp}
\forall G \in \mcb {S}_{\beta,\gamma}, \; \forall n \geq 2, \; \forall t \in [0, T], \quad \mathcal{Y}_t^n (G) = \mathcal{Y}_0^n (G) + \mathcal{M}_t^n(G) + \mathcal{I}_t^{n}(G) + \mathcal{E}_t^{n}(G) + A_t^{n}(G). 
\end{equation}
From Remark \ref{remnucfre}, we can use Mitoma's criterion (see \cite{mitoma}) in the same way as  in \cite{tertufluc} since for every $(\beta,\gamma) \in R_0$, our space of test functions  $\mcb {S}_{\beta,\gamma}$,  are nuclear  Fr\'echet spaces, see Remark \ref{remnucfre}. For the convenience of the reader, we state Mitoma's criterion in the next result. 
\begin{prop} \textbf{(Mitoma's criterion)} \label{propmit} Let $\mcb{N}$ be a nuclear Fr\'echet space. A sequence $\{ x_t^n; t \in [0,T] \}_{n \geq 2}$ in $\mcb{D} ([0,T], \mcb{N}' )$ of stochastic processes is tight with respect to the Skorohod topology if, and only if, the sequence of real-valued processes $\{ x_t^n (G); t \in [0,T] \}_{n \geq 2}$  is tight with respect to the Skorohod topology of $\mcb{D} ([0,T], \mathbb{R})$, for every $G \in  \mcb{N}$ fixed.
\end{prop}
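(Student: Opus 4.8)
The statement is an equivalence, and the forward (``only if'') direction is immediate, so I would dispatch it first. For fixed $G \in \mcb N$ the evaluation $\mu \mapsto \mu(G)$ is a continuous linear map $\mcb N' \to \bR$, inducing a continuous map $\mcb D([0,T], \mcb N') \to \mcb D([0,T], \bR)$; since continuous images of tight families are tight, tightness of $\{x_t^n\}$ in $\mcb D([0,T],\mcb N')$ forces tightness of each $\{x_t^n(G)\}$. All the content lies in the converse, and the strategy there is to upgrade the countable family of scalar tightness statements into a single tightness statement in a Hilbert space that embeds continuously into $\mcb N'$.

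First I would exploit the nuclear structure. Since $\mcb N$ is nuclear Fr\'echet, its topology is generated by an increasing sequence of Hilbertian seminorms $\|\cdot\|_0 \le \|\cdot\|_1 \le \cdots$; writing $\cH_p$ for the completion of $\mcb N$ in $\|\cdot\|_p$ and $\cH_{-p} := \cH_p'$ for its dual, nuclearity guarantees that for each $p$ there is $q > p$ for which the canonical embedding $\cH_q \hookrightarrow \cH_p$ is Hilbert--Schmidt. Dualizing, $\mcb N' = \bigcup_p \cH_{-p}$ and the adjoint embeddings $\cH_{-p} \hookrightarrow \cH_{-q}$ are Hilbert--Schmidt, hence compact, and continuous into $\mcb N'$. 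The Hilbert--Schmidt embedding furnishes an orthonormal basis $(h_k)_{k\ge1} \subset \mcb N$ of $\cH_p$ and summable weights $\lambda_k > 0$, $\sum_k \lambda_k < \infty$, with the representation $\|\mu\|_{-q}^2 = \sum_k \lambda_k\, |\mu(h_k)|^2$ for $\mu \in \cH_{-q}$. The aim is to prove tightness of $\{x_t^n\}$ in $\mcb D([0,T], \cH_{-q})$; since $\cH_{-q}$ embeds continuously into $\mcb N'$, this yields tightness in $\mcb D([0,T], \mcb N')$.

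I would then verify the two hypotheses of the Aldous--Jakubowski tightness criterion in the Polish space $\cH_{-q}$. For \emph{compact containment}, fix $t$: each scalar family $\{x_t^n(h_k)\}_n$ is tight in $\bR$, hence bounded in probability, and combined with $\sum_k \lambda_k < \infty$ this makes the tails $\sum_{k>N} \lambda_k |x_t^n(h_k)|^2$ uniformly small in probability, so $\{x_t^n\}_n$ is relatively compact in $\cH_{-q}$ with high probability. For the \emph{modulus-of-continuity condition}, for stopping times $\tau$ and increments $\theta \le \delta$ I would split
\[
\|x_{\tau+\theta}^n - x_\tau^n\|_{-q}^2 = \sum_{k \le N} \lambda_k |x_{\tau+\theta}^n(h_k) - x_\tau^n(h_k)|^2 + \sum_{k>N} \lambda_k |x_{\tau+\theta}^n(h_k) - x_\tau^n(h_k)|^2,
\]
controlling the tail $k>N$ by the summability of $(\lambda_k)$ together with uniform second-moment bounds, and the finite head $k \le N$ by the scalar Aldous condition satisfied by each $\{x^n(h_k)\}$, which holds precisely because these scalar processes are tight in $\mcb D([0,T], \bR)$.

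The main obstacle is this last passage: extracting from the countably many scalar tightness statements a single modulus estimate for the $\cH_{-q}$-valued paths that is uniform in $n$. This is exactly where nuclearity is indispensable --- the summable weights $(\lambda_k)$ provided by the Hilbert--Schmidt embedding are what allow the infinitely many coordinate contributions to be absorbed into a uniformly small tail, reducing the infinite-dimensional Aldous condition to finitely many scalar ones at a time. Once both conditions hold, the Aldous--Jakubowski criterion gives tightness in $\mcb D([0,T], \cH_{-q})$, and continuity of $\cH_{-q} \hookrightarrow \mcb N'$ completes the proof. This is, of course, the content of Mitoma's theorem \cite{mitoma}, restated here for the reader's convenience.
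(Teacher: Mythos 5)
The paper offers no proof of this proposition: it is quoted directly from Mitoma's paper \cite{mitoma}, so the relevant comparison is with Mitoma's own argument, whose scaffolding you reproduce correctly — dispatch the ``only if'' direction via continuity of the evaluation maps, then use the Hilbert--Schmidt embeddings supplied by nuclearity to reduce tightness in $\mcb D([0,T],\mcb N')$ to tightness in a fixed Hilbert space $\cH_{-q}$. However, your sketch has a genuine gap at precisely the step you flag as the main obstacle, and it occurs earlier than you locate it. From tightness of each scalar family $\{x_t^n(h_k)\}_n$ you obtain, for each fixed $k$, some $M_k$ with $\sup_n P(|x_t^n(h_k)|>M_k)<\varepsilon 2^{-k}$, but nothing controls the growth of $M_k$ in $k$; hence $\sum_{k>N}\lambda_k M_k^2$ need not be small, or even finite, and the claimed uniform smallness in probability of the tails $\sum_{k>N}\lambda_k|x_t^n(h_k)|^2$ does not follow. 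Likewise, the ``uniform second-moment bounds'' you invoke for the modulus estimate are not among the hypotheses: tightness of the scalar processes yields no moment bounds at all, and the proposition is stated (and applied in the paper) without any. A further unaddressed point is that a priori each $x_t^n(\omega)$ lies only in $\bigcup_p\cH_{-p}$, with the index possibly depending on $n$, $t$ and $\omega$, so even placing all the processes into one common $\cH_{-q}$ with uniformly high probability requires an argument.

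Mitoma closes exactly this hole with a Baire-category (equicontinuity) lemma on the Fr\'echet space $\mcb N$: sets of the form $A_m:=\{G\in\mcb N:\ \sup_n P(\sup_{t\le T}|x_t^n(G)|>m)\le\varepsilon\}$ are shown to be closed, and by the per-$G$ tightness they cover $\mcb N$, so some $A_m$ has nonempty interior; by linearity and the Fr\'echet structure this produces a single continuous Hilbertian seminorm $\|\cdot\|_p$ over whose unit ball the tightness statements hold \emph{uniformly}. Only after this quantitative upgrade — from countably many qualitative tightness statements to one uniform estimate on a seminorm ball — can one sum over an orthonormal basis against the Hilbert--Schmidt weights $\lambda_k$ and carry out your compact-containment and Aldous steps in $\cH_{-q}$. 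So the architecture is right, but the decisive lemma is missing, and without it the weighted tail sums on which your whole converse rests cannot be controlled.
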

Thus, we apply Proposition \ref{propmit} to obtain the tightness for $\{ \mathcal{Y}_t^n; t \in [0,T] \}_{n \geq 2} \}$ in $\mcb{D} ([0,T], \mcb {S}_{\beta,\gamma}' )$. In order to do so, according to \eqref{decomp}, we fix $G \in \mcb {S}_{\beta,\gamma}$ and obtain the tightness for the initial fields $\big( \mathcal{Y}_0^n (G) \big)_{n \geq 2}$, the martingale terms $ \big( \mathcal{M}_t^n(G) \big)_{n \geq 2}$, integral terms $\big(\mathcal{I}_t^n(G) \big)_{n \geq 2}$, error terms $\big(\mathcal{E}_t^n(G) \big)_{n \geq 2}$ and asymmetric terms $\big(A_t^n(G) \big)_{n \geq 2}$ in $\mcb{D} ([0,T], \mathbb{R})$, separately. 

\subsection{Tightness of the sequence of initial fields}

Next result is analogous to Proposition 3 of \cite{franco2017equilibrium}, therefore we leave its proof to the reader.
\begin{prop} \textbf{(Convergence of the initial field)} \label{tightfluc1}
Let $(\beta, \gamma) \in R_0$. For every $G \in \mathcal{S}_{\beta,\gamma}$, for every $t \geq 0$ and every $\lambda \in \mathbb{R}$, it holds
\begin{align*}
\lim_{n \rightarrow \infty}  \mathbb{E}_{\nu_b^n} [ \exp \{  i\lambda  \mathcal{Y}_t^n (G) \} ] = \exp \Big\{ - \frac{ \lambda^2 \chi(b)}{2} \int_{0}^1 G^2(u) du  \Big\}.
\end{align*}
Thus, the sequence $\big( \mathcal{Y}_t^n (G) \big)_{n \geq 1}$ converges in distribution to a mean zero Gaussian variable with variance $\lambda^2 \chi(b) \| G \|_{L^2}^2 $ and in particular it is tight with respect to the Skorohod topology of $\mcb{D} ([0,T], \mathbb{R})$. Furthermore, $( \mathcal{Y}_0^n)_{n \geq 2}$ converges in distribution to $ \mathcal{Y}_0$, where $\mathcal{Y}_0$ is a mean zero Gaussian field with covariance given by \eqref{covinifie}.
\end{prop}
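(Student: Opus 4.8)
The plan is to exploit the invariance of $\nu_b^n$ together with the product structure of this measure, reducing the statement to a classical central limit theorem for a triangular array of independent, uniformly bounded variables. First, since $\nu_b^n$ is invariant for the accelerated Markov process, the law of $\eta_t^n$ under $\mathbb{P}_{\nu_b^n}$ equals $\nu_b^n$ for every $t \geq 0$; hence $\mathcal{Y}_t^n(G)$ has the same distribution as $\mathcal{Y}_0^n(G)$ and it suffices to analyse the case $t=0$. Under $\nu_b^n$ the variables $\{\bar{\eta}_0^n(x)\}_{x \in \Lambda_n}$ are independent and centered, take values in $\{-b,\,1-b\}$, and have common variance $\chi(b)$. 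Writing $\theta_x := \lambda\, G(\tfrac{x}{n})/\sqrt{n-1}$, independence factorizes the characteristic function,
\begin{align*}
\mathbb{E}_{\nu_b^n}\big[ \exp\{ i \lambda\, \mathcal{Y}_0^n(G) \} \big] = \prod_{x \in \Lambda_n} \mathbb{E}_{\nu_b^n}\big[ \exp\{ i \theta_x\, \bar{\eta}_0^n(x) \} \big].
\end{align*}

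I would then take logarithms and Taylor expand each factor. Because $G \in \mcb {S}_{\beta,\gamma} \subset C^{\infty}([0,1])$ is bounded, $\max_{x \in \Lambda_n} |\theta_x| \lesssim |\lambda|\, \|G\|_{\infty} (n-1)^{-1/2} \to 0$, so for $n$ large every factor is close to $1$ and the expansion $\log \mathbb{E}_{\nu_b^n}[\exp\{i\theta_x \bar{\eta}_0^n(x)\}] = -\tfrac{1}{2}\theta_x^2 \chi(b) + O(|\theta_x|^3)$ holds uniformly in $x$, the centering annihilating the linear term. Summing over $x \in \Lambda_n$ produces the leading contribution $-\tfrac{\lambda^2 \chi(b)}{2}\cdot \tfrac{1}{n-1}\sum_{x \in \Lambda_n} G^2(\tfrac{x}{n})$ together with a remainder of order $|\lambda|^3 \|G\|_{\infty}^3 (n-1)^{-1/2}$, which vanishes. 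Since $G^2$ is continuous on $[0,1]$, the Riemann sum converges, $\tfrac{1}{n-1}\sum_{x \in \Lambda_n} G^2(\tfrac{x}{n}) \to \int_0^1 G^2(u)\,du$, and exponentiating gives the stated limit; L\'evy's continuity theorem then upgrades this to convergence in distribution of $(\mathcal{Y}_t^n(G))_n$ to a centered Gaussian of variance $\chi(b)\|G\|_{L^2}^2$, whence tightness of the real marginals in $\mcb{D}([0,T],\mathbb{R})$.

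For the final assertion about the $\mcb {S}_{\beta,\gamma}'$-valued field $\mathcal{Y}_0^n$, I would argue at the level of the characteristic functional. As $\mcb {S}_{\beta,\gamma}$ is a vector space, for any finite family $G_1,\ldots,G_m$ and reals $a_1,\ldots,a_m$ the combination $\sum_i a_i \mathcal{Y}_0^n(G_i) = \mathcal{Y}_0^n(\sum_i a_i G_i)$ is again of the preceding form, so the one-dimensional limit together with the Cram\'er--Wold device yields joint Gaussian convergence of the finite-dimensional distributions, with limiting covariance $\mathbb{E}_{\nu_b^n}[\mathcal{Y}_0^n(G_1)\mathcal{Y}_0^n(G_2)] = \chi(b)\, \tfrac{1}{n-1}\sum_{x} G_1(\tfrac{x}{n})G_2(\tfrac{x}{n}) \to \chi(b)\int_0^1 G_1(u)G_2(u)\,du$, matching \eqref{covinifie}. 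The one delicate point, and the place I expect the only genuine work, is the passage from convergence of these characteristic functionals to weak convergence of the $\mcb {S}_{\beta,\gamma}'$-valued random elements: here I would invoke the L\'evy continuity theorem on nuclear spaces (using Remark \ref{remnucfre}), which applies once the limiting functional $G \mapsto \exp\{-\tfrac{\chi(b)}{2}\|G\|_{L^2}^2\}$ is shown to be continuous at the origin in the topology of $\mcb {S}_{\beta,\gamma}$; this continuity is immediate since that topology dominates the $L^2$-norm. This identifies the limit as the mean-zero Gaussian white-noise field with covariance \eqref{covinifie}.
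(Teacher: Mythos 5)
Your proposal is correct and coincides with the paper's intended argument: the paper omits the proof, noting only that it is analogous to Proposition 3 of \cite{franco2017equilibrium}, and that reference's proof is exactly what you carry out — reduction to $t=0$ by invariance of $\nu_b^n$, factorization of the characteristic function under the Bernoulli product measure, a uniform Taylor expansion of the logarithm with an $O(|\lambda|^3\|G\|_\infty^3 (n-1)^{-1/2})$ remainder, Riemann-sum convergence of $\tfrac{1}{n-1}\sum_x G^2(\tfrac{x}{n})$, and then Cram\'er--Wold together with the L\'evy continuity theorem on nuclear Fr\'echet spaces (justified by Remark \ref{remnucfre}) for the field-level convergence. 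As a minor side remark, the limiting variance you obtain, $\chi(b)\|G\|_{L^2}^2$, is the correct one; the factor $\lambda^2$ appearing in the variance in the statement of Proposition \ref{tightfluc1} is a typo, as the displayed characteristic-function limit itself shows.
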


\begin{rem} \label{remstat}
For any $t \in [0,T]$, the sequence $(\mathcal{Y}_t^n)_{n \geq 2}$ converges to a white noise of variance $\chi(b)$. Combining this with \eqref{densfield}, we have that if $(\mathcal{Y}_t)_{0 \leq t \leq T}$ is a limit point of the sequence $(\mathcal{Y}_t)_{0 \leq t \leq T}$, $(\mathcal{Y}_t)_{0 \leq t \leq T}$ is stationary and satisfies \eqref{condUSC}, see Definition \ref{defstat}. 
\end{rem}

\subsection{Tightness of the sequence of martingale terms}

Our goal in this subsection is to prove the following result.
\begin{prop} \label{tightmartterm}
Let $(\alpha, \beta, \gamma) \in (0, \infty) \times \mathbb{R} \times (0,  2)$. Then the sequence $\{ \mathcal{M}_t^n(G); \; t \in [0,T] \}_{n \geq 2}$ is tight with respect to the Skorohod topology of $\mcb{D}( [0,T], \mathbb{R})$, for every $G \in \mcb {S}_{\beta,\gamma}$.
\end{prop}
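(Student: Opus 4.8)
The plan is to prove tightness of the martingale sequence $\{\mathcal{M}_t^n(G)\}$ by verifying Aldous' criterion, which in turn reduces (for martingales) to controlling the quadratic variation. The starting point is the explicit formula for the quadratic variation of $\mathcal{M}_t^n(G)$, obtained from \eqref{quadvar0} applied to $F = \mathcal{Y}_\cdot^n(G)$. Writing $\langle \mathcal{M}^n(G)\rangle_t = \Theta(n)\int_0^t \big[\mcb{L}_n\big((\mathcal{Y}_s^n(G))^2\big) - 2\mathcal{Y}_s^n(G)\,\mcb{L}_n\mathcal{Y}_s^n(G)\big]\,ds$, I would first carry out the standard computation that collapses this into a sum of squared discrete gradients. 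Concretely, for each jump $x\to y$ (bulk or reservoir) the increment of $\mathcal{Y}^n(G)$ is of order $\tfrac{1}{\sqrt{n-1}}[G(\tfrac{y}{n})-G(\tfrac{x}{n})]$ for bulk exchanges and $\tfrac{1}{\sqrt{n-1}}G(\tfrac{x}{n})$ for Glauber flips at the boundary, so the integrand becomes
\begin{align*}
\frac{\Theta(n)}{n-1}\Big[ \sum_{x,y}\big(s(y-x)+\alpha_a n^{-\beta_a}a(y-x)\big)&\big[G(\tfrac{y}{n})-G(\tfrac{x}{n})\big]^2\,\eta_s(x)[1-\eta_s(y)]\\
&+ (\text{boundary Glauber terms involving } G(\tfrac{x}{n})^2)\Big].
\end{align*}

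Next I would bound this integrand in expectation under the stationary measure $\nu_b^n$. Since the occupation variables are bounded by $1$ and $\mathbb{E}_{\nu_b^n}[\eta_s(x)(1-\eta_s(y))]\le 1$, it suffices to show that the deterministic sums
\[
\frac{\Theta(n)}{n-1}\sum_{x,y} s(y-x)\big[G(\tfrac{y}{n})-G(\tfrac{x}{n})\big]^2
\]
and the analogous antisymmetric and boundary pieces are bounded uniformly in $n$. For the bulk symmetric part this is exactly a discrete approximation of the Gagliardo seminorm $\|G\|_{\gamma/2}^2$ from \eqref{seminorm}: with $s(y-x)=\tfrac{c^++c^-}{2}|y-x|^{-\gamma-1}$, the rescaled sum converges to $\tfrac{c^++c^-}{2}\iint (G(v)-G(u))^2|v-u|^{-1-\gamma}\,du\,dv$ times the correct power of $n$, and the time-scale $\Theta(n)\le n^\gamma$ (together with the factor $n^{-\gamma}$ coming from the Riemann-sum weight and the $\tfrac{1}{n-1}$) makes this $O(1)$; I would invoke the convergence machinery of Appendix \ref{propfrac}/\ref{secdiscconv} to make this rigorous. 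The antisymmetric contribution carries the extra factor $\alpha_a n^{-\beta_a}$ and the reservoir Glauber terms carry $\alpha n^{-\beta}\sum_x [r_n^-+r_n^+](\tfrac{x}{n})G(\tfrac{x}{n})^2$; using \eqref{tiscbound}, the constraint \eqref{condrate} that $\beta_a\ge\max\{0,\beta\}$, and the boundedness of $G$, each of these is controlled by the same order or lower, so the full quadratic variation satisfies $\mathbb{E}_{\nu_b^n}[\langle \mathcal{M}^n(G)\rangle_t]\le C(G)\,t$ uniformly in $n$.

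With this bound in hand, Aldous' tightness criterion follows in the usual way. For any stopping time $\tau\le T$ and any $\theta>0$, stationarity and the optional-sampling/Markov property give $\mathbb{E}_{\nu_b^n}\big[(\mathcal{M}_{\tau+\theta}^n(G)-\mathcal{M}_{\tau}^n(G))^2\big] = \mathbb{E}_{\nu_b^n}\big[\langle\mathcal{M}^n(G)\rangle_{\tau+\theta}-\langle\mathcal{M}^n(G)\rangle_\tau\big]\le C(G)\theta$, which tends to $0$ uniformly as $\theta\to 0$; a Chebyshev estimate then yields the Aldous condition $\lim_{\theta\to0}\limsup_n \sup_{\tau}\mathbb{P}_{\nu_b^n}(|\mathcal{M}_{\tau+\theta}^n(G)-\mathcal{M}_\tau^n(G)|>\epsilon)=0$. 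Combined with the trivial bound on the initial second moment (from \eqref{densfield}), this gives tightness of $\{\mathcal{M}_t^n(G)\}$ in $\mcb{D}([0,T],\mathbb{R})$.

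\emph{The main obstacle} I anticipate is the uniform boundedness of the rescaled symmetric bulk sum precisely near the boundary, where $|G(\tfrac{y}{n})-G(\tfrac{x}{n})|$ must be controlled against the long-jump kernel $|y-x|^{-\gamma-1}$ when one of the sites sits close to $1$ or $n-1$. Because the kernel is non-local, pairs $(x,y)$ with one endpoint outside the bulk or straddling the boundary contribute, and the naive Riemann-sum comparison can diverge for $\gamma$ close to $2$ unless the boundary conditions built into $\mcb{S}_{\beta,\gamma}$ (the Dirichlet/Neumann vanishing encoded in \eqref{expGSd}) are used to gain the extra decay $|G(u)|\lesssim u^d$ near the endpoints. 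This is exactly why the choice of test-function space in \eqref{defsbetagamma} matters, and I expect to lean on \eqref{expGSd} together with the discrete-convergence estimates of Appendix \ref{secdiscconv} to absorb these boundary pairs. The reservoir Glauber term $\alpha n^{-\beta}\sum_x [r_n^-+r_n^+](\tfrac{x}{n})G(\tfrac{x}{n})^2$ is a secondary concern for $\beta<0$, where $\Theta(n)n^{-\beta}=n^\gamma$ saturates \eqref{tiscbound} and the singular weights $r^\pm(u)\sim u^{-\gamma}$ near the boundary must be tamed by the vanishing of $G$ on $\mcb{S}$, again via \eqref{expGSd}.
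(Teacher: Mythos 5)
Your proposal is correct and follows essentially the same route as the paper: the paper also verifies Aldous' criterion by computing the quadratic variation explicitly (its Lemma on $\langle \mathcal{M}^n(G)\rangle$, leading to the uniform bound $\mathbb{E}_{\nu_b^n}[\langle \mathcal{M}^n(G)\rangle_t - \langle \mathcal{M}^n(G)\rangle_s] \lesssim \chi(b)(t-s)$ via the discrete Gagliardo-seminorm term $\hat{\mathcal{A}}_{n,\beta}(G)$ and the reservoir term $\mathcal{B}_{n,\beta}(G)$, controlled exactly by the boundary decay \eqref{expGSd} and the constraint \eqref{condrate} you invoke), followed by Doob's inequality for the first Aldous condition and Chebyshev plus the quadratic-variation bound for the second. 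Your anticipated obstacles and their resolutions match the paper's Propositions on $\hat{\mathcal{A}}_{n,\beta}$ and $\mathcal{B}_{n,\beta}$, so no gap to report.
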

We will use Aldous' criterion to prove last result.

\begin{prop} (Aldous' criterion) \label{tightmarterm0}\\
Let $(\alpha, \beta, \gamma) \in (0, \infty) \times \mathbb{R} \times (0,  2)$ and $G\in \mcb {S}_{\beta,\gamma}$. The sequence $\{ \mathcal{M}_t^n(G); \; t \in [0,T] \}_{n \geq 2}$ is tight with respect to the Skorohod topology of $\mcb{D}( [0,T], \mathbb{R})$ if
\begin{enumerate}
\item
$\lim_{A \rightarrow \infty} \varlimsup_{n \rightarrow \infty}  \mathbb{P}_{\nu_b^n} \big( \sup_{t \in [0,T]} |\mathcal{M}_t^n(G) | > A \big)=0;$
\item
For every $\varepsilon >0$, we have
\begin{align*}
\lim _{\omega \rightarrow 0^+} \varlimsup_{n \rightarrow\infty} \sup_{\tau  \in \mathcal{T}_{T}, \; 0 \leq t \leq \omega} \mathbb{P}_{\nu_b^n} \big(  |\mathcal{M}_{T \wedge (\tau+ t)}^n(G)- \mathcal{M}_{\tau}^n(G)  |> \varepsilon \big)  =0,
\end{align*}
where $\mathcal{T}_T$ denotes the set of stopping times $\tau$ such that $0 \leq \tau \leq T$.
\end{enumerate}
\end{prop}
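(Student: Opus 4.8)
The plan is to reduce the statement to the general form of Aldous' tightness criterion for real-valued c\`adl\`ag processes (see, e.g., \cite{kipnis1998scaling}). That criterion yields tightness in $\mcb{D}([0,T],\mathbb{R})$ from two ingredients: first, that for each fixed $t\in[0,T]$ the family of real random variables $\{\mathcal{M}_t^n(G)\}_{n\geq 2}$ is tight in $\mathbb{R}$; and second, the modulus-of-continuity control over small random time windows, namely that for every $\varepsilon>0$ one has $\lim_{\omega\to 0^+}\varlimsup_{n\to\infty}\sup_{\tau\in\mathcal{T}_T,\,0\leq t\leq\omega}\mathbb{P}_{\nu_b^n}\big(|\mathcal{M}_{T\wedge(\tau+t)}^n(G)-\mathcal{M}_\tau^n(G)|>\varepsilon\big)=0$. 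The second ingredient is exactly hypothesis (2), so nothing needs to be done there beyond matching notation with the reference.

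It remains to extract the first ingredient from hypothesis (1). For a fixed $t\in[0,T]$ and any $A>0$ one has the inclusion $\{|\mathcal{M}_t^n(G)|>A\}\subseteq\{\sup_{s\in[0,T]}|\mathcal{M}_s^n(G)|>A\}$, whence $\mathbb{P}_{\nu_b^n}(|\mathcal{M}_t^n(G)|>A)\leq\mathbb{P}_{\nu_b^n}(\sup_{s\in[0,T]}|\mathcal{M}_s^n(G)|>A)$. Taking $\varlimsup_{n\to\infty}$ and then letting $A\to\infty$, hypothesis (1) yields $\lim_{A\to\infty}\varlimsup_{n\to\infty}\mathbb{P}_{\nu_b^n}(|\mathcal{M}_t^n(G)|>A)=0$. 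Since each individual $\mathcal{M}_t^n(G)$ is an a.s.\ finite real random variable, the finitely many indices $n$ not already controlled by the $\varlimsup$ are each tight on their own, so by choosing $A$ large enough uniformly over them one upgrades this to $\lim_{A\to\infty}\sup_{n\geq 2}\mathbb{P}_{\nu_b^n}(|\mathcal{M}_t^n(G)|>A)=0$. Hence $\{\mathcal{M}_t^n(G)\}_{n\geq 2}$ is tight in $\mathbb{R}$ for every fixed $t$.

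With both hypotheses of the general criterion verified, tightness of $\{\mathcal{M}_t^n(G);\,t\in[0,T]\}_{n\geq 2}$ in the Skorohod topology of $\mcb{D}([0,T],\mathbb{R})$ follows at once. I do not expect any genuine obstacle in this proposition: its content is precisely the general Aldous criterion, and the only points requiring care are bookkeeping — matching the truncated stopping-time increment in (2) with the exact form appearing in the cited reference, and absorbing the finitely many small-$n$ terms in the marginal-tightness step above. The substantive effort is instead deferred to the later verification of hypotheses (1) and (2) themselves, which will rest on the quadratic-variation formula \eqref{quadvar0} together with Doob's maximal inequality for (1), and on a Chebyshev/Markov estimate of the predictable bracket of $\mathcal{M}^n(G)$ for (2).
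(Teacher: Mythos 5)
Your proposal is correct and coincides with the paper's treatment: the paper states this proposition as the standard Aldous criterion quoted from the literature and gives no proof at all, which is exactly your reduction. Your only added content --- checking that hypothesis (1) implies marginal tightness for each fixed $t$ via the inclusion $\{|\mathcal{M}_t^n(G)|>A\}\subseteq\{\sup_{s\in[0,T]}|\mathcal{M}_s^n(G)|>A\}$ and absorbing the finitely many small-$n$ indices --- is sound bookkeeping, so there is nothing to fix.
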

In order to apply last proposition, we state the following lemma, whose proof is postponed to the end of this subsection. We observe that Lemma \ref{lemconvmartterm1} is analogous to Lemma 3.3 in \cite{flucstefano}.
\begin{lem} \label{lemconvmartterm1}
Let $(\alpha, \beta, \gamma) \in (0, \infty) \times \mathbb{R} \times (0,  2)$ and recall the definition of $\mathcal{P}^{\gamma}_{\alpha, \beta}$ in \eqref{defPalbetgam}. Then
\begin{align} \label{limexpquadvar}
\forall 0 \leq s \leq t \leq T, \quad \lim_{n \rightarrow \infty} \mathbb{E}_{\nu_b^n} [ \langle \mathcal{M}^n(G)  \rangle_t - \langle \mathcal{M}^n(G)  \rangle_s ]  =  2 \chi(b) (t-s)   \mathcal{P}^{\gamma}_{\alpha, \beta} G,
\end{align}
for any $G \in \mcb S_{\beta,\gamma}$. Moreover, for all $ 0 \leq s \leq t \leq T$ and  $n \geq 2$ it holds 
\begin{align} \label{limexpquadvarb}
 \mathbb{E}_{\nu_b^n} [ \langle \mathcal{M}^n(G)  \rangle_t - \langle \mathcal{M}^n(G)  \rangle_s ] \lesssim \chi(b) (t-s). 
\end{align}
\end{lem}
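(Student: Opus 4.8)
The plan is to compute the predictable quadratic variation $\langle \mathcal{M}^n(G) \rangle_t$ explicitly from Dynkin's formula and then estimate its expectation under the invariant measure $\nu_b^n$. Starting from \eqref{quadvar0} with $F=\mathcal{Y}_s^n(G)$, the quadratic variation is a time integral of terms of the form $\Theta(n)(n-1)^{-1}$ times a sum over bond pairs $(x,y)$ of $[G(\tfrac yn)-G(\tfrac xn)]^2$ weighted by the jump rates $s(y-x)+\alpha_a n^{-\beta_a}a(y-x)$ and the exclusion factors $\eta(x)[1-\eta(y)]$, plus analogous boundary contributions from $\mcb L_n^l$ and $\mcb L_n^r$ involving $[G(\tfrac xn)]^2$ and the reservoir rates $\alpha n^{-\beta}s(\cdot)$. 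First I would write this out, noting that the antisymmetric piece $\alpha_a n^{-\beta_a}a(y-x)$ is lower order (by \eqref{condrate} and the time scale bound \eqref{tiscbound}) and will not contribute in the limit, so only the symmetric rates $s$ and the reservoir rates survive.

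The second step is to take the expectation $\mathbb{E}_{\nu_b^n}$. Since $\nu_b^n$ is Bernoulli product with density $b$, the occupation factors average out: $\mathbb{E}_{\nu_b^n}[\eta(x)(1-\eta(y))]=b(1-b)=\chi(b)$ for $x\neq y$ and the boundary Glauber factors produce the same constant $\chi(b)$. After this the expectation of $\langle \mathcal{M}^n(G)\rangle_t-\langle \mathcal{M}^n(G)\rangle_s$ becomes $2\chi(b)(t-s)$ times a purely deterministic discrete functional of $G$. The heart of the matter is then to identify this discrete functional, namely
\begin{align*}
\frac{\Theta(n)}{n-1}\Big[\tfrac{c^++c^-}{2}\sum_{x,y}\mathbbm{1}_{\{x\neq y\}}\frac{[G(\tfrac yn)-G(\tfrac xn)]^2}{|y-x|^{\gamma+1}}+\alpha n^{-\beta}\sum_x [r_n^-(\tfrac xn)+r_n^+(\tfrac xn)][G(\tfrac xn)]^2\Big],
\end{align*}
with its continuum limit $\mathcal{P}^{\gamma}_{\alpha,\beta}G$ defined in \eqref{defPalbetgam}, where the time scale \eqref{timescale} is precisely calibrated so that $\Theta(n)n^{-\gamma}$ keeps the bulk Dirichlet-form term $O(1)$ when $\beta\geq0$ and $\Theta(n)n^{-\beta}n^{-\gamma}$ keeps the reservoir term $O(1)$ when $\beta\leq0$. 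Recognizing the bulk double sum as a Riemann-sum approximation to the seminorm $\|G\|_{\gamma/2}^2$ in \eqref{seminorm}, and the boundary sum as the integral $\int_0^1[r^-+r^+]G^2$, gives \eqref{limexpquadvar} via the indicator decomposition in \eqref{defPalbetgam}.

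The main obstacle I expect is the convergence of the discrete bulk Dirichlet form to the continuum seminorm, because of the singularity of the kernel $|y-x|^{-\gamma-1}$ near the diagonal combined with the boundary of $\Lambda_n$: one must show that the Riemann sum $\tfrac1{n-1}\sum_{x,y}\tfrac{[G(y/n)-G(x/n)]^2}{|y/n-x/n|^{\gamma+1}}$ (after rescaling) converges to $\iint_{(0,1)^2}\tfrac{[G(v)-G(u)]^2}{|v-u|^{1+\gamma}}\,du\,dv$ uniformly in the cutoff, which is where the boundary conditions encoded in the definition of $\mcb S_{\beta,\gamma}$ in \eqref{defsbetagamma} become essential to guarantee the kernel-weighted sums are finite and to control near-boundary terms; I would handle this by splitting into diagonal (short jumps, controlled by Taylor expansion of $G$ and \eqref{expGSd}) and off-diagonal regions, invoking the discrete-to-continuum convergence results collected in Appendices \ref{propfrac} and \ref{secdiscconv}. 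The uniform bound \eqref{limexpquadvarb} is then the easier companion: it follows from the same computation by bounding $[G(\tfrac yn)-G(\tfrac xn)]^2$ crudely (using $\|\nabla G\|_\infty$ for short jumps and $\|G\|_\infty^2$ for long ones) and using \eqref{tiscbound} together with summability of $s$ to show the discrete functional stays bounded uniformly in $n$, yielding the $\chi(b)(t-s)$ bound with an implicit constant depending only on $G$.
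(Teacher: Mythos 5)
Your overall route is the paper's: compute the quadratic variation explicitly from \eqref{quadvar0}, take the expectation under the product measure, and identify the limit of the resulting deterministic functional (the paper's \eqref{quadvar1} followed by Propositions \ref{prop1lem1convmart} and \ref{prop2lem1convmart}). However, your treatment of the antisymmetric rates has a genuine flaw. You discard $\alpha_a n^{-\beta_a} a(\cdot)$ as ``lower order'' by \eqref{condrate} and \eqref{tiscbound}, but this fails when $\beta_a=0$, which is admissible whenever $\beta \leq 0$ (and actually occurs, e.g.\ in the regime $\beta=\beta_a=0$, $\gamma=1$ of Proposition \ref{propraeq1}): in that case $|a|=\tfrac{|c^{+}-c^{-}|}{c^{+}+c^{-}}\, s$ is of exactly the same order as the symmetric rate, and the antisymmetric contribution to the quadratic variation is $O(1)$, not $o(1)$. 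What really removes these terms is exact cancellation \emph{in expectation}: in \eqref{quadvar1} the antisymmetric pieces come multiplied by $a(y-x)[\eta(y)-\eta(x)]$ and by $\bar{\eta}(x)$, both of zero mean under $\nu_b^n$; equivalently, after using $\mathbb{E}_{\nu_b^n}[\eta(x)(1-\eta(y))]=\chi(b)$ as in your second step, the sum $\sum_{x,y} a(y-x)[G(\tfrac{y}{n})-G(\tfrac{x}{n})]^2$ vanishes identically because $a$ is antisymmetric while the weight is symmetric. Your expectation step would have exhibited this cancellation had you carried the $a$-terms along; as written, the discard is unjustified for a nontrivial part of the parameter range covered by the lemma.

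A second, smaller gap concerns the uniform bound \eqref{limexpquadvarb}. Bounding $G^2$ crudely by $\|G\|_\infty^2$ in the reservoir term fails when $\beta \leq 0$ and $\gamma \geq 1$: there $\Theta(n)n^{-\beta}(n-1)^{-1}\sum_x [r_n^{-}(\tfrac{x}{n})+r_n^{+}(\tfrac{x}{n})] \approx n^{\gamma-1}\sum_x x^{-\gamma}$, which diverges (like $\log n$ at $\gamma=1$ and like $n^{\gamma-1}$ for $\gamma>1$). You need the boundary decay of $G \in \mcb{S}_{\beta,\gamma}$ (note $\mcb{S}_{\beta,\gamma} \subset \mcb S_{Dir}$ in exactly these regimes), via \eqref{expGSd} as exploited in Propositions \ref{lemA2stefano} and \ref{contlemA2stef}; the paper instead deduces \eqref{limexpquadvarb} directly from the convergence of $\hat{\mathcal{A}}_{n,\beta}(G)$ and $\mathcal{B}_{n,\beta}(G)$, since convergent sequences are bounded. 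Relatedly, you misattribute where the boundary conditions bite: the bulk Riemann sum is controlled by $\|G^{(1)}\|_\infty$ alone for every $G \in C^{\infty}([0,1])$, cf.\ \eqref{semnormfrac} and Proposition \ref{prop1lem1convmart}, so it is the reservoir sum, not the discrete Dirichlet form, that makes the test-function spaces \eqref{defsbetagamma} essential in this lemma.
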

From Lemma \ref{lemconvmartterm1}, for every $G \in \mcb {S}_{\beta,\gamma}$ fixed, it holds
\begin{equation} \label{MthnL2}
\forall n \geq 2, \quad \sup_{r \in [0,T]} \mathbb{E}_{\nu_b^n} \big[ \big( \mathcal{M}_r^n(G) \big)^2 \big] = \sup_{r \in [0,T]} \mathbb{E}_{\nu_b^n} [  \langle  \mathcal{M}^n(G)\rangle_r  ]  \lesssim \chi(b) T .
\end{equation}
Next, we will obtain Proposition \ref{tightmartterm} as a consequence of Proposition \ref{tightmarterm0}, \eqref{MthnL2} and \eqref{limexpquadvarb}. 
\begin{proof} [Proof of Proposition \ref{tightmartterm}]
Observe that the sequence of martingales $\{ \mathcal{M}_t^n(G); t \in [0,T] \}_{n \geq 1}$ satisfies the first condition of Proposition \ref{tightmarterm0}, due to Doob's inequality and \eqref{MthnL2}.
 In order to obtain the second condition of Proposition \ref{tightmarterm0}, we apply Chebyshev's inequality, which leads to
\begin{align*}
 \mathbb{P}_{\nu_b^n} \big(  |\mathcal{M}_{T \wedge (\tau+ t)}^n(G)- \mathcal{M}_{\tau}^n(G)  |> \varepsilon \big) \leq & \frac{1}{\varepsilon^2} \mathbb{E}_{\nu_b^n} \big[  \big(\mathcal{M}_{T \wedge (\tau+ t)}^n(G)- \mathcal{M}_{\tau}^n(G)  \big)^2 \big]\\
  \leq & \frac{1}{\varepsilon^2} \mathbb{E}_{\nu_b^n} \big[  \langle  \mathcal{M}^n(G)\rangle_{T \wedge (\tau+ t)} - \langle  \mathcal{M}^n(G)\rangle_{\tau} \big].
\end{align*}
The proof ends by combining last display with \eqref{limexpquadvarb}. 
\end{proof}
We end this subsection by presenting the proof of Lemma \ref{lemconvmartterm1}.

\begin{proof} [Proof for Lemma \ref{lemconvmartterm1}]

Combining \eqref{Lbeta} with \eqref{generatorsasym} and \eqref{quadvar0}, for every $0 \leq s \leq t \leq T$,  it holds
\begin{equation} \label{quadvar1}
\begin{split}
\langle \mathcal{M}^n(G) \rangle_t - \langle \mathcal{M}^n(G) \rangle_s=& \frac{\Theta(n)}{2(n-1)} \int_s^t \sum_{x, y } [ G\left( \tfrac{y}{n}\right) -  G\left( \tfrac{x}{n}\right) ]^2  s(y-x) [\eta_{s}^{n}(y) - \eta_{s}^{n}(x)]^2  dr \\
-& \frac{\Theta(n)}{2(n-1)} \alpha_a n^{-\beta_a} \int_s^t \sum_{x, y } [ G\left( \tfrac{y}{n}\right) -  G\left( \tfrac{x}{n}\right) ]^2   a(y-x) [\eta_{s}^{n}(y) - \eta_{s}^{n}(x)]   dr \\
+ & \frac{\alpha \Theta(n)}{n^{\beta} (n-1)}  \int_s^t \sum_{x } G^2\left( \tfrac{x}{n}\right) [ b  +  \eta_{s}^{n}(x) (1 - 2b) ] [ r_n^{+} \left( \tfrac{x}{n}\right) + r_n^{-} \left( \tfrac{x}{n}\right)] dr \\
+ & \frac{\alpha_a \Theta(n)}{n^{\beta_a} (n-1)} \frac{c_{+} - c_{-}}{c_{+} + c_{-}} \int_s^t  \sum_{x } G^2 \left( \tfrac{x}{n}\right) \bar{\eta}_s^n(x)  [ r_n^{+} \left( \tfrac{x}{n}\right) - r_n^{-} \left( \tfrac{x}{n}\right)] dr.
\end{split}
\end{equation}
Combining Fubini's Theorem with \eqref{quadvar1}, we get
\begin{align*}
\mathbb{E}_{\nu_b^n} [\langle \mathcal M^n (G)  \rangle_t - \langle \mathcal M^n (G)  \rangle_s ] = 2  \chi(b) (t-s)[ \hat{\mathcal{A}}_{n,\beta} (G) +   \alpha  \mathcal{B}_{n,\beta} (G) ],
\end{align*}
where for every $n \geq 2$,  $\beta \in \mathbb{R}$ and $G \in C^{\infty}([0,1])$, $\hat{\mathcal{A}}_{n,\beta} (G)$ and $\mathcal{B}_{n,\beta} (G)$ are given by 
\begin{align}
	&\hat{\mathcal{A}}_{n,\beta} (G): = \frac{\Theta(n)}{n-1}    \sum_{  x,y } s(y-x) [G( \tfrac{y}{n}) - G( \tfrac{x}{n})]^2, \label{op_Anb} \\
	& \mathcal{B}_{n,\beta} (G): = \frac{\Theta(n)}{n^{\beta}(n-1)} \sum_{ x } [r_n^{-}(\tfrac{x}{n})+r_n^{+}(\tfrac{x}{n})]  G^2( \tfrac{x}{n}). \label{op_Bnb}
\end{align}
Therefore the proof ends as a consequence of Propositions \ref{prop1lem1convmart} and \ref{prop2lem1convmart}.
\end{proof}
\subsection{Tightness of the sequence of integral terms}
The tightness of the integral terms is a consequence of next result. 
\begin{prop}  \label{tightfluc2a}
	The sequence $ \big\{ \mathcal{I}_t^n(G) ; t \in [0,T] \big\}_{n \geq 2}$ is tight with respect to the Skorohod topology of $\mcb{D} ([0,T], \mathbb{R})$, for every $(\alpha, \beta, \gamma) \in (0, \infty) \times \mathbb{R} \times (0,  2)$ and every $G \in \mcb {S}_{\beta,\gamma}$.
\end{prop}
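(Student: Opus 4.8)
The plan is to exploit that $\mathcal{I}_t^n(G) = \int_0^t \mathcal{Y}_s^n(\mathbb{L}_{\alpha,\beta}^{\gamma/2} G)\,ds$, defined in \eqref{princnbfluc}, has absolutely continuous (hence continuous) trajectories with $\mathcal{I}_0^n(G)=0$, so that a single second-moment bound on its time increments will give tightness. Write $F:=\mathbb{L}_{\alpha,\beta}^{\gamma/2} G$, which lies in $L^2([0,1])$ by Corollary \ref{corL2ab}, and recall that $\mathcal{Y}_r^n(F)=(n-1)^{-1/2}\sum_x F(\tfrac xn)\bar{\eta}_r^n(x)$. The first step is to record the elementary variance identity coming from the product structure of the invariant measure $\nu_b^n$: since the variables $\bar{\eta}_r^n(x)$ are centred, independent and of common variance $\chi(b)$ under $\nu_b^n$, the cross terms vanish and, for each fixed $r$,
\[
\mathbb{E}_{\nu_b^n}\big[(\mathcal{Y}_r^n(F))^2\big]=\frac{\chi(b)}{n-1}\sum_{x} F\big(\tfrac xn\big)^2=:V_n .
\]
The crucial feature is that $V_n$ is bounded uniformly in $n$, since the right-hand side is a Riemann-type sum converging to $\chi(b)\|F\|_{L^2}^2<\infty$; this is exactly where the discrete-to-continuous convergence results of Appendix \ref{secdiscconv} are used.

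The second step is the increment estimate. For deterministic $0\le s<t\le T$, Cauchy--Schwarz in the time variable together with Tonelli's theorem and the invariance of $\nu_b^n$ yield
\[
\mathbb{E}_{\nu_b^n}\big[(\mathcal{I}_t^n(G)-\mathcal{I}_s^n(G))^2\big]\le (t-s)\int_s^t \mathbb{E}_{\nu_b^n}\big[(\mathcal{Y}_r^n(F))^2\big]\,dr = V_n\,(t-s)^2\le C\,(t-s)^2,
\]
with $C$ independent of $n$. Because the exponent of the moment is $2$ and the power of $t-s$ equals $2=1+1$, the Kolmogorov--Chentsov tightness criterion applies directly and gives tightness of $(\mathcal{I}_t^n(G))_{n\ge 2}$ in $C([0,T],\mathbb{R})$, hence in $\mcb D([0,T],\mathbb{R})$. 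Alternatively, and in keeping with the treatment of the martingale terms, one may verify the two conditions of Aldous' criterion as stated in Proposition \ref{tightmarterm0}: condition (1) follows from $\sup_t|\mathcal{I}_t^n(G)|\le\int_0^T|\mathcal{Y}_r^n(F)|\,dr$ and the bound $\mathbb{E}_{\nu_b^n}[(\int_0^T|\mathcal{Y}_r^n(F)|\,dr)^2]\le T^2 V_n$ via Markov's inequality, while for condition (2) one uses $(\mathcal{I}_{\tau+\theta}^n(G)-\mathcal{I}_\tau^n(G))^2\le \theta\int_0^T(\mathcal{Y}_r^n(F))^2\,dr$ for a stopping time $\tau$ and $\theta\le\omega$, so that taking expectations (Tonelli and invariance as above) gives $\mathbb{E}_{\nu_b^n}[(\mathcal{I}_{T\wedge(\tau+\theta)}^n(G)-\mathcal{I}_\tau^n(G))^2]\le \omega\,T\,V_n\to 0$ as $\omega\to 0^+$, uniformly in $n$, and Chebyshev closes the argument.

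The main obstacle is precisely the uniform control of $V_n$. Since $F=\mathbb{L}_{\alpha,\beta}^{\gamma/2} G$ inherits the non-local behaviour of the regional fractional Laplacian together with the reaction terms $r^{-},r^{+}$ of \eqref{defrpm}, which blow up like $u^{-\gamma}$ and $(1-u)^{-\gamma}$ near the endpoints, the convergence of the discrete sums $(n-1)^{-1}\sum_x F(\tfrac xn)^2$ to $\|F\|_{L^2}^2$ is not automatic and depends on the boundary conditions imposed in the definition \eqref{defsbetagamma} of $\mcb S_{\beta,\gamma}$ (Dirichlet and/or Neumann), which are tailored to keep these singularities square-summable on the lattice. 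Once this uniform integrability of the Riemann sums is secured through Appendix \ref{secdiscconv}, the remaining steps are the routine second-moment manipulations above.
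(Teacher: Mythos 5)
Your proposal is correct and follows essentially the same route as the paper: bound $\mathbb{E}_{\nu_b^n}\big[(\mathcal{I}_t^n(G)-\mathcal{I}_s^n(G))^2\big]\leq K_1(t-s)^2$ via Cauchy--Schwarz and Fubini, with $K_1$ the uniform-in-$n$ bound on $\mathbb{E}_{\nu_b^n}\big[(\mathcal{Y}_r^n(\mathbb{L}_{\alpha,\beta}^{\gamma/2}G))^2\big]$ coming from Corollary \ref{corL2ab}, then conclude by the Kolmogorov--Centsov criterion together with $\mathcal{I}_0^n(G)=0$. Your additional Aldous-criterion variant and your explicit flagging of the uniform control of the discrete $L^2$ sums near the boundary are sound but not needed beyond what the paper's argument already uses.
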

\begin{proof}
Let $G \in \mcb {S}_{\beta,\gamma}$. From Corollary \ref{corL2ab} we get $\mathbb{L}_{\alpha,\beta}^{ \gamma /2 } G \in L^2([0,1])$, thus
	\begin{align*} 
		K_1:=& \sup_{t \in [0,T], n \geq 2} \mathbb{E}_{\nu_b^n} \big[  \big( \mathcal{Y}_t^n ( \mathbb{L}_{\alpha,\beta}^{ \gamma /2 }  G ) \big)^2 \big]  = \sup_{ n \geq 2} \Big\{ \frac{\chi(b)}{n-1} \sum_{x} [\mathbb{L}_{\alpha,\beta}^{ \gamma /2 }  G (\tfrac{x}{n})]^2  \Big\}< \infty.
	\end{align*}
	Therefore, Cauchy-Schwarz's inequality and Fubini's Theorem lead to
	\begin{align*}
		\mathbb{E}_{\nu_b^n} [  |  \mathcal{I}_t^n(G) -  \mathcal{I}_r^n(G) |^2   ] = \mathbb{E}_{\nu_b^n}\Big[  \Big| \ \int_r^t  \mathcal{Y}_s^n ( \mathbb{L}_{\alpha,\beta}^{ \gamma /2 }   G )ds \Big|^2  \Big] \leq K_1   (t-r)^2, 
	\end{align*}
for every $r, t \in [0,T]$. Since $\mathcal{I}_0^n(G)=0$, combining last display with Kolmogorov-Centsov's criterion (see Proposition 4.3 of \cite{jarafluc}), we obtain the tightness of $ \big\{  \mathcal{I}_t^n(G) ; t \in [0,T] \big\}_{n \geq 2}$.
\end{proof}

\subsection{Tightness of the sequence of error terms} \label{tighterror}

Now we state a result that is useful in what follows.
\begin{prop} \label{conv0tight}
Let $\{x_t^n: t \in [0,T]\}_{n \geq 2}$ be a sequence of stochastic process such that
\begin{align*} 
		\lim_{n \rightarrow \infty} \mathbb{E}_{\nu_b^n} \big[ \sup_{t \in [0,T]} (x_t^n )^2 \big] =0. 
	\end{align*}
Then $\{x_t^n: t \in [0,T]\}_{n \geq 2}$ converges in distribution to zero, as $n \rightarrow \infty$. In particular, $\{x_t^n: t \in [0,T]\}_{n \geq 2}$  is tight with respect to the Skorohod topology of $\mcb{D}( [0,T], \mathbb{R})$. 
\end{prop}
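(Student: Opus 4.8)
The plan is to deduce both assertions—convergence in distribution to the zero process and tightness—from the single quantitative hypothesis $\lim_{n \to \infty} \mathbb{E}_{\nu_b^n}[\sup_{t\in[0,T]}(x_t^n)^2]=0$, by routing everything through convergence in probability with respect to the Skorohod metric on $\mcb{D}([0,T],\mathbb{R})$.

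First I would translate the hypothesis into convergence of the uniform norm to zero in probability. Writing $\|x^n\|_T := \sup_{t\in[0,T]}|x_t^n|$, Markov's inequality gives, for every $\varepsilon>0$,
\[
\mathbb{P}_{\nu_b^n}\big(\|x^n\|_T > \varepsilon\big) \;\le\; \varepsilon^{-2}\,\mathbb{E}_{\nu_b^n}\big[\|x^n\|_T^2\big],
\]
and the right-hand side tends to $0$ by assumption. Hence $\|x^n\|_T \to 0$ in $\mathbb{P}_{\nu_b^n}$-probability.

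Next I would compare the uniform and Skorohod topologies. Let $d$ denote a (complete, separable) metrization of the Skorohod topology on $\mcb{D}([0,T],\mathbb{R})$ and let $\mathbf{0}$ be the constant path equal to zero. Choosing the identity time-change in the definition of $d$ yields the elementary bound $d(x^n,\mathbf{0}) \le \|x^n\|_T$; that is, uniform smallness forces Skorohod smallness. Combining this with the previous step gives $d(x^n,\mathbf{0})\to 0$ in probability, i.e. $(x_t^n)_{t\in[0,T]}$ converges in probability to the deterministic limit $\mathbf{0}$ in $\mcb{D}([0,T],\mathbb{R})$. Finally, convergence in probability to a deterministic (hence constant-in-law) limit implies convergence in distribution, by the standard fact that on any metric space $d(x^n,c)\to 0$ in probability with $c$ constant yields $x^n \Rightarrow c$. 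This establishes the stated convergence in distribution to zero, and tightness then follows immediately, since any sequence converging in distribution on the Polish space $\mcb{D}([0,T],\mathbb{R})$ is tight by Prohorov's theorem.

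The argument presents no serious obstacle; the only point requiring a moment's care is the comparison $d(x^n,\mathbf{0})\le\|x^n\|_T$, which must be checked against the chosen metrization but holds for any standard one, since uniform convergence always implies Skorohod ($J_1$) convergence.
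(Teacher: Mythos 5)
Your proposal is correct and follows essentially the same route as the paper, whose entire proof is the remark that the statement is a direct consequence of Chebyshev's inequality: your first step (Markov applied to $\|x^n\|_T^2$) \emph{is} that Chebyshev bound, and the remaining steps --- the comparison $d(x^n,\mathbf{0})\le\|x^n\|_T$ via the identity time-change, convergence in probability to a deterministic limit implying convergence in distribution, and tightness of a weakly convergent sequence on the Polish space $\mcb{D}([0,T],\mathbb{R})$ --- are precisely the standard facts the paper leaves implicit. Your write-up simply makes explicit what the paper compresses into one line, and all of it is sound.
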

\begin{proof}
The proof is a direct consequence of Chebyshev's inequality.
\end{proof}
Thus, the tightness for $\big(\mathcal{E}_t^n(G) \big)_{n \geq 2}$ is a direct consequence of Proposition \ref{convL2error}. 

Observe that that from Cauchy-Schwarz's inequality, for every sequence $(a_j)_{j \geq 1}$,
\begin{equation} \label{CSdisc}
\forall m \in \mathbb{N}, \quad  \sum_{j=1}^m a_j \leq m \sum_{j=1}^m (a_j)^2, 
\end{equation}
from where it follows that
\begin{equation} \label{CSdiscexp}
\forall t \in [0, T], \quad \mathbb{E}_{\nu_b^n} \Big[ \sup_{s \in [0,t]} \Big( \int_{0}^s \sum_{j=1}^m g_j(\eta_r^n)\; dr  \Big)^2  \Big] \leq  m  \sum_{j=1}^m\mathbb{E}_{\nu_b^n} \Big[ \sup_{s \in [0,t]} \Big( \int_{0}^s  g_j(\eta_r^n)\; dr  \Big)^2  \Big] ,
\end{equation}
for every $m \in \mathbb{N}$ and every $g_1, \ldots, g_m: \Omega_n \rightarrow \mathbb{R}$.
Combining the definition of $\mathcal{E}_t^n(G)$ given in \eqref{extranbfluc} with an application of \eqref{CSdiscexp} for $m=2$ and Proposition \ref{conv0tight}, the result of  Proposition \ref{convL2error} is a direct consequence of the next two results.
\begin{prop} \label{convrem1}
	Assume $(\alpha, \beta, \gamma) \in (0, \infty) \times \mathbb{R} \times (0,  2)$. For every $G \in \mcb {S}_{\beta,\gamma}$, it holds
	\begin{align*} 
		\lim_{n \rightarrow \infty} \mathbb{E}_{\nu_b^n} \Big[ \sup_{t \in [0,T]} \Big( \int_{0}^{t} \frac{1}{\sqrt{n-1}} \sum _{x } \big[  \Theta(n)  \mcb {K}_{n, \alpha, \beta} G \left( \tfrac{x}{n} \right) - \mathbb{L}_{\alpha,\beta}^{ \gamma / 2 } G\left( \tfrac{x}{n} \right) \big]   \bar{\eta}_{s}^{n}(x) ds \Big)^2 \Big] =0. 
	\end{align*}
\end{prop}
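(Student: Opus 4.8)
The plan is to first collapse the time-integral and the supremum by means of stationarity, reducing the claim to a purely deterministic $L^2$-convergence of the operator discretisation error. Write $f_s^n := \frac{1}{\sqrt{n-1}} \sum_x \big[\Theta(n)\mcb{K}_{n,\alpha,\beta}G(\tfrac{x}{n}) - \mathbb{L}_{\alpha,\beta}^{\gamma/2}G(\tfrac{x}{n})\big]\bar{\eta}_s^n(x)$. By the Cauchy--Schwarz inequality in the time variable, for every $t \in [0,T]$ one has $\big(\int_0^t f_s^n\,ds\big)^2 \leq T\int_0^T (f_s^n)^2\,ds$, and since this bound is uniform in $t$ it also bounds the supremum. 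As $\nu_b^n$ is invariant for the dynamics, the integrand is stationary, so
\[
\mathbb{E}_{\nu_b^n}\Big[\sup_{t\in[0,T]}\Big(\int_0^t f_s^n\,ds\Big)^2\Big] \leq T^2\,\mathbb{E}_{\nu_b^n}\big[(f_0^n)^2\big].
\]
Because $\nu_b^n$ is a Bernoulli product measure, the variables $\bar\eta_0^n(x)$ are independent, centred, with variance $\chi(b)$, so all cross terms vanish and
\[
\mathbb{E}_{\nu_b^n}\big[(f_0^n)^2\big] = \frac{\chi(b)}{n-1}\sum_x\big[\Theta(n)\mcb{K}_{n,\alpha,\beta}G(\tfrac{x}{n}) - \mathbb{L}_{\alpha,\beta}^{\gamma/2}G(\tfrac{x}{n})\big]^2.
\]
Everything thus reduces to showing that this discrete $L^2$-type norm of the discretisation error tends to $0$ as $n\to\infty$.

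For the reduced statement I would split according to the sign of $\beta$, matching the two indicator-supported pieces of $\mcb{K}_{n,\alpha,\beta}$ in \eqref{op_Knb} against the corresponding pieces of $\mathbb{L}_{\alpha,\beta}^{\gamma/2}$ in \eqref{deltaab}. For $\beta\geq 0$ one has $\Theta(n)=n^\gamma$, and after rescaling $u=x/n$, $v=y/n$ the quantity $n^\gamma\mcb{K}_nG(\tfrac{x}{n})$ becomes $\frac{c^++c^-}{2}\cdot\frac1n\sum_v \frac{G(v)-G(u)}{|v-u|^{\gamma+1}}$, a Riemann sum for $\mathbb{L}^{\gamma/2}G(u)$ as defined in \eqref{defdeltaepsilon}--\eqref{deflapfracreg}. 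For $\beta< 0$ one has $\Theta(n)\,n^{-\beta}=n^\gamma$, and the tail sums satisfy $n^\gamma r_n^{\mp}(\tfrac{x}{n})\to r^{\mp}(u)$ by comparing $\sum_{y\geq x}y^{-\gamma-1}$ with $\int_x^\infty t^{-\gamma-1}\,dt = \frac{x^{-\gamma}}{\gamma}$, which reproduces \eqref{defrpm}. In both cases the pointwise convergence of the discretised operator to $\mathbb{L}_{\alpha,\beta}^{\gamma/2}G$ is exactly the content of the self-contained discrete-approximation lemmas collected in Appendices \ref{propfrac} and \ref{secdiscconv}, which I would invoke directly.

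To upgrade pointwise convergence to convergence of the normalised sum $\frac{1}{n-1}\sum_x(\cdots)^2$ to zero, I would establish an $n$-uniform, summable pointwise bound on $\big|\Theta(n)\mcb{K}_{n,\alpha,\beta}G(\tfrac{x}{n}) - \mathbb{L}_{\alpha,\beta}^{\gamma/2}G(\tfrac{x}{n})\big|$ and run a discrete dominated-convergence argument; since $\mathbb{L}_{\alpha,\beta}^{\gamma/2}G\in L^2([0,1])$ by Corollary \ref{corL2ab}, the limiting term contributes a finite integral and the difference vanishes in the limit.

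The main obstacle will be precisely this uniform control of the discretisation error near the two singular regions: the diagonal $v=u$, where for $\gamma\in[1,2)$ the kernel $|v-u|^{-\gamma-1}$ is non-integrable and the operator exists only as a principal value; and the boundary points $u=0,1$, where $r^{\pm}(u)$ blow up like $u^{-\gamma}$, respectively $(1-u)^{-\gamma}$. Both are tamed by the boundary-vanishing conditions encoded in the choice of $\mcb{S}_{\beta,\gamma}$ in \eqref{defsbetagamma}: the Taylor estimate \eqref{expGSd} forces $G$ to vanish to the required order at $0$ and $1$, which both absorbs the boundary blow-up of the tail kernels and supplies the cancellation needed to compare the discrete principal value against its continuum counterpart. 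Matching the exponents in these estimates to the individual regimes listed in \eqref{defsbetagamma} is the delicate bookkeeping, and is exactly why the spaces of test functions are chosen as they are.
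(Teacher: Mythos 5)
Your proposal is correct and follows essentially the same route as the paper: the Cauchy--Schwarz/Fubini bound \eqref{CSFub} together with the stationarity of the Bernoulli product measure $\nu_b^n$ reduces the claim to the vanishing of $\frac{\chi(b)}{n-1}\sum_x\big[\Theta(n)\mcb{K}_{n,\alpha,\beta}G(\tfrac{x}{n})-\mathbb{L}_{\alpha,\beta}^{\gamma/2}G(\tfrac{x}{n})\big]^2$, which is exactly Corollary \ref{convknbeta} (i.e.\ Propositions \ref{convfrac} and \ref{lemA2stefano} applied with $q=2$, according to the sign of $\beta$, with both pieces active at $\beta=0$). One small correction: the appendix results are already stated as this normalised discrete $L^2$-sum convergence, not merely as pointwise convergence of the discretised operator, so the dominated-convergence ``upgrade'' you outline in your third paragraph is superfluous once Corollary \ref{convknbeta} is invoked.
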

\begin{prop} \label{convrem2}
	Assume $(\alpha, \beta, \gamma) \in (0, \infty) \times \mathbb{R} \times (0, 2)$. For every $G \in \mcb {S}_{\beta,\gamma}$, it holds
	\begin{align*} 
		\lim_{n \rightarrow \infty} \mathbb{E}_{\nu_b^n} \Big[ \sup_{t \in [0,T]} \Big(\int_{0}^{t} \frac{1}{\sqrt{n-1}} \sum _{x } [ \Theta(n)  \mcb {R}_{n, \alpha, \beta} G \left( \tfrac{x}{n} \right)   ]   \bar{\eta}_{s}^{n}(x) ds \Big)^2 \Big] =0. 
	\end{align*}
\end{prop}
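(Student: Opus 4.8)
The plan is to reduce the statement, separately for each sign of $\beta$, to a deterministic estimate on the symbol $\Theta(n)\mcb{R}_{n,\alpha,\beta}G$ and then to quantify the cancellation produced by the time integral. Write $V_n(\eta):=\frac{\Theta(n)}{\sqrt{n-1}}\sum_x\mcb{R}_{n,\alpha,\beta}G(\tfrac{x}{n})\bar\eta(x)$, a linear, $\nu_b^n$-mean-zero functional, so the object to control is $\mathbb{E}_{\nu_b^n}[\sup_{t\le T}(\int_0^t V_n(\eta_s^n)\,ds)^2]$. Since $\mcb{R}_{n,\alpha,\beta}G\equiv 0$ when $\beta=0$, only $\beta<0$ and $\beta>0$ need work, and these are genuinely different.

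For $\beta<0$ one has $\mcb{R}_{n,\alpha,\beta}G=\mcb{K}_nG$ and $\Theta(n)=n^{\gamma+\beta}$, and here the naive second-moment bound is enough. Cauchy--Schwarz in time together with stationarity and the product structure of $\nu_b^n$ give
\[
\mathbb{E}_{\nu_b^n}\Big[\sup_{t\le T}\Big(\int_0^tV_n\,ds\Big)^2\Big]\le T^2\,\mathbb{E}_{\nu_b^n}[V_n^2]=\frac{T^2\chi(b)\,\Theta(n)^2}{n-1}\sum_x[\mcb{K}_nG(\tfrac{x}{n})]^2 .
\]
Because $G\in\mcb S$ vanishes to all orders at $0$ and $1$, the discrete convergence results of Appendix \ref{secdiscconv} yield $\frac{1}{n-1}\sum_x[\mcb{K}_nG(\tfrac{x}{n})]^2\lesssim n^{-2\gamma}\|\mathbb{L}^{\gamma/2}G\|_{L^2}^2$, so the right-hand side is $O(n^{2(\gamma+\beta)}n^{-2\gamma})=O(n^{2\beta})\to0$.

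The delicate regime is $\beta>0$, where $\mcb{R}_{n,\alpha,\beta}G=\alpha n^{-\beta}(r_n^-+r_n^+)G$ and $\Theta(n)=n^{\gamma}$. Here the naive bound \emph{fails}: since $r_n^-(\tfrac1n)=\sum_{y\ge1}s(y)=O(1)$, the symbol $\Theta(n)\mcb{R}_{n,\alpha,\beta}G$ is of order $n^{\gamma-\beta}$ at $x=1$, and $\frac{\Theta(n)^2}{n-1}\sum_x\mcb{R}^2$ does not vanish in the Neumann regime $\beta\in(\gamma-1,\gamma-\tfrac12]$. The gain must come from the time average, which I capture through the energy (Kipnis--Varadhan) inequality (see \cite{kipnis1998scaling})
\[
\mathbb{E}_{\nu_b^n}\Big[\sup_{t\le T}\Big(\int_0^tV_n\,ds\Big)^2\Big]\le C\,T\,\|V_n\|_{-1}^2,\qquad \|V_n\|_{-1}^2=\sup_f\big\{2\langle V_n,f\rangle_{\nu_b^n}-\langle f,-\Theta(n)\mcb{L}_n^{s}f\rangle_{\nu_b^n}\big\},
\]
valid since $\nu_b^n$ is reversible for the symmetric part $\mcb{L}_n^{s}$ and the antisymmetric part is dominated by it through \eqref{condrate}, so that the required sector condition holds. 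Restricting the variational problem to linear $f=\sum_x\phi_x\bar\eta(x)$ (an invariant subspace of $\mcb{L}_n^{s}$) and discarding the nonnegative bulk Dirichlet form leaves only the reservoir mass $\Theta(n)\chi(b)\alpha n^{-\beta}(r_n^-+r_n^+)\phi_x^2$; a site-by-site optimization then gives the crucial bound
\[
\|V_n\|_{-1}^2\le\frac{\chi(b)\,\alpha\,\Theta(n)\,n^{-\beta}}{n-1}\sum_x[r_n^-(\tfrac{x}{n})+r_n^+(\tfrac{x}{n})]\,G^2(\tfrac{x}{n}),
\]
in which the reservoir intensity now appears to the \emph{first} power rather than squared.

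It remains to show the right-hand side tends to $0$, and this is exactly where the boundary conditions encoded in $\mcb{S}_{\beta,\gamma}$ enter. Using $r_n^-(\tfrac{x}{n})\lesssim x^{-\gamma}$ and $r_n^+(\tfrac{x}{n})\lesssim(n-x)^{-\gamma}$, I would split the sum into a boundary layer and a bulk part: when $\beta>\gamma-1$ the test function is merely bounded (space $C^\infty([0,1])$ or $\mcb S_{Neu}$) and the sum is $\lesssim n^{\gamma-1-\beta}$ (with a logarithmic correction at $\gamma=1$ and the better rate $n^{-\beta}$ when $\gamma<1$), which vanishes precisely because $\beta>\gamma-1$; when $\beta\le\gamma-1$ one necessarily has $\gamma>1$ and the space is $\mcb S_{Dir}$, so $|G(u)|\lesssim u\wedge(1-u)$ by \eqref{expGSd}, and inserting $G^2(\tfrac{x}{n})\lesssim(x/n)^2$ in the boundary layer upgrades the estimate to $O(n^{-\beta})$. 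In every admissible pair $(\beta,\gamma)$ the exponent is negative, hence $\|V_n\|_{-1}^2\to0$ and the proposition follows. The main obstacle is therefore not the bookkeeping but the structural point that the pointwise size of the reservoir symbol forces one to abandon the second-moment bound in favour of the $H_{-1}$ estimate, whose single surviving power of $(r_n^-+r_n^+)$ is exactly what makes the boundary conditions built into $\mcb{S}_{\beta,\gamma}$ sufficient.
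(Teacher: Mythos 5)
Your proof is correct and follows essentially the same route as the paper's: the $\beta=0$ case is trivial from \eqref{op_Rnb}; for $\beta<0$ the paper likewise uses the crude second-moment bound \eqref{CSFub} together with the discrete convergence of $n^{\gamma}\mcb{K}_n G$ (Propositions \ref{convfrac} and \ref{Lqreg}); and for $\beta>0$ your Kipnis--Varadhan estimate carrying a single power of $r_n^{\pm}$ is exactly Proposition \ref{lem61stefano} (imported from Lemma 6.1 of \cite{flucstefano}, applied with $c_n=\alpha\Theta(n)n^{-\beta}(\sqrt{n-1})^{-1}$ and $q=1$), after which your boundary-layer case analysis --- rate $n^{\gamma-1-\beta}$ for bounded $G$ when $\beta>\gamma-1$, logarithmic correction at $\gamma=1$, and the Dirichlet gain $G^2(\tfrac{x}{n})\lesssim(\tfrac{x}{n})^2$ when $\beta\leq\gamma-1$ --- reproduces Proposition \ref{prop3assum} case by case, correctly matching the spaces in \eqref{defsbetagamma}. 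The only cosmetic caveat is that restricting the variational formula for $\|V_n\|_{-1}^2$ to linear $f$ a priori yields a \emph{lower} bound on the supremum (the Glauber part does not preserve the chaos decomposition for $b\neq 1/2$), so the rigorous derivation of your key inequality proceeds instead by per-site Cauchy--Schwarz against the reservoir Dirichlet form, as in the cited lemma; since the resulting estimate is identical, this is a presentational shortcut rather than a gap.
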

In order to obtain the aforementioned results, it will be useful the following inequality, which is a direct consequence of Cauchy-Schwarz's inequality and Fubini's Theorem:
\begin{equation} \label{CSFub}
\forall t \in [0, T], \; \forall g: \Omega_n \rightarrow \mathbb{R}, \quad \mathbb{E}_{\nu_b^n} \Big[ \sup_{s \in [0,t]} \Big(  \int_0^s g(\eta_{r}^n) dr \Big)^2 \Big] \leq t \int_0^t \mathbb{E}_{\nu_b^n} \big[ g^2( \eta_{r}^n) \big] dr.
\end{equation}
Now we present the proof for Proposition \ref{convrem1}.
\begin{proof}[Proof of Proposition \ref{convrem1}]
The proof is a direct consequence of \eqref{CSFub} and Corollary \ref{convknbeta}.
\end{proof}
In order to obtain Proposition \ref{convrem2}, we will apply the following result. Since it can be obtained from Lemma 6.1 of \cite{flucstefano} for the choice $a_j=G(j/n)$ for every $j \in \Lambda_n$, we leave its proof to the reader. 
\begin{prop} \label{lem61stefano}
	Let $(c_n)_{n \geq 2} \subset \mathbb{R}$ and $q \in \{1, 2\}$. It holds
	\begin{align*}
		\mathbb{E}_{\nu_b} \Big[ \sup_{t \in [0,T]} \Big( \int_0^t c_n \sum_x [G(x/n)]^{q} r_n^{\pm}\left( \tfrac{x}{n} \right) \bar{\eta}_s^n(x) ds \Big)^2  \Big] \lesssim  \frac{(c_n)^2 n^{\beta}}{\alpha \Theta(n)} \sum_x  r_n^{\pm}\left( \tfrac{x}{n} \right) [G(x/n)]^{2q}.
	\end{align*}
\end{prop}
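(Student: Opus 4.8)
The statement is a specialization of Lemma 6.1 of \cite{flucstefano}, so in the final write-up it suffices to check that the integrand fits the hypotheses of that lemma; let me nonetheless sketch the underlying argument, which is a Kipnis--Varadhan energy estimate followed by an explicit $H_{-1}$ computation. Write $V_n(\eta):= c_n \sum_x [G(x/n)]^q r_n^{\pm}(x/n)\,\bar\eta(x)$, so that the integrand in the statement is $V_n(\eta_s^n)$. Since $\nu_b^n$ is the invariant product measure and $\mathbb{E}_{\nu_b^n}[\bar\eta(x)]=0$, the observable $V_n$ is a mean-zero linear (chaos degree one) functional. First I would invoke the supremum version of the Kipnis--Varadhan inequality for stationary processes,
\begin{align*}
\mathbb{E}_{\nu_b^n}\Big[\sup_{t\in[0,T]}\Big(\int_0^t V_n(\eta_s^n)\,ds\Big)^2\Big] \;\lesssim\; T\,\|V_n\|_{-1,n}^2, \qquad \|V_n\|_{-1,n}^2 := \sup_{f}\Big\{2\langle V_n,f\rangle_{\nu_b^n} - \langle f,\mathcal{S}_n f\rangle_{\nu_b^n}\Big\},
\end{align*}
where $\mathcal{S}_n$ is minus the symmetric part of the accelerated generator $\Theta(n)\mcb L_n$ and the supremum runs over all $f\in L^2(\nu_b^n)$. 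The antisymmetric contributions coming from $a(\cdot)$ enter only the antisymmetric part of the generator and drop out of $\mathcal{S}_n$, so they play no role here.

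The core of the argument is to bound $\|V_n\|_{-1,n}^2$ by the right-hand side of the statement. Here $\mathcal{S}_n$ is a sum of nonnegative Dirichlet forms (symmetrized bulk plus the reversible boundary Glauber dynamics), so discarding all but the boundary Glauber form of the relevant sign, say $\mathcal{S}_n^{\mathrm{res}}$ generated by the symmetric reservoir rates $\alpha n^{-\beta} r_n^{-}$ on the left (resp. $\alpha n^{-\beta} r_n^{+}$ on the right), only enlarges the supremum: $\|V_n\|_{-1,n}^2 \le \sup_f\{2\langle V_n,f\rangle - \langle f,\mathcal{S}_n^{\mathrm{res}}f\rangle\}$. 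The key structural observation, which I would establish by a short direct computation, is that $\bar\eta(x)$ is an eigenfunction of this reversible boundary generator,
\begin{align*}
\mcb L_n^{\mathrm{res}}\,\bar\eta(x) \;=\; -\,\Theta(n)\,\alpha\, n^{-\beta}\, r_n^{\pm}(\tfrac{x}{n})\,\bar\eta(x),
\end{align*}
because summing $s(x-y)$ over the reservoir sites reproduces $r_n^{\pm}$ as in \eqref{defrnpm}, and the Glauber part is reversible with respect to $\nu_b^n$. Consequently the monomials $\prod_{x\in A}\bar\eta(x)$ diagonalize $\mathcal{S}_n^{\mathrm{res}}$ and it preserves the chaos degree; since $V_n$ has degree one, the variational problem decouples across degrees and the supremum is attained on linear test functions $f=\sum_x g(x)\bar\eta(x)$.

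It then remains to solve a decoupled scalar optimization. Using $\langle \bar\eta(x),\bar\eta(y)\rangle_{\nu_b^n}=\chi(b)\,\delta_{x,y}$ and the eigenvalue above, the objective becomes $\sum_x\big(2\chi(b)\,c_n[G(\tfrac{x}{n})]^q r_n^{\pm}(\tfrac{x}{n})\,g(x) - \chi(b)\,\Theta(n)\alpha n^{-\beta} r_n^{\pm}(\tfrac{x}{n})\,g(x)^2\big)$, which is maximized at $g(x)=c_n[G(\tfrac{x}{n})]^q\, n^{\beta}/(\alpha\Theta(n))$ and yields
\begin{align*}
\|V_n\|_{-1,n}^2 \;\le\; \frac{\chi(b)\,(c_n)^2\,n^{\beta}}{\alpha\,\Theta(n)}\sum_x r_n^{\pm}(\tfrac{x}{n})\,[G(\tfrac{x}{n})]^{2q}.
\end{align*}
Combining this with the Kipnis--Varadhan bound and $\chi(b)\le 1$ gives the claim, with the exponent $q\in\{1,2\}$ simply carried along.

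I expect the main obstacle to be the justification of the reduction to linear test functions in the computation of $\|V_n\|_{-1,n}^2$, which is exactly where the estimate could lose its sharp constant. The clean way around it is the eigenfunction identity above: it makes the monomial basis diagonal for $\mathcal{S}_n^{\mathrm{res}}$ and the degree grading invariant, so no higher-order test function can improve the supremum of a degree-one target. A secondary point to record carefully is that retaining only one reservoir's Glauber form is a legitimate lower bound for $\langle f,\mathcal{S}_n f\rangle$; this is immediate from nonnegativity of the remaining symmetric Dirichlet forms. All of this is precisely the content of Lemma 6.1 of \cite{flucstefano} with $a_j=G(j/n)$.
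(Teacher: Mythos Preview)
Your proposal is correct and matches the paper's approach exactly: the paper simply invokes Lemma~6.1 of \cite{flucstefano} with $a_j=G(j/n)$ and leaves the details to the reader, while you additionally supply a correct sketch of that lemma's proof (Kipnis--Varadhan inequality, discarding all but the reservoir Dirichlet form, eigenfunction identity for $\bar\eta(x)$ under the symmetric Glauber dynamics, and the resulting site-wise optimization).
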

The remaining ingredient which leads to Proposition \ref{convrem2} is Proposition \ref{prop3assum} below.
\begin{prop} \label{prop3assum}
	Assume that at least one of the next affirmations:
	\begin{itemize}
		\item 
		$(\alpha, \beta, \gamma) \in (0, \infty) \times (0, \infty) \times (0, 1]$ and $G \in C^{\infty}([0,1])$; 
		\item 
		$\alpha >0$, $\gamma \in (1,2)$, $\beta > \gamma - 1$ and $G \in  C^{\infty}([0,1])$;
		\item 
		$(\alpha, \beta, \gamma) \in (0, \infty) \times (0, \infty) \times (1, 2)$ and $G \in \mcb S_{Dir}$. 
	\end{itemize}
	Then
	\begin{align*}
		\lim_{n \rightarrow \infty} \alpha \frac{n^{\gamma}}{n^{\beta+1}} \sum_x [r^{-} \left( \tfrac{x}{n} \right) + r^{+} \left( \tfrac{x}{n} \right) ]  G^2 \left( \tfrac{x}{n} \right) =0.
	\end{align*}
\end{prop}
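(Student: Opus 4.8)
The plan is to reduce the whole quantity to the growth in $n$ of a single one-endpoint sum and then to run a case analysis in $\gamma$. First I would use the reflection $x\mapsto n-x$: since $r^{+}(\tfrac{x}{n})$ is the reflection of $r^{-}(\tfrac{x}{n})$ about $n/2$ (compare \eqref{defrpm} and the reservoir tails \eqref{defrnpm}), and since under this substitution $G^{2}(\tfrac{x}{n})$ is replaced by $G^{2}(1-\tfrac{x}{n})$, a function with the same sup-norm and, in the Dirichlet case, still vanishing at both endpoints, the $r^{+}$ contribution near $u=1$ is estimated exactly as the $r^{-}$ contribution near $u=0$. Hence it suffices to show $\alpha\,n^{\gamma-\beta-1}\sum_{x}r^{-}(\tfrac{x}{n})G^{2}(\tfrac{x}{n})\to0$, the $r^{+}$ term being symmetric. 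Throughout I use the uniform tail bounds $r^{-}(\tfrac{x}{n})\lesssim x^{-\gamma}$ and $r^{+}(\tfrac{x}{n})\lesssim (n-x)^{-\gamma}$, obtained by summing $s(y)\lesssim|y|^{-\gamma-1}$ from \eqref{defsa} and \eqref{prob} in the tails of \eqref{defrnpm} (these are precisely the weights through which this sum arises in Proposition \ref{lem61stefano}). Everything then rests on the order in $n$ of $\Sigma_{n}:=\sum_{x=1}^{n-1}x^{-\gamma}G^{2}(\tfrac{x}{n})$.

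The first two bullets follow from the summability of $x^{-\gamma}$ alone. For $\gamma\in(0,1)$ one has $\sum_{x=1}^{n-1}x^{-\gamma}\lesssim n^{1-\gamma}$, so with $G$ bounded $\Sigma_{n}\lesssim\|G\|_{\infty}^{2}\,n^{1-\gamma}$ and the full quantity is $\lesssim n^{\gamma-\beta-1}n^{1-\gamma}=n^{-\beta}\to0$ because $\beta>0$; the endpoint $\gamma=1$ is identical up to the harmless factor $\sum_{x}x^{-1}\lesssim\log n$, and $n^{-\beta}\log n\to0$. This settles the first bullet. For $\gamma\in(1,2)$ with $G\in C^{\infty}([0,1])$ merely bounded, the series $\sum_{x\ge1}x^{-\gamma}$ converges, whence $\Sigma_{n}=O(1)$ and the quantity is $\lesssim n^{\gamma-\beta-1}$, which tends to $0$ exactly under the hypothesis $\beta>\gamma-1$ of the second bullet.

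The delicate regime, and the step I expect to be the main obstacle, is the third bullet: $\gamma\in(1,2)$ with only $\beta>0$, where $\beta$ may be arbitrarily small while $\gamma-1$ stays bounded away from $0$. Here $\sum_{x\ge1}x^{-\gamma}$ still converges, but a bounded $G$ only yields $\Sigma_{n}=O(1)$ and hence $\lesssim n^{\gamma-\beta-1}$, which fails to vanish when $\beta\le\gamma-1$; one genuinely needs the Dirichlet condition $G\in\mcb S_{Dir}=\mcb S_{0}$. I would invoke the order-one Taylor estimate \eqref{expGSd} with $d=1$: since $G(0)=0$ it gives $|G(u)|\le\|\nabla G\|_{\infty}u$, and symmetrically $|G(u)|\le\|\nabla G\|_{\infty}(1-u)$. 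Substituting $G^{2}(\tfrac{x}{n})\lesssim(\tfrac{x}{n})^{2}$ upgrades the summand to $x^{-\gamma}(\tfrac{x}{n})^{2}=n^{-2}x^{2-\gamma}$, and since $2-\gamma\in(0,1)$ we obtain $\Sigma_{n}\lesssim n^{-2}\sum_{x=1}^{n-1}x^{2-\gamma}\lesssim n^{-2}n^{3-\gamma}=n^{1-\gamma}$. The full expression is then $\lesssim n^{\gamma-\beta-1}n^{1-\gamma}=n^{-\beta}\to0$ for $\beta>0$. The crux is to recognize that the boundary vanishing of $G$, quantified by this linear Taylor bound, supplies the extra factor $(\tfrac{x}{n})^{2}$ that turns the $O(1)$ estimate for $\Sigma_{n}$ into $O(n^{1-\gamma})$ and so converts the borderline prefactor $n^{\gamma-\beta-1}$ into the decaying $n^{-\beta}$; near $x=n$ the complementary bound $|G(u)|\le\|\nabla G\|_{\infty}(1-u)$ plays the same role for the $r^{+}$ term.
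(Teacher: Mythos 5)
Your proof is correct and follows essentially the same route as the paper: both arguments reduce via the tail bound $r^{\pm}(\tfrac{x}{n})\lesssim x^{-\gamma}$, resp.\ $(n-x)^{-\gamma}$ (the paper's \eqref{boundrnpm}), then run the identical case analysis — Riemann-sum/logarithmic estimates for $\gamma\le 1$ with $\beta>0$, the convergent series giving $n^{\gamma-1-\beta}$ when $\beta>\gamma-1$, and the order-one Taylor bound \eqref{expGSd} with $d=1$ in the Dirichlet case to upgrade $\Sigma_n$ from $O(1)$ to $O(n^{1-\gamma})$. Your preliminary reflection $x\mapsto n-x$ is only a cosmetic repackaging of the paper's reindexing of the $(n-x)^{-\gamma}$ sum, so there is no substantive difference.
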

\begin{proof}
	Applying \eqref{boundrnpm}, last display is bounded from above by a constant (independent of $n$) times
	\begin{equation} \label{L2notrob}
		\lim_{n \rightarrow \infty} \alpha \frac{n^{\gamma}}{n^{\beta+1}} \sum_x  [x^{-\gamma} + (n-x)^{-\gamma} ] G^2 \left( \tfrac{x}{n} \right).
	\end{equation}
For any $G \in C^{\infty}([0,1])$, last expression is bounded from above by
	\begin{equation} \label{L2neu}
		\lim_{n \rightarrow \infty} \alpha \| G \|_{\infty}^2  \frac{n^{\gamma}}{n^{\beta+1}} \sum_x  [x^{-\gamma} + (n-x)^{-\gamma} ] \lesssim \lim_{n \rightarrow \infty} \frac{n^{\gamma}}{n^{\beta+1}} \sum_x x^{-\gamma}.
	\end{equation}
\begin{itemize}
\item
Now we treat the case $(\alpha, \beta, \gamma) \in (0, \infty) \times (0, \infty) \times (0, 1]$ and $G \in C^{\infty}([0,1])$. For $\gamma =1$, the sum in \eqref{L2neu} is of order $\log(n)$, thus the term in \eqref{L2neu} is of order $n^{-\beta} \log(n)$, which vanishes as $n \rightarrow \infty$, due to $\beta >0$. Alternatively, if $\gamma \in (0,1)$, last limit in \eqref{L2neu} can be rewritten as
	\begin{align*}
		\lim_{n \rightarrow \infty} \frac{1}{n^{\beta}} \frac{1}{n}  \sum_x \Big( \frac{x}{n} \Big)^{-\gamma} \lesssim \lim_{n \rightarrow \infty} \frac{1}{n^{\beta}} \int_0^1 u^{-\gamma} du = \lim_{n \rightarrow \infty} \frac{1}{n^{\beta}(1-\gamma)} =0.
	\end{align*} 
	Again, last limit holds due to $\beta >0$. 
\item
Now we analyse the regime $\alpha >0$, $\gamma \in (1,2)$, $\beta > \gamma - 1$ for $G \in C^{\infty}([0,1])$. In this case, the sum in \eqref{L2neu} is convergent and the display in \eqref{L2neu} is of order $n^{\gamma - 1 -\beta}$, which goes to zero as $n \rightarrow \infty$, since $\beta > \gamma - 1$.
\item
Finally, we study the regime $(\alpha, \beta, \gamma) \in (0, \infty) \times (0, \infty) \times (1, 2)$ for $G \in \mcb S_{Dir}$. In this case, applying \eqref{expGSd} for $d=1$, the display in \eqref{L2notrob} is bounded from above by
	\begin{align*}
		& \lim_{n \rightarrow \infty} \alpha \| G^{(1)} \|_{\infty}^2  \frac{n^{\gamma}}{n^{\beta+1}} \Big\{ \sum_x x^{-\gamma} \Big( \frac{x}{n} \Big)^{2} + \sum_x (n-x)^{-\gamma} \Big( \frac{n-x}{n} \Big)^{2} \Big \} \\
		\lesssim & \lim_{n \rightarrow \infty} \frac{n^{\gamma}}{n^{\beta+1}} \sum_x x^{-\gamma} \Big( \frac{x}{n} \Big)^{2} = \lim_{n \rightarrow \infty} \frac{1}{n^{\beta}} \frac{1}{n}  \sum_x \Big( \frac{x}{n} \Big)^{2-\gamma} \\&\lesssim \lim_{n \rightarrow \infty} \frac{1}{n^{\beta}} \int_0^1 u^{2-\gamma} du = \lim_{n \rightarrow \infty} \frac{1}{n^{\beta}(3-\gamma)} =0.
	\end{align*}
Last limit holds due to $\beta >0$. This ends the proof. 
\end{itemize}		
\end{proof}
Finally, we present the proof for Proposition \ref{convrem2}.
\begin{proof} [Proof of Proposition \ref{convrem2}]
Let $(\alpha, \beta, \gamma) \in (0, \infty) \times \mathbb{R} \times (0, 2)$ and $G \in \mcb {S}_{\beta,\gamma}$.
\begin{itemize}
\item
If $(\alpha, \beta, \gamma) \in (0, \infty) \times \{ 0 \} \times (0, 2)$, the proof is a direct consequence of \eqref{op_Rnb}.

\item
If $(\alpha, \beta, \gamma) \in (0, \infty) \times (-\infty, 0) \times (0,  2)$, the proof ends by combining \eqref{timescale}, \eqref{defsbetagamma}, \eqref{op_Rnb}, \eqref{CSFub}, \eqref{CSdisc} with Propositions \ref{convfrac} and \ref{Lqreg}.

\item
If $(\alpha, \beta, \gamma) \in (0, \infty) \times (0, \infty) \times (0, 2)$, the proof follows by combining \eqref{op_Rnb} with \eqref{timescale}, Proposition \ref{prop3assum} and an application of Proposition \ref{lem61stefano} for $c_n = \alpha \Theta(n) n^{-\beta} (\sqrt{n-1})^{-1}$ and $q=1$.
\end{itemize}
\end{proof}

\subsection{Conclusion}

From \eqref{decomp} and Propositions \ref{propmit},  \ref{tightfluc1}, \ref{tightmartterm}, \ref{tightfluc2a}, \ref{convL2error} and \ref{conv0tight}, the tightness for $\{ \mathcal{Y}_t^n; t \in [0,T] \}_{n \geq 2} \}$ in $\mcb{D} ([0,T], \mcb {S}_{\beta,\gamma}' )$ follows from the next two results.
\begin{prop} \label{convAtnG}
Assume that $G \in C^{\infty}([0,1])$ and at least one of the cases \textit{(I), (II), (III) or (IV)} of (H1) holds. 
Then 
$
\lim_{n \rightarrow \infty} \mathbb{E}_{\nu_b^n} \big[ \sup_{t \in [0,T]} \big(  A_t^n(G) \big)^2 \big] =0.
$
\end{prop}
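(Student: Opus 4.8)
The plan is to write $A_t^n(G)=\int_0^t V_n(\eta_s^n)\,ds$ with the integrand
\begin{equation*}
V_n(\eta):=\alpha_a\,\frac{n^{r_a}}{\sqrt{n-1}}\sum_{x,y}\big[G(\tfrac yn)-G(\tfrac xn)\big]\,a(y-x)\,\bar\eta(x)\bar\eta(y),
\end{equation*}
and to control it by a crude \emph{static} estimate wherever possible, reverting to a sharper \emph{dynamic} estimate only in the strongly asymmetric cases. If $c^+=c^-$ then $a\equiv0$ by \eqref{defsa}--\eqref{prob}, so $V_n\equiv0$ and the claim is trivial; hence I assume $c^+\neq c^-$, in which case $a(k)^2=\tfrac{(c^+-c^-)^2}{4}|k|^{-2\gamma-2}$ for $k\neq0$. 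Note also that the coefficient $[G(\tfrac yn)-G(\tfrac xn)]\,a(y-x)$ is \emph{symmetric} in $(x,y)$, by the antisymmetry of $a$, and vanishes on the diagonal.

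First I would treat the weakly asymmetric regime. Since the process is stationary under $\nu_b^n$, applying \eqref{CSFub} with $g=V_n$ gives $\mathbb{E}_{\nu_b^n}[\sup_{t\le T}(A_t^n(G))^2]\le T^2\,\mathbb{E}_{\nu_b^n}[V_n^2]$. The variables $(\bar\eta(x))_x$ being i.i.d.\ and centred with $\mathbb{E}_{\nu_b^n}[\bar\eta(x)^2]=\chi(b)$, the standard pairing identity for an off-diagonal symmetric quadratic form yields
\begin{equation*}
\mathbb{E}_{\nu_b^n}[V_n^2]=2\chi(b)^2\,\alpha_a^2\,\frac{n^{2r_a}}{n-1}\sum_{x,y}\big[G(\tfrac yn)-G(\tfrac xn)\big]^2 a(y-x)^2 .
\end{equation*}
Using the mean-value bound $|G(\tfrac yn)-G(\tfrac xn)|\le\|\nabla G\|_\infty\,|y-x|/n$ (available since $G\in C^\infty([0,1])$ needs no boundary condition here) and summing $|k|^{-2\gamma}$ over $k$, one finds $\sum_{x,y}[G(\tfrac yn)-G(\tfrac xn)]^2|y-x|^{-2\gamma-2}\lesssim n^{-1}$ for $\gamma>\tfrac12$ and $\lesssim n^{-2\gamma}$ for $\gamma<\tfrac12$ (logarithmic at $\gamma=\tfrac12$). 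Hence $\mathbb{E}_{\nu_b^n}[\sup_t(A_t^n)^2]\lesssim n^{2r_a-2}$ when $\gamma>\tfrac12$, and $\lesssim n^{2r_a-1-2\gamma}$ when $\gamma\le\tfrac12$. The second bound vanishes automatically because $r_a\le\gamma$ by \eqref{comprsgam}, and the first vanishes whenever $r_a<1$; together with the trivial case this settles case (I) of (H1).

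For cases (II)--(IV) we have $r_a\ge1$, hence $\gamma\ge1$, and the static bound no longer decays. Here I would invoke the Kipnis--Varadhan inequality for the stationary process: since $V_n$ is mean zero under $\nu_b^n$,
\begin{equation*}
\mathbb{E}_{\nu_b^n}\Big[\sup_{0\le t\le T}\big(A_t^n(G)\big)^2\Big]\ \lesssim\ \frac{T}{\Theta(n)}\,\|V_n\|_{-1}^2,\qquad \|V_n\|_{-1}^2:=\sup_{f}\Big\{2\langle V_n,f\rangle_{\nu_b^n}-\langle f,(-\mcb L_n^{s})f\rangle_{\nu_b^n}\Big\},
\end{equation*}
the $H_{-1}$ norm being taken with respect to the symmetric part $\mcb L_n^{s}$ of the generator. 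Dropping the boundary (Glauber) pieces of $\mcb L_n^{s}$ only enlarges this norm, so it suffices to estimate it for the $\beta$-independent bulk symmetric exclusion; evaluating the variational formula on a quadratic test function adapted to the short-range kernel $a(\cdot)$ should produce a gain of one power of $n$ over the static bound, i.e.\ $\|V_n\|_{-1}^2\lesssim n^{\gamma+2r_a-3}$ (up to a logarithm at $\gamma=1$). With $\Theta(n)=n^{\gamma}$ for $\beta\ge0$ and $\Theta(n)=n^{\gamma+\beta}$ for $\beta<0$ from \eqref{timescale}, this gives $n^{2r_a-3}$ when $\beta\ge0$ and $n^{2r_a-3-\beta}$ when $\beta<0$, which vanish exactly under the thresholds $2r_a<3$ (case III) and $2r_a\le3+\beta$ (case IV). The marginal exponent $\gamma=1$ (case II), where the first moment $m_a$ in \eqref{defma} diverges logarithmically and the constraint $\beta_a\ge\max\{0,\beta\}$ in \eqref{condrate} forces $\beta\ge0$, is absorbed by tracking this logarithm.

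The hard part will be the $H_{-1}$ estimate of the third step: constructing (equivalently, approximately solving the associated Poisson equation for) a quadratic test function that realizes the extra power of $n$ over the naive $L^2$ bound, while controlling the Dirichlet form of the long-range symmetric exclusion together with its boundary contributions. This is exactly the second-order-type computation that underlies the paper's nonlinear estimates, and it is what pins the critical value $r_a=\tfrac32$ separating the Ornstein--Uhlenbeck regime from the Burgers regime; note in particular that at $r_a=\tfrac32$, $\beta\ge0$ the dynamic bound is merely $O(1)$, consistent with the surviving nonlinearity of \eqref{spdefbe} under (H2).
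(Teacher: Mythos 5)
Your reduction to $c^{+}\neq c^{-}$ and your treatment of case (I) are correct and essentially the paper's Proposition \ref{propraless1}: a static $L^2(\nu_b^n)$ estimate via \eqref{CSFub} and stationarity, with the same exponents $n^{2r_a-2}$ for $\gamma>1/2$ and $n^{2r_a-1-2\gamma}$ for $\gamma\le 1/2$ (killed by \eqref{comprsgam}). The genuine gap is in cases (II)--(IV), where everything rests on the asserted bound $\|V_n\|_{-1}^2\lesssim n^{\gamma+2r_a-3}$, which you do not prove, and which the route you sketch cannot deliver. Evaluating the Kipnis--Varadhan variational formula on a single quadratic test function is, in effect, one one-block replacement of $\bar\eta(x)\bar\eta(x\pm z)$ by a block average at a single scale $K_n$; that is exactly what the paper's Lemma \ref{lem411} provides, with an error of order $(K_n)^{1+\gamma}n^{2r_a}\Theta(n)^{-1}n^{-2}$, to be balanced against the errors $n^{2r_a-2}K_n^{-1}$ and $n^{2r_a-2}K_n^{1-2\gamma}$ of Lemmas \ref{lem410} and \ref{lem45}. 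For $\beta\ge 0$ (so $\Theta(n)=n^{\gamma}$) the optimal single scale is $K_n=n^{\gamma/(\gamma+2)}$, yielding $n^{2r_a-2-\gamma/(\gamma+2)}$, which vanishes only when $2r_a<2+\gamma/(\gamma+2)<5/2$. This falls strictly short of the thresholds $2r_a<3$ and $2r_a\le 3+\beta$ in (III)--(IV), so a one-step $H_{-1}$ computation does not realize the ``gain of one power of $n$'' you posit; the step you flag as ``the hard part'' is in fact the entire content of the proposition in this regime, and your proposal stops precisely there.

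The paper closes this gap with a multiscale second-order Boltzmann--Gibbs argument: after a second-order Taylor expansion of $G$ (Lemma \ref{lem46}) and a long-jump cutoff (Lemma \ref{lem45}), the products $\bar\eta_s^n(x)\bar\eta_s^n(x\pm z)$ are replaced by the directional block averages $\overrightarrow{\psi}_x^{K}$, $\overleftarrow{\psi}_x^{K}$ --- right- or left-directed according to whether $x\le n/2$ or $x> n/2$, which is how the reservoirs are accommodated, rather than by merely discarding the Glauber part of the Dirichlet form --- and the replacement is iterated along the scales $K_j=n^{\lambda_j}$ with $\lambda_j$ defined by the recursion \eqref{reclaw} (Lemmas \ref{lemeq413}, \ref{lem414}, \ref{lemmult0}, \ref{lem415b}, \ref{lem416}). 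Each step uses the dynamic estimate of Proposition \ref{prop48} (a Kipnis--Varadhan-type bound, so your choice of tool is the right one), but only the limit of the iteration attains exponents covering $2r_a<3$, resp.\ $2r_a\le 3+\beta$. Concerning case (II): the paper observes that $r_a\ge 1=\gamma$ forces $\beta=\beta_a=0$ and $r_a=1$, so $2r_a=2<5/2$ and a \emph{single} scale $K_n=n^{u}$, $u<1/2$, already suffices (Proposition \ref{propraeq1}, where the $\log^2 K_n$ of Lemma \ref{lem410} appears); no delicate bookkeeping of the divergent first moment is needed, since $m_a$ in \eqref{defma} is only ever used for $\gamma>1$.
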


\begin{prop} \label{tightAtnG}
Assume that $G \in C^{\infty}([0,1])$, $\beta \geq 0$, $r_a =3/2$ and $c^{+} \neq c^{-}$. Then the sequence $\{ \mathcal{A}_t^n(G); t \in [0,T] \}_{n \geq 1}$  is tight with respect to the Skorohod topology of $\mcb{D}( [0,T], \mathbb{R})$.
\end{prop}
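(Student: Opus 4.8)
The plan is to verify Aldous' criterion, in the form of Proposition~\ref{tightmarterm0}, for the additive functional $A_t^n(G)$ of \eqref{defAtnG}, which is the time integral of the quadratic local function
\[
\Gamma_n(\eta):=\alpha_a\,\frac{n^{r_a}}{\sqrt{n-1}}\sum_{x,y}\big[G(\tfrac{y}{n})-G(\tfrac{x}{n})\big]\,a(y-x)\,\bar\eta(x)\bar\eta(y).
\]
Both conditions of Aldous' criterion will follow from a single second-moment bound on the increments of $A_t^n(G)$. The point I would emphasise first is that at the critical value $r_a=3/2$ the naive static bound coming from \eqref{CSFub}, namely $\mathbb{E}_{\nu_b^n}[\sup_{s\le t}(A_s^n(G))^2]\le t^2\,\mathbb{E}_{\nu_b^n}[\Gamma_n^2]$, is useless: since $\nu_b^n$ is a product measure and $a(\cdot)$ is antisymmetric, a direct pairing computation gives $\mathbb{E}_{\nu_b^n}[\Gamma_n^2]$ of order $n^{2r_a-2}\|\nabla G\|_{L^2}^2=n\,\|\nabla G\|_{L^2}^2$, which diverges. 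The dynamics therefore has to enter, and the right tool is the Kipnis--Varadhan ($H_{-1}$) inequality fed by the second-order Boltzmann--Gibbs principle of Section~\ref{sec:nonlinear}.

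Concretely, I would establish the microscopic energy estimate
\[
\mathbb{E}_{\nu_b^n}\Big[\sup_{0\le s\le t}\big(A_s^n(G)\big)^2\Big]\le C(G)\,t,
\qquad t\in[0,T],\quad G\in C^\infty([0,1]),
\]
uniformly in $n$. This is the microscopic analogue of \eqref{eqdefEE}: the Kipnis--Varadhan inequality bounds the left-hand side by $C\,t\,\|\Gamma_n\|_{-1,n}^2$, where $\|\cdot\|_{-1,n}$ is the Sobolev norm attached to the symmetric part of the accelerated generator $\Theta(n)\mcb L_n$, and the second-order Boltzmann--Gibbs principle shows that $\|\Gamma_n\|_{-1,n}^2$ stays bounded precisely when $r_a=3/2$. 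The linear (rather than superlinear) dependence on $t$ is exactly the reflection of the fact that, contrary to Proposition~\ref{convAtnG}, this term does not vanish but survives in the limit as the nonlinearity $\kappa_1(\nabla\mathcal Z_t)^2$ with $\kappa_1=2\alpha_a m_a$.

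Granting this estimate, the first condition of Aldous' criterion is immediate: taking $t=T$ and applying Chebyshev's inequality gives $\mathbb{P}_{\nu_b^n}(\sup_{t\in[0,T]}|A_t^n(G)|>A)\le C(G)T/A^2\to0$ as $A\to\infty$, uniformly in $n$. For the second condition, given $\tau\in\mathcal{T}_T$ and $t\le\omega$, I would use the stationarity of $\nu_b^n$ and the strong Markov property to realise $A_{\tau+t}^n(G)-A_\tau^n(G)$ as an additive functional of the process restarted at $\eta_\tau$; averaging the conditional second moment reduces the bound to the fixed-time estimate above and yields $\mathbb{E}_{\nu_b^n}[(A_{T\wedge(\tau+t)}^n(G)-A_\tau^n(G))^2]\le C(G)\,\omega$. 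Chebyshev's inequality and $\omega\to0^+$ then close the second condition.

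The main obstacle is the second-order Boltzmann--Gibbs principle controlling $\|\Gamma_n\|_{-1,n}^2$, i.e. the multi-scale analysis announced in the introduction, in which the occupation products $\bar\eta(x)\bar\eta(y)$ are replaced iteratively by averages over microscopic boxes of geometrically increasing size, each step paying an $H_{-1}$ cost that must be summed and balanced against the box length. Two features make this delicate here. First, the kernel $a(\cdot)$ is long-range with tails governed by $\gamma\in[\tfrac32,2)$, so the replacement errors carry $\gamma$-dependent weights that have to be controlled by the fractional estimates of Appendix~\ref{propfrac}. Second, because of the boundary reservoirs one cannot use centred boxes near the endpoints: one must take right-directed boxes for reference sites with $x/n<1/2$ and left-directed ones for $x/n>1/2$, which is precisely the microscopic shadow of the split of $\mathcal A_{s,t}^\varepsilon(G)$ into its $\iota_\varepsilon^+$ and $\iota_\varepsilon^-$ contributions in \eqref{defprocA}. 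Once $\|\Gamma_n\|_{-1,n}^2=O(1)$ is secured the remaining Aldous bookkeeping is routine.
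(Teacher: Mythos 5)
Your first half is sound and parallels the paper: you correctly observe that the static bound diverges (indeed $\mathbb{E}_{\nu_b^n}[\Gamma_n^2]\sim n^{2r_a-2}=n$ at $r_a=3/2$), that the dynamics must enter through Kipnis--Varadhan-type estimates (Proposition \ref{prop48}) combined with the multiscale second-order Boltzmann--Gibbs replacement with right/left-directed boxes, and that the surviving limit is the nonlinearity with $\kappa_1=2\alpha_a m_a$. This is essentially the content of Proposition \ref{propaprox}, which replaces $A_t^n(G)$ by $2\alpha_a m_a\tilde A_t^n(G)$, the quadratic field of box averages at scale $n^{\lambda_N}$ defined in \eqref{deftilAtnf}. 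The gap is in your tightness mechanism. Your verification of Aldous' condition (2) hinges on the claim that, by stationarity and the strong Markov property, the conditional second moment of $A^n_{\tau+t}-A^n_\tau$ averages out to the fixed-time estimate. This is false: invariance of $\nu_b^n$ gives $\mathrm{Law}(\eta_t)=\nu_b^n$ at \emph{deterministic} times only; at a stopping time $\tau$ the law of $\eta_\tau$ is in general not $\nu_b^n$ (take $\tau=\inf\{t:\eta_t(1)=1\}$), and the Kipnis--Varadhan bound you invoke is an estimate for additive functionals started from the invariant measure over deterministic horizons, so it does not transfer to the restarted process. For the martingale part this obstruction is bypassed via optional stopping of the quadratic variation (as in the proof of Proposition \ref{tightmartterm}), but $A^n_t$ is not a martingale and no analogous structure is available. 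Note also that your linear-in-$t$ bound $\mathbb{E}[\sup_{s\le t}(A^n_s)^2]\le C(G)t$, even if granted, is not by itself a tightness criterion at deterministic times: second moments of increments of order $|t-s|$ do not imply tightness in $\mcb D([0,T],\mathbb{R})$ (a sequence with two unit jumps at mutual distance $1/n$ satisfies such a bound and is not tight), and Kolmogorov--Centsov requires an exponent strictly larger than one.

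The missing idea is precisely the paper's Lemma \ref{boundvaraux}: a \emph{superlinear-in-time} second-moment bound of order $t^{(2\gamma-1)/\gamma}$, with exponent $(2\gamma-1)/\gamma=2-1/\gamma>1$ since $\gamma\geq 3/2$ (here $\beta\ge 0$ enters through $\Theta(n)=n^\gamma$). It is obtained by interpolating between the static independent-boxes bound of Lemma \ref{lemauxtight}, which gives $t^2/n^{\lambda}$ and wins for $t\le n^{(\lambda-1)/\gamma}$, and a comparison of the box average at scale $n^{\lambda}$ with one at scale $\varepsilon n$ via the doubling Lemma \ref{lemmult0} (built on Proposition \ref{prop48}), optimizing the choice $\varepsilon=t^{1/\gamma}$ so that both contributions are of order $t^{(2\gamma-1)/\gamma}/n$. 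Applied to $\tilde A^n_t(G)$ this yields $\mathbb{E}_{\nu_b^n}[\sup_{s\le t}(\tilde A^n_s(G))^2]\lesssim t^{(2\gamma-1)/\gamma}\|\nabla G\|_{L^2}^2$ uniformly in $n$, from which tightness of $\{\tilde A^n_t(G)\}$ follows by a Kolmogorov--Centsov-type argument at deterministic times (Proposition \ref{proptightaux}, as at the end of Section 4.4.4 of \cite{jarafluc}), with no stopping times needed; Proposition \ref{tightAtnG} then follows from this together with the replacement Proposition \ref{propaprox}. To repair your proof you would either need this superlinear refinement, or a genuine stopping-time version of the $H_{-1}$ estimate, e.g. via the Kipnis--Varadhan martingale-plus-corrector decomposition of $A^n_t$, neither of which is supplied by the argument as written.
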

Since the arguments required to obtain the last two results are particularly technical, we prove Proposition \ref{convAtnG} and  \ref{tightAtnG} in Section \ref{secconvAtnG}. 

\section{Proof of Propositions \ref{convAtnG} and  \ref{tightAtnG}} \label{sec:nonlinear}

\subsection{Proof of Proposition \ref{convAtnG}} 
\label{secconvAtnG}
If $c^{+}=c^{-}$, we get from \eqref{defsa} that $a(z)=0$, for any $z \in \mathbb{Z}$. Thus, from \eqref{defAtnG} we get that $A_t^n(G)=0$ for every $G \in C^{\infty}([0,1])$, every $t \in [0, T]$ and every $n \geq 2$, and Proposition \ref{convAtnG} holds trivially. Therefore, in the remainder of this section we will assume that $c^{+} \neq c^{-}$.
We observe that Proposition \ref{convAtnG} is a consequence of Propositions \ref{propraless1}, \ref{propraeq1} and \ref{propgammales32} below. We begin by obtaining Proposition \ref{propraless1}, which corresponds to the case $r_a < 1$ in Proposition \ref{convAtnG}.

\subsubsection{The case $r_a < 1$}
\begin{prop} \label{propraless1}
Assume that $G \in C^{\infty}([0,1])$ and $r_a < 1$. Then 
\begin{align*}
\lim_{n \rightarrow \infty} \mathbb{E}_{\nu_b^n} \Big[ \sup_{t \in [0,T]} \Big(  \int_{0}^t \frac{n^{r_a}}{ \sqrt{ n-1}} \sum_{x, y} [G \left( \tfrac{y}{n}  \right) - G \left( \tfrac{x}{n}  \right)]    a(y-x)  \bar{\eta}_s^n(x) \bar{\eta}_s^n(y) ds \Big)^2  \Big]=0.
\end{align*}
\end{prop}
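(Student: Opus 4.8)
The plan is to reduce this supremum-in-time bound to a single second-moment computation under the invariant measure $\nu_b^n$ and then to identify the exact power of $n$ that governs it. First I would apply \eqref{CSFub} to
\[
g(\eta) := \frac{n^{r_a}}{\sqrt{n-1}} \sum_{x,y} \big[G(\tfrac yn) - G(\tfrac xn)\big]\, a(y-x)\, \bar{\eta}(x)\bar{\eta}(y),
\]
so that the quantity in the statement is bounded by $T\int_0^T \mathbb{E}_{\nu_b^n}[g^2(\eta_r^n)]\, dr$. Since the process starts from the invariant state $\nu_b^n$, each $\eta_r^n$ is distributed as $\nu_b^n$, hence this equals $T^2\, \mathbb{E}_{\nu_b^n}[g^2]$, and it suffices to prove that $\mathbb{E}_{\nu_b^n}[g^2] \to 0$ as $n\to\infty$.

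Next I would compute $\mathbb{E}_{\nu_b^n}[g^2]$ exploiting that $\nu_b^n$ is a Bernoulli product measure, so the variables $\bar{\eta}(x)$ are independent and mean-zero with $\mathbb{E}_{\nu_b^n}[\bar{\eta}(x)^2]=\chi(b)$. Writing $c_{x,y}:=[G(\tfrac yn)-G(\tfrac xn)]\,a(y-x)$, which vanishes on the diagonal $x=y$, the four-point function $\mathbb{E}_{\nu_b^n}[\bar{\eta}(x)\bar{\eta}(y)\bar{\eta}(x')\bar{\eta}(y')]$ is nonzero only when the unordered pairs $\{x,y\}$ and $\{x',y'\}$ coincide. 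Together with the antisymmetry $a(-z)=-a(z)$ from \eqref{defsa}, which yields $c_{y,x}=c_{x,y}$, this gives
\[
\mathbb{E}_{\nu_b^n}[g^2] = \frac{2\chi(b)^2\, n^{2r_a}}{n-1} \sum_{x\neq y} \big[G(\tfrac yn)-G(\tfrac xn)\big]^2\, a(y-x)^2 .
\]

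The third step is to estimate this double sum. Because $G$ extends to an element of $C_c^{\infty}(\mathbb{R})$ it is Lipschitz, so $[G(\tfrac yn)-G(\tfrac xn)]^2 \le \|\nabla G\|_{\infty}^2 (y-x)^2/n^2$, while \eqref{prob} and \eqref{defsa} give $a(y-x)^2 = \tfrac{(c^{+}-c^{-})^2}{4}|y-x|^{-2\gamma-2}$. Hence each summand is $\lesssim |y-x|^{-2\gamma}/n^2$, and I would control $S_n:=\sum_{x\neq y}|y-x|^{-2\gamma}$ by splitting on the sign of $2\gamma-1$: it is $O(n)$ if $\gamma>1/2$, $O(n\log n)$ if $\gamma=1/2$, and $O(n^{2-2\gamma})$ if $\gamma<1/2$. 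Substituting back, $\mathbb{E}_{\nu_b^n}[g^2]$ is of order $n^{2r_a-2}$, $n^{2r_a-2}\log n$, or $n^{2r_a-1-2\gamma}$ in the three regimes, respectively.

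The main (and essentially only) delicate point is that the hypothesis $r_a<1$ is not by itself sufficient in the regime $\gamma<1/2$, since there the exponent $2r_a-1-2\gamma$ can be positive when $r_a$ is close to $1$. To close this case I would invoke \eqref{comprsgam}, namely $r_a\le\gamma$, which forces $2r_a-1-2\gamma \le 2\gamma-1-2\gamma=-1<0$. In the two remaining regimes $\gamma\ge 1/2$ the assumption $r_a<1$ directly makes the exponent of $n$ negative, the logarithmic factor at $\gamma=1/2$ being harmless. Thus $\mathbb{E}_{\nu_b^n}[g^2]\to 0$ in all cases, which completes the proof.
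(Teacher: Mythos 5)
Your proof is correct and follows essentially the same route as the paper: an application of \eqref{CSFub} plus stationarity, the second-moment bound $n^{2r_a-3}\sum_{x,z}z^{-2\gamma}$ via the Mean Value Theorem and \eqref{defsa}, and the same three-case split on $\gamma$ versus $1/2$, including the use of \eqref{comprsgam} to close the regime $\gamma\in(0,1/2)$. The only difference is cosmetic: you spell out the four-point-function computation under the Bernoulli product measure that the paper leaves implicit.
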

\begin{proof}
Combining \eqref{CSFub}, \eqref{defsa} and the Mean Value`s Theorem, last expectation is bounded from above by a constant, times
\begin{align*}
n^{2 r_a - 3} \sum_{x,z} z^{-2\gamma} \leq n^{2 r_a - 2} \sum_{z} z^{-2\gamma}.
\end{align*}
If $\gamma > 1/2$, last sum is convergent and last display is of order $n^{2 r_a - 2}$, which goes to zero as $n \rightarrow \infty$, due to the assumption that $r_a < 1$. If $\gamma = 1/2$, last sum is of order $\log(n)$ and last display is of order $n^{2 r_a - 2} \log(n)$, which goes to zero as $n \rightarrow \infty$, due to the assumption that $r_a < 1$. Finally, if $\gamma \in (0, \, 1/2)$, last display can be rewritten as
\begin{align*}
n^{2 r_a - 1} \frac{1}{n} \sum_{z} z^{-2\gamma} \leq \frac{1}{n} n^{2 r_a - 2 \gamma -1} \frac{1}{n} \sum_{z=1}^n \Big( \frac{z}{n} \Big)^{-2\gamma} \lesssim n^{2 r_a - 2 \gamma -1} \int_0^1 u^{- 2 \gamma} du \lesssim n^{2 r_a - 2 \gamma -1} \lesssim n^{-1},
\end{align*}
which goes to zero as $n \rightarrow \infty$. In last bound we applied \eqref{comprsgam}.
\end{proof}
\subsubsection{The case $r_a \geq 1$}

We note that the proof of \textit{(II)} in (H1) is given below in Proposition \ref{propraeq1}, but in order to prove it we need to derive intermediate results which are given below.

For the ase of notation, we denote $b_n:=n^{r_a-1} (\sqrt{ n-1})^{-1}$, for every $n \geq 2$. Performing a change of variables and a second order Taylor expansion on $G$ around $x/n$, we observe that
\begin{align}
A_t^n(G)=& \alpha_a \int_{0}^t \frac{b_n}{n} \sum_{x=1}^{n/2} \sum_{y=x+1}^{n-1} \Delta G \left(  \xi^n_{x,y}  \right)     a(y-x) (y-x)^2  \bar{\eta}_s^n(x) \bar{\eta}_s^n(y) ds \label{Atngrad0a} \\
+& \alpha_a \int_{0}^t \frac{b_n}{n} \sum_{x=1+n/2}^{n-1} \; \sum_{y=1+n/2}^{x-1} \Delta G \left(  \xi^n_{x,y}  \right)     a(y-x) (y-x)^2  \bar{\eta}_s^n(x) \bar{\eta}_s^n(y) ds \label{Atngrad0b}  \\
+ & 2 \alpha_a \int_{0}^t b_n \sum_{x=1}^{n/2} \nabla G \left(  \tfrac{x}{n}  \right) \sum_{y=x+1}^{n-1}      a(y-x)  (y-x) \bar{\eta}_s^n(x) \bar{\eta}_s^n(y) ds \label{Atngrad1} \\
+ & 2 \alpha_a \int_{0}^t b_n \sum_{x=1+n/2}^{n-1} \nabla G \left(  \tfrac{x}{n}  \right) \sum_{y=1+n/2}^{x-1}      a(y-x)  (y-x) \bar{\eta}_s^n(x) \bar{\eta}_s^n(y) ds, \label{Atngrad2}
\end{align}
where $\xi^n_{x,y}\in(x/n,y/n)$.

Next we estimate the terms in \eqref{Atngrad0a} and \eqref{Atngrad0b} by applying the following result.
\begin{lem} \label{lem46}
Assume that $G \in C^{\infty}([0,1])$. Then it holds
\begin{align*}
&   \mathbb{E}_{\nu_b^n} \Big[ \sup_{t \in [0,T]} \Big(  \int_{0}^t \frac{b_n}{n} \sum_{x=1}^{n/2} \sum_{y=x+1}^{n-1} \Delta G \left(  \xi^n_{x,y}  \right)     a(y-x) (y-x)^2 \bar{\eta}_s^n(x) \bar{\eta}_s^n(y) ds \Big)^2  \Big] \lesssim \frac{1}{n}; \\
&   \mathbb{E}_{\nu_b^n} \Big[ \sup_{t \in [0,T]} \Big(  \int_{0}^t \frac{b_n}{n} \sum_{x=1+n/2}^{n-1} \; \sum_{y=1+n/2}^{x-1} \Delta G \left(  \xi^n_{x,y}  \right)     a(y-x) (y-x)^2 \bar{\eta}_s^n(x) \bar{\eta}_s^n(y)ds \Big)^2  \Big] \lesssim \frac{1}{n}.
\end{align*}
\end{lem}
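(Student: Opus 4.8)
The plan is to reduce each space-time quantity to a static second-moment computation under the invariant measure $\nu_b^n$, and then to use the product structure of $\nu_b^n$ to collapse the resulting sum to its diagonal. The two estimates are mirror images of one another under the reflection $x\mapsto n-x$, which preserves $|a(\cdot)|$ and the factors $(y-x)^2$ and leaves $\|\Delta G\|_\infty$ untouched; hence it suffices to treat the first, the second being identical. Set
\[
g(\eta):=\frac{b_n}{n}\sum_{x=1}^{n/2}\sum_{y=x+1}^{n-1}\Delta G(\xi^n_{x,y})\,a(y-x)\,(y-x)^2\,\bar\eta(x)\,\bar\eta(y),
\]
so that the first integrand equals $g(\eta_s^n)$, with $\xi^n_{x,y}\in(x/n,y/n)$ deterministic. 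Applying \eqref{CSFub} with $t=T$ and using that $\nu_b^n$ is invariant (so $\mathbb{E}_{\nu_b^n}[g^2(\eta_r^n)]=\mathbb{E}_{\nu_b^n}[g^2]$ for all $r$) gives
\[
\mathbb{E}_{\nu_b^n}\Big[\sup_{t\in[0,T]}\Big(\int_0^t g(\eta_s^n)\,ds\Big)^2\Big]\le T^2\,\mathbb{E}_{\nu_b^n}[g^2].
\]

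Next I would expand $g^2$ as a sum over two pairs $(x_1,y_1)$ and $(x_2,y_2)$ and evaluate $\mathbb{E}_{\nu_b^n}[\bar\eta(x_1)\bar\eta(y_1)\bar\eta(x_2)\bar\eta(y_2)]$. Under the Bernoulli product measure the centred variables $\{\bar\eta(z)\}_z$ are independent, with $\mathbb{E}_{\nu_b^n}[\bar\eta(z)]=0$ and $\mathbb{E}_{\nu_b^n}[\bar\eta(z)^2]=\chi(b)$, so this expectation vanishes unless every site occurring does so at least twice. Since $x_i<y_i$ and $x_i\le n/2$ in both pairs, the only admissible matching is the diagonal $x_1=x_2$, $y_1=y_2$ (the crossed matching $x_1=y_2$, $y_1=x_2$ is ruled out by the orderings), which contributes $\chi(b)^2$. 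Hence
\[
\mathbb{E}_{\nu_b^n}[g^2]=\frac{b_n^2}{n^2}\,\chi(b)^2\sum_{x=1}^{n/2}\sum_{y=x+1}^{n-1}[\Delta G(\xi^n_{x,y})]^2\,a(y-x)^2\,(y-x)^4.
\]

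To close the estimate I would bound $|\Delta G(\xi^n_{x,y})|\le\|\Delta G\|_\infty$ and $|a(z)|\lesssim|z|^{-\gamma-1}$ (from \eqref{prob} and \eqref{defsa}), so that each summand is $\lesssim(y-x)^{2-2\gamma}$; writing $z=y-x$ and summing over the $O(n)$ values of $x$ gives $\mathbb{E}_{\nu_b^n}[g^2]\lesssim \tfrac{b_n^2}{n}\sum_{z=1}^{n}z^{2-2\gamma}$. Since $b_n^2=n^{2r_a-2}/(n-1)\lesssim n^{2r_a-3}$, this is $\lesssim n^{2r_a-4}\sum_{z=1}^{n}z^{2-2\gamma}$. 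In the representative range $\gamma\in(0,3/2)$ the sum is of order $n^{3-2\gamma}$, giving the bound $n^{2(r_a-\gamma)-1}\le n^{-1}$, where the last step is precisely \eqref{comprsgam}; the borderline $\gamma=3/2$ (a logarithmic sum) and the summable range $\gamma\in(3/2,2)$ follow from the same power count, now using the regime constraint $r_a<3/2$ in force in cases \textit{(III)} and \textit{(IV)} of (H1). The one place demanding genuine care is this final power count, especially verifying that the logarithmic factor at $\gamma=3/2$ is absorbed by the strict inequality $r_a<3/2$; the reduction to the diagonal and the role of stationarity and independence are otherwise routine.
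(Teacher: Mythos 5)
Your argument is, in substance, the paper's own proof: the paper also applies \eqref{CSFub} and then, citing \eqref{defsa}, passes directly to the bound $(b_n)^2 n^{-2}\sum_{x,z}z^{2-2\gamma}$ --- that is, the stationarity step, the product structure of $\nu_b^n$, and the collapse to the diagonal matching that you write out explicitly are exactly what is compressed into that one citation --- followed by the same power count closed by \eqref{comprsgam}. Your verification that the crossed matching $x_1=y_2$, $y_1=x_2$ is excluded by the orderings $x_i<y_i$, $x_i\le n/2$ is correct and makes explicit what the paper leaves implicit.

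One caveat concerning your final step. The constraint $r_a<3/2$ from cases \textit{(III)}--\textit{(IV)} of (H1) is not at your disposal: Lemma \ref{lem46} assumes only $G\in C^{\infty}([0,1])$ (plus the standing bound $r_a\le\gamma$ of \eqref{comprsgam}), and it is invoked not only under (H1) but also in the proof of Proposition \ref{propaprox} under (H2), where $2r_a=3$ exactly. For $\gamma>3/2$ this costs nothing, since $\sum_z z^{2-2\gamma}$ converges and $n^{2r_a-4}\le n^{-1}$ already follows from $r_a\le 3/2$; but at the corner $\gamma=r_a=3/2$ (i.e.\ $\beta=\beta_a=0$, which is admissible under (H2)) your count yields $n^{-1}\log n$ rather than the claimed $n^{-1}$. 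This blemish is shared by the paper itself, whose displayed chain bounds the Riemann sum by $\int_0^1 u^{2-2\gamma}\,du$, which is finite only for $\gamma<3/2$; the spurious logarithm is harmless in every application of the lemma, where only the vanishing of these terms as $n\to\infty$ is used. So apart from this borderline point --- which you correctly flagged as the delicate spot, though the fix you propose imports hypotheses the lemma does not carry --- your proof is complete and coincides with the paper's.
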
 
\begin{proof}
Combining \eqref{CSFub} and \eqref{defsa}, both expectations in last display are bounded from above by a positive constant depending only on $T$ and $G$, times
\begin{align*}
&  \frac{(b_n)^2}{n^2}  \sum_{x, z} z^{2 - 2 \gamma} \lesssim  \frac{1}{n} n^{2 r_a - 2 \gamma -1} \frac{1}{n} \sum_{z=1}^n \Big( \frac{z}{n} \Big)^{2-2\gamma} \lesssim n^{2 r_a - 2 \gamma -1} \int_0^1 u^{2- 2 \gamma} du \lesssim n^{2 r_a - 2 \gamma -1} \lesssim \frac{1}{n},
\end{align*}
ending the proof. In last line, we applied \eqref{comprsgam} and the fact that $n \leq 2(n-1)$, for every $n \geq 2$.
\end{proof}
In order to treat \eqref{Atngrad1}, given $K_n \in \{1, \ldots, n/3\}$ such that $\lim_{n \rightarrow \infty} K_n = \infty$, we rewrite \eqref{Atngrad1} as
\begin{align}
& 2 \alpha_a \int_{0}^t b_n \sum_{x=1}^{n/2} \nabla G \left(  \tfrac{x}{n}  \right) \sum_{y=x+K_n}^{n-1}       a(y-x)  (y-x)\bar{\eta}_s^n(x) \bar{\eta}_s^n(y) ds \label{Atngrad1a} \\
+ & 2 \alpha_a \int_{0}^t b_n \sum_{x=1}^{n/2} \nabla G \left(  \tfrac{x}{n}  \right)  \sum_{y=x+1}^{x+K_n-1}       a(y-x)  (y-x) \bar{\eta}_s^n(x) \bar{\eta}_s^n(y) ds. \label{Atngrad1b}
\end{align}
Analogously, \eqref{Atngrad2} can be rewritten as
\begin{align}
& 2 \alpha_a \int_{0}^t b_n \sum_{x=K_n+1+n/2}^{n-1} \nabla G \left(  \tfrac{x}{n}  \right) \sum_{y=1+n/2}^{x-K_n}      a(y-x)  (y-x) \bar{\eta}_s^n(x) \bar{\eta}_s^n(y) ds \label{Atngrad2a} \\
+ & 2 \alpha_a \int_{0}^t b_n \sum_{x=K_n+1+n/2}^{n-1} \nabla G \left(  \tfrac{x}{n}  \right) \sum_{y=x-K_n+1}^{x-1}      a(y-x)  (y-x) \bar{\eta}_s^n(x) \bar{\eta}_s^n(y) ds \label{Atngrad2b} \\
+& 2 \alpha_a \int_{0}^t b_n \sum_{x=1+n/2}^{K_n+n/2} \nabla G \left(  \tfrac{x}{n}  \right) \sum_{y=1+n/2}^{x-1}      a(y-x)  (y-x) \bar{\eta}_s^n(x) \bar{\eta}_s^n(y) ds. \label{Atngrad2c}
\end{align}
Due to \eqref{comprsgam}, a necessary condition for $r_a \geq 1$ is $\gamma \geq 1$, thus we will state this assumption in the following lemmas. In order to treat \eqref{Atngrad1a} and \eqref{Atngrad2a} (which are the terms for which $|x-y| \geq K_n)$, we apply the following result, which is analogous to Lemma 4.5 in \cite{jarafluc}.
\begin{lem} \label{lem45}
Assume that $\gamma \geq 1$, $1 \leq K_n \leq n/3$ and $G \in C^{\infty}([0,1])$. Then
\begin{align*}
& \lim_{n \rightarrow \infty}  \mathbb{E}_{\nu_b^n} \Big[ \sup_{t \in [0,T]} \Big(  \int_{0}^t b_{n} \sum_{x=1}^{n/2} \nabla G \left(  \tfrac{x}{n}  \right) \sum_{y=x+K_n}^{n-1}     a(y-x)  (y-x) \bar{\eta}_s^n(x) \bar{\eta}_s^n(y) ds \Big)^2  \Big] \lesssim \frac{n^{2r_a-2}}{K_n^{2 \gamma - 1}}; \\
& \lim_{n \rightarrow \infty}  \mathbb{E}_{\nu_b^n} \Big[ \sup_{t \in [0,T]} \Big(   \int_{0}^t b_{n} \sum_{x=K_n+1+n/2}^{n-1} \nabla G \left(  \tfrac{x}{n}  \right) \sum_{y=1+n/2}^{x-K_n}      a(y-x)  (y-x) \bar{\eta}_s^n(x) \bar{\eta}_s^n(y) ds \Big)^2  \Big] \lesssim \frac{n^{2r_a-2}}{K_n^{2 \gamma - 1}}.
\end{align*}
\end{lem}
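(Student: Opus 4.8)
The plan is to reduce each estimate to a static second-moment computation under the invariant measure $\nu_b^n$, and then to exploit that the integrand is a sum of products of centered occupation variables sitting at \emph{well-separated} sites. Both displayed estimates share the same structure — an outer sum over $\lesssim n$ sites, an inner sum over partner sites at distance at least $K_n$, and the weight $a(y-x)(y-x)$ — so the same computation delivers both bounds; I carry it out for the first and write
\begin{equation*}
g(\eta):= b_{n} \sum_{x=1}^{n/2} \nabla G\big(\tfrac{x}{n}\big) \sum_{y=x+K_n}^{n-1} a(y-x)\,(y-x)\,\bar{\eta}(x)\bar{\eta}(y)
\end{equation*}
for the corresponding summand, so that the quantity to be bounded is $\mathbb{E}_{\nu_b^n}[\sup_{t}(\int_0^t g(\eta_s^n)\,ds)^2]$.

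First I would apply \eqref{CSFub}. Since the process starts from the invariant measure $\nu_b^n$ it is stationary, so $\mathbb{E}_{\nu_b^n}[g^2(\eta_r^n)]$ is independent of $r$ and the expectation above is bounded by $T^2\,\mathbb{E}_{\nu_b^n}[g^2]$, the last expectation now being taken under $\nu_b^n$ itself. Everything then rests on estimating $\mathbb{E}_{\nu_b^n}[g^2]$. Expanding the square yields a fourfold sum over $(x,y,x',y')$ of $\mathbb{E}_{\nu_b^n}[\bar{\eta}(x)\bar{\eta}(y)\bar{\eta}(x')\bar{\eta}(y')]$; as $\nu_b^n$ is a Bernoulli product measure with centered marginals, this vanishes unless the four sites pair up into two coincident pairs. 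The constraints $y\ge x+K_n$ and $y'\ge x'+K_n$ guarantee $x<y$ and $x'<y'$, which rules out any diagonal collision \emph{within} a pair and also excludes the ``swap'' pairing $x=y',\,y=x'$ (that would force simultaneously $x\ge y+K_n$ and $y\ge x+K_n$). Hence only the genuine diagonal $x=x',\,y=y'$ contributes and, writing $\chi(b)=b(1-b)$,
\begin{equation*}
\mathbb{E}_{\nu_b^n}[g^2] = \chi(b)^2\, b_n^2 \sum_{x=1}^{n/2} \big[\nabla G\big(\tfrac{x}{n}\big)\big]^2 \sum_{y=x+K_n}^{n-1} [a(y-x)]^2\, (y-x)^2.
\end{equation*}

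It then remains to bound this sum. From \eqref{defsa} and \eqref{prob} one has $|a(z)|\lesssim z^{-\gamma-1}$, whence $[a(y-x)]^2(y-x)^2\lesssim (y-x)^{-2\gamma}$; bounding $[\nabla G(x/n)]^2\le \|\nabla G\|_\infty^2$ and substituting $z=y-x$ reduces the inner sum to the tail $\sum_{z\ge K_n} z^{-2\gamma}\lesssim K_n^{1-2\gamma}$. This is precisely where the hypothesis $\gamma\ge 1$ is used: it ensures $2\gamma>1$, so the tail is summable with the stated rate. The outer sum over $x\in\{1,\dots,n/2\}$ then contributes a factor $\lesssim n$, and since $b_n^2=n^{2r_a-2}/(n-1)\lesssim n^{2r_a-3}$, I obtain $\mathbb{E}_{\nu_b^n}[g^2]\lesssim n^{2r_a-3}\cdot n\cdot K_n^{1-2\gamma}= n^{2r_a-2}/K_n^{2\gamma-1}$, as claimed.

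The step I expect to be the main obstacle is not the final tail estimate but the bookkeeping in the second-moment expansion: one has to argue cleanly that the separation $|x-y|\ge K_n$ annihilates every off-diagonal contraction, so that no cross terms — which would carry a different, potentially larger, power of $n$ — ever appear. Once the reduction to the diagonal is secured the remaining work is the elementary computation above, where the only care needed is to track the exponents of $n$ and $K_n$ so as to land exactly on $n^{2r_a-2}K_n^{1-2\gamma}$. The bound $r_a\le\gamma$ from \eqref{comprsgam} is not needed at this stage; it enters only later, when $K_n$ is optimized against the short-range estimate.
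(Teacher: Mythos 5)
Your proof is correct and follows essentially the same route as the paper's: the paper also applies \eqref{CSFub} to reduce to a static second moment under $\nu_b^n$ and arrives at the bound $n^{2r_a-2}\sum_{z\geq K_n}z^{-2\gamma}\lesssim n^{2r_a-2}K_n^{1-2\gamma}$, which is exactly your diagonal computation with $|a(z)|z\lesssim z^{-\gamma}$ and the tail estimate using $2\gamma>1$. The only difference is that you make explicit the collapse of the fourfold sum to the pairing $x=x'$, $y=y'$ (the separation $|x-y|\geq K_n$ killing the swap term), a step the paper leaves implicit in invoking the product structure of $\nu_b^n$.
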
 
\begin{proof}
From \eqref{CSFub}, both expectations in last display are bounded from above by a constant times
\begin{align*}
 n^{2r_a-2} \sum_{z = K_n}^{n-1} z^{- 2 \gamma}  = &  n^{2r_a-2} \Big[ K_n^{- 2 \gamma} + \sum_{z = K_n+1}^{n} z^{- 2 \gamma} \Big] \leq n^{2r_a-2} \Big[ K_n^{- 2 \gamma} + \int_{K_n}^{n} z^{- 2 \gamma} \Big] \lesssim \frac{n^{2r_a-2}}{K_n^{2 \gamma - 1}},
\end{align*}
ending the proof. To get the last upper bound we applied the assumption that $\gamma \geq 1$.
\end{proof}
In \eqref{Atngrad1b}, \eqref{Atngrad2b} and \eqref{Atngrad2c}, we do not have a lower bound for controlling terms depending on the distance $|x-y|$, thus we introduce some extra notation in order to do that. Given $L \in \{1, \ldots, n/3\}$ we define 
\begin{equation*}
\overrightarrow{\eta}^{L}(x):=\frac{1}{L} \sum_{y=0}^{L-1} \bar{\eta}(x+y), \; 1 \leq x \leq n/2;   \quad \quad \overleftarrow{\eta}^{L}(x):=\frac{1}{L} \sum_{y=0}^{L-1} \bar{\eta}(x-y), \;  1 + n/2 \leq x \leq n-1.
\end{equation*}
For every $L \in \{2, \ldots, n/3\}$ we define
\begin{equation}    \label{medemp} 
\begin{split}
& \overrightarrow{\psi}_x^{L}(\eta)= \frac{L}{L-1} \Big[ \big( \overrightarrow{\eta}^{L}(x) \big)^2 - \frac{b(1-b)}{L} \Big], \quad 1 \leq x \leq n/2, \; \eta \in \Omega_n ;   \\
& \overleftarrow{\psi}_x^{L}(\eta)= \frac{L}{L-1} \Big[ \big( \overleftarrow{\eta}^{L}(x) \big)^2 - \frac{b(1-b)}{L} \Big], \quad 1 + n/2 \leq x \leq n-1, \; \eta \in \Omega_n .
\end{split}
\end{equation}
From \eqref{medemp}, it is not hard to prove that for every $L \in \{2, \ldots, n/3\}$ and  for every $x \in \{1, \ldots,  n/2 \}$  it holds
\begin{equation} \label{boundpsi}
\begin{split}
 \int_{\Omega_n}  \overrightarrow{\psi}_x^{L}(\eta)  d \nu_b^n=0, \quad  \int_{\Omega_n} \big( \overrightarrow{\psi}_x^{L}(\eta) \big)^2 d \nu_b^n \leq \frac{6}{L^2}, 
\end{split}
\end{equation}
and the same holds for  the left averages  for $x \in \{1 + n/2, \ldots, n-1\}.$
Next, we state a lemma that will be useful in what follows.

\begin{lem} \label{lemauxtight}
Assume that $\gamma \in (0,2)$ and $G \in C^{\infty}([0,1])$. Let $n \geq 2$, $J_n \subset \{1+n/2, \ldots, n-1\}$, $h_n: \Lambda_n \rightarrow \mathbb{R}$,  $n/3 \geq \hat{K} \in \mathbb{N}$ and $t \in [0, T]$. Then, 
\begin{align*}
 \mathbb{E}_{\nu_b^n} \Big[ \sup_{s \in [0,t]} \Big(  \int_{0}^s
\sum_{x=1}^{n/2}  \nabla G \left(  \tfrac{x}{n}  \right) h_{n}(x)  \overrightarrow{\psi}_x^{\hat{K}}(\eta_r^n)  dr  \Big)^2  \Big] \leq \frac{6}{\hat{K}} t^2 \sum_{x=1}^{n/2}[\nabla G \left(  \tfrac{x}{n}  \right) h_{n}(x)]^2, \\
 \mathbb{E}_{\nu_b^n} \Big[ \sup_{s \in [0,t]} \Big(  \int_{0}^s
\sum_{x \in J_n}  \nabla G \left(  \tfrac{x}{n}  \right) h_{n}(x)  \overrightarrow{\psi}_x^{\hat{K}}(\eta_r^n)  dr  \Big)^2  \Big] \leq \frac{6}{\hat{K}} t^2 \sum_{x \in J_n}[\nabla G \left(  \tfrac{x}{n}  \right) h_{n}(x)]^2.
\end{align*}
\end{lem}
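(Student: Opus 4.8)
The plan is to reduce both estimates to a single static second-moment computation under the invariant measure, and then to exploit the near-independence of the averaged quantities $\overrightarrow{\psi}_x^{\hat K}$ under the product measure $\nu_b^n$. Write $c_x := \nabla G(\tfrac{x}{n}) h_n(x)$ and set $g(\eta):= \sum_{x=1}^{n/2} c_x \overrightarrow{\psi}_x^{\hat K}(\eta)$. Applying \eqref{CSFub} with this choice of $g$ bounds the left-hand side of the first inequality by $t \int_0^t \mathbb{E}_{\nu_b^n}[g^2(\eta_r^n)]\,dr$. Since the process starts from the invariant measure $\nu_b^n$, each $\eta_r^n$ is distributed according to $\nu_b^n$, so $\mathbb{E}_{\nu_b^n}[g^2(\eta_r^n)] = \int_{\Omega_n} g^2\,d\nu_b^n$ for every $r$, and the bound becomes $t^2 \int_{\Omega_n} g^2\,d\nu_b^n$. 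It therefore suffices to show $\int_{\Omega_n} g^2\,d\nu_b^n \leq \tfrac{6}{\hat K}\sum_{x=1}^{n/2} c_x^2$.

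Next I would expand $\int_{\Omega_n} g^2\,d\nu_b^n = \sum_{x,x'} c_x c_{x'} \int_{\Omega_n} \overrightarrow{\psi}_x^{\hat K}\,\overrightarrow{\psi}_{x'}^{\hat K}\,d\nu_b^n$. The key observation, read off from \eqref{medemp}, is that $\overrightarrow{\psi}_x^{\hat K}$ is a function only of the occupation variables at the sites $x, x+1, \ldots, x+\hat K - 1$; hence whenever $|x - x'| \geq \hat K$ the two factors depend on disjoint sets of coordinates. Because $\nu_b^n$ is a product measure and each factor has zero mean by \eqref{boundpsi}, the corresponding cross-correlation factorizes and vanishes, so only the near-diagonal pairs with $|x - x'| < \hat K$ survive in the double sum.

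For the surviving pairs I would bound $\big|\int_{\Omega_n} \overrightarrow{\psi}_x^{\hat K}\,\overrightarrow{\psi}_{x'}^{\hat K}\,d\nu_b^n\big|$ by Cauchy--Schwarz together with the variance estimate $\int_{\Omega_n} (\overrightarrow{\psi}_x^{\hat K})^2\,d\nu_b^n \leq 6/\hat K^2$ from \eqref{boundpsi}, then use $|c_x c_{x'}| \leq \tfrac12(c_x^2 + c_{x'}^2)$ and the symmetry of the double sum to reduce the estimate to $\sum_x c_x^2$ weighted by the number of indices $x'$ with $|x-x'| < \hat K$, which is of order $\hat K$. The product of the $O(\hat K^{-2})$ variance factor with the $O(\hat K)$ count of overlapping neighbours produces the gain of order $\hat K^{-1}$; tracking the explicit constant from \eqref{boundpsi} yields the bound $\tfrac{6}{\hat K}\sum_x c_x^2$, hence the first inequality after multiplying by $t^2$. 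The second inequality follows by the identical argument, restricting the outer sum to $x \in J_n$; neither the restriction to $J_n$ nor the orientation of the average alters the product-measure decorrelation or the counting, so the same constant is obtained.

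The main obstacle is the decorrelation step: one must correctly pin down the support of each $\overrightarrow{\psi}_x^{\hat K}$ and invoke the product structure of $\nu_b^n$ to eliminate the off-support correlations, since it is precisely the interplay between the small $O(\hat K^{-2})$ variance and the $O(\hat K)$ number of overlapping terms that produces the crucial factor $\hat K^{-1}$. A cruder bound that ignores this cancellation would only yield an $O(1)$ estimate, which would be useless when $\hat K \to \infty$ is later taken.
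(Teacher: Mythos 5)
Your argument rests on exactly the same two pillars as the paper's proof: the reduction via \eqref{CSFub} and stationarity to a static second moment under $\nu_b^n$, and the decorrelation coming from \eqref{medemp} and \eqref{boundpsi} (each $\overrightarrow{\psi}_x^{\hat K}$ is centered, has support of width $\hat K$, and variance at most $6/\hat K^2$ under the product measure). Where you differ is the bookkeeping, and this is where your proposal falls slightly short of the stated bound. The paper does not expand the square directly: it partitions $\{1,\ldots,n/2\}$ into the $\hat K$ residue classes $\mathcal{X}_j$ modulo $\hat K$, applies \eqref{CSdisc} with $m=\hat K$ across the classes, and then uses \emph{exact} independence within each class (any two distinct $x,y\in\mathcal{X}_j$ satisfy $|x-y|\geq \hat K$), so that only diagonal terms survive; this yields precisely $\hat K\cdot \frac{6}{\hat K^2}=\frac{6}{\hat K}$. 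Your direct expansion instead pays pairwise Cauchy--Schwarz on the overlapping pairs: for each $x$ there are at most $2\hat K-1$ indices $x'$ with $|x-x'|<\hat K$, so your chain of estimates gives
\begin{align*}
\int_{\Omega_n} g^2\, d\nu_b^n \;\leq\; \frac{6}{\hat K^2}\,(2\hat K-1)\sum_{x=1}^{n/2} c_x^2 \;\leq\; \frac{12}{\hat K}\sum_{x=1}^{n/2} c_x^2,
\end{align*}
i.e.\ a constant $12$ rather than the claimed $6$; your closing assertion that ``tracking the explicit constant from \eqref{boundpsi} yields $\frac{6}{\hat K}$'' is not justified by the steps you describe, since plain Cauchy--Schwarz cannot see any decay of the covariance in the partial-overlap regime. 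This is a quantitative rather than conceptual gap: every downstream use of the lemma (Lemmas \ref{lem410}, \ref{lemmult0}, \ref{boundvaraux}) is through $\lesssim$ bounds, so your weaker constant would serve all applications; but to prove the lemma as literally stated you should either adopt the residue-class grouping, or replace Cauchy--Schwarz by an exact computation of the overlapping covariances. One further small point you handled correctly: for $x\in J_n\subset\{1+n/2,\ldots,n-1\}$ the averages must be read as the left-directed $\overleftarrow{\psi}_x^{\hat K}$ of \eqref{medemp} (supported on $\{x-\hat K+1,\ldots,x\}$), since the right average is not defined near $n-1$; the decorrelation and counting are indeed unchanged.
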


\begin{proof}
We only prove the first bound in last display, the proof of the second one being analogous. From \eqref{CSFub}, the first expectation in last display is bounded from above by 
\begin{align*}
t \int_0^t \mathbb{E}_{\nu_b^n}  \Big[ \Big(  \sum_{x=1}^{n/2}  \nabla G \left(  \tfrac{x}{n}  \right) h_n(x)  \overrightarrow{\psi}_x^{\hat{K}}(\eta_r^n)  \Big)^2 \Big] dr
\end{align*}
Note that for any $x,y \in \Lambda_n$, $\overrightarrow{\psi}_x^{\hat{K}}(\eta_s^n)$ and $\overrightarrow{\psi}_y^{\hat{K}}(\eta_s^n)$ are independent for every $s \in [0, \; T]$ whenever the supports of $\overrightarrow{\psi}_x^{\hat{K}}(\eta_s^n)$ and $\overrightarrow{\psi}_y^{\hat{K}}(\eta_s^n)$ are disjoint. From \eqref{medemp}, a sufficient condition for this is $|x-y| \geq \hat{K}$. Keeping this in mind, we rewrite the expression in last display as
\begin{align} \label{explem410} 
t \int_0^t \mathbb{E}_{\nu_b^n}  \Big[ \Big( \sum_{j=1}^{\hat{K}} \sum_{x \in \mathcal{X}_j} \nabla G \left(  \tfrac{x}{n}  \right) h_n(x)  \overrightarrow{\psi}_x^{\hat{K}}(\eta_r^n) \Big)^2 \Big] dr,
\end{align}
where for any $j \in \{1, \ldots, \hat{K} \}$, $\mathcal{X}_j$ is given by 
$
\mathcal{X}_j:=\{1 \leq x \leq n/2: \exists q \in \mathbb{Z} \, \text{such that} \, x = q \hat{K} +j \}.
$
In words, $\mathcal{X}_j$ is the subset of $\{1, \ldots, n/2\}$ whose remainder in the Euclidean division by $\hat{K}$ is $j$ either $j$, if $j < \hat{K}$; or $0$, if $j= \hat{K}$. Applying \eqref{CSdisc} for $m=\hat{K}$, \eqref{explem410} is bounded from above by
\begin{align*}
\hat{K}  t    \int_0^t   \sum_{j=1}^{\hat{K}}   \sum_{x,y \in \mathcal{X}_j} \nabla G \left(  \tfrac{x}{n}  \right)  h_n(x) \nabla G \left(  \tfrac{y}{n}  \right)  \tilde{h}_{n,\varepsilon}(y) \mathbb{E}_{\nu_b^n}[ \overrightarrow{\psi}_x^{K_n}(\eta_s^n)  \overrightarrow{\psi}_y^{K_n}(\eta_r^n) ] dr.
\end{align*}
For every $j \in \{1, \ldots, \hat{K}\}$ \textit{fixed}, $\overrightarrow{\psi}_x^{\hat{K}}(\eta)$ and $\overrightarrow{\psi}_y^{\hat{K}}(\eta)$ have zero mean and are independent when $x$ and $y$ are different elements of $\mathcal{X}_j$, since $|x-y| \geq \hat{K}$. Therefore, last display can be rewritten as
\begin{align*}
\hat{K}  t  \int_0^t   \sum_{j=1}^{\hat{K}}   \sum_{x \in \mathcal{X}_j} [ \nabla G \left(  \tfrac{x}{n}  \right)  h_n(x)]^2  \mathbb{E}_{\nu_b^n}\big[ \big( \overrightarrow{\psi}_x^{\hat{K}}(\eta_r^n) \big)^2  \big] dr.
\end{align*}
The proof ends by combining last display with \eqref{medemp} and \eqref{boundpsi}.
\end{proof}
Making the change $y=x+z$, for every $K_n \in \{2, \ldots, n/3\}$, \eqref{Atngrad1b} can be rewritten as
\begin{align}
& 2 \alpha_a \int_{0}^t b_n \sum_{x=1}^{n/2} \nabla G \left(  \tfrac{x}{n}  \right)  \sum_{z=1}^{K_n-1}      z a(z)   \overrightarrow{\psi}_x^{K_n}(\eta_s^n) ds \label{Atngrad1b1} \\
+ & 2 \alpha_a \int_{0}^t b_n \sum_{x=1}^{n/2} \nabla G \left(  \tfrac{x}{n}  \right) \sum_{z=1}^{K_n-1}       z a(z) \big[  \bar{\eta}_s^n(x) \bar{\eta}_s^n(x+z) - \overrightarrow{\psi}_x^{K_n}(\eta_s^n) \big] ds \label{Atngrad1b2}.
\end{align}
Analogously, making the change $y=x-z$,  \eqref{Atngrad2b} can be rewritten as
\begin{align}
& 2 \alpha_a \int_{0}^t b_n \sum_{x=K_n+1+n/2}^{n-1} \nabla G \left(  \tfrac{x}{n}  \right) \sum_{z=1}^{K_n-1}     z a(z)  \overleftarrow{\psi}_x^{K_n}(\eta_s^n) ds \label{Atngrad2b1} \\ 
+& 2 \alpha_a \int_{0}^t b_n \sum_{x=K_n+1+n/2}^{n-1} \nabla G \left(  \tfrac{x}{n}  \right) \sum_{z=1}^{K_n-1}     z a(z) \big[ \bar{\eta}_s^n(x) \bar{\eta}_s^n(x-z) - \overleftarrow{\psi}_x^{K_n}(\eta_s^n) \big] ds. \label{Atngrad2b2}
\end{align}
In last display, we used  $-z a(-z) = z a(z)$, due to \eqref{defsa}. Similarly, \eqref{Atngrad2c} can be rewritten as
\begin{align}
& 2 \alpha_a \int_{0}^t b_n \sum_{x=1+n/2}^{K_n+n/2} \nabla G \left(  \tfrac{x}{n}  \right) \sum_{z=1}^{x-1-n/2}     z a(z)  \overleftarrow{\psi}_x^{K_n}(\eta_s^n) ds \label{Atngrad2c1} \\ 
+& 2 \alpha_a \int_{0}^t b_n \sum_{x=1+n/2}^{K_n+n/2} \nabla G \left(  \tfrac{x}{n}  \right) \sum_{z=1}^{x-1-n/2}     z a(z) \big[ \bar{\eta}_s^n(x) \bar{\eta}_s^n(x-z) - \overleftarrow{\psi}_x^{K_n}(\eta_s^n) \big] ds. \label{Atngrad2c2}
\end{align}
In order to treat \eqref{Atngrad1b1}, \eqref{Atngrad2b1} and \eqref{Atngrad2c1}, we apply a result which is analogous to Lemma 4.10 in \cite{jarafluc}.
\begin{lem} \label{lem410}
Let $G \in C^{\infty}([0,1])$, $\gamma \geq 1$, $n \geq 2$ and $K_n \in \mathbb{N} \cap [2, n/3]$. Then it holds
\begin{align*}
& \mathbb{E}_{\nu_b^n} \Big[ \sup_{s \in [0, T]} \Big( \int_{0}^s b_{n} \sum_{x=1}^{n/2} \nabla G \left(  \tfrac{x}{n}  \right)  \sum_{z=1}^{K_n-1}      z a(z)   \overrightarrow{\psi}_x^{K_n}(\eta_r^n) dr  \Big)^2  \Big] \lesssim T^2 \frac{n^{2 r_a}}{n^2} \frac{B_{\gamma}(K_n)}{K_n} \|\nabla G\|^2_{L^2}; \\
& \mathbb{E}_{\nu_b^n} \Big[ \sup_{s \in [0, T]} \Big(  \int_{0}^s b_{ n} \sum_{x=K_n+1+n/2}^{n-1} \nabla G \left(  \tfrac{x}{n}  \right)  \sum_{z=1}^{K_n-1}     z a(z)  \overleftarrow{\psi}_x^{K_n}(\eta_r^n) dr  \Big)^2  \Big] \lesssim T^2 \frac{n^{2 r_a}}{n^2} \frac{B_{\gamma}(K_n)}{K_n} \|\nabla G\|^2_{L^2}; \\
& \mathbb{E}_{\nu_b^n} \Big[ \sup_{s \in [0, T]} \Big(  \int_{0}^s b_{ n} \sum_{x=1+n/2}^{K_n+n/2} \nabla G \left(  \tfrac{x}{n}  \right) \sum_{z=1}^{x-1-n/2}       z a(z)  \overleftarrow{\psi}_x^{K_n}(\eta_r^n) dr \Big)^2  \Big] \lesssim T^2 \frac{n^{2 r_a}}{n^2} \frac{B_{\gamma}(K_n)}{K_n} \|\nabla G\|^2_{L^2}.
\end{align*}
Above, $B_{\gamma}(K_n)=1$ for $\gamma >1$ and $B_{\gamma}(K_n)=[\log(K_n
)]^2$ for $\gamma =1$. 
\end{lem}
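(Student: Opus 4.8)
The plan is to recognize each of the three expressions as a direct instance of Lemma \ref{lemauxtight}, after identifying the correct coefficient function and carrying out elementary sum estimates. First I would record, from \eqref{defsa} and \eqref{prob}, that for $z \geq 1$ we have $a(z) = \tfrac{c^{+} - c^{-}}{2} z^{-\gamma-1}$, hence $z\,a(z) = \tfrac{c^{+}-c^{-}}{2} z^{-\gamma}$. Setting $C(K_n):= \sum_{z=1}^{K_n-1} z\,a(z) = \tfrac{c^{+}-c^{-}}{2}\sum_{z=1}^{K_n-1} z^{-\gamma}$, a comparison with $\int_1^{K_n} z^{-\gamma}\,dz$ gives $|C(K_n)|^2 \lesssim B_{\gamma}(K_n)$: the sum converges for $\gamma > 1$ (so $|C(K_n)|^2 \lesssim 1 = B_{\gamma}(K_n)$) and is of order $\log(K_n)$ for $\gamma = 1$ (so $|C(K_n)|^2 \lesssim [\log(K_n)]^2 = B_{\gamma}(K_n)$).

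For the first expression the inner sum $\sum_{z=1}^{K_n-1} z\,a(z) = C(K_n)$ does not depend on $x$, so I would apply the first bound of Lemma \ref{lemauxtight} with $\hat{K} = K_n$ and the constant coefficient $h_n(x) = b_n\,C(K_n)$. This produces the upper bound
\begin{align*}
\frac{6}{K_n}\,T^2\,(b_n)^2\,C(K_n)^2 \sum_{x=1}^{n/2}\big[\nabla G(\tfrac{x}{n})\big]^2.
\end{align*}
A Riemann-sum estimate yields $\sum_{x=1}^{n/2}[\nabla G(\tfrac{x}{n})]^2 \lesssim n\,\|\nabla G\|_{L^2}^2$, while $(b_n)^2 = n^{2r_a-2}/(n-1)$ by the definition of $b_n$. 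Combining these with $|C(K_n)|^2 \lesssim B_{\gamma}(K_n)$ and $n/(n-1) \lesssim 1$ gives exactly $T^2\,\tfrac{n^{2r_a}}{n^2}\,\tfrac{B_{\gamma}(K_n)}{K_n}\|\nabla G\|_{L^2}^2$, the claimed bound. The second expression is handled identically, now invoking the second bound of Lemma \ref{lemauxtight} with $J_n = \{K_n+1+n/2, \ldots, n-1\}$ and the left averages $\overleftarrow{\psi}_x^{K_n}$; since these satisfy the same variance bound \eqref{boundpsi} as the right averages (as noted after \eqref{boundpsi}), the argument is unchanged.

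The third expression is the only one requiring a small extra step, since there the inner sum $\sum_{z=1}^{x-1-n/2} z\,a(z)$ depends on $x$. I would take $h_n(x) = b_n \sum_{z=1}^{x-1-n/2} z\,a(z)$ and note that, for $x$ in the range $\{1+n/2, \ldots, K_n+n/2\}$, the upper limit $x-1-n/2$ lies in $\{0, \ldots, K_n-1\}$; since $z\,a(z) = \tfrac{c^{+}-c^{-}}{2}z^{-\gamma}$ has a fixed sign, the partial sums are monotone in absolute value and hence $|h_n(x)| \leq b_n\,|C(K_n)|$ uniformly in $x$. Applying the second bound of Lemma \ref{lemauxtight} and bounding $\sum_{x \in J_n}[\nabla G(\tfrac{x}{n})]^2 \leq \sum_{x=1+n/2}^{n-1}[\nabla G(\tfrac{x}{n})]^2 \lesssim n\|\nabla G\|_{L^2}^2$ then reduces this case to the previous computation. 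The only genuinely delicate point is the $\gamma = 1$ borderline, where the logarithmic divergence of $\sum z^{-1}$ is precisely what forces the factor $B_{\gamma}(K_n) = [\log(K_n)]^2$; everything else is bookkeeping of powers of $n$ via \eqref{comprsgam} and the definition of $b_n$.
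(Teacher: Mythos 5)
Your proof is correct and takes essentially the same route as the paper: the paper's proof of Lemma \ref{lem410} is precisely a one-line application of Lemma \ref{lemauxtight} with $t=T$, and your write-up merely fills in the bookkeeping it leaves implicit, namely $\big|\sum_{z=1}^{K_n-1} z\,a(z)\big|^2 \lesssim B_{\gamma}(K_n)$, the Riemann-sum bound $\sum_{x}[\nabla G(\tfrac{x}{n})]^2 \lesssim n\|\nabla G\|^2_{L^2}$, and $(b_n)^2 = n^{2r_a-2}/(n-1)$. Your treatment of the third case, absorbing the $x$-dependent partial sums into a uniform bound via the fixed sign of $z\,a(z)$, is exactly the right reduction (and your reading of the second bound of Lemma \ref{lemauxtight} as concerning the left averages $\overleftarrow{\psi}_x^{\hat K}$ for $J_n\subset\{1+n/2,\ldots,n-1\}$ is the intended one).
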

\begin{proof}
The proof is a direct consequence of an application Lemma \ref{lemauxtight} for $t=T$.
\end{proof}
For every $F: \Omega_n \rightarrow \mathbb{R}$, we denote by $\supp(F)$ the minimal set $\tilde{\Lambda}_n \subset \Lambda_n$ such that 
for all $ y \in \tilde{\Lambda}_n$ and for all $ \eta_1, \eta_2 \in \Omega_n$ such that $ \eta_1(y)= \eta_2(y)$ then $ F(\eta_1)=F(\eta_2)$.
Next we observe that Proposition A.1 and Corollary A.3 of \cite{jarafluc} can be adapted for our case by using the same arguments of \cite{jarafluc}. Therefore, we can treat \eqref{Atngrad1b2}, \eqref{Atngrad2b2} and \eqref{Atngrad2c2} by invoking the following result, which is a stronger version of Proposition 4.9 in \cite{jarafluc}. 
\begin{prop} \label{prop48}
Let $m \in \mathbb{N}$ and $1 \leq k_0 < k_1 < \ldots < k_m \leq n-1$ be elements of $\mathbb{N}$. Let $F_1, \ldots, F_m: \Omega_n \rightarrow \mathbb{R}$ be such that $\supp(F_i) \subset \{k_{i-1}+1, \ldots ,  k_i\}$ for any $i \in \{1, \ldots, m\}$. Assume that $\int_{\Omega_n} F_i d \nu_{\sigma}^n =0$ for any $\sigma \in [0,1]$ and any $i \in \{1, \ldots, m\}$. Then there exists a positive universal constant $\kappa$ such that for $F:=F_1 + \ldots + F_m$, we get 
\begin{align*}
\forall t \in [0, T], \quad \mathbb{E}_{\nu_b^n} \Big[ \sup_{s \in [0,t] } \Big(  \int_{0}^s F( \eta_r^n) dr \Big)^2  \Big] \leq  \kappa t  \sum_{i=1}^m   \frac{(k_i - k_{i-1})^{\gamma}}{\Theta(n)} \int_{\Omega_n} [F_i( \eta)]^2 d \nu_b^n  .
\end{align*}
\end{prop}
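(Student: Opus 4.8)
The plan is to reduce the claim to a bound on the $H_{-1}$-norm of $F$ and then to exploit the disjointness of the supports $\supp(F_i)\subset B_i:=\{k_{i-1}+1,\dots,k_i\}$ together with the spectral gap of the bulk dynamics restricted to each block. First I would invoke the Kipnis--Varadhan type inequality for the stationary (non-reversible) process with accelerated generator $\Theta(n)\mcb L_n$ obtained by adapting Proposition A.1 and Corollary A.3 of \cite{jarafluc}: there is a universal constant $C$ such that for every mean-zero $F$ and every $t\in[0,T]$,
\begin{align*}
\mathbb{E}_{\nu_b^n}\Big[\sup_{s\in[0,t]}\Big(\int_0^s F(\eta_r^n)\,dr\Big)^2\Big]\leq C\,t\,\|F\|_{-1,n}^2, \qquad \|F\|_{-1,n}^2:=\sup_{g}\Big\{2\langle F,g\rangle_{\nu_b^n}-\Theta(n)\,D_n(g)\Big\},
\end{align*}
where $D_n(g)=\langle g,(-\mcb L_n^{s})g\rangle_{\nu_b^n}$ is the symmetric Dirichlet form associated with the reversible part $\mcb L_n^{s}$ of $\mcb L_n$ (the exchange part with rates $s(\cdot)$ plus the reservoir Glauber part, both of which leave $\nu_b^n$ reversible). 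Since the blocks $B_i$ are pairwise disjoint, the internal exchange jumps of distinct blocks involve disjoint pairs of sites and the reservoir terms are non-negative, so $D_n(g)\geq \sum_{i=1}^m D_{B_i}(g)$, where $D_{B_i}(g):=\tfrac12\sum_{x,y\in B_i}s(y-x)\,\mathbb{E}_{\nu_b^n}[(g(\eta^{x,y})-g(\eta))^2]$.

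Next I would estimate each pairing $\langle F_i,g\rangle_{\nu_b^n}$ in isolation. The hypothesis $\int F_i\,d\nu_\sigma^n=0$ for all $\sigma\in[0,1]$ is equivalent to $\mathbb{E}_{\nu_b^n}[F_i\mid N_{B_i}]=0$, where $N_{B_i}:=\sum_{x\in B_i}\eta(x)$: expanding $\int F_i\,d\nu_\sigma^n$ as a polynomial in $\sigma$ and matching coefficients shows the canonical averages of $F_i$ in $B_i$ all vanish. Because $F_i$ depends only on the coordinates in $B_i$, this yields $\langle F_i,\mathbb{E}_{B_i}[g]\rangle_{\nu_b^n}=0$, where $\mathbb{E}_{B_i}$ denotes conditional expectation given $N_{B_i}$ and the configuration off $B_i$; hence $\langle F_i,g\rangle_{\nu_b^n}=\langle F_i,g-\mathbb{E}_{B_i}[g]\rangle_{\nu_b^n}$. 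Applying Cauchy--Schwarz together with the spectral gap of the symmetric long-range exclusion process on $B_i$, which is of order $(k_i-k_{i-1})^{-\gamma}$ and controls the conditional variance by the local Dirichlet form, I get
\begin{align*}
|\langle F_i,g\rangle_{\nu_b^n}|\leq \|F_i\|_{L^2(\nu_b^n)}\,\|g-\mathbb{E}_{B_i}[g]\|_{L^2(\nu_b^n)}\lesssim \|F_i\|_{L^2(\nu_b^n)}\sqrt{(k_i-k_{i-1})^{\gamma}\,D_{B_i}(g)}.
\end{align*}
Summing over $i$ and applying Young's inequality $2ab\leq \Theta(n)^{-1}a^2+\Theta(n)b^2$ to each summand with $a=\sqrt{C(k_i-k_{i-1})^{\gamma}}\,\|F_i\|_{L^2(\nu_b^n)}$ and $b=\sqrt{D_{B_i}(g)}$, then using $\sum_i D_{B_i}(g)\leq D_n(g)$, I obtain uniformly in $g$
\begin{align*}
2\langle F,g\rangle_{\nu_b^n}-\Theta(n)D_n(g)\lesssim \sum_{i=1}^m\frac{(k_i-k_{i-1})^{\gamma}}{\Theta(n)}\,\|F_i\|_{L^2(\nu_b^n)}^2.
\end{align*}
Taking the supremum over $g$ bounds $\|F\|_{-1,n}^2$, and feeding this into the Kipnis--Varadhan inequality produces the asserted estimate with a universal constant $\kappa$.

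The main obstacle is the quantitative spectral-gap input: bounding $\|g-\mathbb{E}_{B_i}[g]\|_{L^2(\nu_b^n)}^2$ by $(k_i-k_{i-1})^{\gamma}D_{B_i}(g)$ requires a lower bound of order $\ell^{-\gamma}$ on the gap of the symmetric exclusion process with jump rate $s(\cdot)\sim|\cdot|^{-1-\gamma}$ confined to a block of length $\ell$. This scaling is sharper than the $\ell^{-2}$ gap of the nearest-neighbour model and reflects the faster mixing induced by the long jumps; establishing or citing it is the delicate point, while everything else is a routine combination of the variational formula, the disjointness of the supports and Young's inequality. I would also be careful to verify rigorously that the vanishing of $\int F_i\,d\nu_\sigma^n$ for every $\sigma$ is genuinely equivalent to $\mathbb{E}_{\nu_b^n}[F_i\mid N_{B_i}]=0$, since it is precisely this identity that lets the pairing be projected onto the gradient directions of the conditional (canonical) measure where the spectral gap applies.
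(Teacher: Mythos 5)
Your proposal is correct and is essentially the paper's own proof unpacked: the paper disposes of this proposition in one line by citing Lemma 4.3 of \cite{CLO} (the Kipnis--Varadhan-type maximal inequality for the stationary non-reversible process, i.e.\ your reduction to the variational $H_{-1}$-norm with Dirichlet form $\Theta(n)D_n$) together with Proposition 4.8 of \cite{jarafluc} (the estimate of that variational expression via disjointness of the supports, projection onto the canonical measures and Young's inequality). The one input you flag as delicate --- the spectral gap of order $(k_i-k_{i-1})^{-\gamma}$ for the symmetric long-jump exclusion on a block, which controls the conditional variance $\|g-\mathbb{E}_{B_i}[g]\|^2_{L^2(\nu_b^n)}$ by $D_{B_i}(g)$ --- is precisely the ingredient embedded in those citations, so your argument contains no gap relative to the paper's.
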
 
\begin{proof}
The proof is a direct consequence of Lemma 4.3 in \cite{CLO} and Proposition 4.8 in \cite{jarafluc}.
\end{proof}

Keeping last proposition in mind, we will treat \eqref{Atngrad1b2}, \eqref{Atngrad2b2} and \eqref{Atngrad2c2}. In order to do that, we apply a result which is analogous to Lemma 4.11 in \cite{jarafluc}.
\begin{lem} \label{lem411}
Let $G \in C^{\infty}([0,1])$, $\gamma \geq 1$, $n \geq 2$ and $K_n \in \mathbb{N} \cap [2, n/3]$. Then it holds
\begin{align*}
& \mathbb{E}_{\nu_b^n} \Big[ \sup_{t \in [0,T]} \Big( \int_{0}^t b_{n} \sum_{x=1}^{n/2} \nabla G \left(  \tfrac{x}{n}  \right) \sum_{z=1}^{K_n-1}       z a(z) \big[  \bar{\eta}_s^n(x) \bar{\eta}_s^n(x+z) - \overrightarrow{\psi}_x^{K_n}(\eta_s^n) \big] ds  \Big)^2  \Big] \lesssim  \frac{(K_n)^{1+\gamma} n^{2 r_a }}{\Theta(n) n^2} ; \\
& \mathbb{E}_{\nu_b^n} \Big[ \sup_{t \in [0,T]} \Big(  \int_{0}^t b_{n} \sum_{x=K_n+1+n/2}^{n-1} \nabla G \left(  \tfrac{x}{n}  \right) \sum_{z=1}^{K_n-1}     z a(z) \big[ \bar{\eta}_s^n(x) \bar{\eta}_s^n(x-z) - \overleftarrow{\psi}_x^{K_n}(\eta_s^n) \big] ds  \Big)^2  \Big] \lesssim \frac{(K_n)^{1+\gamma} n^{2 r_a }}{\Theta(n) n^2}; \\
& \mathbb{E}_{\nu_b^n} \Big[ \sup_{t \in [0,T]} \Big(  \int_{0}^t b_{n} \sum_{x=1+n/2}^{K_n+n/2} \nabla G \left(  \tfrac{x}{n}  \right) \sum_{z=1}^{x-1-n/2}       z a(z) \big[ \bar{\eta}_s^n(x) \bar{\eta}_s^n(x-z) - \overleftarrow{\psi}_x^{K_n}(\eta_s^n) \big] ds \Big)^2  \Big] \lesssim \frac{(K_n)^{1+\gamma} n^{2 r_a }}{\Theta(n) n^2}.
\end{align*}
\end{lem}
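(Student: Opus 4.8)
The three estimates have the same structure, so I will describe the first one in full and indicate the (minor) changes for the other two. Throughout, recall $b_n = n^{r_a-1}(\sqrt{n-1})^{-1}$, and note that since we are in the regime $c^{+}\neq c^{-}$ we necessarily have $b=1/2$; this will be used in a decisive way. The first step is to isolate, for each $1\le x\le n/2$, the local function
\[
V_x(\eta):=\sum_{z=1}^{K_n-1} z\,a(z)\big[\,\bar\eta(x)\bar\eta(x+z)-\overrightarrow{\psi}_x^{K_n}(\eta)\,\big],
\]
so that the integrand is exactly $b_n\sum_{x=1}^{n/2}\nabla G(\tfrac{x}{n})V_x$. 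By \eqref{medemp} we have $\supp(V_x)\subset\{x,\dots,x+K_n-1\}$, a block of length $K_n$. The crucial property is that each bracket, hence $V_x$, is grand-canonically centered: a direct product computation gives $\int_{\Omega_n}\bar\eta(x)\bar\eta(x+z)\,d\nu_\sigma^n=(\sigma-b)^2$, while the normalisation in \eqref{medemp} yields $\int_{\Omega_n}\overrightarrow{\psi}_x^{K_n}\,d\nu_\sigma^n=\frac{K_n}{K_n-1}\big((\sigma-b)^2+\tfrac{\sigma(1-\sigma)-b(1-b)}{K_n}\big)$. When $b=1/2$ one has $\sigma(1-\sigma)-\tfrac14=-(\sigma-\tfrac12)^2$, and the two expressions coincide for every $\sigma\in[0,1]$; thus $\int_{\Omega_n}V_x\,d\nu_\sigma^n=0$ for all $\sigma$. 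This is precisely where the hypothesis $b=1/2$ enters and is the one point that makes Proposition \ref{prop48} applicable.

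Next I would split the outer sum according to the residue of $x$ modulo $K_n$. Within each of the $K_n$ residue classes the functions $V_x$ have pairwise disjoint supports lying in consecutive blocks of length $K_n$, so Proposition \ref{prop48} applies with $k_i-k_{i-1}=K_n$. Applying the Cauchy--Schwarz bound \eqref{CSdiscexp} with $m=K_n$ to pass from the full sum to the $K_n$ classes (this produces one factor $K_n$), and then Proposition \ref{prop48} inside each class, I obtain
\[
\mathbb{E}_{\nu_b^n}\Big[\sup_{s\in[0,t]}\Big(\int_0^s b_n\sum_{x=1}^{n/2}\nabla G(\tfrac{x}{n})V_x(\eta_r^n)\,dr\Big)^2\Big]\lesssim b_n^2\,K_n\,\frac{K_n^{\gamma}}{\Theta(n)}\,t\sum_{x=1}^{n/2}\big[\nabla G(\tfrac{x}{n})\big]^2\int_{\Omega_n}V_x^2\,d\nu_b^n.
\]

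It then remains to estimate $\int_{\Omega_n}V_x^2\,d\nu_b^n$ uniformly in $x$. Using $(u+v)^2\le 2u^2+2v^2$ to separate the bilinear part from the $\overrightarrow{\psi}$ part, the first contributes (by independence and $b=1/2$, only the diagonal $z=z'$ survives) $\tfrac{1}{16}\sum_{z}(z\,a(z))^2\lesssim\sum_{z\ge1}z^{-2\gamma}$, which is $O(1)$ since $\gamma\ge1>1/2$ by \eqref{defsa} and \eqref{prob}; the second equals $\big(\sum_z z\,a(z)\big)^2\int_{\Omega_n}(\overrightarrow{\psi}_x^{K_n})^2\,d\nu_b^n$, bounded via \eqref{boundpsi} by a constant times $\big(\sum_z z\,a(z)\big)^2/K_n^2$, hence subdominant. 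Thus $\int_{\Omega_n}V_x^2\,d\nu_b^n\lesssim 1$. Inserting $\sum_x[\nabla G(\tfrac{x}{n})]^2\lesssim n$ and $b_n^2\lesssim n^{2r_a-3}$ collapses the right-hand side to $\tfrac{K_n^{1+\gamma}n^{2r_a}}{\Theta(n)n^2}$, which is the claim.

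Finally, the second bound is identical after replacing right-averages by the left-averages $\overleftarrow{\psi}_x^{K_n}$ and $\bar\eta(x)\bar\eta(x-z)$, whose supports lie in $\{x-K_n+1,\dots,x\}$. For the third bound the outer sum ranges over only $K_n$ sites, so $\sum_x[\nabla G(\tfrac{x}{n})]^2\lesssim K_n$ instead of $n$; this still yields the asserted expression, the surplus being the harmless factor $K_n/n\le1/3$ (using $K_n\le n/3$). The step I expect to be the main obstacle is not the combinatorial counting but the verification of the grand-canonical centering of $V_x$ for \emph{every} $\sigma$, since this is the structural hypothesis demanded by Proposition \ref{prop48} and the sole reason the argument forces $b=1/2$.
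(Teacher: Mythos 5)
Your proof is correct and follows essentially the same route as the paper's: split the sum according to the residue of $x$ modulo $K_n$ via \eqref{CSdiscexp} with $m=K_n$, apply Proposition \ref{prop48} to the disjointly supported block functions with $k_i-k_{i-1}=K_n$, and bound the static variance using \eqref{boundpsi}, \eqref{defsa} and $\gamma \geq 1$. Your explicit verification that each bracket is centered under $\nu_\sigma^n$ for \emph{every} $\sigma$ --- which indeed forces $b=1/2$, in force in this section since $c^{+}\neq c^{-}$ --- is a hypothesis the paper's proof merely asserts from \eqref{defberprod} and \eqref{medemp}, and you fill in that computation correctly.
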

\begin{proof}
We only prove the first bound in last display, the proof of the second and third ones being analogous. From \eqref{CSdiscexp}, the first expectation in last display is bounded from above by
\begin{align} \label{boundlem411}
K_n \sum_{j=1}^{K_n} \mathbb{E}_{\nu_b^n} \Big[    \sup_{t \in [0,T]}  \Big( \int_{0}^t  \sum_{x \in \mathcal{X}_j}  b_{n}    \nabla G \left(  \tfrac{x}{n}  \right) \sum_{z=1}^{K_n-1}       z a(z) \big[  \bar{\eta}_s^n(x) \bar{\eta}_s^n(x+z) - \overrightarrow{\psi}_x^{K_n}(\eta_s^n) \big] ds  \Big)^2  \Big],
\end{align}
where for any $j \in \{1, \ldots, K_n \}$, $\mathcal{X}_j$ is given by 
\begin{equation*} 
\mathcal{X}_j:=\{1 \leq x \leq n/2: \exists q \in \mathbb{Z} \quad \text{such that} \quad x = q K_n \}.
\end{equation*}
In order to treat the expression in last line, we apply Proposition \ref{prop48}. Keeping this in mind, for every $j \in \{1, \ldots, K_n\}$, let $\{j, K_n+j, \ldots, q_j K_n+j\}$ be an enumeration of the elements of $\mathcal{X}_j$. In particular, we get $K_n q_j \leq n/2$. Next, for every $i \in \{0, 1, \ldots, q_j\}$, define $k_i:=j+i K_n -1$. Thus, $k_0 < k_1 < \ldots k_{q_j}$. Moreover, for every $i \in \{1, \ldots, q_j\}$, define $y_i:=j+i K_n$ and $F_i:  \Omega_n \rightarrow \mathbb{R}$ by
\begin{align*}
F_i( \eta):= b_{n}    \nabla G \left(  \tfrac{y_i}{n}  \right) \sum_{z=1}^{K_n-1}       z a(z) \big\{ [\eta ( y_i ) - b] [\eta ( y_i +z ) - b] - \overrightarrow{\psi}_{y_i}^{K_n}(\eta) \big\}, \quad \eta \in  \Omega_n.
\end{align*}
From \eqref{defberprod} and \eqref{medemp}, we get $\int_{\Omega_n} F_i d \nu_{\sigma}^n =0$ for any $\sigma \in [0,1]$ and any $i \in \{1, \ldots, q_j\}$. Moreover, due to \eqref{medemp}, $\supp(F_i) \subset \{k_{i-1}+1, \ldots, k_i \}$, for any $i \in \{1, \ldots, q_j\}$. From Proposition \ref{prop48}, we get
\begin{align*}
\mathbb{E}_{\nu_b^n} \Big[    \sup_{t \in [0,T]}  &\Big( \int_{0}^t   \sum_{x \in \mathcal{X}_j} b_{n}    \nabla G \left(  \tfrac{x}{n}  \right) \sum_{z=1}^{K_n-1}       z a(z) \big[  \bar{\eta}_s^n(x) \bar{\eta}_s^n(x+z) - \overrightarrow{\psi}_x^{K_n}(\eta_s^n) \big] ds  \Big)^2  \Big] \\
= & \mathbb{E}_{\nu_b^n} \Big[ \sup_{t \in [0,T] } \Big(  \int_{0}^t \sum_{i=1}^{q_j} F_i ( \eta_s^n) ds \Big)^2  \Big] \lesssim  T  \sum_{i=1}^{q_j}   \frac{(k_i - k_{i-1})^{\gamma}}{\Theta(n)} \int_{\Omega_n} [F_i( \eta)]^2 d \nu_b^n  \\
= & \frac{T (K_n)^{\gamma}}{\Theta(n)} \sum_{i=1}^{q_j} \int_{\Omega_n} [F_i( \eta)]^2 d \nu_b^n \lesssim  \frac{ (K_n)^{\gamma}}{\Theta(n)} (b_n)^2 \sum_{i=1}^{q_j} [ \nabla G \left(  \tfrac{y_i}{n}  \right) ]^2. 
\end{align*} 
The upper bound in last line is due to \eqref{defberprod}, \eqref{medemp}, \eqref{boundpsi}, \eqref{defsa} and the assumption $\gamma \geq 1$.
The proof ends by combining the final upper bound in last display with \eqref{boundlem411}.
\end{proof}

Now we obtain Proposition \ref{propraeq1}, which corresponds to \textit{(II)} in Proposition \ref{convAtnG}.

\subsubsection{The case $r_a \geq 1$ and $\gamma=1$}

\begin{prop} \label{propraeq1}
Assume that $G \in C^{\infty}([0,1])$, $r_a \geq 1$ and $\gamma = 1$. Then
\begin{align*}
\lim_{n \rightarrow \infty} \mathbb{E}_{\nu_b^n} \big[ \sup_{t \in [0,T]} \big(  A_t^n(G) \big)^2 \big] =0.
\end{align*}
\end{prop}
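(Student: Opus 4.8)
The plan is to feed the estimates of Lemmas \ref{lem46}, \ref{lem45}, \ref{lem410} and \ref{lem411} into the decomposition of $A_t^n(G)$ and to balance them by a single careful choice of the block size $K_n$. First I would pin down the parameters. Since $r_a \leq \gamma$ by \eqref{comprsgam} and $\gamma = 1$, the hypothesis $r_a \geq 1$ forces $r_a = 1$. Moreover, if $\beta < 0$ then \eqref{defrs} together with $\beta_a \geq \max\{0,\beta\} = 0$ from \eqref{condrate} would give $r_a = 1 + \beta - \beta_a < 1$, a contradiction; hence $\beta \geq 0$, and then $r_a = 1 - \beta_a = 1$ with $\beta_a \geq \max\{0,\beta\}$ yields $\beta_a = \beta = 0$. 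For the proof all I really need is $\beta \geq 0$, because then $\Theta(n) = n$ by \eqref{timescale} (and $\gamma = 1$).

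Next I would fix a sequence $K_n \in \mathbb{N} \cap [2, n/3]$ with $K_n \to \infty$ and $K_n = o(\sqrt{n})$, for instance $K_n = \lfloor n^{1/4} \rfloor$, which lies in the admissible range for all large $n$. Recalling that $A_t^n(G)$ equals the sum of the ten terms \eqref{Atngrad0a}, \eqref{Atngrad0b}, \eqref{Atngrad1a}, \eqref{Atngrad1b1}, \eqref{Atngrad1b2}, \eqref{Atngrad2a}, \eqref{Atngrad2b1}, \eqref{Atngrad2b2}, \eqref{Atngrad2c1} and \eqref{Atngrad2c2}, an application of \eqref{CSdisc} to these ten summands reduces the claim to showing that $\mathbb{E}_{\nu_b^n}[\sup_{t}(\cdot)^2]$ of each individual term vanishes as $n \to \infty$.

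Then I would substitute $r_a = 1$, $\gamma = 1$ and $\Theta(n) = n$ into the four lemmas. Lemma \ref{lem46} bounds \eqref{Atngrad0a} and \eqref{Atngrad0b} by $\lesssim 1/n$. Lemma \ref{lem45} bounds \eqref{Atngrad1a} and \eqref{Atngrad2a} by $\lesssim n^{2r_a-2}/K_n^{2\gamma-1} = 1/K_n$. Lemma \ref{lem410} bounds \eqref{Atngrad1b1}, \eqref{Atngrad2b1} and \eqref{Atngrad2c1} by $\lesssim \tfrac{n^{2r_a}}{n^2}\tfrac{B_\gamma(K_n)}{K_n}\|\nabla G\|_{L^2}^2 = \tfrac{[\log K_n]^2}{K_n}$, using $B_1(K_n) = [\log K_n]^2$. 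Finally Lemma \ref{lem411} bounds \eqref{Atngrad1b2}, \eqref{Atngrad2b2} and \eqref{Atngrad2c2} by $\lesssim \tfrac{(K_n)^{1+\gamma} n^{2r_a}}{\Theta(n)\,n^2} = \tfrac{(K_n)^2}{n}$. All four right-hand sides tend to zero under the chosen $K_n$, and summing the ten contributions finishes the proof.

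The only genuinely delicate point is the joint choice of $K_n$, which must satisfy two opposing requirements: the ``long-range'' pieces with $|x-y| \geq K_n$ controlled by Lemmas \ref{lem45} and \ref{lem410} improve as $K_n$ grows, while the replacement of the local products $\bar\eta(x)\bar\eta(x\pm z)$ by the block averages $\overrightarrow{\psi}_x^{K_n}, \overleftarrow{\psi}_x^{K_n}$, estimated in Lemma \ref{lem411}, deteriorates like $(K_n)^2/n$. One therefore needs the window $1 \ll K_n \ll \sqrt{n}$. The borderline exponent $\gamma = 1$ is exactly where the extra factor $[\log K_n]^2$ enters Lemma \ref{lem410}, but this is harmless since it is still $o(K_n)$; any $K_n$ in the above window, such as $K_n = \lfloor n^{1/4}\rfloor$, closes all the estimates simultaneously.
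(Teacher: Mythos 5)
Your proposal is correct and follows essentially the same route as the paper: pinning down $\beta=\beta_a=0$, $r_a=\gamma=1$ (hence $\Theta(n)=n$), decomposing $A_t^n(G)$ into the terms controlled by Lemmas \ref{lem46}, \ref{lem45}, \ref{lem410} and \ref{lem411}, summing via the Cauchy--Schwarz bound \eqref{CSdiscexp}, and obtaining $\tfrac{1}{n}+\tfrac{1}{K_n}+\tfrac{[\log K_n]^2}{K_n}+\tfrac{K_n^2}{n}$, which vanishes for any $K_n=n^u$ with $0<u<1/2$ (your $K_n=\lfloor n^{1/4}\rfloor$ is one admissible choice, the paper takes arbitrary $u\in(0,1/2)$). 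Your finer ten-term splitting and the explicit contradiction argument ruling out $\beta<0$ are only cosmetic differences from the paper's argument.
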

\begin{proof}
Due to \eqref{defrs} and \eqref{comprsgam}, the conditions if $r_a \geq 1$ and $\gamma = 1$ only occur simultaneously iff $\beta=\beta_a=0$ and $\gamma=r_a=1$. Since $A_t^n(G)$ is equal to the sum of the terms in \eqref{Atngrad0a}, \eqref{Atngrad0b}, \eqref{Atngrad1} and \eqref{Atngrad2}, we get from Lemmas \ref{lem46}, \ref{lem45}, \ref{lem410} and \ref{lem411} and an application of \eqref{CSdiscexp} for $m=4$ that the expectation in last display is bounded from above by a constant times
\begin{align*}
\frac{1}{n}+ \frac{1}{K_n} + \frac{[\log(K_n)]^2}{K_n} + \frac{(K_n)^2}{\Theta(n)} = \frac{1}{n}+ \frac{1}{K_n} + \frac{[\log(K_n)]^2}{K_n} + \frac{(K_n)^2}{n}. 
\end{align*} 
In last equality we applied \eqref{timescale}. Thus, the proof ends by choosing $K_n=n^{u}$, for any $0 < u < 1/2$. 
\end{proof}

\subsubsection{The case $r_a \geq 1$ and $\gamma>1$}

Our next goal is to prove the following result, which corresponds to the remaining cases in Proposition \ref{convAtnG}.
\begin{prop} \label{propgammales32}
Let $G \in C^{\infty}([0,1])$. Assume that $\gamma >1$ and $2 r_a < 3$. Moreover assume that either $\beta \geq 0$; or  $\beta <0$ and $2 r_a \leq 3 + \beta$. Then 
\begin{align*}
\lim_{n \rightarrow \infty} \mathbb{E}_{\nu_b^n} \big[ \sup_{t \in [0,T]} \big(  A_t^n(G) \big)^2 \big] =0.
\end{align*}
\end{prop}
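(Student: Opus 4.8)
The plan is to estimate $A_t^n(G)$ through the second-order Taylor decomposition \eqref{Atngrad0a}--\eqref{Atngrad2} that precedes the statement, bounding each of the four resulting groups in $L^2(\nu_b^n)$ uniformly in $t$ and recombining them via \eqref{CSdiscexp}. As in the excluded trivial case I may assume $c^{+}\neq c^{-}$; since $\gamma>1$ the antisymmetric first moment $m_a$ of \eqref{defma} is finite, which is what legitimizes the expansion. The two ``Laplacian'' pieces \eqref{Atngrad0a} and \eqref{Atngrad0b} are disposed of directly by Lemma \ref{lem46}, with a bound of order $1/n$. For the two ``gradient'' pieces \eqref{Atngrad1} and \eqref{Atngrad2} I would split the inner sum at a range $K_n\to\infty$ with $K_n\le n/3$: the long-range parts $|x-y|\ge K_n$ (that is, \eqref{Atngrad1a} and \eqref{Atngrad2a}) are handled by Lemma \ref{lem45}, producing $n^{2r_a-2}K_n^{-(2\gamma-1)}$, while the short-range parts are further decomposed into the \emph{average} contributions \eqref{Atngrad1b1}, \eqref{Atngrad2b1}, \eqref{Atngrad2c1} and the \emph{replacement} contributions \eqref{Atngrad1b2}, \eqref{Atngrad2b2}, \eqref{Atngrad2c2}, exactly as for Proposition \ref{propraeq1}.

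The average contributions are controlled by Lemma \ref{lem410}; since $\gamma>1$ we have $B_\gamma(K_n)=1$, so the bound is $n^{2r_a-2}/K_n$, and because $2r_a<3$ forces $2r_a-2<1$ this vanishes once $K_n$ exceeds a suitable power of $n$. The genuinely delicate term is the replacement error, i.e. the cost of trading $\bar\eta_s^n(x)\bar\eta_s^n(x\pm z)$ for the directed box average $\overrightarrow{\psi}_x^{K_n}$, resp. $\overleftarrow{\psi}_x^{K_n}$. The one-shot estimate of Lemma \ref{lem411}, which applies Proposition \ref{prop48} once on blocks of width $K_n$, costs $K_n^{1+\gamma}n^{2r_a-2}/\Theta(n)$; balancing this against the long-range and average terms with a \emph{single} choice of $K_n=n^u$ requires $2r_a-2<u$ from the average and $u<(\gamma+2-2r_a)/(1+\gamma)$ from the replacement, and such $u$ exists only when $2r_a<(3\gamma+4)/(\gamma+2)$, which is strictly smaller than $3$. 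Hence one scale $K_n$ cannot reach the sharp threshold, and the heart of the matter is to lower the replacement cost.

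To cover the full region I would instead perform the replacement through the iterated, multi-scale second-order Boltzmann--Gibbs scheme announced in the introduction and rooted in Theorem 2.6 of \cite{G2008}: rather than passing from the microscopic product to the average over $K_n$ in a single jump, I would interpolate along a geometric ladder of box sizes $2^0,2^1,\dots,K_n$, applying the estimate behind Lemma \ref{lem411} only to the \emph{difference} of consecutive box averages, whose variance decays like the inverse square of the current scale by \eqref{boundpsi}; the boundary dynamics again dictates right-directed boxes for $x\le n/2$ and left-directed boxes for $x>n/2$. Summing the telescoped costs replaces the crude $K_n^{1+\gamma}n^{2r_a-2}/\Theta(n)$ by a bound of order $K_n^{\gamma-1}n^{2r_a-2}/\Theta(n)$. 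With this improvement, choosing $K_n=n^u$ with $2r_a-2<u<\big(\gamma+\beta\,\mathbbm{1}_{\{\beta<0\}}+2-2r_a\big)/(\gamma-1)$ makes every term vanish, and the interval for $u$ is nonempty precisely when $2r_a<3$ for $\beta\ge0$ and when $2r_a\le 3+\beta$ for $\beta<0$ (using $\Theta(n)$ from \eqref{timescale}), matching the hypotheses exactly. The main obstacle is precisely this multi-scale replacement estimate: one must control the telescoping sum uniformly over all admissible $(\beta,\gamma,r_a)$, and in particular show that the initial product-to-average step and the large-$z$ tail do not spoil the gain from $K_n^{1+\gamma}$ down to $K_n^{\gamma-1}$, which is the sharp input that yields the threshold $2r_a<3$.
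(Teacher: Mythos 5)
Your architecture coincides with the paper's own: the Taylor decomposition \eqref{Atngrad0a}--\eqref{Atngrad2}, Lemma \ref{lem46} for the second-order pieces, Lemma \ref{lem45} for the range $|x-y|\geq K_n$, Lemma \ref{lem410} for the box-average contributions, right/left-directed boxes on either side of $n/2$, and a multiscale treatment of the replacement error; your computation that a single scale $K_n=n^u$ only reaches $2r_a<(3\gamma+4)/(\gamma+2)<3$ is correct and is exactly why the paper goes multiscale. The gap is that the decisive estimate --- the one you yourself label ``the sharp input'' --- is asserted rather than proved, and in the form you state it, it is not what a dyadic telescope delivers. The telescoping of consecutive \emph{averages} does give $K_n^{\gamma-1}n^{2r_a-2}/\Theta(n)$: that is precisely the paper's Lemma \ref{lemmult0}, proved via the doubling $B_i=2^iD_1$ together with \eqref{boundpsi} and Proposition \ref{prop48}. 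But the product-to-average entry step cannot be taken at unit scale: to compare $\bar{\eta}_s^n(x)\bar{\eta}_s^n(x\pm z)$ with a box average through Proposition \ref{prop48}, the blocks must have width at least $z$, so jumps of size $z\in[L,2L)$ enter the ladder at scale $\approx 2L$, at cost $(2L)^{1+\gamma}L^{-(2\gamma-1)}n^{2r_a-2}/\Theta(n)\sim L^{2-\gamma}n^{2r_a-2}/\Theta(n)$ per rung (the paper's Lemma \ref{lemeq413} with $\tilde{L}_n=2L_n$); summed over a geometric ladder, this portion is of order $K_n^{2-\gamma}n^{2r_a-2}/\Theta(n)$, which dominates your claimed $K_n^{\gamma-1}$ whenever $\gamma<3/2$ and shrinks your window for $u$.

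The paper resolves exactly this point by abandoning the geometric ladder for the entry step: it uses scales $K_j=n^{\lambda_j}$ with exponents defined by the linear recursion \eqref{reclaw} and solved in \eqref{reclambdaj}, tuned so that absorbing jumps $z\in[K_{j-1},K_j)$ into $\overrightarrow{\psi}_x^{K_j}$ (resp.\ $\overleftarrow{\psi}_x^{K_j}$) costs exactly $n^{-\hat{\delta}}$ per rung, with a \emph{fixed, $n$-independent} number $N$ of rungs and a top exponent $\lambda_N\in\bigl(1-\delta,\tfrac{2-\delta}{2}\bigr)$ large enough to kill the average term $n^{2r_a-2}/K_N$ (Lemmas \ref{lemeq413} and \ref{lem414}); only afterwards does it telescope averages up to scale $K_N$ at cost $K_N^{\gamma-1}/\Theta(n)$, with the weights $m_j$ controlled by \eqref{summj} --- and it is there, in Lemma \ref{lem415b}, that the hypotheses $\beta\geq 0$, $2r_a\leq 3$ (resp.\ $\beta<0$, $2r_a\leq 3+\beta$) are actually consumed. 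Incidentally, your dyadic route could in principle be rescued: the weaker bound $\max\{K_n^{2-\gamma},K_n^{\gamma-1}\}\,n^{2r_a-2}/\Theta(n)$ still leaves a nonempty window for $u$, because for $\gamma\in(1,3/2)$ the a priori bounds $r_a\leq\gamma$ ($\beta\geq 0$) and $r_a\leq\gamma+\beta$ ($\beta<0$) from \eqref{defrs} and \eqref{comprsgam} make the extra constraint $(2r_a-2)(2-\gamma)<\gamma+\beta\,\mathbbm{1}_{\{\beta<0\}}+2-2r_a$ automatic --- but you neither prove your claimed bound nor carry out this verification, so as written the argument has a hole precisely at the multiscale replacement estimate. (A minor further slip: for $\beta<0$ your window is nonempty iff $2r_a<3+\beta/\gamma$, which is implied by, but not equivalent to, the hypothesis $2r_a\leq 3+\beta$, so ``precisely'' is inaccurate there.)
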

\begin{rem}
Last conditions are less restrictive that the condition $\gamma < 3/2$, which was necessary to state an analogous result in \cite{jarafluc}. Indeed, if $\gamma < 3/2$, then from \eqref{comprsgam} we get $2 r_a \leq 2 \gamma < 3$. Moreover, if $\beta < 0$ and $\gamma < 3/2$, then 
$
2 r_a = 2 \gamma + 2 \beta - 2 \beta_a < 3 + 2 \beta < 3 + \beta.
$
\end{rem}
Since Proposition \ref{convAtnG} is a direct consequence of Propositions \ref{propraless1}, \ref{propraeq1} and \ref{propgammales32}, we are done if we can prove the latter result. Therefore, for the remainder of this section, we will assume that $2r_a < 3$ and fix $\delta$ such that $0 < \delta < 3 - 2 r_a \leq 1$. In this setting, the upper bound provided by Lemma \ref{lem411} may be not good enough for obtaining Proposition \ref{propgammales32}. Alternatively, we apply a multiscale analysis, similarly as it is done in Section 4.4.3 of \cite{jarafluc}.

Our first step is to state the following result.
\begin{lem} \label{lemeq413}
Assume that $G \in C^{\infty}([0,1])$ and $\gamma > 1$. Let $0 \leq \lambda < \tilde{\lambda} <1$, $L_n=n^{\lambda}$ and $\tilde{L}_n =n^{\tilde{\lambda}}$. Then
\begin{align*}
& \mathbb{E}_{\nu_b^n} \Big[ \sup_{t \in [0,T]} \Big( \int_{0}^t b_{n} \sum_{x=1}^{n/2} \nabla G \left(  \tfrac{x}{n}  \right) \sum_{z=L_n}^{\tilde{L}_n-1}       z a(z) \big[  \bar{\eta}_s^n(x) \bar{\eta}_s^n(x+z) - \overrightarrow{\psi}_x^{\tilde{L}_n}(\eta_s^n) \big] ds  \Big)^2  \Big]  \lesssim \frac{\tilde{L}_n^{1+\gamma}}{L_n^{2 \gamma - 1}}  \frac{n^{2 r_a - 2}}{\Theta(n)} ; \\
& \mathbb{E}_{\nu_b^n} \Big[ \sup_{t \in [0,T]} \Big(  \int_{0}^t b_{n} \sum_{x=\tilde{L}_n+1+n/2}^{n-1} \nabla G \left(  \tfrac{x}{n}  \right) \sum_{z=L_n}^{\tilde{L}_n-1}     z a(z) \big[ \bar{\eta}_s^n(x) \bar{\eta}_s^n(x-z) - \overleftarrow{\psi}_x^{\tilde{L}_n}(\eta_s^n) \big] ds  \Big)^2  \Big] \lesssim \frac{\tilde{L}_n^{1+\gamma}}{L_n^{2 \gamma - 1}}  \frac{n^{2 r_a - 2}}{\Theta(n)}.
\end{align*}
\end{lem}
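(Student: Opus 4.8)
The plan is to follow the template of Lemma \ref{lem411} verbatim, the only structural differences being that the inner sum over $z$ now starts at the lower cutoff $L_n$ and that the local averaging window has the larger size $\tilde{L}_n$. The lower cutoff is precisely what produces the quantitative gain $L_n^{1-2\gamma}$ that is absent in Lemma \ref{lem411}. I will prove only the first bound, the second (left-directed) one being entirely analogous via the reflection $x \mapsto n-x$, which exchanges $\overrightarrow{\psi}$ and $\overleftarrow{\psi}$.

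\textbf{First step (key second-moment bound).} Set $W_x(\eta):= \sum_{z=L_n}^{\tilde{L}_n-1} z a(z)[\bar{\eta}(x)\bar{\eta}(x+z) - \overrightarrow{\psi}_x^{\tilde{L}_n}(\eta)]$, and note that $\supp(W_x)\subset\{x,\ldots,x+\tilde{L}_n-1\}$ by \eqref{medemp}. I claim that $\int_{\Omega_n} W_x^2\, d\nu_b^n \lesssim L_n^{1-2\gamma}$. To obtain it, bound $\int W_x^2\, d\nu_b^n \le 2\int\big(\sum_z z a(z)\bar{\eta}(x)\bar{\eta}(x+z)\big)^2 d\nu_b^n + 2\big(\sum_z z a(z)\big)^2\int(\overrightarrow{\psi}_x^{\tilde{L}_n})^2 d\nu_b^n$. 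Since $\bar{\eta}(x)$ and $\bar{\eta}(x+z)$ are independent with mean zero under $\nu_b^n$ by \eqref{defberprod}, only the diagonal terms $z=z'$ survive in the expansion of the first square, giving $\chi(b)^2\sum_{z=L_n}^{\tilde{L}_n-1}(za(z))^2$; using $z a(z)\lesssim z^{-\gamma}$ from \eqref{defsa} together with $2\gamma>1$, this is $\lesssim\sum_{z\ge L_n}z^{-2\gamma}\lesssim L_n^{1-2\gamma}$. For the second term, $\sum_{z\ge L_n}z^{-\gamma}\lesssim L_n^{1-\gamma}$ because $\gamma>1$, while $\int(\overrightarrow{\psi}_x^{\tilde{L}_n})^2 d\nu_b^n\le 6\tilde{L}_n^{-2}$ by \eqref{boundpsi}, so this term is $\lesssim L_n^{2-2\gamma}\tilde{L}_n^{-2}$, which is dominated by $L_n^{1-2\gamma}$ since $\tilde{L}_n\ge L_n$. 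This proves the claim.

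\textbf{Second step (decomposition and block estimate).} Apply \eqref{CSdiscexp} with $m=\tilde{L}_n$, splitting $\{1,\ldots,n/2\}$ into residue classes $\mathcal{X}_j$ modulo $\tilde{L}_n$. The point of choosing the modulus equal to the window size $\tilde{L}_n$ is that consecutive elements of each $\mathcal{X}_j$ differ by $\tilde{L}_n$, so the supports of the $W_x$ with $x\in\mathcal{X}_j$ are pairwise disjoint and lie in consecutive blocks of length $\tilde{L}_n$. Moreover, by \eqref{medemp} and \eqref{defberprod}, each $F_x:=b_n\nabla G(\tfrac{x}{n})W_x$ satisfies $\int_{\Omega_n} F_x\, d\nu_\sigma^n=0$ for every $\sigma\in[0,1]$, so Proposition \ref{prop48} applies with block sizes $k_i-k_{i-1}=\tilde{L}_n$. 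Combined with the first step it yields, for each fixed $j$, a contribution $\lesssim T\,(\tilde{L}_n)^{\gamma}\Theta(n)^{-1}(b_n)^2 L_n^{1-2\gamma}\sum_{x\in\mathcal{X}_j}[\nabla G(\tfrac{x}{n})]^2$.

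\textbf{Third step (exponent bookkeeping).} Multiplying by the prefactor $\tilde{L}_n$ from \eqref{CSdiscexp} and summing over $j$, and using that the residue classes partition $\{1,\ldots,n/2\}$ so that $\sum_j\sum_{x\in\mathcal{X}_j}[\nabla G(\tfrac{x}{n})]^2=\sum_{x=1}^{n/2}[\nabla G(\tfrac{x}{n})]^2\lesssim n$, the whole expression is $\lesssim \tilde{L}_n\cdot(\tilde{L}_n)^{\gamma}\Theta(n)^{-1}(b_n)^2 L_n^{1-2\gamma}\cdot n$. Recalling $b_n=n^{r_a-1}(\sqrt{n-1})^{-1}$, so that $(b_n)^2\asymp n^{2r_a-3}$, this equals $\frac{(\tilde{L}_n)^{1+\gamma}}{L_n^{2\gamma-1}}\frac{n^{2r_a-2}}{\Theta(n)}$, which is the claimed bound. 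The main delicate point is the second-moment estimate of the first step, and specifically verifying that the contribution of the averaged field $\overrightarrow{\psi}_x^{\tilde{L}_n}$ is subdominant relative to the diagonal contribution $L_n^{1-2\gamma}$; this relies crucially on $\tilde{L}_n\ge L_n$ and $\gamma>1$. Everything after that is exponent arithmetic feeding into Proposition \ref{prop48}.
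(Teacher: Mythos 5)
Your proof is correct and takes essentially the same route as the paper, whose proof of this lemma is a one-line reference to the arguments of Lemma \ref{lem411}: you carry out exactly that adaptation, the only genuinely new input being the variance bound $\int_{\Omega_n} W_x^2\, d\nu_b^n \lesssim L_n^{1-2\gamma}$ coming from the lower cutoff in the $z$-sum (with the $\overrightarrow{\psi}_x^{\tilde{L}_n}$ contribution correctly shown to be subdominant via \eqref{boundpsi} and $\gamma>1$), after which the block decomposition modulo $\tilde{L}_n$, Proposition \ref{prop48} with $k_i-k_{i-1}=\tilde{L}_n$, and the exponent bookkeeping reproduce the stated bound. One minor caveat, inherited verbatim from the paper's own proof of Lemma \ref{lem411}: the assertion $\int_{\Omega_n} F_x\, d\nu_\sigma^n=0$ for every $\sigma\in[0,1]$ is actually false for $b\neq 1/2$ (a direct computation gives mean proportional to $(1-2b)(\sigma-b)(\tilde{L}_n-1)^{-1}$), but this is harmless because the lemma is only invoked when $c^{+}\neq c^{-}$, in which case the standing assumption $b=1/2$ is in force, while for $c^{+}=c^{-}$ one has $a\equiv 0$ and the quantity vanishes identically.
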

\begin{proof}
It is enough to apply arguments analogous to the ones presented in the proof for Lemma \ref{lem411}.
\end{proof}
Keeping Lemma \ref{lemeq413} in mind, assume that $ \tilde{\lambda} (\gamma+1) - (2 \gamma - 1) \lambda - (2 - \gamma)=-\hat{\delta}$ for some $\hat{\delta} >0$ and some $0 \leq \lambda < \tilde{\lambda} <1$.  In this case, due to \eqref{timescale} \eqref{defrs} and \eqref{condrate}, the upper bound in Lemma \ref{lemeq413} is at most of order $n^{-\hat{\delta}}$, which vanishes as $n \rightarrow \infty$.  

Given $\hat{\delta} >0$, we will make use of a sequence $(\lambda_j)_{j \geq 0}$ such that $\lambda_0=0$ and 
\begin{align} \label{reclaw}
\forall j \geq 0, \quad \hat{\delta} + \lambda_{j+1} (\gamma+1) = (2 \gamma - 1) \lambda_j + 2 - \gamma.
\end{align}
In particular, choosing $j=0$ in last expression, from $\lambda_0=0$ we get $\lambda_1=(2-\gamma-\hat{\delta}) (\gamma + 1)^{-1}$. Since we want to obtain $\lambda_1 > \lambda_0 =0$, we will impose $\hat{\delta} < 2 - \gamma$. Now from \eqref{reclaw}, $\forall j \geq 0$ it holds
$ 2 - \gamma - \hat{\delta} = \lambda_{j+1} (1+\gamma) - \lambda_j (2 \gamma - 1) = \lambda_{j+2} (1+\gamma) - \lambda_{j+1} (2 \gamma - 1), 
$ and this gives  $\forall j \geq 0$ that
$ (1+\gamma) \lambda_{j+2} - 3 \gamma \lambda_{j+1} + (2 \gamma - 1) \lambda_j =0.
$
From linear recurrence tools, there exist real constants $A_1$, $A_2$  such that
\begin{align*}
\forall j \geq 0, \quad \lambda_j = A_1 + A_2 \Big( \frac{2 \gamma - 1}{\gamma + 1} \Big)^j.
\end{align*}
Since  $\lambda_0=0$, we get $A_2=-A_1$. Combining this with $\lambda_1=(2-\gamma-\hat{\delta}) (\gamma + 1)^{-1}$, we have
\begin{equation} \label{reclambdaj}
\forall j \geq 0, \quad \lambda_j = \frac{2-\gamma-\hat{\delta}}{2-\gamma} \Big[ 1 - \Big( \frac{2 \gamma - 1}{\gamma +1} \Big)^j \Big] < 1 - \frac{\hat{\delta}}{2-\gamma}. 
\end{equation}
From $\gamma < 2$ we get $2 \gamma - 1 < \gamma +1$, which leads to $0 \leq \lambda_j < \lambda_{j+1}$ for any $j \geq 0$ and
\begin{align*}
\lim_{j \rightarrow \infty} \lambda_j = \frac{2-\gamma-\hat{\delta}}{2-\gamma} = 1 - \frac{\hat{\delta}}{2-\gamma}.
\end{align*}
Recalling that $0 < \delta < 3 - 2 r_a \leq 1$ is fixed, we will fix $\hat{\delta}=(2-\gamma) \delta/2$, which gives $(\lambda_j)_{j \geq 0}$, due to \eqref{reclambdaj}. Moreover, we can fix $N \in \mathbb{N}$ such that $1-\delta < \lambda_N < \frac{2-\delta}{2}$. Furthermore, we define $K_j:=n^{\lambda_j}$ for every $j \in \{0, 1, \ldots, N\}$.

Next, we state some auxiliary results that are useful to obtain Proposition \ref{propgammales32}. 
\begin{lem} \label{lem414}
Assume that $G \in C^{\infty}([0,1])$ and $\gamma > 1$. Then
\begin{align*}
& \mathbb{E}_{\nu_b^n} \Big[ \sup_{t \in [0,T]} \Big( \int_{0}^t b_{n} \sum_{x=1}^{n/2} \nabla G \left(  \tfrac{x}{n}  \right) \sum_{j=1}^{N} \sum_{z=K_{j-1}}^{K_j-1}  z a(z) \big[  \bar{\eta}_s^n(x) \bar{\eta}_s^n(x+z) - \overrightarrow{\psi}_x^{K_j}(\eta_s^n) \big] ds  \Big)^2  \Big] \lesssim n^{-\hat{\delta}}  ; \\
& \mathbb{E}_{\nu_b^n} \Big[ \sup_{t \in [0,T]} \Big(  \int_{0}^t b_{n} \sum_{x=K_N+n/2}^{n-1} \nabla G \left(  \tfrac{x}{n}  \right)\sum_{j=1}^{N} \sum_{z=K_{j-1}}^{K_j-1}     z a(z) \big[ \bar{\eta}_s^n(x) \bar{\eta}_s^n(x-z) - \overleftarrow{\psi}_x^{K_j}(\eta_s^n) \big] ds  \Big)^2  \Big] \lesssim  n^{-\hat{\delta}}  ; \\
& \mathbb{E}_{\nu_b^n} \Big[ \sup_{t \in [0,T]} \Big(  \int_{0}^t b_{n} \sum_{i=1}^N \sum_{x=K_{i-1}+n/2}^{K_i-1+n/2} \nabla G \left(  \tfrac{x}{n}  \right) \sum_{j=1}^{i-1} \sum_{z=K_{j-1}}^{K_j-1}     z a(z) \big[ \bar{\eta}_s^n(x) \bar{\eta}_s^n(x-z) - \overleftarrow{\psi}_x^{K_j}(\eta_s^n) \big] ds  \Big)^2  \Big] \lesssim  n^{-\hat{\delta}}  ; \\
& \mathbb{E}_{\nu_b^n} \Big[ \sup_{t \in [0,T]} \Big(  \int_{0}^t b_{n} \sum_{i=1}^N \sum_{x=K_{i-1}+n/2}^{K_i-1+n/2} \nabla G \left(  \tfrac{x}{n}  \right) \sum_{z=K_{i-1}}^{x-1-n/2}     z a(z) \big[ \bar{\eta}_s^n(x) \bar{\eta}_s^n(x-z) - \overleftarrow{\psi}_x^{K_i}(\eta_s^n) \big] ds  \Big)^2  \Big] \lesssim  n^{-\hat{\delta}} .
\end{align*}
\end{lem}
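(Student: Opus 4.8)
The proof rests on the observation recorded right after Lemma \ref{lemeq413}: the recurrence \eqref{reclaw} is tailored so that for every $j\in\{1,\dots,N\}$ the pair of scales $L_n=K_{j-1}=n^{\lambda_{j-1}}$ and $\tilde L_n=K_j=n^{\lambda_j}$ satisfies $\lambda_j(\gamma+1)-(2\gamma-1)\lambda_{j-1}-(2-\gamma)=-\hat\delta$. The plan is therefore to read each of the four displays as a superposition, over the finitely many scales $K_0<K_1<\dots<K_N$, of terms to which Lemma \ref{lemeq413} (or the block-restricted version of its proof) applies, each contributing at most order $n^{-\hat\delta}$. The arithmetic behind this contribution is the elementary estimate $n^{2r_a-2}/\Theta(n)\le n^{-(2-\gamma)}$: for $\beta\ge0$ this reads $2r_a\le 2\gamma$, which is \eqref{comprsgam}, while for $\beta<0$ it reads $2r_a\le 2\gamma+\beta$, which follows from \eqref{defrs}, \eqref{timescale} and $\beta_a\ge0$ in \eqref{condrate}. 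Plugging $L_n=K_{j-1}$, $\tilde L_n=K_j$ into the right-hand side $\tilde L_n^{1+\gamma}L_n^{-(2\gamma-1)}\,n^{2r_a-2}/\Theta(n)$ of Lemma \ref{lemeq413} and using the recurrence then gives a bound of order $n^{-\hat\delta}$ for each $j$, and by construction $0\le\lambda_{j-1}<\lambda_j<1$ (see \eqref{reclambdaj} and the choice $\lambda_N<\tfrac{2-\delta}{2}$), so Lemma \ref{lemeq413} is indeed applicable.

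For the first two displays the decomposition is already explicit. I would fix the residue-class structure exactly as in Lemma \ref{lem411}, write the integrand as $\sum_{j=1}^N g_j$, where $g_j$ collects the scale-$j$ contributions $b_n\sum_{x=1}^{n/2}\nabla G(\tfrac xn)\sum_{z=K_{j-1}}^{K_j-1}za(z)\big[\bar\eta_s^n(x)\bar\eta_s^n(x+z)-\overrightarrow\psi_x^{K_j}(\eta_s^n)\big]$ (and its left-directed analogue for the second display). Applying the discrete Cauchy--Schwarz inequality \eqref{CSdiscexp} with $m=N$ bounds the expectation by $N\sum_{j=1}^N\mathbb E_{\nu_b^n}[\sup_t(\int_0^t g_j\,dr)^2]$, and each summand is precisely the left-hand side of one of the inequalities in Lemma \ref{lemeq413} with $L_n=K_{j-1}$, $\tilde L_n=K_j$. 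Hence the whole sum is at most $N\cdot N\cdot C n^{-\hat\delta}=CN^2 n^{-\hat\delta}$, and since $N$ is a constant depending only on $\delta$ and $\gamma$, this is $\lesssim n^{-\hat\delta}$.

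The third and fourth displays are the genuinely new ingredient and I expect them to be the main obstacle, since they live in the boundary layer of width $\sim K_N$ around the seam $n/2$ where the left-directed averaging boxes would otherwise cross into the left half; consequently the ranges of $x$ and $z$ are coupled and do not literally match the form of Lemma \ref{lemeq413}. The strategy is to keep the block decomposition $x\in\{K_{i-1}+1+n/2,\dots,K_i-1+n/2\}$ as it stands: in the third display, for each pair $(i,j)$ with $j\le i-1$ the inner jump length satisfies $z<K_{i-1}\le x-1-n/2$, so the box $\overleftarrow\psi_x^{K_j}$ stays inside the right half and Proposition \ref{prop48} applies verbatim on the restricted index set, yielding the same scale-$(j-1,j)$ bound $\lesssim n^{-\hat\delta}$; in the fourth display the averaging scale is the current block scale $K_i$ with $z\in[K_{i-1},K_i-1]$, so one invokes the recurrence with $\lambda=\lambda_{i-1}$, $\tilde\lambda=\lambda_i$. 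In both cases restricting the $x$-summation to a block only decreases the governing $\|\nabla G\,\mathbbm 1_{\text{block}}\|_{L^2}^2$ factor, so the per-term estimate is preserved; the remaining work is purely bookkeeping — one applies \eqref{CSdiscexp} over the $O(N^2)$ (respectively $O(N)$) pairs, absorbs the constant powers of $N$, and concludes that each of the four expectations is $\lesssim n^{-\hat\delta}$. The only point demanding genuine care is to recheck that the coupling of the $z$-range to $x$ in the fourth display does not disturb the disjoint-support hypothesis of Proposition \ref{prop48}, which it does not, since for fixed residue class the supports of the relevant local functions remain separated by at least $K_i$.
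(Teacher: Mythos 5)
Your proposal is correct and follows essentially the same route as the paper, which likewise obtains the first two bounds from an application of \eqref{CSdiscexp} with $m=N$ together with Lemma \ref{lemeq413} and the recurrence \eqref{reclaw}, and treats the third and fourth expectations via \eqref{CSdisc} and arguments analogous to the proof of Lemma \ref{lemeq413} (i.e.\ block decompositions fed into Proposition \ref{prop48}); your exponent arithmetic $n^{2r_a-2}/\Theta(n)\leq n^{-(2-\gamma)}$ combined with $\tilde{L}_n^{1+\gamma}L_n^{-(2\gamma-1)}=n^{2-\gamma-\hat{\delta}}$ is exactly what makes each per-scale term of order $n^{-\hat{\delta}}$, and absorbing the constant powers of $N$ is legitimate since $N$ is fixed independently of $n$. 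The only blemish is the harmless off-by-one in your seam estimate (for $x\geq K_{i-1}+n/2$ one has $K_{i-1}\leq x-n/2$ rather than $K_{i-1}\leq x-1-n/2$), which does not affect the conclusion that the left-directed boxes and the sites $x-z$ stay where Proposition \ref{prop48} needs them.
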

\begin{proof}
Applying \eqref{CSdiscexp} for $m=N$, Lemma \ref{lemeq413} and \eqref{reclaw}, we obtain the first two upper bounds.
On the other hand, the third and fourth expectations in the statement of the lemma can be treated by combining \eqref{CSdisc} with arguments analogous to the ones used to obtain Lemma \ref{lemeq413}. 
\end{proof}
Now, we observe that applying Minkowski's inequality  $\forall t \in [0, T]$ it holds
\begin{equation} \label{mink}
 \mathbb{E}_{\nu_b^n} \Big[ \sup_{s \in [0,t]} \Big( \int_{0}^s \sum_{j=1}^m g_j(\eta_r^n)\; dr  \Big)^2  \Big] \leq \Big( \sum_{j=1}^m  \mathbb{E}_{\nu_b^n} \Big[ \sup_{s \in [0,t]} \Big( \int_{0}^s  g_j(\eta_r^n)\; dr  \Big)^2  \Big]^{1/2}  \Big)^2,
\end{equation}
for every $m \in \mathbb{N}$ and every $g_1, \ldots, g_m: \Omega_n \rightarrow \mathbb{R}$.

The next ingredient of our multiscale argument is the following lemma.
\begin{lem} \label{lemmult0}
If $G \in C^{\infty}([0,1])$, $n \geq 2$, $h_n: \Lambda_n \rightarrow \mathbb{R}$, $J_n \subset \{1+n/2, \ldots, n-1\}$, $2 \leq D_1 \leq D_2 \leq  n/3$ and $\gamma \in (0,2)$, there exists a constant $C_0>0$ independent of $n$, $t$ and $G$ such that for any $t \in [0, T]$, 
\begin{align*}
& \mathbb{E}_{\nu_b^n} \Big[ \sup_{s \in [0,t]} \Big( \int_{0}^s  \sum_{x=1}^{n/2} \nabla G \left(  \tfrac{x}{n}  \right) h_n(x) \big[  \overrightarrow{\psi}_x^{D_1}(\eta_r^n) - \overrightarrow{\psi}_x^{D_2}(\eta_r^n) \big] dr  \Big)^2  \Big] \leq  C_0 \frac{(D_2)^{\gamma-1} }{\Theta(n) } t  \sum_{x=1}^{n/2} [ \nabla G \left(  \tfrac{x}{n}  \right) h_n(x)]^2, \\
& \mathbb{E}_{\nu_b^n} \Big[ \sup_{s \in [0,t]} \Big( \int_{0}^s  \sum_{x \in J_{n}} \nabla G \left(  \tfrac{x}{n}  \right) h_n(x) \big[  \overleftarrow{\psi}_x^{D_1}(\eta_r^n) - \overleftarrow{\psi}_x^{D_2}(\eta_r^n) \big] dr  \Big)^2  \Big] \leq  C_0 \frac{(D_2)^{\gamma-1} }{\Theta(n) } t \sum_{x \in J_{n}} [ \nabla G \left(  \tfrac{x}{n}  \right) h_n(x)]^2.
\end{align*}

\end{lem}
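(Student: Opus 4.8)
The plan is to prove the right-directed bound; the left-directed one follows verbatim with $\overleftarrow{\psi}$ in place of $\overrightarrow{\psi}$ and the index set $J_n$ in place of $\{1,\dots,n/2\}$. Write $c_x:=\nabla G(\tfrac{x}{n})h_n(x)$. The first observation is that a single application of Proposition~\ref{prop48} is too crude: the function $\overrightarrow{\psi}_x^{D_1}-\overrightarrow{\psi}_x^{D_2}$ has support $\{x,\dots,x+D_2-1\}$ of size $D_2$, while its $L^2(\nu_b^n)$-norm is only of order $1/D_1$ (it is dominated by the smaller box). Splitting $\sum_x$ into the $\asymp D_2$ residue classes modulo $D_2$ and invoking Proposition~\ref{prop48} with blocks of length $D_2$ would yield a bound of order $D_2^{\gamma+1}/(D_1^2\Theta(n))$, which exceeds the target $D_2^{\gamma-1}/\Theta(n)$ unless $D_1\asymp D_2$. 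The remedy is a dyadic multiscale telescoping, in the spirit of Section~4.4.3 of \cite{jarafluc}.

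First I would establish the estimate in the comparable-scale regime $D\le D'\le 2D$. Here I decompose the sum defining the integrand into the residue classes $\mathcal{X}_1,\dots,\mathcal{X}_{D'}$ modulo $D'$, so that inside each class the functions $c_x(\overrightarrow{\psi}_x^{D}-\overrightarrow{\psi}_x^{D'})$ have pairwise disjoint supports contained in consecutive blocks of length $D'$. Since $b=1/2$ in the present setting (the regime $c^+\neq c^-$ of Section~\ref{secconvAtnG}), the $\nu_\sigma^n$-means of $\overrightarrow{\psi}_x^{D}$ and $\overrightarrow{\psi}_x^{D'}$ both equal $(\sigma-\tfrac12)^2$ and hence cancel, so $\int_{\Omega_n}(\overrightarrow{\psi}_x^{D}-\overrightarrow{\psi}_x^{D'})\,d\nu_\sigma^n=0$ for every $\sigma\in[0,1]$; this is exactly the hypothesis needed to apply Proposition~\ref{prop48} on each class with block length $D'$. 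Combining Proposition~\ref{prop48} with the variance bound $\int_{\Omega_n}(\overrightarrow{\psi}_x^{D}-\overrightarrow{\psi}_x^{D'})^2\,d\nu_b^n\lesssim D^{-2}$ coming from \eqref{boundpsi}, and then summing the $D'$ classes via \eqref{CSdiscexp}, I obtain a bound of order
\[
D'\cdot t\,\frac{(D')^\gamma}{\Theta(n)}\cdot\frac{1}{D^2}\sum_x c_x^2\;\lesssim\;\frac{(D')^{\gamma-1}}{\Theta(n)}\,t\sum_x c_x^2,
\]
where the last step uses $D\asymp D'$.

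To reach arbitrary $2\le D_1\le D_2\le n/3$ I would telescope over dyadic scales. Setting $L_k:=\min(2^kD_1,D_2)$ and writing $\overrightarrow{\psi}_x^{D_1}-\overrightarrow{\psi}_x^{D_2}=\sum_{k}(\overrightarrow{\psi}_x^{L_k}-\overrightarrow{\psi}_x^{L_{k+1}})$, each summand couples two consecutive comparable scales, so the previous step applies. Minkowski's inequality \eqref{mink} then bounds the square root of the left-hand side by
\[
\Big(\frac{t}{\Theta(n)}\sum_x c_x^2\Big)^{1/2}\sum_{k} L_{k+1}^{(\gamma-1)/2}.
\]
Because $\gamma>1$ in the regime where this lemma is used, the exponent $(\gamma-1)/2$ is positive and the geometric series is dominated by its largest term, of order $D_2^{(\gamma-1)/2}$; squaring returns the claimed bound $C_0\,D_2^{\gamma-1}\Theta(n)^{-1}t\sum_x c_x^2$.

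The main obstacle is the comparable-scale step, where one must verify the two structural facts that make Proposition~\ref{prop48} usable: that $\overrightarrow{\psi}_x^{D}-\overrightarrow{\psi}_x^{D'}$ is centered under every Bernoulli measure $\nu_\sigma^n$ (which is where $b=1/2$ enters, through the cancellation of the $(\sigma-\tfrac12)^2$ means defined via \eqref{medemp}), and that its variance is controlled by the smaller scale via \eqref{boundpsi}. The second delicate point is the summation over scales: obtaining the gain $D_2^{\gamma-1}$ rather than $D_1^{\gamma-1}$ relies on $\gamma>1$ to force the geometric series to concentrate on the coarsest scale $D_2$, which is precisely the feature restricting the scheme to the range relevant for Proposition~\ref{propgammales32}.
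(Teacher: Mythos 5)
Your proof is correct and follows essentially the same route as the paper's: a dyadic telescoping between scales $B_i=2^iD_1$ with top scale $B_{J+1}=D_2$, a comparable-scale estimate obtained from the residue-class decomposition plus Proposition \ref{prop48} (whose centering hypothesis you verify via the $b=\tfrac12$ computation, exactly the mechanism underlying Lemmas \ref{lem411} and \ref{lemeq413}, with the variance controlled by \eqref{boundpsi}), and a final summation of the scales via Minkowski's inequality \eqref{mink}. Your closing caveat is also apt: the geometric series concentrates on the coarsest scale $D_2$ only when $\gamma>1$, a restriction the paper leaves implicit (the lemma is stated for $\gamma\in(0,2)$ but is only invoked in regimes where $\gamma\ge 3/2$).
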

\begin{proof}
In what follows, we detail only the arguments for the first bound in last display, the proof for the second one being analogous. Observe that there exists $J \in \mathbb{N}$ such that $2^J D_1 \leq D_{2} <  2^{J+1} D_1$. Next, for every $i \in \{0, 1, \ldots, J\}$, we define $B_{i} = 2^{i} D_1$. Finally, we define $B_{J+1} =  D_{2}$. With this notation, the first expectation in the statement of Lemma \ref{lemmult0} can be rewritten as 
\begin{align} \label{explemmult0}
 \mathbb{E}_{\nu_b^n} \Big [     \sup_{s \in [0,t]} \Big( \sum_{i=0}^{J} \int_{0}^t  \sum_{x=1}^{n/2} \nabla G \left(  \tfrac{x}{n}  \right)  h_n(x)  \big[  \overrightarrow{\psi}_x^{B_{i}}(\eta_r^n) - \overrightarrow{\psi}_x^{B_{i+1}}(\eta_r^n) \big] dr  \Big)^2  \Big].
\end{align}
By applying arguments analogous to the ones used in the proof of Lemma \ref{lemeq413}, it is not hard to prove that for every $i \in \{0, 1, \ldots, J\}$, it holds 
\begin{equation} \label{explemmult0b}
\begin{split}
& \mathbb{E}_{\nu_b^n} \Big[ \sup_{s \in [0,t]} \Big( \int_{0}^s   \sum_{x=1}^{n/2} \nabla G \left(  \tfrac{x}{n}  \right) h_n(x) \big[  \overrightarrow{\psi}_x^{B_i}(\eta_r^n) - \overrightarrow{\psi}_x^{B_{i+1}}(\eta_r^n) \big] dr  \Big)^2  \Big] \leq   60 \kappa t \frac{(B_{i+1})^{\gamma-1} }{\Theta(n) } \sum_{x=1}^{n/2} [ \nabla G \left(  \tfrac{x}{n}  \right)  h_n(x) ]^2
, 
\end{split}
\end{equation}
where in last line $\kappa$ is exactly the constant given in the statement of Proposition \ref{prop48}.
The proof ends by combining \eqref{explemmult0} with \eqref{mink} and \eqref{explemmult0b}.
\end{proof}
For $j \in \{1, \ldots, N\}$ let $m_j:=\sum_{z=K_{j-1}}^{K_j-1}z a(z) $,  In particular, from \eqref{defsa} and \eqref{defma} we get
\begin{equation} \label{summj}
\sum_{j=1}^{n} |m_j| = \sum_{j=1}^{n} \Big| \sum_{z=K_{j-1}}^{K_j-1}z a(z) \Big| = \Big| \sum_{j=1}^{n} \sum_{z=K_{j-1}}^{K_j-1} z a(z)  \Big| \leq ´\Big| \sum_{z=1}^{\infty} z a(z) \Big| = |m_a| < \infty
\end{equation}
The last bound holds since $\gamma > 1$. Next we state an important lemma in order to obtain Proposition \ref{propgammales32}.
\begin{lem} \label{lem415b}
Assume that $G \in C^{\infty}([0,1])$ and $\gamma > 1$. Moreover, assume that either $\beta \geq 0$ and $2 r_a \leq 3$; or $\beta < 0$ and $2 r_a \leq 3 + \beta$. Then, if $A(\delta,n):= n^{(1-\gamma) \delta/2}$, it holds
\begin{align*}
& \mathbb{E}_{\nu_b^n} \Big [ \sup_{t \in [0,T]} \Big(  \int_{0}^t b_{n} \sum_{x=1}^{n/2} \nabla G \left(  \tfrac{x}{n}  \right) \sum_{j=1}^{N} m_j \sum_{r=j}^{N-1} \big[  \overrightarrow{\psi}_x^{K_r}(\eta_s^n) - \overrightarrow{\psi}_x^{K_{r+1}}(\eta_s^n) \big] ds  \Big)^2  \Big] \lesssim  A(\delta,n)   ; \\
& \mathbb{E}_{\nu_b^n} \Big[ \sup_{t \in [0,T]} \Big(  \int_{0}^t b_{ n} \sum_{x=K_N+n/2}^{n-1} \nabla G \left(  \tfrac{x}{n}  \right) \sum_{j=1}^{N} m_j \sum_{r=j}^{N-1} \big[  \overleftarrow{\psi}_x^{K_r}(\eta_s^n) - \overleftarrow{\psi}_x^{K_{r+1}}(\eta_s^n) \big] ds  \Big)^2  \Big] \lesssim  A(\delta,n)  ; \\
& \mathbb{E}_{\nu_b^n} \Big[ \sup_{t \in [0,T]} \Big(  \int_{0}^t b_{ n} \sum_{i=1}^N \sum_{x=K_{i-1}+n/2}^{K_i-1+n/2} \nabla G \left(  \tfrac{x}{n}  \right) \sum_{j=1}^{i-1} m_j \sum_{r=j}^{N-1} \big[  \overleftarrow{\psi}_x^{K_r}(\eta_s^n) - \overleftarrow{\psi}_x^{K_{r+1}}(\eta_s^n) \big] ds  \Big)^2  \Big] \lesssim  A(\delta,n) ; \\
& \mathbb{E}_{\nu_b^n} \Big[ \sup_{t \in [0,T]} \Big(  \int_{0}^t b_{n} \sum_{i=1}^N \sum_{x=K_{i-1}+n/2}^{K_i-1+n/2} \nabla G \left(  \tfrac{x}{n}  \right) \sum_{z=K_{i-1}}^{x-1-n/2}     z a(z) \sum_{r=i}^{N-1} \big[  \overleftarrow{\psi}_x^{K_r}(\eta_s^n) - \overleftarrow{\psi}_x^{K_{r+1}}(\eta_s^n) \big] ds  \Big)^2  \Big] \lesssim  A(\delta,n).
\end{align*}
\end{lem}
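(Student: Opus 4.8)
The plan is to reduce each of the four expectations to a finite sum of terms of the shape controlled by Lemma \ref{lemmult0}, and then to carry out the resulting exponent arithmetic using \eqref{reclambdaj} together with the hypothesis on $2r_a$. The only nontrivial probabilistic input is already contained in Lemma \ref{lemmult0}; everything else is reorganization of finite sums plus counting exponents of $n$.

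First I would reorganize the nested sums so that the telescoping index becomes outermost. For the first expectation, interchanging the finite sums over $j$ and $r$ gives, for each fixed $x$,
\[
\sum_{j=1}^{N} m_j \sum_{r=j}^{N-1}\big[\overrightarrow{\psi}_x^{K_r}(\eta) - \overrightarrow{\psi}_x^{K_{r+1}}(\eta)\big] = \sum_{r=1}^{N-1} M_r\, \big[\overrightarrow{\psi}_x^{K_r}(\eta) - \overrightarrow{\psi}_x^{K_{r+1}}(\eta)\big],
\]
where $M_r := \sum_{j=1}^{r} m_j$ satisfies $|M_r| \leq |m_a| < \infty$ by \eqref{summj} (recall $\gamma > 1$). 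The same reordering handles the second expectation via the left averages; for the third and fourth one keeps the outer sum over $i$ as well. In every case the coefficient multiplying a single difference $\overrightarrow{\psi}_x^{K_r} - \overrightarrow{\psi}_x^{K_{r+1}}$ (or its left analogue) is a partial sum of the $m_j$, or of $z\,a(z)$, and is therefore uniformly bounded by a constant depending only on $c^{\pm}$ and $\gamma$, since $\sum_{z} |z\,a(z)| = |m_a| < \infty$ for $\gamma>1$.

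Next I would apply Minkowski's inequality \eqref{mink} over the (at most $N$, hence finitely many) indices $r$ and $i$, reducing each expectation to a sum of $O(N^2)$ terms, each of which has exactly the form handled by Lemma \ref{lemmult0} with $D_1 = K_r$, $D_2 = K_{r+1}$, with $h_n$ absorbing $b_n$ and the bounded coefficient, and with $J_n$ the relevant block of sites. Lemma \ref{lemmult0} then bounds each such term by a constant times
\[
\frac{(K_{r+1})^{\gamma-1}}{\Theta(n)}\, T\, b_n^2 \sum_{x}\big[\nabla G(\tfrac{x}{n})\big]^2 \lesssim \frac{n^{\lambda_{r+1}(\gamma-1)}}{\Theta(n)}\, n^{2r_a-2}\,\|\nabla G\|_{L^2}^2,
\]
where I used $b_n^2 = n^{2r_a-2}/(n-1)$ and the Riemann-sum estimate $\sum_x [\nabla G(\tfrac{x}{n})]^2 \lesssim n\,\|\nabla G\|_{L^2}^2$.

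Finally, the decisive step is the exponent bookkeeping, and here I would separate the two regimes of \eqref{timescale}. Using $\lambda_{r+1} < 1 - \delta/2$ from \eqref{reclambdaj} (with $\hat{\delta} = (2-\gamma)\delta/2$) and $\gamma - 1 > 0$: in the case $\beta \geq 0$ one has $\Theta(n) = n^{\gamma}$ and $2r_a \leq 3$, so the exponent of $n$ is at most $\lambda_{r+1}(\gamma-1) - \gamma + (2r_a - 2) < (1-\tfrac{\delta}{2})(\gamma-1) - \gamma + 1 = (1-\gamma)\delta/2$; in the case $\beta < 0$ one has $\Theta(n) = n^{\gamma+\beta}$ and $2r_a - 2 \leq 1 + \beta$, and the extra $-\beta$ coming from $\Theta(n)^{-1}$ is cancelled exactly by the hypothesis $2r_a \leq 3+\beta$, yielding the same exponent $(1-\gamma)\delta/2$. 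Hence each term is $\lesssim A(\delta,n)$, and since there are only $O(N^2)$ of them with $N$ fixed, the triangle/Minkowski sum preserves the bound. The hard part will be the careful reordering in the third and fourth expectations, where the coefficient depends on the outer index $i$ and, in the fourth case, on $x$ through the truncated inner sum $\sum_{z=K_{i-1}}^{x-1-n/2} z\,a(z)$; one must verify that after interchanging sums this coefficient remains uniformly bounded so that it can be absorbed into $h_n$ in Lemma \ref{lemmult0} without affecting the exponent count.
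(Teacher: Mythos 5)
Your proposal is correct and follows essentially the same route as the paper: both reduce each expectation, via \eqref{mink} and uniform control of the coefficients through $|m_a|<\infty$ (using the constant sign of $z\,a(z)$ for $\gamma>1$), to finitely many applications of Lemma \ref{lemmult0} with $D_1=K_r$, $D_2=K_{r+1}$, and then carry out the identical exponent arithmetic with $\lambda_{r+1}<1-\delta/2$ from \eqref{reclambdaj}, $\Theta(n)$ from \eqref{timescale}, and the hypotheses $2r_a\leq 3$ (resp.\ $2r_a\leq 3+\beta$). Your preliminary interchange of the $j$- and $r$-sums, giving partial-sum coefficients $M_r$ with $|M_r|\leq |m_a|$ absorbed into $h_n$ (which Lemma \ref{lemmult0} indeed permits, also for the $x$-dependent coefficient in the fourth bound), is only a mild streamlining of the paper's nested applications of \eqref{CSdiscexp}, which instead produce the harmless factor $N^3|m_a|^2$.
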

\begin{proof}
We prove here only the first upper bound, the proofs for the remaining ones being analogous. Applying \eqref{mink} for $m=N$, \eqref{summj}, \eqref{CSdiscexp} for $m=N$ and \eqref{CSdiscexp} for $m=N-j$, it is not very hard to prove that the first expectation in last display is bounded from above by
\begin{equation} \label{boundexpmult}
N^3 |m_a|^2  \sum_{r=1}^{N-1} \mathbb{E}_{\nu_b^n} \Big [       \sup_{t \in [0,T]} \Big(  \int_{0}^t  \sum_{x=1}^{n/2} \nabla G \left(  \tfrac{x}{n}  \right)  b_{n}  \big[  \overrightarrow{\psi}_x^{K_r}(\eta_s^n) - \overrightarrow{\psi}_x^{K_{r+1}}(\eta_s^n) \big] ds  \Big)^2  \Big].
\end{equation} 
Combining \eqref{boundexpmult} with an application of Lemma \ref{lemmult0} for $h_n \equiv b_n$, we get that the first expectation in the statement of the  lemma  is bounded from a constant, times
\begin{align*}
N^3 |m_a|^2  \sum_{r=1}^{N-1} \frac{(K_{r+1})^{\gamma-1} n^{2r_a-2}}{\Theta(n)} \lesssim \frac{(K_{r+1})^{\gamma-1} n^{2r_a-2}}{\Theta(n)} \lesssim
\begin{cases}
n^{(\gamma-1) \lambda_N + \gamma - 2 - 2 \beta_a }, \quad & \beta \geq 0; \\
n^{(\gamma-1) \lambda_N + \gamma - 2 - 2 \beta_a + \beta}, \quad & \beta \leq 0. 
\end{cases}
\end{align*}
In last bound we applied \eqref{timescale} and \eqref{defrs}. Thus, from \eqref{reclambdaj} we get
\begin{align*}
\begin{cases}
(\gamma-1) \lambda_N + \gamma - 2 - 2 \beta_a  < \gamma - 1 - \frac{\gamma-1}{2-\gamma} \hat{\delta} + \gamma - 2 - 2 \beta_a  = 2 r_a - 3 - \frac{\gamma-1}{2-\gamma} \hat{\delta}, \quad & \beta \geq 0; \\
(\gamma-1) \lambda_N + \gamma - 2 - 2 \beta_a + \beta < \gamma - 1 - \frac{\gamma-1}{2-\gamma} \hat{\delta} + \gamma - 2 - 2 \beta_a + \beta = 2 r_a - 3 - \beta - \frac{\gamma-1}{2-\gamma} \hat{\delta}, \quad & \beta \leq 0. 
\end{cases}
\end{align*}
Combining last display with the assumption that either $\beta \geq 0$ and $2 r_a \leq 3$; or $\beta < 0$ and $2 r_a \leq 3 + \beta$, we get that the first  expectation in the statement of Lemma \ref{lem415b} is bounded from a constant, times
$
n^{-\frac{\gamma-1}{2-\gamma} \hat{\delta}} =   n^{(1-\gamma) \delta/2}.
$
\end{proof}
We proceed by stating a final lemma that leads to the proof of Proposition \ref{propgammales32}.
\begin{lem} \label{lem416}
Let $G \in C^{\infty}([0,1])$ and $\gamma >1$. Assume that either $\beta \geq 0$ and $2 r_a \leq 3$; or $\beta < 0$ and $2 r_a \leq 3 + \beta$. If $A(\delta,n):= \max \big\{ n^{(1-\gamma) \delta/2}, \; n^{(\gamma-2) \delta/2} \big\} = \max \big\{ n^{(1-\gamma) \delta/2}, \; n^{- \hat{\delta}} \big\}  $, it holds
\begin{align*}
& \mathbb{E}_{\nu_b^n} \Big[ \Big( \sup_{t \in [0,T]}\int_{0}^t b_{n} \sum_{x=1}^{n/2} \nabla G \left(  \tfrac{x}{n}  \right) \sum_{z=1}^{K_N-1}       z a(z) \big[  \bar{\eta}_s^n(x) \bar{\eta}_s^n(x+z) - \overrightarrow{\psi}_x^{K_N}(\eta_s^n) \big] ds  \Big)^2  \Big] \lesssim  A(\delta,n) ; \\
& \mathbb{E}_{\nu_b^n} \Big[ \Big( \sup_{t \in [0,T]} \int_{0}^t b_{n} \sum_{x=K_n+1+n/2}^{n-1} \nabla G \left(  \tfrac{x}{n}  \right) \sum_{z=1}^{K_N-1}     z a(z) \big[ \bar{\eta}_s^n(x) \bar{\eta}_s^n(x-z) - \overleftarrow{\psi}_x^{K_N}(\eta_s^n) \big] ds  \Big)^2  \Big]\lesssim    A(\delta,n); \\
& \mathbb{E}_{\nu_b^n} \Big[ \Big( \sup_{t \in [0,T]} \int_{0}^t b_{n} \sum_{x=1+n/2}^{K_n+n/2} \nabla G \left(  \tfrac{x}{n}  \right) \sum_{z=1}^{x-1-n/2}       z a(z) \big[ \bar{\eta}_s^n(z) \bar{\eta}_s^n(x-z) - \overleftarrow{\psi}_x^{K_N}(\eta_s^n) \big] ds \Big)^2  \Big] \lesssim    A(\delta, n).
\end{align*}
\end{lem}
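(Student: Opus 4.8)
The plan is to prove each of the three bounds by a telescoping decomposition across the scales $K_0 = 1 < K_1 < \cdots < K_N$ already fixed, thereby reducing everything to the estimates established in Lemmas \ref{lem414} and \ref{lem415b}. I will carry out the first (forward) bound in detail; the other two are handled by the same scheme using the backward averages $\overleftarrow{\psi}$ and the corresponding bounds in the auxiliary lemmas.

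First I would split the inner sum over $z$ into the scale blocks $z \in \{K_{j-1}, \ldots, K_j - 1\}$, $j = 1, \ldots, N$ (recall $K_0 = n^{\lambda_0} = 1$, so these blocks exactly cover $\{1, \ldots, K_N - 1\}$), and within each block replace the reference scale $K_N$ by the block-adapted scale $K_j$. Using $m_j = \sum_{z=K_{j-1}}^{K_j-1} za(z)$ together with $\sum_{z=1}^{K_N-1} za(z) = \sum_{j=1}^N m_j$, this gives the pointwise identity
\begin{align*}
\sum_{z=1}^{K_N-1} za(z)\big[\bar{\eta}(x)\bar{\eta}(x+z) - \overrightarrow{\psi}_x^{K_N}(\eta)\big]
= & \sum_{j=1}^N \sum_{z=K_{j-1}}^{K_j-1} za(z)\big[\bar{\eta}(x)\bar{\eta}(x+z) - \overrightarrow{\psi}_x^{K_j}(\eta)\big] \\
& + \sum_{j=1}^N m_j \big[\overrightarrow{\psi}_x^{K_j}(\eta) - \overrightarrow{\psi}_x^{K_N}(\eta)\big].
\end{align*}
I would then telescope the last difference as $\overrightarrow{\psi}_x^{K_j} - \overrightarrow{\psi}_x^{K_N} = \sum_{r=j}^{N-1}[\overrightarrow{\psi}_x^{K_r} - \overrightarrow{\psi}_x^{K_{r+1}}]$, so that the second sum becomes exactly the quantity bounded in the first line of Lemma \ref{lem415b}.

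Inserting this identity under the time integral, multiplying by $b_n \nabla G(\tfrac{x}{n})$ and summing over $x \in \{1,\ldots,n/2\}$, I apply Minkowski's inequality \eqref{mink} with $m=2$ to separate the two contributions. The first is controlled by the first bound of Lemma \ref{lem414}, giving $\lesssim n^{-\hat{\delta}}$, and the second by the first bound of Lemma \ref{lem415b}, giving $\lesssim n^{(1-\gamma)\delta/2}$; since $A(\delta,n) = \max\{n^{(1-\gamma)\delta/2}, n^{-\hat{\delta}}\}$, the two terms combine to the claimed bound. The second statement of the lemma is the direct backward counterpart and uses the same decomposition with the second bounds of Lemmas \ref{lem414} and \ref{lem415b}.

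The hard part is the third bound, where $x$ ranges over the boundary layer $\{1+n/2,\ldots,K_N+n/2\}$ and the $z$-sum is truncated at $x-1-n/2$, reflecting that the left average $\overleftarrow{\psi}_x^{K}$ cannot employ a window exceeding the distance from $x$ to $n/2$. Here the number of available scales depends on $x$: for $x \in \{K_{i-1}+n/2,\ldots,K_i - 1 + n/2\}$ one may telescope only up to $K_i$, so the decomposition must be organized with an outer sum over $i$, splitting the $z$-sum into the full blocks $j = 1,\ldots,i-1$ and a terminal partial block $z \in \{K_{i-1},\ldots,x-1-n/2\}$ that is collapsed to $\overleftarrow{\psi}_x^{K_i}$. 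After this reorganization the two resulting pieces match precisely the third and fourth bounds of Lemmas \ref{lem414} and \ref{lem415b}, and a further application of \eqref{mink} closes the estimate at order $A(\delta,n)$. Once all three bounds are in place, the proof requires nothing beyond bookkeeping of the blocks and careful tracking of the $x$-dependent truncation in the boundary layer.
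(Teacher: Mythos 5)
Your proposal is correct and follows essentially the same route as the paper: the paper's proof of this lemma is precisely to combine \eqref{CSdiscexp} for $m=2$ with Lemmas \ref{lem414} and \ref{lem415b}, and your explicit block decomposition with the telescoping identity $\overrightarrow{\psi}_x^{K_j}-\overrightarrow{\psi}_x^{K_N}=\sum_{r=j}^{N-1}[\overrightarrow{\psi}_x^{K_r}-\overrightarrow{\psi}_x^{K_{r+1}}]$ (and its $x$-dependent reorganization in the boundary layer) is exactly the decomposition those two lemmas are tailored to, merely left implicit in the paper. The only immaterial difference is that you combine the two resulting pieces via Minkowski's inequality \eqref{mink} rather than \eqref{CSdiscexp}, which changes nothing up to constants.
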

\begin{proof}
The proof ends by combining \eqref{CSdiscexp} for $m=2$ with Lemmas \ref{lem414} and \ref{lem415b}.
\end{proof}
We end this section by applying Lemma \ref{lem416} to obtain Proposition \ref{propgammales32}.  
\begin{proof} [Proof of Proposition \ref{propgammales32}]
Recall that $\delta$ is such that $0 < \delta < 3 - 2 r_a$, $K_N=n^{\lambda_N}$ and $1-\delta < \lambda_N < \frac{2-\delta}{2}$. Since $A_t^n(G)$ is equal to the sum of the terms in \eqref{Atngrad0a}, \eqref{Atngrad0b}, \eqref{Atngrad1} and \eqref{Atngrad2}, we get from Lemmas \ref{lem46}, \ref{lem45}, \ref{lem410}, \ref{lem416} and an application of \eqref{CSdiscexp} for $m=4$ that the expectation in the statement of Proposition \ref{propgammales32} is bounded from above by a constant times
\begin{align*}
\frac{1}{n}+ \frac{n^{2 r_a - 2}}{K_N^{2 \gamma - 1}} + \frac{n^{2 r_a - 2}}{K_N} +   n^{(1-\gamma) \delta/2} + n^{(\gamma-2) \delta/2} \lesssim \frac{1}{n} + \frac{n^{1-\delta}}{n^{\lambda_N}} +  n^{(1-\gamma) \delta/2} + n^{(\gamma-2) \delta/2},
\end{align*} 
which goes to zero as $n \rightarrow \infty$, due to $1 < \gamma < 2$. This ends the proof.
\end{proof}

\subsection{Proof of Proposition \ref{tightAtnG} } \label{sectightAtnG}

In order to obtain Proposition \ref{tightAtnG}, we begin by stating the following result.
\begin{lem} \label{boundvaraux}
Assume that $G \in C^{\infty}([0,1])$, $\gamma \in (0,2)$ and $\beta \geq 0$. Let $\lambda \in (0,1)$, $n \geq 2$, $h_n: \Lambda_n \rightarrow \mathbb{R}$, $J_n \subset \{1+n/2, \ldots, n-1\}$ and $\lambda \in (0,1)$. Then, for every $ t \in [0, T]$ it holds
\begin{align}
 & \mathbb{E}_{\nu_b^n} \Big[ \sup_{s \in [0,t]} \Big( \int_{0}^s  \sum_{x=1}^{n/2} \nabla G \left(  \tfrac{x}{n}  \right) h_n(x)   \overrightarrow{\psi}_x^{n^{\lambda}}(\eta_r^n)  dr  \Big)^2  \Big] \leq  \frac{\tilde{C}_0  t^{\frac{2 \gamma-1}{\gamma}}}{n}  \sum_{x=1}^{n/2} [ \nabla G \left(  \tfrac{x}{n}  \right) h_n(x)]^2, \label{boundaux1} \\
 & \mathbb{E}_{\nu_b^n} \Big[ \sup_{s \in [0,t]} \Big( \int_{0}^s  \sum_{x \in J_{n}} \nabla G \left(  \tfrac{x}{n}  \right) h_n(x)   \overleftarrow{\psi}_x^{n^{\lambda}}(\eta_r^n)  dr  \Big)^2  \Big] \leq  \frac{\tilde{C}_0 t^{\frac{2 \gamma-1}{\gamma}}}{n} \sum_{x \in J_{n}} [ \nabla G \left(  \tfrac{x}{n}  \right) h_n(x)]^2, \nonumber
\end{align}
for some constant $\tilde{C}_0>0$ depending on $T$ and independent of $n$, $t$, $G$ and $\lambda$.
\end{lem}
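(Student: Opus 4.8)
The plan is to estimate the left-hand side from two complementary directions — a \emph{static} bound that is sharp for small $t$ and a \emph{dynamic}, multiscale bound that is sharp for large $t$ — and then, for each fixed $t$, to retain whichever is smaller. Since $\beta \ge 0$, throughout we have $\Theta(n)=n^{\gamma}$ by \eqref{timescale}. Writing $c_x:=\nabla G(\tfrac{x}{n})\,h_n(x)$, I will detail only the forward estimate \eqref{boundaux1}; the backward one is obtained identically, replacing $\overrightarrow{\psi}$ by $\overleftarrow{\psi}$ and the sum over $\{1,\dots,n/2\}$ by the sum over $J_n$, and invoking the second (rather than the first) bounds of Lemmas \ref{lemauxtight} and \ref{lemmult0}.

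The static bound follows directly from Lemma \ref{lemauxtight} with $\hat K=n^{\lambda}$:
\begin{equation*}
\mathbb{E}_{\nu_b^n}\Big[\sup_{s\in[0,t]}\Big(\int_0^s\sum_{x=1}^{n/2}c_x\,\overrightarrow{\psi}_x^{\,n^{\lambda}}(\eta_r^n)\,dr\Big)^2\Big]\le \frac{6\,t^2}{n^{\lambda}}\sum_{x=1}^{n/2}c_x^2 .
\end{equation*}
This controls the expectation efficiently when $t$ is at most $t_\ast:=n^{-\gamma(1-\lambda)}$.

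For the dynamic bound I would telescope the box average across dyadic scales, writing $\overrightarrow{\psi}_x^{\,n^{\lambda}}=\overrightarrow{\psi}_x^{\,2}+\sum_i\big(\overrightarrow{\psi}_x^{\,B_{i+1}}-\overrightarrow{\psi}_x^{\,B_i}\big)$ with $B_i=2^{\,i+1}$ running up to $n^{\lambda}$, and then apply Minkowski's inequality \eqref{mink}. Each dyadic difference is bounded by Lemma \ref{lemmult0}, contributing $C_0(B_{i+1})^{\gamma-1}t/\Theta(n)$, while the base term $\overrightarrow{\psi}_x^{\,2}$ is handled by Proposition \ref{prop48} applied to blocks of length $2$ (after splitting into two residue classes so the supports are disjoint), contributing $\lesssim t/\Theta(n)$. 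For $\gamma>1$ the geometric sum $\sum_i(B_{i+1})^{(\gamma-1)/2}$ is dominated by its top scale $n^{\lambda}$, so after squaring I obtain
\begin{equation*}
\mathbb{E}_{\nu_b^n}\Big[\sup_{s\in[0,t]}\Big(\int_0^s\sum_{x=1}^{n/2}c_x\,\overrightarrow{\psi}_x^{\,n^{\lambda}}(\eta_r^n)\,dr\Big)^2\Big]\lesssim \frac{n^{\lambda(\gamma-1)}}{\Theta(n)}\,t\sum_{x=1}^{n/2}c_x^2 = t\,n^{\lambda(\gamma-1)-\gamma}\sum_{x=1}^{n/2}c_x^2 ,
\end{equation*}
which is the efficient estimate once $t\ge t_\ast$.

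Finally I would interpolate. Both displays bound the same expectation, hence it is at most the minimum of their right-hand sides. A short computation shows that the two bounds coincide exactly at $t_\ast=n^{-\gamma(1-\lambda)}$, where both equal $n^{-2\gamma(1-\lambda)-\lambda}\sum_x c_x^2$, and that this common value is precisely $t_\ast^{(2\gamma-1)/\gamma}\,n^{-1}\sum_x c_x^2$. Comparing the three powers of $t$ involved — the static exponent $2$, the dynamic exponent $1$, and the target exponent $(2\gamma-1)/\gamma=2-\tfrac1\gamma$, which lies strictly between $1$ and $2$ when $\gamma>1$ — the static bound lies below $t^{(2\gamma-1)/\gamma}n^{-1}\sum_x c_x^2$ for $t\le t_\ast$ and the dynamic bound lies below it for $t\ge t_\ast$; taking the minimum yields \eqref{boundaux1} with a constant $\tilde C_0$ depending only on $T$. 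I expect the dynamic estimate to be the main obstacle: extracting the sharp scale $n^{\lambda(\gamma-1)}/\Theta(n)$ requires the careful multiscale telescoping together with the correct base-scale treatment via Proposition \ref{prop48}, and the clean balancing that produces exactly the exponent $(2\gamma-1)/\gamma$ relies on $\gamma>1$ (so that the dyadic sum is top-dominated); the regime $\gamma\le 1$ is more delicate, but the estimate is only needed for $\gamma\ge\tfrac32$ in the proof of Proposition \ref{tightAtnG}.
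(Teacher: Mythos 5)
Your static bound and the interpolation algebra are correct (the crossover $t_\ast=n^{-\gamma(1-\lambda)}$ and the identification of the common value with $t_\ast^{(2\gamma-1)/\gamma}n^{-1}\sum_x c_x^2$ both check out), and your small-$t$ regime coincides with the paper's. The genuine gap is in your dynamic bound, at the base scale: $\overrightarrow{\psi}_x^{\,2}$ does \emph{not} satisfy the hypothesis of Proposition \ref{prop48}, which requires $\int_{\Omega_n}F_i\,d\nu_\sigma^n=0$ for \emph{every} $\sigma\in[0,1]$. A direct computation from \eqref{medemp} gives $\int_{\Omega_n}\overrightarrow{\psi}_x^{\,L}\,d\nu_\sigma^n=(\sigma-b)^2+\tfrac{(1-2b)(\sigma-b)}{L-1}$; in particular, at $b=1/2$ one has $\overrightarrow{\psi}_x^{\,2}(\eta)=\bar{\eta}(x)\bar{\eta}(x+1)$, whose mean under $\nu_\sigma^n$ is $(\sigma-\tfrac12)^2\neq 0$ for $\sigma\neq\tfrac12$. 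This is not a removable technicality: a function with nonvanishing mean on the measures $\nu_\sigma^n$ does not admit a Kipnis--Varadhan bound linear in $t$ with constants uniform in $n$, and this is precisely why the multiscale scheme of this paper (and of \cite{jarafluc}) always terminates with a \emph{static} variance estimate at the top scale, never a dynamic estimate at the bottom. Note that the dyadic differences you do control are admissible only because, at $b=1/2$, $\int\overrightarrow{\psi}_x^{\,L}\,d\nu_\sigma^n=(\sigma-\tfrac12)^2$ is independent of $L$, so differences of two box averages are mean zero under every $\nu_\sigma^n$; the single base box is exactly where this cancellation is unavailable. As a sanity check, for $t$ of order one your claimed dynamic bound $t\,n^{\lambda(\gamma-1)-\gamma}\sum_x c_x^2$ is strictly \emph{stronger} than the lemma's bound $t^{(2\gamma-1)/\gamma}n^{-1}\sum_x c_x^2$ (since $\lambda<1$, $\gamma>1$), which should already raise suspicion.

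The repair is to run your comparison \emph{upward} rather than downward, which is what the paper does. Set $\zeta:=t(9T)^{-1}\in[0,1/9]$; for $\zeta\le n^{(\lambda-1)/\gamma}$ the static bound with $\hat K=n^\lambda$ already gives \eqref{boundaux1}, while for $\zeta\ge n^{(\lambda-1)/\gamma}$ one chooses the $t$-dependent mesoscopic scale $\varepsilon=\zeta^{1/\gamma}\le 1/3$ (so $n^\lambda\le\varepsilon n$), applies Lemma \ref{lemmult0} with $D_1=n^\lambda$, $D_2=\varepsilon n$ to bound the difference term by $C_0\,\varepsilon^{\gamma}(\varepsilon n)^{-1}\,t\sum_x c_x^2$ (using $\Theta(n)=n^\gamma$, valid since $\beta\ge 0$), and applies Lemma \ref{lemauxtight} with $\hat K=\varepsilon n$ to bound the large box \emph{statically} by $6\,t^2(\varepsilon n)^{-1}\sum_x c_x^2$; with $\varepsilon=\zeta^{1/\gamma}$ both contributions are of order $\zeta^{(2\gamma-1)/\gamma}n^{-1}\sum_x c_x^2$, yielding \eqref{boundaux1} with $\tilde C_0$ depending only on $T$. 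Besides dispensing with the dyadic sum and the base-scale estimate entirely, this argument covers all $\gamma\in(0,2)$ as stated in the lemma, whereas your route — even if the base scale could be fixed — is confined to $\gamma>1$ by the top-dominance of the geometric sum and the direction of the interpolation.
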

\begin{proof}
We detail only the proof for \eqref{boundaux1}, since the remaining bound can be treated in the same way.  Denoting $\zeta:= t (9 T)^{-1}$, we get that $t = (9T  ) \zeta$. Therefore, the expression in \eqref{boundaux1} is equivalent to
\begin{equation} \label{boundaux2}
\mathbb{E}_{\nu_b^n} \Big[ \sup_{s \in [0, \zeta]} \Big( \int_{0}^s  \sum_{x=1}^{n/2} \nabla G \left(  \tfrac{x}{n}  \right) h_n(x)   \overrightarrow{\psi}_x^{n^{\lambda}}(\eta_r^n)  dr  \Big)^2  \Big] \leq  \tilde{C}_0 (9 T )^{\frac{2 \gamma-1}{\gamma}} \frac{\zeta^{\frac{2 \gamma-1}{\gamma}}}{n}     \sum_{x=1}^{n/2} [ \nabla G \left(  \tfrac{x}{n}  \right) h_n(x)]^2,
\end{equation}
for every $\zeta \in [0, 1/9]$. An application of Lemma \ref{lemauxtight} for $\hat{K}=n^{\lambda}$ leads to
\begin{align*}
\forall t \in [0, T], \quad \mathbb{E}_{\nu_b^n} \Big[ \sup_{s \in [0,t]} \Big(  \int_{0}^s
\sum_{x=1}^{n/2}  \nabla G \left(  \tfrac{x}{n}  \right) h_{n}(x)  \overrightarrow{\psi}_x^{n^{\lambda}}(\eta_r^n)  dr  \Big)^2  \Big] \leq \frac{6}{n^{\lambda}} t^2 \sum_{x=1}^{n/2}[\nabla G \left(  \tfrac{x}{n}  \right) h_{n}(x)]^2.
\end{align*}
Therefore, for every $\zeta \in [0, 1/9]$, it holds
\begin{equation} \label{boundaux3}
\mathbb{E}_{\nu_b^n} \Big[ \sup_{s \in [0,  \zeta]} \Big( \int_{0}^s  \sum_{x=1}^{n/2} \nabla G \left(  \tfrac{x}{n}  \right) h_n(x)   \overrightarrow{\psi}_x^{n^{\lambda}}(\eta_r^n)  dr  \Big)^2  \Big] \leq 6 (9T)^2 \frac{\zeta^2}{n^{\lambda}}    \sum_{x=1}^{n/2} [ \nabla G \left(  \tfrac{x}{n}  \right) h_n(x)]^2.
\end{equation}
Next, observe that $\zeta^2 / n^{\lambda} \leq \zeta^{\frac{2 \gamma-1}{\gamma}}/n$ iff $n^{1-\lambda} \leq \zeta^{-1/\gamma}$, which is equivalent to $0 \leq \zeta \leq n^{\frac{\lambda-1}{\gamma}}$. Thus, \eqref{boundaux2} can be obtained directly from an application of \eqref{boundaux3} for $\hat{K}=n^{\lambda}$, when $0 \leq \zeta \leq n^{\frac{\lambda-1}{\gamma}}$. In this case, \eqref{boundaux2} holds for $\tilde{C}_0=6 (9T)^2 [ (9 T )^{\frac{2 \gamma-1}{\gamma}} ]^{-1}=6 (9T)^{1/\gamma}$.

On the other hand, for every $\varepsilon \in (0, 1/3)$, an application of Lemma \ref{lemauxtight} for $\hat{K}= \varepsilon n$ leads to
\begin{align*}
\forall t \in [0, T], \quad \mathbb{E}_{\nu_b^n} \Big[ \sup_{s \in [0,t]} \Big(  \int_{0}^s
\sum_{x=1}^{n/2}  \nabla G \left(  \tfrac{x}{n}  \right) h_{n}(x)  \overrightarrow{\psi}_x^{\varepsilon n}(\eta_r^n)  dr  \Big)^2  \Big] \leq \frac{6}{\varepsilon n} t^2 \sum_{x=1}^{n/2}[\nabla G \left(  \tfrac{x}{n}  \right) h_{n}(x)]^2.
\end{align*}
Therefore, for every $\zeta \in [0, 1/9]$, it holds
\begin{align} \label{boundaux4}
 \mathbb{E}_{\nu_b^n} \Big[ \sup_{s \in [0, \zeta]} \Big(  \int_{0}^s
\sum_{x=1}^{n/2}  \nabla G \left(  \tfrac{x}{n}  \right) h_{n}(x)  \overrightarrow{\psi}_x^{\varepsilon n}(\eta_r^n)  dr  \Big)^2  \Big] \leq 6 (9 T)^2 \frac{\zeta^2}{\varepsilon n}   \sum_{x=1}^{n/2}[\nabla G \left(  \tfrac{x}{n}  \right) h_{n}(x)]^2.
\end{align}
If $n^{\lambda} \leq \varepsilon n$, an application of Lemma \ref{lemmult0} for $D_1=n^{\lambda}$ and $D_2=\varepsilon n$ leads to 
\begin{align*}
\mathbb{E}_{\nu_b^n} \Big[ \sup_{s \in [0,t]} \Big( \int_{0}^s  \sum_{x=1}^{n/2} \nabla G \left(  \tfrac{x}{n}  \right) h_n(x) \big[  \overrightarrow{\psi}_x^{n^{\lambda}}(\eta_r^n) - \overrightarrow{\psi}_x^{\varepsilon n}(\eta_r^n) \big] dr  \Big)^2  \Big] \leq C_0 \frac{(\varepsilon n)^{\gamma-1} }{\Theta(n) }    t  \sum_{x=1}^{n/2} [ \nabla G \left(  \tfrac{x}{n}  \right) h_n(x)]^2,
\end{align*}
for every $t \in [0, T]$. In last line, $C_0$ is the same constant given in the statement of Lemma \ref{lemmult0}. Since $\beta \geq 0$, we get from \eqref{timescale} that $\Theta(n)=n^{\gamma}$. Thus, for any $t \in [0, T]$, it holds
\begin{align*}
\mathbb{E}_{\nu_b^n} \Big[ \sup_{s \in [0,t]} \Big( \int_{0}^s  \sum_{x=1}^{n/2} \nabla G \left(  \tfrac{x}{n}  \right) h_n(x) \big[  \overrightarrow{\psi}_x^{n^{\lambda}}(\eta_r^n) - \overrightarrow{\psi}_x^{\varepsilon n}(\eta_r^n) \big] dr  \Big)^2  \Big] \leq C_0 \frac{\varepsilon^{\gamma}  }{ \varepsilon n }    t  \sum_{x=1}^{n/2} [ \nabla G \left(  \tfrac{x}{n}  \right) h_n(x)]^2.
\end{align*}
From last display, we get that for every $\zeta \in [0, 1/9]$, it holds
\begin{equation*} 
\mathbb{E}_{\nu_b^n} \Big[ \sup_{s \in [0, \zeta]} \Big( \int_{0}^s  \sum_{x=1}^{n/2} \nabla G \left(  \tfrac{x}{n}  \right) h_n(x) \big[  \overrightarrow{\psi}_x^{n^{\lambda}}(\eta_r^n) - \overrightarrow{\psi}_x^{\varepsilon n}(\eta_r^n) \big] dr  \Big)^2  \Big] \leq C_0 (9T ) \frac{\varepsilon^{\gamma} \zeta }{ \varepsilon n }  \sum_{x=1}^{n/2} [ \nabla G \left(  \tfrac{x}{n}  \right) h_n(x)]^2. 
\end{equation*}
Combining last display with \eqref{boundaux4} and an application of \eqref{CSdiscexp} for $m=2$, we conclude that 
\begin{align*}
\mathbb{E}_{\nu_b^n} \Big[ \sup_{s \in [0,t]} \Big(  \int_{0}^s
\sum_{x=1}^{n/2}  \nabla G \left(  \tfrac{x}{n}  \right) h_n(x)  \overrightarrow{\psi}_x^{n^{\lambda}}(\eta_r^n)  dr  \Big)^2  \Big] \leq 18 T (54T + C_0)  \frac{\varepsilon^{\gamma} \zeta + \zeta^2}{\varepsilon n} \sum_{x=1}^{n/2}[\nabla G \left(  \tfrac{x}{n}  \right) h_n(x)]^2,
\end{align*}
if $n^{\lambda} \leq \varepsilon n$. Next, we observe that $\zeta^{1/\gamma} \leq 9^{-1/\gamma} \leq 9^{-1/2} =1/3$, therefore it is always possible to choose $\varepsilon=\zeta^{1/\gamma}$. For this choice, we get $n^{\lambda} \leq \varepsilon n$ iff $\zeta^{1/\gamma} \geq n^{\lambda-1}$, which is equivalent to $  n^{\frac{\lambda-1}{\gamma}} < \zeta \leq 1/9$. In this case, $(\varepsilon^{\gamma} \zeta + \zeta^2)(\varepsilon n)^{-1}= (2\zeta^2)(\varepsilon n)^{-1} =  2 \zeta^{\frac{2 \gamma-1}{\gamma}} /n$. In particular, \eqref{boundaux2} holds for $\tilde{C}_0=36 T (54 T + C_0) [ (9 T )^{\frac{2 \gamma-1}{\gamma}} ]^{-1}$. This ends the proof.
\end{proof}
For $r_a=3/2$, we get from \eqref{comprsgam} that $\gamma \geq 3/2$, which leads to $\gamma + 1 > 2 \gamma - 1 \geq 2$. Moreover, keeping \eqref{reclambdaj} and Lemma \ref{lem416} in mind, given any $\delta \in (0, 1/2)$, we can define $N=N(\delta) \in \mathbb{N}$ by
\begin{align*}
N=N(\delta):= \min \Big\{ j \in \mathbb{N}: \Big( \frac{2 \gamma - 1}{\gamma +1} \Big)^j < \frac{\delta}{2 - \delta} \Big\}, 
\end{align*}
Applying \eqref{reclambdaj} with $\hat{\delta}=(2-\gamma) \delta /2$, we have that $1/2 < 1-\delta < \lambda_N < \frac{2-\delta}{2}$, since $\lambda_N$ is given by
\begin{equation} \label{deflambdaN}
\lambda_N := \Big( 1 - \frac{\delta}{2} \Big) \Big[ 1 - \Big( \frac{2 \gamma - 1}{\gamma+1} \Big)^N \Big].
\end{equation}
Without loss of generality, we fix $\delta = 1/4$ in the remainder of this section. Thus, $\lambda_N$ is also fixed.

In order to obtain Proposition \ref{tightAtnG}, we introduce the auxiliary process $\tilde{A}^{n}(G)$: 
\begin{align} \label{deftilAtnf}
\tilde{A}_t^{n}(G):=  \frac{\sqrt{n}}{\sqrt{n-1}} \int_0^t  \Big[ \sum_{x=1}^{n/2} \nabla G \left(  \tfrac{x}{n}  \right)  \overrightarrow{\psi}_x^{n^{\lambda_N}}(\eta_s^n) + \sum_{x=n/2+1}^{n-1} \nabla G \left(  \tfrac{x}{n}  \right) \overleftarrow{\psi}_x^{n^{\lambda_N}}(\eta_s^n)  \Big]ds, \quad t \in [0,T],
\end{align}
for any $n \geq 2$ and $G \in C^{\infty}([0,1])$. The previous display is motivated by the following couple of results.
\begin{prop} \label{propaprox}
Let $G \in C^{\infty}([0,1])$. Assume that $2 r_a =3$ and $\beta \geq 0$. Then 
\begin{equation} \label{claimtight}
\lim_{n \rightarrow \infty} \mathbb{E}_{\nu_b^n} \big[ \sup_{t \in [0,T]} \big(  A_t^n(G) - 2 \alpha_a m_a  \tilde{A}_t^{n}(G) \big)^2 \big] =0.
\end{equation}
\end{prop}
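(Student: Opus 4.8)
The plan is to expand $A_t^n(G)$ through the decomposition \eqref{Atngrad0a}--\eqref{Atngrad2} together with its refinements, now specializing the free scale to $K_n=K_N=n^{\lambda_N}$ with $\lambda_N$ as in \eqref{deflambdaN}, and then to match the surviving pieces against $2\alpha_a m_a \tilde A_t^n(G)$. The point is that when $2r_a=3$ one has $b_n=n^{r_a-1}/\sqrt{n-1}=\sqrt n/\sqrt{n-1}$, which is precisely the prefactor in \eqref{deftilAtnf}, and the averaging window $n^{\lambda_N}$ appearing in $\tilde A_t^n(G)$ coincides with $K_N$. Hence $\tilde A_t^n(G)$ is built exactly to reproduce the local-average terms \eqref{Atngrad1b1}, \eqref{Atngrad2b1} and \eqref{Atngrad2c1} once the coefficient $\sum_z z a(z)$ is replaced by its limit $m_a$, recalling that $r_a=3/2$ forces $\gamma\geq 3/2$ by \eqref{comprsgam}, so that $m_a$ in \eqref{defma} is finite.

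First I would discard the terms that vanish on their own. The second-order Taylor remainders \eqref{Atngrad0a}--\eqref{Atngrad0b} are $O(n^{-1})$ by Lemma \ref{lem46}; the long-range blocks \eqref{Atngrad1a} and \eqref{Atngrad2a} are bounded by $n^{2r_a-2}/K_N^{2\gamma-1}=n^{1-\lambda_N(2\gamma-1)}$ via Lemma \ref{lem45}, which vanishes because $\lambda_N>1-\delta=3/4$ and $2\gamma-1\geq 2$ give $\lambda_N(2\gamma-1)>1$; and the ``fluctuation minus average'' blocks \eqref{Atngrad1b2}, \eqref{Atngrad2b2}, \eqref{Atngrad2c2} are controlled by $A(\delta,n)\to 0$ through Lemma \ref{lem416} (applicable since $\beta\geq 0$ and $2r_a=3$). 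What remains is exactly \eqref{Atngrad1b1}, \eqref{Atngrad2b1} and \eqref{Atngrad2c1}, whose $x$-ranges $\{1,\dots,n/2\}$, $\{K_N+1+n/2,\dots,n-1\}$ and $\{1+n/2,\dots,K_N+n/2\}$ tile $\Lambda_n$ exactly as the forward and backward sums in $\tilde A_t^n(G)$.

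Then I would compare the surviving terms with the matching pieces of $2\alpha_a m_a\tilde A_t^n(G)$, all of which share the same windows $\overrightarrow{\psi}_x^{K_N}$ or $\overleftarrow{\psi}_x^{K_N}$, so that only the scalar coefficients differ. In the bulk blocks \eqref{Atngrad1b1} and \eqref{Atngrad2b1} the deficit is the uniform tail $m_a-\sum_{z=1}^{K_N-1} z a(z)=\sum_{z\geq K_N} z a(z)\lesssim K_N^{1-\gamma}$, since $za(z)=\tfrac{c^{+}-c^{-}}{2}z^{-\gamma}$ by \eqref{defsa}. Feeding this constant coefficient into Lemma \ref{lemauxtight} with $h_n\equiv b_n\sum_{z\geq K_N}z a(z)$ bounds the corresponding difference by $\tfrac{b_n^2}{K_N}K_N^{2(1-\gamma)}\sum_x[\nabla G(\tfrac xn)]^2\lesssim n^{1+\lambda_N(1-2\gamma)}$, which tends to $0$ by the same inequality $\lambda_N(2\gamma-1)>1$.

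The hard part will be the boundary block \eqref{Atngrad2c1} over $\{1+n/2,\dots,K_N+n/2\}$: there the coefficient is the partial sum $\sum_{z=1}^{x-1-n/2}z a(z)$, whose deficit from $m_a$ is $c_x:=-\sum_{z>x-1-n/2}z a(z)$ and is only $O(1)$ for $x$ within a bounded distance of $n/2$. A uniform bound $|c_x|=O(1)$ would leave a non-vanishing $O(1)$ contribution here, so one must exploit the decay $|c_x|\lesssim(x-1-n/2)^{1-\gamma}$. Writing $k=x-1-n/2$, Lemma \ref{lemauxtight} bounds this difference by $\tfrac{b_n^2}{K_N}\|\nabla G\|_{\infty}^2\sum_{k=0}^{K_N-1}c_{x(k)}^2$, and $\sum_{k\geq 1}k^{2(1-\gamma)}$ converges for $\gamma>3/2$ and is $O(\log K_N)$ for $\gamma=3/2$; in either case $\tfrac{b_n^2}{K_N}\,O(\log K_N)\lesssim n^{-\lambda_N}\log n\to 0$. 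Collecting all the pieces through \eqref{CSdiscexp} then yields \eqref{claimtight}.
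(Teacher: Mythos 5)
Your proposal is correct and takes essentially the same route as the paper: the same decomposition of $A_t^n(G)$ with $K_n=K_N=n^{\lambda_N}$, Lemmas \ref{lem46}, \ref{lem45} and \ref{lem416} disposing of the vanishing blocks (using $1=2r_a-2<(2\gamma-1)\lambda_N$), and the surviving averaged blocks \eqref{Atngrad1b1}, \eqref{Atngrad2b1}, \eqref{Atngrad2c1} matched against $2\alpha_a m_a \tilde{A}_t^n(G)$ via the tail estimate $\big|\sum_{z\geq K} z a(z)\big|\lesssim K^{1-\gamma}$. The only inessential deviation is that you bound the coefficient-deficit residuals with Lemma \ref{lemauxtight} where the paper cites Lemma \ref{boundvaraux} (whose sharper $t$-dependence is only needed for the tightness statement, not here); if anything, your write-up makes explicit the tiling of the $x$-ranges and the boundary-block deficit analysis that the paper leaves implicit.
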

\begin{prop} \label{proptightaux}
Let $G \in C^{\infty}([0,1])$. Assume that $2 r_a =3$ and $\beta \geq 0$. Then the family of processes $\{\tilde{A}_t^{n}(G): n \geq 2 \}$ is tight.
\end{prop}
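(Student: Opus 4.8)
The plan is to establish tightness via the Kolmogorov--Chentsov criterion (Proposition 4.3 of \cite{jarafluc}), in the same spirit as the proof of Proposition \ref{tightfluc2a}. Since $\tilde{A}_t^n(G)$ is the time integral of a bounded functional of the configuration, its trajectories are continuous and $\tilde{A}_0^n(G)=0$; hence it suffices to exhibit some $\omega>0$ and a constant independent of $n$ such that
\begin{align*}
\mathbb{E}_{\nu_b^n}\big[\big(\tilde{A}_t^n(G)-\tilde{A}_s^n(G)\big)^2\big]\lesssim (t-s)^{1+\omega},\qquad 0\le s<t\le T,\ n\ge 2.
\end{align*}

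First I would reduce each increment to an expression based at time $0$. For $0\le s<t\le T$, the increment $\tilde{A}_t^n(G)-\tilde{A}_s^n(G)$ is the time integral over $[s,t]$ of the same integrand appearing in \eqref{deftilAtnf}. Since $\nu_b^n$ is invariant for the accelerated dynamics, the process $\{\eta_r^n\}_{r\ge 0}$ started from $\nu_b^n$ is stationary in time, so after the change of variable $r=s+u$ and the Markov property this increment has the same law as $\tilde{A}_{t-s}^n(G)$. Therefore
\begin{align*}
\mathbb{E}_{\nu_b^n}\big[\big(\tilde{A}_t^n(G)-\tilde{A}_s^n(G)\big)^2\big]=\mathbb{E}_{\nu_b^n}\big[\big(\tilde{A}_{t-s}^n(G)\big)^2\big]\le \mathbb{E}_{\nu_b^n}\Big[\sup_{u\in[0,t-s]}\big(\tilde{A}_u^n(G)\big)^2\Big].
\end{align*}
Splitting $\tilde{A}_u^n(G)$ into its right-directed and left-directed contributions, applying \eqref{CSdiscexp} for $m=2$, and then invoking Lemma \ref{boundvaraux} with $\lambda=\lambda_N$ and $h_n\equiv 1$ to each of the two pieces (the outer factor $\sqrt{n}/\sqrt{n-1}$ contributes, after squaring, the bounded multiple $n/(n-1)$), the right-hand side is bounded above by a constant times
\begin{align*}
(t-s)^{\frac{2\gamma-1}{\gamma}}\,\frac{1}{n-1}\sum_{x}\big[\nabla G(\tfrac{x}{n})\big]^2 .
\end{align*}
Here I use that $\lambda_N\in(0,1)$, as guaranteed by \eqref{deflambdaN} and the discussion preceding it, so that Lemma \ref{boundvaraux} indeed applies in the regime $\beta\ge 0$.

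The factor $\tfrac{1}{n-1}\sum_x[\nabla G(\tfrac{x}{n})]^2$ is a Riemann sum converging to $\|\nabla G\|_{L^2}^2<\infty$, hence uniformly bounded in $n$. It then remains only to check that the exponent $\tfrac{2\gamma-1}{\gamma}=2-\tfrac1\gamma$ strictly exceeds $1$. This is precisely where the hypothesis enters: by \eqref{comprsgam} the assumption $r_a=3/2$ forces $\gamma\ge 3/2$, so $2-\tfrac1\gamma\ge \tfrac43>1$ and we may take $\omega=\tfrac{\gamma-1}{\gamma}>0$. The Kolmogorov--Chentsov criterion then yields tightness of $\{\tilde{A}_t^n(G):n\ge 2\}$ with respect to the Skorohod topology of $\mcb{D}([0,T],\mathbb{R})$. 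I expect the only delicate point to be the time-stationarity reduction of the increment together with the verification that the exponent is supercritical; once the power of $(t-s)$ is pinned down to be larger than $1$ (which hinges solely on $\gamma>1$), the conclusion is immediate, all the genuinely technical work having been carried out already in Lemma \ref{boundvaraux}.
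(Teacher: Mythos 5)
Your proposal is correct and follows essentially the same route as the paper: the authors simply remark that Proposition \ref{proptightaux} is a direct consequence of Lemma \ref{boundvaraux} by arguing as at the end of Section 4.4.4 of \cite{jarafluc}, which is precisely the Kolmogorov--Centsov argument you spell out (stationarity under $\nu_b^n$ to reduce increments to $\tilde{A}_{t-s}^n(G)$, Lemma \ref{boundvaraux} with $h_n\equiv 1$, and the supercritical exponent $\tfrac{2\gamma-1}{\gamma}>1$ since $r_a=3/2$ and \eqref{comprsgam} force $\gamma\geq 3/2$). Your write-up just makes explicit the details the paper leaves to the cited reference.
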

In particular, Proposition \ref{tightAtnG} is a direct consequence of Propositions \ref{propaprox} and \ref{proptightaux}. Now we present the proof of Proposition \ref{propaprox}.
\begin{proof} [Proof of Proposition \ref{propaprox}] 
Recall from Section \ref{secconvAtnG} that $A_t^n(G)$ is equal to the sum of the terms in \eqref{Atngrad0a}, \eqref{Atngrad0b}, \eqref{Atngrad1} and \eqref{Atngrad2}, thus $A_t^n(G)$ can be estimated by applying Lemmas \ref{lem46}, \ref{lem45}, \ref{lem410} and \ref{lem416}. Due to our choice for $\lambda_N$, we get $2r_a-2 = 3 - 2= 1 < 2 \lambda_N =(3-1) \lambda_N =[2(3/2) -1] \lambda_N \leq (2\gamma-1) \lambda_N$ and the terms corresponding to Lemmas \ref{lem46}, \ref{lem45} and \ref{lem416} all vanish in $L^2(\mathbb{P}_{\nu_b^n})$, as $n \rightarrow \infty$. 

Thus, the proof is a direct consequence of Lemmas \ref{lem46}, \ref{lem45}, \ref{lem416}, \eqref{CSdiscexp} and Lemma \ref{boundvaraux}.
\end{proof} 
We end this section with the following observation: by proceeding in the same way as it was done in the end of Section 4.4.4 of \cite{jarafluc}, we can conclude that Proposition \ref{proptightaux} is a direct consequence of Lemma \ref{boundvaraux}.

\section{Characterization of the limit points} \label{seccharac}

We apply \eqref{decomp} to characterize the limit points of $( \mathcal{Y}_t^n)_{n \geq 1}$ as random elements satisfying the  conditions stated in Definition \ref{spde} or Definition \ref{spdefbe}, depending on  whether (H1) or (H2) is satisfied.
More exactly, let $(\alpha, \beta, \gamma) \in (0, \infty) \times \mathbb{R} \times (0,  2)$. From Propositions \ref{propmit},  \ref{tightfluc1}, \ref{tightmartterm}, \ref{tightfluc2a}, \ref{convL2error} \ref{conv0tight}, \ref{convAtnG} and \ref{tightAtnG}, there exists a subsequence $(n_j)_{j \geq 1}$ such that all the sequences $(\mathcal{Y}_0^{n_j})_{j \geq 1}$, $(\mathcal{M}_t^{n_j})_{j \geq 1}$, $(\mathcal{I}_t^{n_j})_{j \geq 1}$, $(\mathcal{E}_t^{n_j})_{j \geq 1}$ and $(A_t^{n_j})_{j \geq 1}$  converge, as $j\to+\infty$, in distribution, with respect to the Skorohod topology of $ \mcb{D} ( [0,T],  \mcb {S}_{\beta,\gamma}' )$. Denote the respective limit points by $\mathcal{Y}_0$, $\mathcal{M}_t$, $\mathcal{I}_t$, $\mathcal{E}_t$ and $A_t$. From \eqref{decomp}, $(\mathcal{Y}_t^{n_j})_{j \geq 1}$ converges in distribution to some limit point $\mathcal{Y}_t$ which is stationary, due to Remark \ref{remstat}. 
Thus, for every $G \in \mcb {S}_{\beta,\gamma}$, from \eqref{decomp} it holds
\begin{align} \label{decomp2}
	\mathcal{M}_t(G) = \mathcal{Y}_t(G) - \mathcal{Y}_0(G) - \mathcal{I}_t(G) +  A_t(G).
\end{align}
In last line, we applied \eqref{intprocess}, Corollary \ref{corL2ab} and Proposition \ref{aproxL2gen}. Also, we note that $\mathcal{E}_t(G)=0$, due to Propositions \ref{convL2error} and \ref{conv0tight}. From \eqref{MthnL2}, $\mathcal{M}_t(G)$ is the limit in distribution of the  sequence $\big(\mathcal{M}_t^{n_j}(G) \big)_{j \geq 1}$ of martingales. In the next subsection we prove that this sequence is uniformly integrable, and as a consequence the limit  $\mathcal{M}_t(G)$ is also a martingale.
It also remains to characterize the limit   $A_t(G)$ under (H1) and  (H2).

\subsection{Characterization of limiting martingale}

\begin{lem} \label{lemconvmartterm2}
Let $(\beta, \gamma) \in R_0$ and $G \in \mcb {S}_{\beta,\gamma}$. Then
\begin{align} \label{expconvmartterm2}
 \lim_{n \rightarrow \infty}  \mathbb{E}_{\nu_b^n} \big[\sup_{t \in [0,T]} \big(  \langle \mathcal M^n (G)  \rangle_t - \mathbb{E}_{\nu_b^n} [   \langle \mathcal M^n (G)  \rangle_t ] \big)^2 \big] =0. 
\end{align}
\end{lem}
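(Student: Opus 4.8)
The plan is to reduce the statement to a variance estimate for the integrand of the quadratic variation and then to exploit the product structure of $\nu_b^n$. Setting $s=0$ in \eqref{quadvar1} (and using $\langle\mathcal{M}^n(G)\rangle_0=0$), we may write $\langle\mathcal{M}^n(G)\rangle_t=\int_0^t F_n(\eta_r^n)\,dr$, where $F_n=\sum_{i=1}^4 F_n^{(i)}$ collects the four integrands in \eqref{quadvar1}. Since $\nu_b^n$ is invariant, $\mathbb{E}_{\nu_b^n}[F_n(\eta_r^n)]$ is independent of $r$, so $\mathbb{E}_{\nu_b^n}[\langle\mathcal{M}^n(G)\rangle_t]=t\,\mathbb{E}_{\nu_b^n}[F_n]$ and the quantity inside the supremum in \eqref{expconvmartterm2} is exactly $\int_0^t(F_n(\eta_r^n)-\mathbb{E}_{\nu_b^n}[F_n])\,dr$. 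Applying \eqref{CSFub} with $g=F_n-\mathbb{E}_{\nu_b^n}[F_n]$ and using stationarity once more gives $\mathbb{E}_{\nu_b^n}[\sup_{t\in[0,T]}(\int_0^t(F_n-\mathbb{E}_{\nu_b^n}[F_n])(\eta_r^n)\,dr)^2]\le T^2\,\mathrm{Var}_{\nu_b^n}(F_n)$. Hence it suffices to prove $\mathrm{Var}_{\nu_b^n}(F_n)\to 0$, and by \eqref{CSdisc} with $m=4$ it is enough to treat each $\mathrm{Var}_{\nu_b^n}(F_n^{(i)})$ separately.

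Each $F_n^{(i)}$ is a finite linear combination of local functions, supported either on a bond (the first and second terms of \eqref{quadvar1}, through $[\eta(y)-\eta(x)]^2$ and $[\eta(y)-\eta(x)]$) or on a single site (the third and fourth terms, through $\bar\eta(x)$). Because $\nu_b^n$ is a Bernoulli product \eqref{defberprod}, two such local functions are uncorrelated whenever their supports are disjoint, and their covariances are uniformly bounded; thus each variance is controlled by a sum of coefficients over overlapping supports. For the bond terms this yields $\mathrm{Var}_{\nu_b^n}(F_n^{(i)})\lesssim(\sup_x\sum_y|c_{x,y}^{(i)}|)\sum_{x,y}|c_{x,y}^{(i)}|$, while for the single-site terms it yields $\mathrm{Var}_{\nu_b^n}(F_n^{(i)})\lesssim\sum_x(c_x^{(i)})^2$, with the coefficients read off from \eqref{quadvar1}. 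For the two bond terms the per-site total obeys a local degree bound: using the Mean Value Theorem, $[G(\tfrac{x+z}{n})-G(\tfrac{x}{n})]^2\lesssim\min\{1,(z/n)^2\}$, together with $s(z)=\tfrac{c^++c^-}{2}|z|^{-1-\gamma}$, $|a(z)|\le s(z)$, $\Theta(n)\le n^{\gamma}$ and $\Theta(n)n^{-\beta_a}=n^{r_a}\le n^{\gamma}$, one gets $\sup_x\sum_y|c_{x,y}^{(i)}|\lesssim n^{-1}$; since the total masses $\sum_{x,y}|c_{x,y}^{(i)}|$ coincide up to constants with $\hat{\mathcal{A}}_{n,\beta}(G)$ in \eqref{op_Anb} (and its antisymmetric analogue), which are $O(1)$ by Proposition \ref{prop1lem1convmart}, both bond variances are $O(n^{-1})$.

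The delicate point is the two single-site terms, whose coefficients $c_x^{(i)}$ concentrate near the two boundaries through the weights $r_n^\pm(\tfrac{x}{n})$; here the decay must be produced by playing these singular weights off against the vanishing of $G$ at the boundary. Using $r_n^\pm(\tfrac{x}{n})\lesssim x^{-\gamma}$ (resp. $(n-x)^{-\gamma}$), together with the exact prefactor scalings $\tfrac{\Theta(n)}{n^\beta}$ and $\tfrac{\Theta(n)}{n^{\beta_a}}=n^{r_a}$, and the boundary estimates \eqref{expGSd} valid for $G\in\mcb {S}_{\beta,\gamma}$ (the order of vanishing being dictated by which of $\mcb S$, $\mcb S_{Dir}$, $\mcb S_{Dir}\cap\mcb S_{Neu}$, $\mcb S_{Neu}$ or $C^{\infty}([0,1])$ occurs in \eqref{defsbetagamma}), a short case analysis over the regimes of \eqref{defsbetagamma} shows $\sum_x(c_x^{(i)})^2\to 0$; the key structural input is $r_a\le\gamma$ from \eqref{comprsgam} and the regime-defining inequalities (e.g. $\beta>\gamma-1$ forces $r_a<1$ in the unconstrained regimes, while the Dirichlet/Neumann vanishing supplies the extra room when $r_a$ is large), exactly the same balances that make the means converge in Lemma \ref{lemconvmartterm1}. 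I expect this boundary analysis to be the main obstacle, since, unlike the bulk terms, no uniform smallness of the individual coefficients is available and one must instead extract the decay from the interplay between the boundary weights $r_n^\pm$, the time scale $\Theta(n)$, and the prescribed vanishing of the test function.
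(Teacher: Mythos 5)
Your proposal is correct, and its skeleton (center the quadratic variation via stationarity, reduce to $T^2\,\mathrm{Var}_{\nu_b^n}(F_n)$ by \eqref{CSFub}, then split the terms of \eqref{quadvar1}) matches the paper's reduction to \eqref{quadvarcent}; but the two key estimates are carried out by genuinely different means. The paper merges the two bond sums into a single centered object $Z^n_{x,y}$, bounds the resulting expression by a fourth-moment quantity \eqref{boundexpconvmartterm2a}, extends $G$ to $\hat G\in C_c^\infty(\mathbb{R})$ and identifies the bound with $B_n(\hat G)$, i.e.\ (4.1) of \cite{jarafluc}, whose vanishing is imported from Appendix D there; you instead close the bond terms self-containedly by the row-sum-times-total-mass inequality, with $\sup_x\sum_y|c_{x,y}|\lesssim \Theta(n)n^{-1}\cdot n^{-\gamma}\le n^{-1}$ and total mass $O(1)$ via Proposition \ref{prop1lem1convmart} (the antisymmetric analogue being dominated by the symmetric one since $|a|\le s$ and $\Theta(n)n^{-\beta_a}=n^{r_a}\le\Theta(n)$). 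More significantly, for the two boundary terms \eqref{expconvmartterm2b}--\eqref{expconvmartterm2c} the paper invokes the \emph{dynamical} Kipnis--Varadhan-type bound of Proposition \ref{lem61stefano} with $q=2$, which exploits the reservoir part of the Dirichlet form and yields a gain $n^{\beta}/(\alpha\Theta(n))$ with only a \emph{single} power of $r_n^{\pm}$; you replace it by the \emph{static} variance $\sum_x(c_x)^2$, which carries $(r_n^\pm)^2\lesssim x^{-2\gamma}$ and therefore forces the regime-by-regime analysis you sketch. That analysis does close in every regime of \eqref{defsbetagamma}: e.g.\ the symmetric boundary term is $O(n^{2\gamma-2\beta-2})$ when $G$ need not vanish (and then $\beta>\gamma-1$ applies), and $O(n^{-1-2\beta})$ when the Dirichlet (order $1$) or Dirichlet--Neumann (order $2$) vanishing from \eqref{expGSd} is available, since $\sum_x x^{-2\gamma}(x/n)^{4d}\lesssim n^{1-2\gamma}$; the antisymmetric one replaces $2\gamma-2\beta-2$ by $2r_a-2$, and indeed $\beta>\gamma-1$ with $\beta_a\ge\beta$ forces $r_a<1$ exactly as you say, while $r_a\le\gamma$ covers the remaining cases. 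The trade-off: your route is more elementary and avoids the two external inputs (Proposition \ref{lem61stefano} and \cite{jarafluc}'s Appendix D) at the price of a harmless $T^2$ instead of $T$ and of redoing the boundary bookkeeping by hand, whereas the paper's dynamical estimate handles all regimes uniformly with one lemma that is in any case needed elsewhere (e.g.\ in Proposition \ref{convrem2}).
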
 
\begin{proof}
Combining Fubini's Theorem with \eqref{quadvar1} it holds
\begin{equation} \label{quadvarcent}
\begin{split}
\langle \mathcal M^n (G)  \rangle_t - \mathbb{E}_{\nu^n_b} [  \langle \mathcal M^n (G)  \rangle_t ]=& \frac{\Theta(n)}{2(n-1)} \int_0^t \sum_{x, y } [ G\left( \tfrac{y}{n}\right) -  G\left( \tfrac{x}{n}\right) ]^2  Z^n_{x,y}(\eta_s^n) ds \\
+ & \frac{\alpha \Theta(n)}{n^{\beta} (n-1)} (1 - 2b) \int_0^t \sum_{x } G^2\left( \tfrac{x}{n}\right) \bar{\eta}_{s}^{n}(x)   [ r_n^{+} \left( \tfrac{x}{n}\right) + r_n^{-} \left( \tfrac{x}{n}\right)] ds \\
+ & \frac{\alpha_a \Theta(n)}{n^{\beta_a} (n-1)} \frac{c_{+} - c_{-}}{c_{+} + c_{-}} \int_0^t  \sum_{x } G^2 \left( \tfrac{x}{n}\right) \bar{\eta}_s^n(x)  [ r_n^{+} \left( \tfrac{x}{n}\right) - r_n^{-} \left( \tfrac{x}{n}\right)] ds,
\end{split}
\end{equation}
where for every $n \geq 2$ and every $x,y \in \mathbb{Z}$, $Z^n_{x,y}(\eta_s^n)$ is given by
\begin{align*}
Z^n_{x,y}(\eta_s^n):= s(y-x) \big\{ [ \eta_s^n(y) - \eta_s^n(x)]^2 - 2 \chi(b) \big\} - \alpha_a n^{-\beta_a} a(y-x) [\bar{\eta}_{s}^{n}(y) - \bar{\eta}_{s}^{n}(x)].
\end{align*}
In particular, from \eqref{defsa} and \eqref{condrate}, for every $n \geq 2$ and every $x,y,w \in \Lambda_n$, it holds
\begin{equation} \label{boundexpZn}
\mathbb{E}_{\nu_b^n} \big[ | Z^n_{x,y}(\eta_s^n) Z^n_{x,w}(\eta_s^n) |  \big] \leq (3+\alpha_a)^2 s(y-x) s(w-x).
\end{equation}
Combining \eqref{quadvarcent} with an application of \eqref{CSdiscexp} for $m=3$ and the fact that $n \leq 2(n-1)$ for every $n \geq 2$, we have that the expectation in \eqref{expconvmartterm2} is bounded from above by a constant times
\begin{align}
& [ \Theta(n) ]^2 n^{-2} \mathbb{E}_{\nu_b^n} \Big[ \sup_{t \in [0,T]} \Big(  \int_0^t  \sum_{ x,y } [ G( \tfrac{y}{n} ) - G( \tfrac{x}{n} )]^2 Z^n_{x,y}(\eta_s^n) ds \Big)^2 \Big] \label{expconvmartterm2a} \\
+& \mathbb{E}_{\nu_b^n} \Big[ \sup_{t \in [0,T]} \Big( \Theta(n)  n^{-1- \beta} \int_0^t   \sum_{ x } [r_n^{+}(\tfrac{x}{n})+r_n^{-}(\tfrac{x}{n})]  G^2( \tfrac{x}{n})  \bar{\eta}_s^n(x) ds \Big)^2 \Big] \label{expconvmartterm2b} \\
+ & \mathbb{E}_{\nu_b^n} \Big[ \sup_{t \in [0,T]} \Big( \Theta(n)  n^{-1- \beta_a} \int_0^t   \sum_{ x } [r_n^{+}(\tfrac{x}{n}) - r_n^{-}(\tfrac{x}{n})]  G^2( \tfrac{x}{n})  \bar{\eta}_s^n(x) ds \Big)^2 \Big]. \label{expconvmartterm2c}
\end{align}

Combining \eqref{CSdiscexp} and \eqref{timescale} with an application of Proposition \ref{lem61stefano} for $q=2$ and $c_n=\Theta(n)  n^{-1- \beta}$, resp. \eqref{boundrnpm} and an application of Proposition \ref{lem61stefano} for $q=2$ and $c_n=\Theta(n)  n^{-1- \beta_a}$,  the term in \eqref{expconvmartterm2b}, resp. \eqref{expconvmartterm2c} vanishes, as $n \rightarrow \infty$. 
Combining \eqref{CSFub} with \eqref{boundexpZn} and the fact that $\mathbb{E}_{\nu_b^n} [  Z^n_{x,y}(\eta_s^n)]=0=\mathbb{E}_{\nu_b^n} [  Z^n_{x,y}(\eta_s^n) Z_{w,r}(\eta_s^n)]=0$ for disjoint sets $\{x,y\}$, $\{w,r\}$, we conclude that the expectation in \eqref{expconvmartterm2a} is bounded from above by
\begin{equation} \label{boundexpconvmartterm2a}
[T(3+\alpha_a)]^2 \Big\{ \sum_{ x,y } [ G( \tfrac{y}{n} ) - G( \tfrac{x}{n} )]^4 s^2(y-x) +  \sum_{ x } \Big[ \sum_{y } s(y-x) [ G( \tfrac{y}{n} ) - G( \tfrac{x}{n} )]^2 \Big]^2 \Big\}.
\end{equation}
Now we extend $G$ to some function $\hat{G} \in C_c^{\infty}(\mathbb{R})$. Moreover, it holds $[ \Theta(n) ]^2 n^{-2} \leq n^{2\gamma-2}$ for any $\beta \in \mathbb{R}$, due to \eqref{timescale}. Thus, the term in \eqref{expconvmartterm2a} is bounded from above by $[T(3+\alpha_a)]^2 B_n(\hat{G})$, where
\begin{align*}
B_n(H):= n^{2\gamma-2} \Big\{ \sum_{ x,y \in \mathbb{Z} } [ H( \tfrac{y}{n} ) - H( \tfrac{x}{n} )]^4 s^2(y-x) +  \sum_{ x \in \mathbb{Z} } \Big[ \sum_{y \in \mathbb{Z}} s(y-x) [ H( \tfrac{y}{n} ) - H( \tfrac{x}{n} )]^2 \Big]^2 \Big\},
\end{align*} 
for every $H \in C_c^{\infty}(\mathbb{R})$. Last expression is exactly equal to (4.1) of \cite{jarafluc}. Doing similar computations to those of  Appendix D of \cite{jarafluc}, we conclude  $\forall \gamma \in (0,2)$ and $\forall H \in C_c^{\infty}(\mathbb{R})$, that
$
 \quad \lim_{n \rightarrow \infty} B_n(H)=0,
$
and the proof ends.
\end{proof}
Now we can state a corollary which is useful in the remainder of this section.
\begin{cor} \label{corquadvar}
Let $(\alpha, \beta, \gamma) \in (0, \infty) \times \mathbb{R} \times (0,  2)$ and $G \in \mcb {S}_{\beta,\gamma}$. Then 
\begin{enumerate}
	\item 
$\big( \langle \mathcal M^{n} (G)  \rangle_t \big)_{n \geq 2}$ converges in distribution to $2 \chi(b) t \mathcal{P}^{\gamma}_{\alpha, \beta} G$, as $n \rightarrow \infty$, for any $t \in [0,T]$; 
\item 
there exists $C_1(G,T) >0$ depending only on $G$ and $T$ such that $\mathbb{E}_{\nu_b^n} \big[ \big(   \langle  \mathcal{M}^n(G) \rangle_t  \big)^2 \big] \leq C_1(G,T)$, for any $t \in [0,T]$ and any $n \geq 2$.
\end{enumerate}
\end{cor}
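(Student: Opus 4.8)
The plan is to derive both items directly from Lemmas \ref{lemconvmartterm1} and \ref{lemconvmartterm2}, since the substantive microscopic estimates have already been established there; no new computation involving the generator is required. Throughout, fix $(\alpha,\beta,\gamma)\in(0,\infty)\times\mathbb{R}\times(0,2)$, $G\in\mcb{S}_{\beta,\gamma}$ and a time $t\in[0,T]$, and abbreviate $X_n:=\langle\mathcal{M}^n(G)\rangle_t$ together with $\mu_n:=\mathbb{E}_{\nu_b^n}[X_n]$.

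The first step is to control the deterministic sequence $\mu_n$. Applying \eqref{limexpquadvar} with $s=0$ and using $\langle\mathcal{M}^n(G)\rangle_0=0$ yields $\lim_{n\to\infty}\mu_n=2\chi(b)t\,\mathcal{P}^{\gamma}_{\alpha,\beta}G$. Moreover, \eqref{limexpquadvarb} with $s=0$ gives the uniform bound $\mu_n\lesssim\chi(b)t\le\chi(b)T$ for every $n\ge2$, with an implicit constant depending only on $G$; here I use that the right-hand side of \eqref{limexpquadvarb} is linear in $t-s$ and hence dominated by its value at $t=T$. The second step is to invoke Lemma \ref{lemconvmartterm2}, which asserts that $\mathbb{E}_{\nu_b^n}[\sup_{t\in[0,T]}(X_n-\mu_n)^2]\to0$; in particular, for the fixed $t$, the random variable $X_n-\mu_n$ converges to $0$ in $L^2(\mathbb{P}_{\nu_b^n})$, hence in probability.

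For item (1), I write $X_n=(X_n-\mu_n)+\mu_n$. The first summand tends to $0$ in probability by the previous paragraph, while the second is a deterministic sequence converging to the constant $2\chi(b)t\,\mathcal{P}^{\gamma}_{\alpha,\beta}G$; therefore $X_n$ converges to that constant in probability, which for a deterministic limit is exactly convergence in distribution, establishing the first claim. For item (2), I use the variance decomposition
\[
\mathbb{E}_{\nu_b^n}[X_n^2]=\mathbb{E}_{\nu_b^n}[(X_n-\mu_n)^2]+\mu_n^2.
\]
The second term is bounded uniformly in $n$ and $t$ by some $(c\,\chi(b)T)^2$ thanks to the bound on $\mu_n$. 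For the first term, since the sequence $\mathbb{E}_{\nu_b^n}[\sup_{t\in[0,T]}(X_n-\mu_n)^2]$ converges (to $0$) it is bounded, say by $C>0$, uniformly in $n\ge2$; as $(X_n-\mu_n)^2\le\sup_{t\in[0,T]}(X_n-\mu_n)^2$, this gives $\mathbb{E}_{\nu_b^n}[(X_n-\mu_n)^2]\le C$ for every $t$ and every $n$. Taking $C_1(G,T):=C+(c\,\chi(b)T)^2$ finishes item (2).

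I do not expect any genuine obstacle here: the only point demanding care is that the constants produced are truly independent of both $t$ and $n$. This is guaranteed precisely because the supremum over $t\in[0,T]$ sits inside the expectation in Lemma \ref{lemconvmartterm2}, and because the estimate \eqref{limexpquadvarb} is linear in the time increment; the rest is bookkeeping assembling the mean and the (vanishing) variance.
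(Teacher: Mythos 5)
Your proof is correct and follows essentially the same route as the paper: both items are reduced to Lemma \ref{lemconvmartterm1} (via \eqref{limexpquadvar} and \eqref{limexpquadvarb}, the latter being what underlies \eqref{MthnL2}) and Lemma \ref{lemconvmartterm2}, with Chebyshev giving convergence in probability to the constant $2\chi(b)t\,\mathcal{P}^{\gamma}_{\alpha,\beta}G$ for item (1) and a mean--variance split giving the uniform second-moment bound for item (2). The only cosmetic difference is that you use the exact identity $\mathbb{E}[X_n^2]=\mathbb{E}[(X_n-\mu_n)^2]+\mu_n^2$ where the paper invokes \eqref{CSdisc} with $m=2$; this changes nothing of substance.
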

\begin{proof}
\begin{enumerate}
	
	The proof of item \textit{(1)} is a direct consequence of Chebyshev's and Markov's inequalities, Lemma \ref{expconvmartterm2} and \eqref{limexpquadvar}. 
For \textit{(2)} we note that from Lemma \ref{lemconvmartterm2}, there exists a constant $\tilde{C}_0(G) >0$ depending only on $G$ such that
	\begin{align} \label{bndvarquadvar}
		\forall n \geq 2, \quad \mathbb{E}_{\nu_b^n} \big[\sup_{r \in [0,T]} \big(  \langle \mathcal M^n (G)  \rangle_r - \mathbb{E}_{\nu_b^n} [   \langle \mathcal M^n (G)  \rangle_r ] \big)^2 \big] \leq \tilde{C}_0(G).
	\end{align}
The proof ends by combining \eqref{CSdisc}, \eqref{MthnL2} and \eqref{bndvarquadvar}.
\end{enumerate}	
\end{proof}
Now we present another useful lemma.
\begin{lem} \label{lem4mom}
Let $(\beta, \gamma) \in R_0$ and $G \in \mcb {S}_{\beta,\gamma}$. Then there exists $C_2(G,T) >0$ depending only on $G$ and $T$ such that $\mathbb{E}_{\nu_b^n} \big[ \big(   \langle  \mathcal{M}^n(G) \rangle_t  \big)^4 \big] \leq C_2(G,T)$, for any $t \in [0,T]$ and any $n \geq 2$. 
\end{lem}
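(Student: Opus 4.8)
The plan is to separate the deterministic mean of the quadratic variation from its fluctuation and estimate the fourth moment of each piece. Write
\[
\langle \mathcal M^n(G)\rangle_t = \mathbb{E}_{\nu_b^n}[\langle \mathcal M^n(G)\rangle_t] + R_t^n(G), \qquad R_t^n(G):= \langle \mathcal M^n(G)\rangle_t - \mathbb{E}_{\nu_b^n}[\langle \mathcal M^n(G)\rangle_t].
\]
Since $(a+b)^4\le 8(a^4+b^4)$, it suffices to bound each summand. The deterministic part is immediate: by \eqref{limexpquadvarb} we have $0\le \mathbb{E}_{\nu_b^n}[\langle \mathcal M^n(G)\rangle_t]\lesssim \chi(b)T$ uniformly in $n$ and $t\in[0,T]$, so its fourth power is bounded by a constant depending only on $G$ and $T$. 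It then remains to control $\mathbb{E}_{\nu_b^n}[(R_t^n(G))^4]$. For this I use the decomposition \eqref{quadvarcent} of $R_t^n(G)$ as a sum of three integral terms $T_1(t)+T_2(t)+T_3(t)$ (the bulk term carrying the $Z^n_{x,y}$, and the two boundary terms carrying $r_n^{\pm}$), together with the elementary convexity bound $(a_1+a_2+a_3)^4\le 27(a_1^4+a_2^4+a_3^4)$, reducing the problem to bounding $\mathbb{E}_{\nu_b^n}[T_i(t)^4]$ for $i=1,2,3$.

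For the bulk term $T_1(t)=\int_0^t W_n(\eta_s^n)\,ds$, where $W_n$ denotes the mean-zero integrand $\tfrac{\Theta(n)}{2(n-1)}\sum_{x,y}[G(\tfrac yn)-G(\tfrac xn)]^2 Z^n_{x,y}$, I apply the $L^4$ analogue of \eqref{CSFub}: by Jensen's inequality in time and stationarity,
\[
\mathbb{E}_{\nu_b^n}\big[T_1(t)^4\big]\le t^3\int_0^t \mathbb{E}_{\nu_b^n}\big[W_n^4\big]\,ds = t^4\,\mathbb{E}_{\nu_b^n}[W_n^4].
\]
Expanding $\mathbb{E}_{\nu_b^n}[W_n^4]$ over quadruples of pairs and using that each $Z^n_{x,y}$ has zero mean, that $Z^n_{x,y}$ and $Z^n_{w,r}$ are independent when $\{x,y\}\cap\{w,r\}=\varnothing$, and the pointwise control \eqref{boundexpZn}, the only surviving contributions come from configurations whose overlap graph has no isolated pair; the dominant one, from two disjoint pairs each appearing twice, is $\approx \big(\mathbb{E}_{\nu_b^n}[W_n^2]\big)^2 = B_n^2$, where $B_n$ is the quantity appearing in \eqref{boundexpconvmartterm2a}. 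Since $B_n\to 0$ (the computation behind \eqref{boundexpconvmartterm2a} and Appendix D of \cite{jarafluc}), and since the remaining connected contributions are estimated by the same $s(\cdot)$-decay bookkeeping and are likewise $O(1)$, we get that $\mathbb{E}_{\nu_b^n}[W_n^4]$ is bounded uniformly in $n$, whence $\mathbb{E}_{\nu_b^n}[T_1(t)^4]\lesssim T^4$.

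The boundary terms $T_2(t)$ and $T_3(t)$ are the delicate part and constitute the main obstacle. Here the crude Hölder-in-time bound used above is \emph{not} admissible: the static fourth moment of the boundary integrands grows with $n$ (it already does at the level of the second moment, which is exactly why Proposition \ref{lem61stefano}, an occupation-time estimate exploiting the temporal decorrelation of the dynamics, was invoked for $T_2,T_3$ in the proof of Lemma \ref{lemconvmartterm2}). Accordingly, the plan is to establish and apply the fourth-moment analogue of Proposition \ref{lem61stefano} --- a $2p$-moment occupation-time inequality with $p=2$, obtained by extending Lemma 6.1 of \cite{flucstefano} --- which yields
\[
\mathbb{E}_{\nu_b^n}\big[T_2(t)^4\big]\lesssim \Big(\frac{c_n^2 n^{\beta}}{\alpha\Theta(n)}\sum_x r_n^{\pm}(\tfrac xn)\,G^4(\tfrac xn)\Big)^2,
\]
with $c_n=\alpha\Theta(n)n^{-\beta}(n-1)^{-1}(1-2b)$, and the analogous bound for $T_3(t)$ with $c_n$ replaced by the corresponding $\beta_a$-prefactor; both right-hand sides are bounded (indeed they vanish) uniformly in $t\in[0,T]$. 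Collecting the three contributions together with the deterministic part gives the claimed uniform bound $\mathbb{E}_{\nu_b^n}[(\langle \mathcal M^n(G)\rangle_t)^4]\le C_2(G,T)$. The technical heart is thus the $p=2$ occupation-time estimate for the boundary additive functionals; the bulk term requires only the (lengthy but routine) four-point extension of the computation already carried out for \eqref{boundexpconvmartterm2a}.
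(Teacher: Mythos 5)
Your proof is incomplete at exactly the point you yourself call ``the technical heart'': the claimed fourth-moment analogue of Proposition \ref{lem61stefano}. Lemma 6.1 of \cite{flucstefano} is a Kipnis--Varadhan-type bound whose proof rests on the variational/resolvent characterization of the $L^2$-norm of an additive functional; it does not extend to fourth moments by any routine modification, no such $L^4$ occupation-time estimate is proved in this paper or in the references you lean on, and you give no argument for it. Worse, the premise motivating it is mistaken: the static moments of the boundary integrands do \emph{not} grow with $n$ once you use the decay of $G\in\mcb S_{\beta,\gamma}$ at the boundary together with $\Theta(n)n^{-\beta}\leq n^{\gamma}$ from \eqref{tiscbound}. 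For instance, in the only regime where the prefactor of the $\beta_a$-term in \eqref{quadvarcent} grows (e.g.\ under (H2), where $c_n\sim n^{\gamma-1-\beta_a}=n^{1/2}$), Remark \ref{remH2} gives $G\in\mcb S_{Dir}\cap\mcb S_{Neu}$, so $G^4(\tfrac xn)\lesssim (\tfrac xn)^4$ by \eqref{expGSd} and, with \eqref{boundrnpm}, $c_n^2\sum_x G^4(\tfrac xn)[r_n^{\pm}(\tfrac xn)]^2\lesssim n^{2-2\gamma}\to 0$; a similar check covers all cases of \eqref{defsbetagamma}. Hence the ``crude'' Cauchy--Schwarz-in-time bound \eqref{CSFub} that you dismissed as inadmissible in fact suffices for $T_2$ and $T_3$ (the paper's use of Proposition \ref{lem61stefano} in Lemma \ref{lemconvmartterm2} is a convenient way to get \emph{convergence to zero}, not a sign that static bounds fail at the level of boundedness). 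Your bulk-term treatment is plausible but also only sketched: the ``connected'' four-point contributions are waved away, whereas they are precisely what a complete proof would have to control.

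The whole decomposition is, moreover, unnecessary, and the paper proceeds quite differently: its proof is a one-line soft argument, invoking the martingale fourth-moment inequality of Lemma 3 in \cite{Dittrich91} together with \eqref{MthnL2} and the Dynkin representation \eqref{defMtngfluc}. An even more elementary repair of your argument is available and worth recording: the bracket is bounded by a \emph{deterministic} constant. Indeed, from \eqref{quadvar1}, using $[\eta(y)-\eta(x)]^2\leq 1$, $|\bar{\eta}|\leq 1$, $|r_n^{+}-r_n^{-}|\leq r_n^{+}+r_n^{-}$ and the rate condition $\alpha_a n^{-\beta_a}|a(y-x)|\leq \min\{1,\alpha n^{-\beta}\}\,s(y-x)$ stated before \eqref{condrate}, one gets almost surely $\langle \mathcal M^n(G)\rangle_t\leq t\,[\hat{\mathcal A}_{n,\beta}(G)+2\alpha\,\mathcal B_{n,\beta}(G)]$, and by Propositions \ref{prop1lem1convmart} and \ref{prop2lem1convmart} both sequences converge for $G\in\mcb S_{\beta,\gamma}$, hence are bounded uniformly in $n$; every moment of $\langle \mathcal M^n(G)\rangle_t$ is then trivially bounded by a constant depending only on $G$ and $T$. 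Either substitute this static estimate or actually prove your $L^4$ occupation-time lemma; as written, the proposal does not establish the statement.
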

\begin{proof}
The proof is a direct consequence of Lemma 3 in \cite{Dittrich91}, \eqref{MthnL2} and \eqref{defMtngfluc}.
\end{proof}

Let $\mathcal{N}_t^{n_j}(G):=[\mathcal{M}_t^{n_j}(G)]^2 - \langle \mathcal M^{n_j} (G)  \rangle_t$ for every $t \in [0,T]$ and $j \geq 1$, from Corollary \ref{corquadvar} we get that $\mathcal{N}_t^{n_j}(G)$ converges in distribution to $\mathcal{N}_t(G)$ as $j \rightarrow \infty$, where $\mathcal{N}_t(G)$ is given by  
\begin{align*} 
	\mathcal{N}_t(G):= [ \mathcal{M}_t(G)]^2 - 2 \chi(b) t \mathcal{P}^{\gamma}_{\alpha, \beta} G.
\end{align*}
Combining an application of \eqref{CSdisc} for $m=2$ with Corollary \ref{corquadvar} and Lemma \ref{lem4mom}, we get that
\begin{align*}
	\sup_{t \in [0,T], j \geq 1} \mathbb{E}_{\nu_b^{n_j}} \big[ \big( \mathcal{N}_t^{n_j}(G) \big)^2 \big]  = \sup_{t \in [0,T], j \geq 1} \mathbb{E}_{\nu_b^{n_j}} \big[ \big( [ \mathcal{M}_t^{n_j}(G)]^2 - \langle  \mathcal{M}^{n_j}(G) \rangle_t  \big)^2 \big]  < \infty. 
\end{align*}
From last display, $\mathcal{N}_t(G)$ is the limit in distribution of $\big(\mathcal{N}_t^{n_j}(G) \big)_{j \geq 1}$, an uniformly integrable sequence  of martingales. Therefore $\mathcal{N}_t(G)$ is itself a martingale.

\subsection{Characterization of the limit $A_t(G)$}
{If $(\beta,\gamma)\in \mathbb R\times (0,2)$ are such that    (H1) holds, then from Propositions \ref{convAtnG} and \ref{conv0tight} we get that $A_t(G)=0$, for any $G\in C^\infty([0,1])$.} Therefore, we conclude that $\mathcal{Y}_\cdot$ satisfies the conditions stated in Definition \ref{defspde}, ending the proof of Theorem \ref{clt}. 

Now assume   (H2).
Keeping \eqref{defprocA} in mind, for every $n \geq 2$ and $\varepsilon \in (0,1/3)$, we define the auxiliary process $A^{n,\varepsilon}(G)$ by
\begin{align*}
A_t^{n,\varepsilon}(G):= \int_0^t \Big\{ \frac{1}{n} \sum_{x=1}^{n/2} \nabla G \left(  \tfrac{x}{n}  \right)  \big[ \mathcal{Y}_t^n * \iota_{\varepsilon}^{+} \left( \tfrac{x}{n} \right) \big]^2 + \frac{1}{n} \sum_{x=n/2+1}^{n-1} \nabla G \left(  \tfrac{x}{n}  \right) \big[ \mathcal{Y}_t^n * \iota_{\varepsilon}^{-} \left( \tfrac{x}{n} \right) \big]^2 \Big\}ds, \quad t \in [0,T],
\end{align*}
where $\iota_{\varepsilon}^{+}$ and $\iota_{\varepsilon}^{-}$ are given by \eqref{defiota}. Analogously to (5.5) in \cite{jarafluc}, we claim that
\begin{equation} \label{limsupcharac}
\varlimsup_{\varepsilon \rightarrow 0^+} \varlimsup_{n \rightarrow \infty} \mathbb{E}_{\nu_b^n} \big[ \sup_{t \in [0,T]} \big(     A_t^n(G) - 2 \alpha_a m_a  A_t^{n,\varepsilon}(G) \big)^2  \big] =0.
\end{equation}
Recalling \eqref{limsupcharac} and  (H2), we get that $A_t^n(G)$ is asymptotically equivalent to $2 \alpha_a m_a \tilde{A}_t^n(G)$, where $\tilde{A}_t^n(G)$ is given by \eqref{deftilAtnf}. 
Applying Lemma \ref{boundvaraux} for $t=T$ and $\lambda=\lambda_N$ (given by \eqref{deflambdaN}), we get from \eqref{deftilAtnf} and \eqref{CSdiscexp} 
\begin{equation} \label{bound1charac}
\forall n \geq 2, \quad \mathbb{E}_{\nu_b^n} \Big[ \sup_{t \in [0,T]} \Big(\tilde{A}_t^{n}(G) - \frac{\sqrt{n-1}}{\sqrt{n}} \tilde{A}_t^{n}(G)   \Big)^2 \Big] \leq   \frac{ \tilde{C}_0 T^{\frac{2 \gamma-1}{\gamma}}}{n^2} \sum_{x } [ \nabla G \left(  \tfrac{x}{n}  \right) ]^2.
\end{equation}
For any $\varepsilon \in (0, 1/3)$ and $n \geq 2$ such that $n^{\lambda_N} \leq \varepsilon n$, applying Lemma \ref{lemmult0} for $t=T$, $D_1 =n^{\lambda_N}$ and $D_2 = \varepsilon n$, we have from \eqref{CSdiscexp}, \eqref{timescale} and the fact that $\beta \geq 0$ under  (H2) that
\begin{equation} \label{bound2charac}
\mathbb{E}_{\nu_b^n} \Big[ \sup_{t \in [0,T]} \Big( \frac{\sqrt{n-1}}{\sqrt{n}} \tilde{A}_t^n(G) -  \tilde{A}_t^{n,\varepsilon}(G)  \Big)^2  \Big] \leq   \varepsilon^{\gamma-1} \frac{2 C_0 T}{n} \sum_{x } [ \nabla G \left(  \tfrac{x}{n}  \right) ]^2.
\end{equation}
In last line, we defined, for every $\varepsilon \in (0, 1/3)$ and $n \geq 2$, 
\begin{align*}
\tilde{A}_t^{n,\varepsilon}(G):= \int_{0}^t\Big[ \sum_{x=1}^{n/2} \nabla G \left(  \tfrac{x}{n}  \right)  \overrightarrow{\psi}_x^{\varepsilon n}(\eta_s^n) + \sum_{x=n/2+1}^{n-1} \nabla G \left(  \tfrac{x}{n}  \right) \overleftarrow{\psi}_x^{\varepsilon n}(\eta_s^n)  \Big]ds, \quad t \in [0,T].
\end{align*}
From \eqref{eqdensfie}, \eqref{defiota} and \eqref{medemp}, we get
\begin{equation} \label{replproc}
\frac{\varepsilon n - 1}{\varepsilon n - \varepsilon} \tilde{A}_t^{n, \varepsilon}(G) - A_t^{n,\varepsilon}(G) =  - \frac{b(1-b)t}{\varepsilon n- \varepsilon} \sum_{x=1}^{n-1}  \nabla G \left(  \tfrac{x}{n}  \right) .
\end{equation}
Combining \eqref{CSdiscexp} with an application of Lemma \ref{lemauxtight} for $\hat{K}=\varepsilon n$ and $t =T$, we get
\begin{equation} \label{boundvareps}
\forall n \geq 2, \; \forall \varepsilon \in (0, 1/3), \quad \mathbb{E}_{\nu_b^n} \Big[ \sup_{t \in [0,T]} \Big(      \tilde{A}_t^{n,\varepsilon}(G) - \frac{\varepsilon n - 1}{\varepsilon n - \varepsilon} \tilde{A}_t^{n, \varepsilon}(G) \Big)^2  \Big]    \leq  \frac{48 T^2}{(\varepsilon n)^3}  \sum_{x } [ \nabla G \left(  \tfrac{x}{n}  \right) ]^2.
\end{equation}
From the Mean Value Theorem,  for any $h \in C^1([0,1])$ such that $\int_0^1 h(u) du =0$, it holds 
\begin{align*}
\forall n \geq 2, \quad \Big| \frac{1}{n} \sum_{j=0}^{n-1} h  ( \tfrac{j}{n} ) \Big| \leq \frac{2}{n} \sup_{u \in [0,1]} | \nabla h(u)|.
\end{align*}
From Remark \ref{remH2}, we have that $G \in \mcb S_{Dir} \cap \mcb S_{Neu}$ under  (H2).
 In particular, $\nabla G(0)=0$ and $ \int_0^1 \nabla G(u) du =0$. Thus, from last display, we conclude that
\begin{equation*} \begin{split}
  \sup_{t \in [0,T]} \Big( -\frac{b(1-b) t}{\varepsilon n-\varepsilon}   \sum_{x=1}^{n-1}   \nabla G \left(  \tfrac{x}{n}  \right)   \Big)^2    \leq \Big(  \frac{4Tb(1-b)}{\varepsilon n}  \|\Delta G \|_{\infty} \Big)^2 = \Big(  \frac{T}{\varepsilon n}  \|\Delta G \|_{\infty} \Big)^2. 
\end{split}\end{equation*}
Combining the display in last line with \eqref{replproc}, we conclude that
\begin{equation*} 
\forall n \geq 2, \; \forall \varepsilon \in (0, 1/3), \quad \mathbb{E}_{\nu_b^n} \Big[ \sup_{t \in [0,T]} \Big(  \frac{\varepsilon n - 1}{\varepsilon n - \varepsilon} \tilde{A}_t^{n, \varepsilon}(G) - A_t^{n,\varepsilon}(G) \Big)^2  \Big]    \leq \Big(  \frac{T}{\varepsilon n}  \|\Delta G \|_{\infty} \Big)^2.
\end{equation*}
Combining the bound in last line with \eqref{boundvareps} and an application of \eqref{CSdiscexp} for $m=2$, we conclude that
\begin{equation} \label{bound4charac}
\forall \varepsilon \in (0, 1/3), \quad \mathbb{E}_{\nu_b^n} \big[ \sup_{t \in [0,T]} \big(     \tilde{A}_t^{n,\varepsilon}(G) - A_t^{n,\varepsilon}(G) \big)^2  \big]    \leq 2 \Big(  \frac{T}{\varepsilon n}   \Big)^2 \Big[ [\|\Delta G \|_{\infty}]^2 +  \frac{16}{\varepsilon n}  \sum_{x } [ \nabla G \left(  \tfrac{x}{n}  \right) ]^2 \Big],
\end{equation}
for any $n \geq 2$. Combining \eqref{bound4charac} with \eqref{bound1charac}, \eqref{bound2charac} and an application of \eqref{CSdiscexp} for $m=3$,  there exists a constant $C(G,T)$ independent of $n$ and $\varepsilon$ such that for every $n \geq 2$, it holds
\begin{equation*} 
\forall \varepsilon \in (0, 1/3), \quad \mathbb{E}_{\nu_b^n} \big[ \sup_{t \in [0,T]} \big(     \tilde{A}_t^n(G) - A_t^{n,\varepsilon}(G) \big)^2  \big]    \leq C(G,T) [n^{-1} + \varepsilon^{\gamma-1} + (\varepsilon n)^{-2} + (\varepsilon n)^{-2} (\varepsilon)^{-1}]. 
\end{equation*}
Combining last display with \eqref{claimtight} and \eqref{CSdiscexp}, we conclude that \eqref{limsupcharac} holds.

Next, applying Lemma \ref{lemmult0} for $D_1 =\delta n$ and $D_2 = \varepsilon n$, we get from \eqref{CSdiscexp} that
\begin{equation} \label{bound5charac}
\forall 0 < \delta \leq \varepsilon \in (0, 1/3), \quad  \mathbb{E}_{\nu_b^n} \Big[ \sup_{s \in [0,t]} \Big( \tilde{A}_s^{n,\delta}(G)  -  \tilde{A}_s^{n,\varepsilon}(G)  \Big)^2  \Big] \leq   t \varepsilon^{\gamma-1} \frac{2 C_0}{n} \sum_{x } [ \nabla G \left(  \tfrac{x}{n}  \right) ]^2,
\end{equation}
for any $n \geq 2$. Moreover, from \eqref{bound4charac}, for any fixed $0 < \delta < \varepsilon \in (0, 1/3)$ it holds 
\begin{align*}
& \lim_{n \rightarrow \infty} \mathbb{E}_{\nu_b^n} \big[ \sup_{t \in [0,T]} \big(     \tilde{A}_t^{n,\varepsilon}(G) - A_t^{n,\varepsilon}(G) \big)^2  \big] =  \lim_{n \rightarrow \infty} \mathbb{E}_{\nu_b^n} \big[ \sup_{t \in [0,T]} \big(     \tilde{A}_t^{n,\delta}(G) - A_t^{n,\delta}(G) \big)^2  \big]=0.
\end{align*}
Combining the expression in last line with \eqref{bound5charac} and \eqref{CSdiscexp}, we get
\begin{align*}
\forall 0 < \delta \leq \varepsilon \in (0, 1/3), \quad \lim_{n \rightarrow \infty} \mathbb{E}_{\nu_b^n} \Big[ \sup_{s \in [0,t]} \Big( \tilde{A}_s^{n,\delta}(G)  -  \tilde{A}_s^{n,\varepsilon}(G)  \Big)^2  \Big] \leq   t \varepsilon^{\gamma-1} 2 C_0 \| \nabla G \|^2_{L^2}.
\end{align*}
Recalling \eqref{defprocA}, last display is equivalent to
\begin{align*}
\forall 0 < \delta \leq \varepsilon \in (0, 1/3), \quad \mathbb{E} \big[ \big( \mathcal{A}_{0,t}^{\varepsilon}(G) - \mathcal{A}_{0,t}^{\delta}(G)  \big)^2 \big] \leq 2 C_0 t \varepsilon^{\gamma-1} \| \nabla G \|^2_{L^2}. 
\end{align*}
From Remark \ref{remstat}, the process $(\mathcal{Y}_t)_{0 \leq t \leq T}$ is stationary. Thus, the previous bound shows that $(\mathcal{Y}_t)_{0 \leq t \leq T}$ satisfies \eqref{defEE} for $\kappa=2 C_0>0$ and $\omega=\gamma-1 \in (0,1)$. Therefore, the process $(\mathcal{A}_t)_{0 \leq t \leq T}$ given by $\mathcal{A}_t(G):= \lim_{\varepsilon \rightarrow 0^+} \mathcal{A}_{0,t}^{\varepsilon}(G)$ is well-defined.
Moreover, from \eqref{limsupcharac} we have that the limit in distribution of $\big( A_t^{n_j}(G) \big)_{j \geq 1}$ is given by
$
 A_t(G) = 2 \alpha_a m_a \mathcal{A}_t(G).
$
The previous arguments show that $\mathcal{Y}$ satisfies the conditions (1) and (2 ) stated in Definition \ref{defspdefbe} with $\kappa_1=2 \alpha_a m_a$.

Finally we observe that condition (3) in Definition \ref{defspdefbe} can be obtained by repeating the computations above for the process reversed in time, we leave the details to the reader. 

This  end the proof of Theorem \ref{clt2}.

\appendix

\section{Properties of the regional fractional Laplacian} \label{propfrac}

From \eqref{defdeltaepsilon} and \eqref{deflapfracreg}, for every $G \in C^{2}([0,1])$, it holds
\begin{align} \label{equivlapfrac}
\frac{2}{c^{+}+c^{-}}	\big( \mathbb{L}^{\gamma/2} G \big)(u) = 
	\begin{cases}
		I_{1,G}(u) + I_{2,G}(u) , \quad & 0 < u \leq 1/2, \\
		I_{3,G}(u) + I_{4,G}(u) , \quad & 1/2 < u < 1.
	\end{cases}.
\end{align}
In last display, $I_{1,G}$ and $I_{2,G}$ (resp. $I_{3,G}$ and $I_{4,G}$) are defined for every $u \in (0, \, 1/2]$ (resp. for every $u \in (1/2, \, 2)]$ and are given by
\begin{equation} \label{defI1I2}
	I_{1,G}(u):= \int_0^u \frac{G(u+w) + G(u-w) - 2G(u)}{ w^{1+\gamma}} dw; \quad I_{2,G}(u):= \int_{ u}^{1-u}  \frac{G(u+w) - G(u)}{ w^{1+\gamma}} dw;
\end{equation} 
\begin{equation*} 
	I_{3,G}(u):=  \int_{0}^{1-u}  \frac{G(u+w) + G(u-w) - 2G(u)}{ w^{1+\gamma}} dw; \quad I_{4,G}(u):= \int_{ 1-u}^{u}  \frac{G(u-w) - G(u)}{ w^{1+\gamma}} dw.
\end{equation*}
The goal of this section is to prove the following proposition.
\begin{prop} \label{Lqreg}
	Let $\gamma \in (0, 2)$. If $\gamma \in (0, \; 3/2)$, then $\mathbb{L}^{\gamma/2} G \in  L^2([0,1])$ for every $G \in C^{\infty}([0,1])$ and the operator $ \mathbb{L}^{\gamma/2}: C^{\infty}([0,1]) \rightarrow L^2([0,1])$ is continuous. On the other hand, if $\gamma \in [3/2, \; 2)$, then $\mathbb{L}^{\gamma/2} G \in  L^2([0,1])$ for every $G$ in $\mcb{S}_{Neu}$ and the operator $ \mathbb{L}^{\gamma/2}: \mcb{S}_{Neu} \rightarrow L^2([0,1])$ is continuous.
\end{prop}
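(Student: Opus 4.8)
The plan is to estimate $\mathbb{L}^{\gamma/2} G$ pointwise via the decomposition \eqref{equivlapfrac}, treating the four pieces $I_{1,G},I_{2,G}$ (on $(0,1/2]$) and $I_{3,G},I_{4,G}$ (on $(1/2,1)$) separately. The pieces $I_{1,G}$ and $I_{3,G}$ carry the singularity at $w=0$ but are tamed by a second-order cancellation, while $I_{2,G}$ and $I_{4,G}$ are the genuinely delicate boundary terms; it is there that the threshold between $\gamma<3/2$ and $\gamma\geq3/2$, and hence the need for the Neumann condition, appears. Since $\mathbb{L}^{\gamma/2}$ is linear and the notion of continuity used in the excerpt only tests sequences whose derivatives converge uniformly, once I produce an a priori bound of the form $\| \mathbb{L}^{\gamma/2} G\|_{L^2}\lesssim \|\nabla G\|_\infty+\|\Delta G\|_\infty$ on the relevant space, continuity follows immediately by applying that bound to the difference $H_j-G$. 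So the whole proposition reduces to such an estimate.

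For $I_{1,G}$, and for $I_{3,G}$ by the reflection $u\mapsto 1-u$, I would use the symmetric second-difference bound $|G(u+w)+G(u-w)-2G(u)|\leq \|\Delta G\|_\infty\, w^2$, valid since $u\pm w\in(0,1)$ on the range of integration. This yields $|I_{1,G}(u)|\leq \|\Delta G\|_\infty\int_0^u w^{1-\gamma}\,dw=\tfrac{\|\Delta G\|_\infty}{2-\gamma}\,u^{2-\gamma}$, which is uniformly bounded on $(0,1/2]$ because $\gamma<2$; hence $I_{1,G}\in L^\infty([0,1/2])\subset L^2([0,1/2])$ with the required control. This step needs only $G\in C^\infty([0,1])$ and works for every $\gamma\in(0,2)$.

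The crux is $I_{2,G}$, with $I_{4,G}$ treated symmetrically. Recalling \eqref{defI1I2}, I would Taylor expand $G(u+w)-G(u)=\nabla G(u)\,w+R(u,w)$ with $|R(u,w)|\leq\tfrac12\|\Delta G\|_\infty w^2$, splitting $I_{2,G}(u)$ into a remainder integral $\int_u^{1-u}R(u,w)\,w^{-1-\gamma}\,dw$, bounded by $\tfrac12\|\Delta G\|_\infty\int_0^1 w^{1-\gamma}\,dw<\infty$ uniformly in $u$, plus the main term $\nabla G(u)\int_u^{1-u}w^{-\gamma}\,dw$. The elementary estimate $0\leq\int_u^{1-u}w^{-\gamma}\,dw\leq(\gamma-1)^{-1}u^{1-\gamma}$ for $\gamma>1$ (logarithmic for $\gamma=1$, bounded for $\gamma<1$) shows the main term is dominated by $|\nabla G(u)|\,u^{1-\gamma}$. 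For $\gamma<3/2$ and $G\in C^\infty([0,1])$ one uses $|\nabla G(u)|\leq\|\nabla G\|_\infty$, and $u^{1-\gamma}\in L^2([0,1/2])$ precisely when $2(1-\gamma)>-1$, i.e. $\gamma<3/2$ (the case $\gamma=1$ giving an $L^2$ logarithm). For $\gamma\in[3/2,2)$ this crude bound fails, and here the Neumann condition $\nabla G(0)=0$ is exactly what is needed: $|\nabla G(u)|=|\nabla G(u)-\nabla G(0)|\leq\|\Delta G\|_\infty\,u$ upgrades the main term to $O(u^{2-\gamma})$, bounded since $\gamma<2$. The term $I_{4,G}$ near $u=1$ is handled identically using $\nabla G(1)=0$, the second half of the definition of $\mcb S_{Neu}=\mcb S_1$.

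Collecting the four bounds gives $\|\mathbb{L}^{\gamma/2}G\|_{L^2([0,1])}\lesssim\|\nabla G\|_\infty+\|\Delta G\|_\infty$ on $C^\infty([0,1])$ when $\gamma<3/2$, and $\lesssim\|\Delta G\|_\infty$ on $\mcb S_{Neu}$ when $\gamma\in[3/2,2)$; both the $L^2$ membership and the continuity then follow as explained in the first paragraph. The main obstacle is purely the borderline exponent bookkeeping for the main term of $I_{2,G}$: recognizing that the divergent factor is exactly $\nabla G(u)\,u^{1-\gamma}$ and that the Neumann vanishing buys one extra power of $u$, which is just enough to cross the $L^2$ integrability threshold at $\gamma=3/2$.
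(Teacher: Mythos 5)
Your proof is correct and follows essentially the same route as the paper: the same decomposition \eqref{equivlapfrac} into $I_{1,G},\dots,I_{4,G}$, the same second-order Taylor bound giving a uniform estimate on $I_{1,G}$ and $I_{3,G}$, and the same use of the Neumann conditions $\nabla G(0)=0=\nabla G(1)$ to upgrade the boundary terms $I_{2,G},I_{4,G}$ past the $L^2$ threshold at $\gamma=3/2$. The only cosmetic difference is that the paper handles the subcritical case via a Mean Value Theorem bound with an interpolated exponent $\delta\in[0,1]\cap(\gamma-q^{-1},\gamma)$ in a general $L^q$ lemma (leaving details to the reader), whereas you specialize to $q=2$ and split off the gradient term of $I_{2,G}$ explicitly --- a legitimate filling-in of the details the paper omits.
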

In order to obtain last result, by performing second order Taylor expansions on $G$ around $u$, we get
\begin{align} \label{Lqreg1}
	\forall q \geq 1, \quad \int_0^{1/2}  |I_{1,G}(u)|^q du + \int_{1/2}^1  |I_{3,G}(u)|^q du  \leq \Big(  \frac{\| G^{(2)} \|_{\infty}}{2 - \gamma} \Big)^q. 
\end{align}
Therefore, Proposition \ref{Lqreg} is a direct consequence of \eqref{equivlapfrac}, \eqref{Lqreg1} and the following result.

\begin{prop}
	Let $\gamma \in (0,2)$ and $q \geq 1$. Assume that either $\gamma \in (0, 1 + q^{-1})$ and $G \in C^{\infty}([0,1])$; or $\gamma \in [1 + q^{-1}, 2)$ and $G \in \mcb S_{Neu}$. Then there exists a constant $C(q,\gamma)>0$ depending only on $q$ and $\gamma$ such that
	\begin{equation*} 
		\int_0^{1/2} |I_{2,G}(u)|^q du + \int_{1/2}^1 |I_{4,G}(u)|^q du \leq C(q,\gamma) \| G^{(2)} \|^q_{\infty}. 
	\end{equation*}
\end{prop}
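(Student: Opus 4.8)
The plan is to reduce everything to a single first-order Taylor expansion together with a careful analysis of the boundary singularity. First I would note that the substitution $u \mapsto 1-u$, $w \mapsto w$ converts $I_{4,G}(u)$ (for $u \in (1/2,1)$) into an integral of exactly the same shape as $I_{2,\tilde G}(1-u)$, where $\tilde G(v) := G(1-v)$; since $\tilde G \in C^{\infty}([0,1])$ and $\tilde G \in \mcb S_{Neu}$ precisely when $G \in \mcb S_{Neu}$, it suffices to bound $\int_0^{1/2} |I_{2,G}(u)|^q\,du$. So I focus on $I_{2,G}$ and on the endpoint $u \to 0^+$, which is where the only singular behaviour occurs (the domain of integration $[u,1-u]$ keeps $w$ bounded away from $0$).

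Next I would Taylor expand $G(u+w) = G(u) + G^{(1)}(u) w + \tfrac12 G^{(2)}(\xi_{u,w}) w^2$ with $\xi_{u,w} \in (u, u+w)$, and split
\begin{equation*}
I_{2,G}(u) = G^{(1)}(u)\, J_\gamma(u) + R(u),
\end{equation*}
where $J_\gamma(u) := \int_u^{1-u} w^{-\gamma}\,dw$ and $R(u) := \tfrac12 \int_u^{1-u} G^{(2)}(\xi_{u,w})\, w^{1-\gamma}\,dw$. The remainder is harmless: since $\gamma < 2$, $|R(u)| \le \tfrac12 \|G^{(2)}\|_{\infty} \int_0^1 w^{1-\gamma}\,dw = \tfrac{\|G^{(2)}\|_{\infty}}{2(2-\gamma)}$ uniformly in $u$, hence $\int_0^{1/2} |R(u)|^q\,du \lesssim_{q,\gamma} \|G^{(2)}\|_{\infty}^q$. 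All of the difficulty is concentrated in the main term $G^{(1)}(u) J_\gamma(u)$.

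Then I would estimate $J_\gamma$ explicitly: it stays bounded by $(1-\gamma)^{-1}$ when $\gamma < 1$, grows like $\log(2/u)$ when $\gamma = 1$, and equals $\tfrac{u^{1-\gamma}-(1-u)^{1-\gamma}}{\gamma-1} \lesssim_\gamma u^{1-\gamma}$ when $\gamma \in (1,2)$. The threshold $1+q^{-1}$ then appears as the exact borderline for $L^q$-integrability of $J_\gamma$ near $0$: when $\gamma < 1 + q^{-1}$ one has $q(1-\gamma) > -1$, so in all three cases $\int_0^{1/2} J_\gamma(u)^q\,du < \infty$ (the $\gamma = 1$ logarithm only helping), and bounding $|G^{(1)}(u)| \le \|G^{(1)}\|_{\infty}$ closes this regime. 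When instead $\gamma \ge 1 + q^{-1}$ (so necessarily $\gamma > 1$), I would invoke the hypothesis $G \in \mcb S_{Neu}$, i.e. $G^{(1)}(0) = 0$: by the mean value theorem $|G^{(1)}(u)| = |G^{(1)}(u) - G^{(1)}(0)| \le \|G^{(2)}\|_{\infty}\, u$, so that $|G^{(1)}(u) J_\gamma(u)| \lesssim_\gamma \|G^{(2)}\|_{\infty}\, u^{2-\gamma}$, and $\int_0^{1/2} u^{q(2-\gamma)}\,du < \infty$ because $q(2-\gamma) > -1$ holds for every $\gamma < 2$. The treatment of $I_{4,G}$ is symmetric, using $G^{(1)}(1) = 0$ and a Taylor expansion around $u = 1$ in the Neumann regime.

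The main obstacle is precisely this sharp boundary analysis of the main term: one must verify that the single extra power of $u$ gained from the Neumann condition $G^{(1)}(0)=0$ is exactly what moves the exponent out of the divergent range $q(1-\gamma) \le -1$ into the convergent range $q(2-\gamma) > -1$, and one must dispose of the borderline $\gamma = 1$ logarithm cleanly. A secondary bookkeeping point is the dependence on seminorms: in the Neumann regime the estimate is genuinely controlled by $\|G^{(2)}\|_{\infty}$ as stated, whereas in the subcritical regime $\gamma < 1 + q^{-1}$ the main term is naturally controlled by $\|G^{(1)}\|_{\infty}$, which is in any case dominated by the $C^{\infty}([0,1])$-topology underlying the continuity assertion in Proposition \ref{Lqreg}.
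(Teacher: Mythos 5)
Your proof is correct, and it is considerably more detailed than the paper's own argument, which is a two-line sketch ending with ``we leave the details to the reader.'' Both arguments share the same core: control $G(u+w)-G(u)$ by Taylor/mean-value bounds and then do power counting of the singularity of $I_{2,G}$ as $u\to 0^+$ (with the mirrored argument, or your clean reflection $I_{4,G}(u)=I_{2,\tilde G}(1-u)$, $\tilde G(v):=G(1-v)$, for $I_{4,G}$). Where you genuinely diverge is the subcritical regime $\gamma\in(0,1+q^{-1})$: you split off the main term $G^{(1)}(u)J_\gamma(u)$ plus a uniformly bounded remainder and then treat $\gamma<1$, $\gamma=1$ and $\gamma\in(1,1+q^{-1})$ by separate explicit computations, whereas the paper interpolates the mean-value bound, replacing $|G(u+w)-G(u)|\le \|G^{(1)}\|_{\infty}\,w$ by $\|G^{(1)}\|_{\infty}\,w^{\delta}$ for a single exponent $\delta\in[0,1]\cap(\gamma-q^{-1},\gamma)$ (this intersection is nonempty precisely because $\gamma<1+q^{-1}$), which gives $|I_{2,G}(u)|\lesssim u^{\delta-\gamma}$ with $q(\delta-\gamma)>-1$ in one stroke and avoids your three-case analysis and the logarithmic borderline at $\gamma=1$. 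In the Neumann regime the two proofs coincide: the mean value theorem applied to $G^{(1)}$ together with $G^{(1)}(0)=0=G^{(1)}(1)$ gains exactly one power of $u$, moving the exponent from $1-\gamma$ to $2-\gamma$, and $q(2-\gamma)>-1$ is automatic from $\gamma<2$. Your approach buys transparency (one sees exactly where the threshold $1+q^{-1}$ comes from, as the $L^q$-integrability borderline of $J_\gamma$); the paper's buys brevity and uniformity in $\gamma$.

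One further point in your favor: your closing remark about seminorms identifies a genuine (if harmless) inaccuracy in the statement itself. In the subcritical regime the asserted bound cannot hold with $\|G^{(2)}\|_{\infty}$ alone --- take $G(u)=u$ and $\gamma\in(0,1)$, so that $G^{(2)}\equiv 0$ while $I_{2,G}(u)=\int_u^{1-u}w^{-\gamma}\,dw>0$ --- and both your argument and the paper's sketch (which invokes the mean value theorem for $G$ there) actually produce a bound involving $\|G^{(1)}\|_{\infty}$ in that regime. Since the downstream use in Proposition \ref{Lqreg} is continuity of $\mathbb{L}^{\gamma/2}$ with respect to the topology controlled by all sup-norms $\|G^{(k)}\|_{\infty}$, nothing in the paper is affected, but your proof is the one that states what is really proved.
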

\begin{proof}
In the case, $\gamma \in (0, 1 + q^{-1})$,  we can always choose $\delta \in [0,1] \cap (\gamma - q^{-1}, \gamma)$, from the Mean Value Theorem for $G$, the proof ends, while for  $\gamma \in [1 + q^{-1}, 2)$ and $G \in \mcb S_{Neu}$, from the Mean Value Theorem for $G$ and $G^{(1)}$ and the fact that $G^{(1)}(0)=0=G^{(1)}(1)$, the proof ends. We leave the details to the reader.
		
\end{proof}

\section{Discrete convergences} \label{secdiscconv} 

The following result was useful to obtain Proposition \ref{convrem2}. Recall the definition of $\mcb {K}_{n}$ in \eqref{op_Kn}.
\begin{prop} \label{convfrac}
	Assume  $\gamma \in (0, \; 3/2)$ and $G \in C^{\infty}([0,1])$; or $\gamma \in [3/2, \; 2)$ and $G \in \mcb{S}_{Neu}$. Then
	\begin{align*}
		\lim_{n \rightarrow \infty} \frac{1}{n-1} \sum _{x } \big|  n^{\gamma}  \mcb {K}_{n} G \left( \tfrac{x}{n} \right) - \mathbb{L}^{ \gamma/2 } G]\left( \tfrac{x}{n} \right) \big|^2 = 0.
	\end{align*}
\end{prop}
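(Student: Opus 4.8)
The plan is to recognize $n^{\gamma}\mcb{K}_{n}G(\tfrac{x}{n})$ as a Riemann sum of mesh $1/n$ approximating the principal-value integral defining $\mathbb{L}^{\gamma/2}G(\tfrac{x}{n})$, and then to estimate the discrete $L^{2}$-average of the difference. Writing $u=x/n$, $v=y/n$ in \eqref{op_Kn} and using $|y-x|^{\gamma+1}=n^{\gamma+1}|v-u|^{\gamma+1}$ gives
\[
n^{\gamma}\mcb{K}_{n}G(\tfrac{x}{n})=\frac{c^{+}+c^{-}}{2}\,\frac1n\sum_{y\neq x}\frac{G(\tfrac{y}{n})-G(\tfrac{x}{n})}{|v-u|^{1+\gamma}},
\]
so the statement is exactly a convergence of Riemann sums towards the integrals $I_{1,G},\dots,I_{4,G}$ of \eqref{equivlapfrac}--\eqref{defI1I2}. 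First I would mirror that decomposition at the discrete level: for $x\le n/2$, pair $y=x\pm z$ for $z=1,\dots,x-1$ into the \emph{symmetric second difference} $G(\tfrac{x+z}{n})+G(\tfrac{x-z}{n})-2G(\tfrac{x}{n})$ (the discrete analogue of $I_{1,G}$), and collect the remaining one-sided terms $y=x+z$, $z=x,\dots,n-1-x$, as the analogue of $I_{2,G}$; the case $x>n/2$ is symmetric and matches $I_{3,G}+I_{4,G}$. One must keep track of the boundary terms created by the mismatch between the discrete range $\{1,\dots,n-1\}$ and the domain $(0,1)$ at the endpoints, but these are of lower order.

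Then I would split the outer average $\tfrac1{n-1}\sum_{x}$ into a \emph{bulk} $x\in[\delta n,(1-\delta)n]$ and two \emph{boundary layers} $x\le\delta n$, $x\ge(1-\delta)n$, taking $n\to\infty$ first and then $\delta\to 0^{+}$. In the bulk, where $u=x/n\in[\delta,1-\delta]$, the near-diagonal summand is controlled by a second-order Taylor expansion, $|G(\tfrac{x+z}{n})+G(\tfrac{x-z}{n})-2G(\tfrac{x}{n})|\lesssim (z/n)^{2}\|G^{(2)}\|_{\infty}$, so divided by $(z/n)^{1+\gamma}$ it is $\lesssim (z/n)^{1-\gamma}$, summable since $\gamma<2$; the tail integrand is bounded for $u$ away from $\{0,1\}$. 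Standard Riemann-sum error bounds (the $z=1$ term alone contributes $\lesssim (1/n)^{2-\gamma}\to 0$) then give the uniform convergence $\sup_{\delta n\le x\le(1-\delta)n}\big|n^{\gamma}\mcb{K}_{n}G(\tfrac{x}{n})-\mathbb{L}^{\gamma/2}G(\tfrac{x}{n})\big|\to 0$, so the bulk contribution to the squared average vanishes for each fixed $\delta$.

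For the boundary layers I would establish a discrete analogue of Proposition \ref{Lqreg}: a bound $\tfrac1n\sum_{x\le\delta n}\big|n^{\gamma}\mcb{K}_{n}G(\tfrac{x}{n})\big|^{2}\le C\,\omega(\delta)$, uniform in $n$, with $\omega(\delta)\to0$. Here the near-diagonal part is again harmless by the Taylor bound, while the one-sided tail part is decisive: for $u=x/n\to0$ it is governed by $\int_{u}^{1-u}w^{-1-\gamma}[G(u+w)-G(u)]\,dw$, which by the Mean Value Theorem for $G$ is $O(u^{1-\gamma})$ when $\gamma\ge1$, whence $\tfrac1n\sum_{x}u^{2(1-\gamma)}$ is finite exactly when $2(1-\gamma)>-1$, i.e. $\gamma<3/2$. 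This is the source of the threshold, and for $\gamma\in[3/2,2)$ the Neumann condition $G^{(1)}(0)=G^{(1)}(1)=0$ available for $G\in\mcb{S}_{Neu}$ improves this, via the Mean Value Theorem applied to both $G$ and $G^{(1)}$, to $|G(u+w)-G(u)|\lesssim w^{2}$ for $w\ge u$, so the tail is bounded and square-integrable up to the boundary --- precisely as in the proof of Proposition \ref{Lqreg}. The matching bound $\tfrac1n\sum_{x\le\delta n}|\mathbb{L}^{\gamma/2}G(\tfrac{x}{n})|^{2}\le C\int_{0}^{2\delta}|\mathbb{L}^{\gamma/2}G|^{2}+o(1)$ follows from Proposition \ref{Lqreg} together with Riemann-sum convergence, and tends to $0$ with $\delta$. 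Combining the bulk and boundary estimates through Cauchy--Schwarz then closes the argument. I expect the main obstacle to be exactly this boundary-layer analysis for $\gamma\ge3/2$: obtaining the uniform-in-$n$ discrete $L^{2}$ bound on $n^{\gamma}\mcb{K}_{n}G$ near the endpoints, where the discrete singularity and the boundary interact and the Neumann cancellation must be exploited carefully.
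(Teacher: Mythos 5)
Your proposal follows essentially the same route as the paper: the paper also splits the discrete $L^2$-average into boundary layers and a bulk with the iterated limits $n\to\infty$ then $\varepsilon\to 0^+$ (Propositions \ref{convfrac1} and \ref{convfrac2}), controls the bulk via the decomposition \eqref{equivlapfrac}--\eqref{defI1I2} with second-order Taylor bounds and Riemann-sum errors (there mediated by the truncated operator $\mathbb{L}_{\varepsilon}^{\gamma/2}$, which is quantitatively the same as your direct uniform-convergence claim), and handles the boundary layers exactly as you do, using the $L^2$-integrability of $\mathbb{L}^{\gamma/2}G$ from Proposition \ref{Lqreg} together with the matching discrete bounds, where the Mean Value Theorem applied to $G$ and $G^{(1)}$ with the Neumann condition produces the same $\gamma=3/2$ threshold you identify. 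Your plan is correct, including the key point that square-summability of the $u^{1-\gamma}$ one-sided tail near the boundary is what forces $\gamma<3/2$ without the Neumann condition.
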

Following the general strategy used to obtain Lemma 5.1 in \cite{HLstefano}, Proposition \ref{convfrac} is a direct consequence of Propositions \ref{convfrac1} and \ref{convfrac2} below.
\begin{prop} \label{convfrac1}
	Assume  $\gamma \in (0, \; 3/2)$ and $G \in C^{\infty}([0,1])$; or $\gamma \in [3/2, \; 2)$ and $G \in \mcb{S}_{Neu}$. Then
	\begin{equation} \label{limconvreg0}
		\begin{split}
			&\varlimsup_{\varepsilon \rightarrow 0^+} \varlimsup_{n \rightarrow \infty} \frac{1}{n-1} \sum _{x=1}^{\varepsilon n} \big|  n^{\gamma}  \mcb {K}_{n} G \left( \tfrac{x}{n} \right) - \mathbb{L}^{ \gamma/2 } G]\left( \tfrac{x}{n} \right) \big|^2 = 0;  \\
			& \varlimsup_{\varepsilon \rightarrow 0^+} \varlimsup_{n \rightarrow \infty} \frac{1}{n-1} \sum _{x=n-\varepsilon n-1}^{n-1} \big|  n^{\gamma}  \mcb {K}_{n} G \left( \tfrac{x}{n} \right) - \mathbb{L}^{ \gamma/2 } G]\left( \tfrac{x}{n} \right) \big|^2.
		\end{split}
	\end{equation}
\end{prop}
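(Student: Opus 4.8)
The plan is to treat the two displays separately; since they are mirror images under the reflection $x \mapsto n-x$ (equivalently $u \mapsto 1-u$, which swaps the kernels governing $I_{1,G},I_{2,G}$ and $I_{3,G},I_{4,G}$ in \eqref{equivlapfrac}), it suffices to handle the left boundary $1 \leq x \leq \varepsilon n$ in detail. Using $(a-b)^2 \leq 2a^2 + 2b^2$, I would split
\begin{equation*}
\frac{1}{n-1}\sum_{x=1}^{\varepsilon n}\big| n^\gamma \mcb{K}_n G(\tfrac xn) - \mathbb{L}^{\gamma/2}G(\tfrac xn)\big|^2 \leq \frac{2}{n-1}\sum_{x=1}^{\varepsilon n}\big( n^\gamma \mcb{K}_n G(\tfrac xn)\big)^2 + \frac{2}{n-1}\sum_{x=1}^{\varepsilon n}\big(\mathbb{L}^{\gamma/2}G(\tfrac xn)\big)^2,
\end{equation*}
and bound the continuous and discrete boundary sums by the same type of quantity, namely a Riemann sum of $u^{2(1-\gamma)}$ near $u=0$. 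The guiding principle is that both operators exhibit the identical singular profile $u^{1-\gamma}$ as $u\to 0^+$, which is square-integrable exactly when $\gamma<3/2$, and which is suppressed by the Neumann condition $G^{(1)}(0)=0$ when $\gamma\in[3/2,2)$.

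For the continuous sum I would first obtain a pointwise bound on $\mathbb{L}^{\gamma/2}G(u)$ for $u\in(0,1/2]$ from the decomposition \eqref{equivlapfrac}--\eqref{defI1I2}. The part $I_{1,G}$ is controlled by $\|G^{(2)}\|_\infty$ uniformly via \eqref{Lqreg1}; the part $I_{2,G}(u)=\int_u^{1-u} w^{-1-\gamma}[G(u+w)-G(u)]\,dw$ is estimated by a Taylor expansion of $G$, giving $|I_{2,G}(u)| \lesssim \|G^{(1)}\|_\infty\,(1+u^{1-\gamma})$ for $\gamma\in(1,2)$ (with a logarithmic correction at $\gamma=1$ and a uniform bound for $\gamma<1$), and $|I_{2,G}(u)|\lesssim \|G^{(2)}\|_\infty\,(1+u^{2-\gamma})$ when $G\in\mcb{S}_{Neu}$. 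Squaring and summing then yields $\frac{1}{n-1}\sum_{x=1}^{\varepsilon n}(\mathbb{L}^{\gamma/2}G(\tfrac xn))^2 \lesssim \frac{1}{n}\sum_{x=1}^{\varepsilon n}(1+(x/n)^{2-2\gamma})$, a left-endpoint Riemann sum whose exponent exceeds $-1$ in all admissible regimes (this is precisely where Proposition \ref{Lqreg} and $L^2$-membership enter); it converges as $n\to\infty$ to $\int_0^\varepsilon(1+u^{2-2\gamma})\,du$ and hence vanishes as $\varepsilon\to 0$.

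The discrete sum is the main obstacle. I would insert a second-order Taylor expansion of $G$ inside \eqref{op_Kn}, writing, with $z=y-x$,
\begin{equation*}
n^\gamma \mcb{K}_n G(\tfrac xn) = \tfrac{c^++c^-}{2}\Big[ n^{\gamma-1}G^{(1)}(\tfrac xn)\sum_{z} \frac{\operatorname{sgn}(z)}{|z|^{\gamma}} + \tfrac{1}{2}n^{\gamma-2}\sum_{z} \frac{G^{(2)}(\xi_z)\,z^2}{|z|^{\gamma+1}}\Big],
\end{equation*}
where $z$ ranges over $\{1-x,\dots,n-1-x\}\setminus\{0\}$ and $\xi_z$ is a Taylor remainder point. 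The remainder sum is $O(1)$ uniformly in $x$ because $\sum_z|z|^{1-\gamma}\lesssim n^{2-\gamma}$. The delicate term is the truncated signed sum $\sum_z\operatorname{sgn}(z)|z|^{-\gamma}$: comparing it to $\int z^{-\gamma}\,dz$ and tracking the asymmetric truncation shows it is $O(n^{1-\gamma})$ for $\gamma<1$, of order $\log$ for $\gamma=1$, and $\sim(\gamma-1)^{-1}x^{1-\gamma}$ for $\gamma>1$, so that $n^{\gamma-1}G^{(1)}(\tfrac xn)\sum_z\operatorname{sgn}(z)|z|^{-\gamma}$ reproduces exactly the singular profile $(x/n)^{1-\gamma}$ already seen for $I_{2,G}$. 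The hard part is making these discrete-to-integral comparisons uniform in both $x$ and $n$ simultaneously across the four regimes $\gamma\lessgtr 1$ and $\gamma\lessgtr 3/2$, and, in the Neumann case, using $G^{(1)}(\tfrac xn)=G^{(1)}(\tfrac xn)-G^{(1)}(0)$ together with $|G^{(1)}(\tfrac xn)|\le\|G^{(2)}\|_\infty\,\tfrac xn$ to gain the extra factor $(x/n)$ that restores square-summability when $\gamma\ge 3/2$. Assembling these pointwise bounds, the discrete boundary sum is controlled, just as the continuous one, by $\frac{1}{n}\sum_{x=1}^{\varepsilon n}(1+(x/n)^{2-2\gamma})$ (respectively by $\frac1n\sum_{x=1}^{\varepsilon n}(x/n)^{4-2\gamma}$ under the Neumann condition), uniformly in $n$; taking $\varlimsup_{n\to\infty}$ and then $\varlimsup_{\varepsilon\to 0^+}$ sends it to zero. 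This mirrors the argument behind Lemma 5.1 of \cite{HLstefano} and the analogous boundary estimates in \cite{jarafluc}, and the right-boundary display follows by the reflection symmetry noted above, completing the proof of Proposition \ref{convfrac1}.
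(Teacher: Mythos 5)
Your proposal is correct and takes essentially the same route as the paper: after the triangle-inequality split, both arguments control the continuous boundary sum by the square-integrability of the singular profile of $\mathbb{L}^{\gamma/2}G$ near the endpoints (the paper invokes Proposition \ref{Lqreg} and the absolute-continuity bound \eqref{contleb}, valid precisely because $\gamma<3/2$, or because the Neumann condition upgrades the profile from $u^{1-\gamma}$ to $u^{2-\gamma}$ when $\gamma\in[3/2,2)$), and the discrete boundary sum by showing $n^{\gamma}\mcb{K}_n G$ obeys the same profile via \eqref{op_Kn}, \eqref{defI1I2} and \eqref{equivlapfrac}. The only difference is one of detail: you carry out explicitly the Taylor-expansion, signed-sum, and Riemann-sum comparisons that the paper's very terse proof leaves to the reader.
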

\begin{prop} \label{convfrac2}
	Assume  $\gamma \in (0,  2)$ and $G \in C^{\infty}([0,1])$, then
	\begin{equation} \label{limconvreg2}
		\begin{split}
			&\varlimsup_{\varepsilon \rightarrow 0^+} \varlimsup_{n \rightarrow \infty} \frac{1}{n-1} \sum _{x=\varepsilon n + 1}^{(n-1)/2} \big|  n^{\gamma}  \mcb {K}_{n} G \left( \tfrac{x}{n} \right) - \mathbb{L}^{ \gamma/2 } G]\left( \tfrac{x}{n} \right) \big|^2 = 0;  \\
			&\varlimsup_{\varepsilon \rightarrow 0^+} \varlimsup_{n \rightarrow \infty} \frac{1}{n-1} \sum _{x=(n-1)/2 +1}^{n-\varepsilon n-2} \big|  n^{\gamma}  \mcb {K}_{n} G \left( \tfrac{x}{n} \right) - \mathbb{L}^{ \gamma/2 } G]\left( \tfrac{x}{n} \right) \big|^2 = 0.
		\end{split}
	\end{equation}
\end{prop}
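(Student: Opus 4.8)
The plan is to recognize $n^\gamma \mcb{K}_n G(\tfrac{x}{n})$ as a Riemann sum, at mesh $1/n$, for the principal-value integral defining $\mathbb{L}^{\gamma/2}G(\tfrac{x}{n})$, and to control the error uniformly over the bulk indices. Writing $u=x/n$, $v=y/n$ in \eqref{op_Kn}, the factor $|y-x|^{-(1+\gamma)}=n^{-(1+\gamma)}|v-u|^{-(1+\gamma)}$ gives
\begin{equation*}
n^\gamma \mcb{K}_n G(\tfrac{x}{n}) = \frac{c^{+}+c^{-}}{2}\, \frac{1}{n} \sum_{\substack{y \in \Lambda_n \\ y \neq x}} \frac{G(\tfrac{y}{n}) - G(\tfrac{x}{n})}{|\tfrac{y}{n} - \tfrac{x}{n}|^{1+\gamma}},
\end{equation*}
whereas $\mathbb{L}^{\gamma/2}G(u) = \tfrac{c^{+}+c^{-}}{2}\,\mathrm{p.v.}\!\int_0^1 \tfrac{G(v)-G(u)}{|v-u|^{1+\gamma}}\,dv$. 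Since the summation range is $\varepsilon n+1 \le x \le (n-1)/2$, we have $u\in[\varepsilon,1/2]$, so $u$ stays at distance at least $\varepsilon$ from $0$ and at least $1/2$ from $1$.

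First I would split both objects at the scale $\delta:=\varepsilon$ into a near part ($|y-x|\le\delta n$, resp. $|v-u|\le\delta$) and a far part. For the near part, because $x\ge\varepsilon n+1\ge\delta n+1$ and $x\le(n-1)/2$, the symmetric stencil $\{x-k,\,x+k:1\le k\le\delta n\}$ lies inside $\Lambda_n$ and $[u-\delta,u+\delta]\subset(0,1)$, so I may symmetrize. Pairing $\pm k$ (resp. $\pm w$) turns both near parts into second differences, and a second-order Taylor expansion of $G$ gives $|G(u+w)+G(u-w)-2G(u)|\le\|G^{(2)}\|_\infty w^2$ together with its discrete analogue. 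Hence both the continuum and the discrete near parts are bounded, uniformly in $x$ and $n$, by a constant multiple of $\|G^{(2)}\|_\infty\int_0^\delta w^{1-\gamma}\,dw\lesssim\|G^{(2)}\|_\infty\,\delta^{2-\gamma}$, so their difference is $O(\varepsilon^{2-\gamma})$. This is exactly where only $G\in C^\infty([0,1])$ (and not any boundary condition) is needed, since the symmetrization takes place strictly inside $(0,1)$.

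For the far part, the integrand $v\mapsto [G(v)-G(u)]\,|v-u|^{-(1+\gamma)}$ restricted to $\{v\in[0,1]:|v-u|>\delta\}$ is smooth, with $C^1$-norm controlled by $\|G\|_\infty$, $\|G'\|_\infty$ and negative powers of $\delta=\varepsilon$. A standard Riemann-sum estimate then bounds the difference between the discrete and continuum far parts by $C(\varepsilon,\gamma,G)/n$, the only delicate points being the mismatch between the discrete cutoff $\{|y-x|>\delta n\}$ and the continuum cutoff $\{|v-u|>\delta\}$ and the endpoints $v\in\{0,1\}$; each contributes at most $O(1)$ unmatched grid cells of size $O(\varepsilon^{-(1+\gamma)})$, i.e. an extra $O(\varepsilon^{-(1+\gamma)}/n)$. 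In every case this far error vanishes as $n\to\infty$ at $\varepsilon$ fixed.

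Combining the two parts yields $\varlimsup_{n\to\infty}\sup_{\varepsilon n+1\le x\le(n-1)/2}\big|n^\gamma \mcb{K}_n G(\tfrac{x}{n})-\mathbb{L}^{\gamma/2}G(\tfrac{x}{n})\big|\le C\varepsilon^{2-\gamma}$. As the bulk sum contains at most $n/2$ terms, the averaged quantity in the first line of \eqref{limconvreg2} is bounded by $\tfrac12\,(C\varepsilon^{2-\gamma})^2$, which tends to $0$ as $\varepsilon\to0^+$, proving the first identity; the second line, over $(n-1)/2+1\le x\le n-\varepsilon n-2$ (so $u\in[1/2,1-\varepsilon]$), follows verbatim after the reflection $u\mapsto1-u$, which swaps the two boundaries and preserves the symmetry of $s$. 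The main obstacle I anticipate is the far-part Riemann-sum bound: one must track the $\varepsilon$-dependence of the constants and handle the cutoff/endpoint mismatch carefully so that, at fixed $\varepsilon$, the error genuinely vanishes as $n\to\infty$, while the near-part estimate $O(\varepsilon^{2-\gamma})$ supplies the final smallness after the outer limit $\varepsilon\to0$.
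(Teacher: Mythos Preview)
Your proposal is correct and follows essentially the same approach as the paper. Both arguments split at the scale $\varepsilon$ into a near part (handled by symmetrization and second-order Taylor, giving an $O(\varepsilon^{2-\gamma})$ bound uniform in $n$) and a far part (a Riemann-sum comparison of a smooth integrand, vanishing as $n\to\infty$ at fixed $\varepsilon$). The only organizational difference is that the paper introduces the truncated operator $\mathbb{L}_\varepsilon^{\gamma/2}$ as an intermediate pivot and further splits the far part into a symmetric piece and an asymmetric piece (the terms (I), (II), (III), following the template of \cite{HLstefano}), whereas you compare the discrete and continuum objects directly; the underlying estimates are identical.
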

\begin{proof}[Proof of Proposition \ref{convfrac1}]
	We prove only the first double limit in \eqref{limconvreg0}, the proof of the other one being analogous. From Proposition \ref{convfrac1} and  Proposition \ref{Lqreg}, we conclude that $|\mathbb{L}^{\gamma/2} G|^{2} \in  L^1([0,1])$. Moreover, 
	\begin{equation} \label{contleb}
		\varlimsup_{\varepsilon \rightarrow 0^+} \varlimsup_{n \rightarrow \infty}\frac{1}{n-1}  \sum_{x=1}^{\varepsilon n - 1} \big| (  \mathbb{L}^{ \gamma/2 } G)\left( \tfrac{x}{n} \right)  \big|^2 \lesssim \lim_{\varepsilon \rightarrow 0^+} \int_0^{\varepsilon} \big| (  \mathbb{L}^{ \gamma/2 } G) (u) \big|^2 du =0.
	\end{equation}
	The proof ends by combining \eqref{op_Kn} with \eqref{defI1I2}, \eqref{equivlapfrac} and \eqref{contleb}.
\end{proof}

\begin{proof} [Proof of Proposition \ref{convfrac2}]
	We prove only the first double limit in \eqref{limconvreg2}, the proof of the other one being analogous. From the steps described in (5.14) of \cite{HLstefano}, we are done if we can show that
	\begin{align}
		&\varlimsup_{\varepsilon \rightarrow 0^+} \varlimsup_{n \rightarrow \infty} \frac{1}{n-1} \sum _{x=\varepsilon n + 1}^{(n-1)/2}  \big| \mathbb{L}_{\varepsilon}^{ \gamma/2 } G \left( \tfrac{x}{n} \right) - \mathbb{L}^{ \gamma/2 } G \left( \tfrac{x}{n} \right) \big|^2 = 0; \nonumber \\
		&\varlimsup_{\varepsilon \rightarrow 0^+} \varlimsup_{n \rightarrow \infty} \frac{1}{n-1} \sum _{x=\varepsilon n + 1}^{(n-1)/2}  \big|  n^{\gamma}  \mcb {K}_{n} G \left( \tfrac{x}{n} \right) - \mathbb{L}_{\varepsilon}^{ \gamma/2 } G \left( \tfrac{x}{n} \right) \big|^2 = 0. \label{limconvreg2b}.
	\end{align}
	Above, $\mathbb{L}_{\varepsilon}^{ \gamma/2 } G$ is given in \eqref{defdeltaepsilon}. Following the same arguments as in (5.14) of \cite{HLstefano}, the limit in first line of last display is bounded from above by a constant times
	\begin{align*}
		\varlimsup_{\varepsilon \rightarrow 0^+} \varlimsup_{n \rightarrow \infty} \frac{1}{n-1} \sum _{x=\varepsilon n + 1}^{(n-1)/2}  \Big( \int_0^{\varepsilon} u^{1 - \gamma} du \Big)^2 \lesssim \varlimsup_{\varepsilon \rightarrow 0^+} \varlimsup_{n \rightarrow \infty} \frac{1}{n} \sum _{x=1}^{n} \varepsilon^{2(2-\gamma)} = \lim_{\varepsilon \rightarrow 0^+} \varepsilon^{2(2-\gamma)} =0.
	\end{align*}
	Above we used the fact that $\gamma < 2$. 
	
	Following (5.5) in \cite{HLstefano}, for every fixed $u \in (0, \, 1/2]$, we define $\theta_u: [0,u] \rightarrow \mathbb{R}$ on $ v \in [0,u],$ by $\theta_u(v):= G(u+v) + G(u-v) - 2 G(u).$
	In order to prove \eqref{limconvreg2b}, following the arguments in (5.20) of \cite{HLstefano}, it is enough to prove that 
	\begin{equation} \label{eq520stef}
		\varlimsup_{\varepsilon \rightarrow 0^+} \varlimsup_{n \rightarrow \infty} \frac{1}{n-1} \sum _{x=\varepsilon n + 1}^{(n-1)/2} [(I)^2 + (II)^2 + (III)^2 ] =0, 
	\end{equation} 
	where 
	\begin{align*}
		& (I):= \Big| n^{\gamma} \sum_{y=\varepsilon n}^{x-1} s(y) \theta_{x/n} \left( \tfrac{y}{n} \right) - \frac{c^{+}+c^{-}}{2} \int_{\varepsilon}^{x/n} u^{-1-\gamma} \theta_{x/n}(u) \, du \Big|; \\
		& (II):= \Big| n^{\gamma} \sum_{y=1}^{\varepsilon n-1} s(y)\theta_{x/n} \left( \tfrac{y}{n} \right) \Big|; \\
		& (III):=\Big| n^{\gamma} \sum_{y=x}^{n-1-x} s(y) [G \left( \tfrac{x+y}{n} \right) - G \left( \tfrac{x}{n} \right) ] - \frac{c^{+}+c^{-}}{2} \int_{x/n}^{(n-x)/n} u^{-1-\gamma} [G \left( \tfrac{x}{n} +u \right) - G \left( \tfrac{x}{n} \right) ]  \, du \Big|.
	\end{align*}
The proof ends by combining the arguments in (5.21), (5.22), (5.23) and (5.27) of \cite{HLstefano}.
\end{proof}
From here on, it will be convenient to make use of further spaces of test functions. Motivated by \eqref{expGSd}, for every $d \geq 1$, we denote
\begin{align*}
	\mathcal{S}_d: = \{G \in C^{\infty}([0,1]): \forall j \in \{0, 1, \ldots, d-1\}, \quad G^{(j)}(0) = 0 = G^{(j)}(1) \}.  
\end{align*}
Moreover, we denote $C^{\infty}([0,1])$ by $\mathcal S_0$. Observe that $\mathcal S_1 = \mcb S_{Dir}$ and $\mathcal S_2 = \mcb S_{Dir} \cap \mcb S_{Neu}$.  

Next we state and prove a result which is analogous to Lemma A.2 in \cite{flucstefano}.
\begin{prop} \label{lemA2stefano}
	Recall the definitions of $r^{\pm}$ and $r_n^{\pm}$ in \eqref{defrpm} and \eqref{defrnpm}, respectively. Assume that $\gamma \in (0,2)$, $q >0$, $d \in \mathbb{N}$, $q\gamma <2d+1$ and $G \in \mathcal {S}_d$. Then, we get
	\begin{align*}
		\lim_{n \rightarrow \infty} \frac{1}{n-1} \sum \big| n^{\gamma} r_n^{-} (\tfrac{x}{n}) - r^{-} (\tfrac{x}{n}) \big|^q G^2(\tfrac{x}{n}) =0 = \lim_{n \rightarrow \infty} \frac{1}{n-1} \sum \big| n^{\gamma} r_n^{+} (\tfrac{x}{n}) - r^{+} (\tfrac{x}{n}) \big|^q G^2(\tfrac{x}{n}). 
	\end{align*}
\end{prop}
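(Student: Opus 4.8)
The plan is to reduce both limits to a single sharp comparison between a tail sum and its integral, and then to exploit the vanishing of $G$ near the boundary that is encoded in $G \in \mathcal{S}_d$. By the symmetry $r^{+}(u) = r^{-}(1-u)$ together with the identity $r_n^{+}(\tfrac{x}{n}) = r_n^{-}(\tfrac{n-x}{n})$ coming from the rightmost equality in \eqref{defrnpm}, and using the bound $|G(u)| \lesssim (1-u)^d$ from \eqref{expGSd}, it suffices to prove the statement for $r^{-}$; the $r^{+}$ case then follows verbatim after the reflection $x \mapsto n-x$, which turns the decay of $G$ near $1$ into decay near $0$.

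First I would recall from \eqref{prob} and \eqref{defsa} that $s(y) = \tfrac{c^{+}+c^{-}}{2}|y|^{-\gamma-1}$ for $y \neq 0$, so that \eqref{defrnpm} gives $n^{\gamma} r_n^{-}(\tfrac{x}{n}) = \tfrac{c^{+}+c^{-}}{2}\, n^{\gamma} \sum_{y \ge x} y^{-\gamma-1}$, while \eqref{defrpm} gives $r^{-}(\tfrac{x}{n}) = \tfrac{c^{+}+c^{-}}{2}\, n^{\gamma}\, \tfrac{x^{-\gamma}}{\gamma} = \tfrac{c^{+}+c^{-}}{2}\, n^{\gamma} \int_x^{\infty} t^{-\gamma-1}\,dt$. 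The heart of the matter is the elementary estimate, valid for $x \ge 2$,
\[
0 \le \sum_{y=x}^{\infty} y^{-\gamma-1} - \int_x^{\infty} t^{-\gamma-1}\,dt \le \int_{x-1}^{x} t^{-\gamma-1}\,dt \le (x-1)^{-\gamma-1} \lesssim x^{-\gamma-1},
\]
obtained by comparing the decreasing function $t \mapsto t^{-\gamma-1}$ with its Riemann sum. This yields the key sharp bound $\big| n^{\gamma} r_n^{-}(\tfrac{x}{n}) - r^{-}(\tfrac{x}{n}) \big| \lesssim n^{\gamma} x^{-\gamma-1}$ for $x \ge 2$. I emphasise that the extra factor $x^{-1}$ gained here, beyond the trivial $x^{-\gamma}$, is precisely what makes the power counting close; the cruder bound would not give decay in the stated range.

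Next I would combine this bound with $G^2(\tfrac{x}{n}) \lesssim (x/n)^{2d}$, which holds by \eqref{expGSd} since $G \in \mathcal{S}_d$. The single term $x=1$ is handled directly, both $n^{\gamma} r_n^{-}(\tfrac1n)$ and $r^{-}(\tfrac1n)$ being $\lesssim n^{\gamma}$, so that its contribution is $\lesssim \tfrac1n\, n^{q\gamma}\, n^{-2d} \to 0$ because $q\gamma < 2d+1$. For the remaining terms the sum is bounded by a constant times
\[
n^{\,q\gamma - 2d - 1} \sum_{x=1}^{n-1} x^{\,2d - q\gamma - q}.
\]
I would then close by a case distinction on the sign of $2d - q\gamma - q + 1$: if it is negative the sum converges and the prefactor $n^{q\gamma-2d-1} \to 0$ since $q\gamma < 2d+1$; if it is zero the sum is $O(\log n)$ and the same prefactor wins; if it is positive the sum is $\lesssim n^{\,2d-q\gamma-q+1}$, giving an overall bound $\lesssim n^{-q} \to 0$ since $q > 0$. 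In every regime the hypotheses $q\gamma < 2d+1$ and $q > 0$ force the limit to vanish.

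The main obstacle, and the only genuinely delicate point, is obtaining the sharp $x^{-\gamma-1}$ rate in the tail-sum-versus-integral comparison rather than the naive $x^{-\gamma}$, since only the former makes the power counting produce decay throughout the admissible range of $(q,\gamma,d)$; the remainder is bookkeeping via \eqref{expGSd}, the reflection symmetry, and the three-way case split.
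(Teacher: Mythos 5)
Your proof is correct and takes essentially the same route as the paper's: the same key estimate $\big| n^{\gamma} r_n^{-} (\tfrac{x}{n}) - r^{-} (\tfrac{x}{n}) \big| \lesssim n^{\gamma} x^{-\gamma-1}$, combined with \eqref{expGSd} to reduce everything to $n^{q\gamma-2d-1}\sum_x x^{2d-q-q\gamma}$, followed by the identical three-way case split on the exponent. The only difference is that the paper imports the key estimate from (B.1) of \cite{BJ}, whereas you derive it directly via the tail-sum-versus-integral comparison (handling $x=1$ separately as a consequence), which makes the argument self-contained but does not change the approach.
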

\begin{proof}
	We only prove the first limit above, but we observe that the strategy for the second one is analogous. From (B.1) in \cite{BJ}, we get
	\begin{align*}
		\forall x \in \Lambda_n, \quad \big| n^{\gamma} r_n^{+} (\tfrac{x}{n}) - r^{+} (\tfrac{x}{n}) \big| \leq \frac{c^{+}+c^{-}}{2n} \big( \tfrac{x}{n} \big)^{- \gamma - 1}.
	\end{align*}
	Since $G \in  \mathcal {S}_d$, we can combine last display with \eqref{expGSd}, which leads to
	\begin{align} 
		&\frac{1}{n-1} \sum \big| n^{\gamma} r_n^{-} (\tfrac{x}{n}) - r^{-} (\tfrac{x}{n}) \big|^q G^2(\tfrac{x}{n}) \lesssim  \frac{1}{n} \sum \big| \frac{1}{n} \big( \tfrac{x}{n} \big)^{- \gamma - 1} \big|^q (\tfrac{x}{n})^{2d} = n^{q \gamma - 2d - 1} \sum_x x^{2d - q - q \gamma}. \label{bndlemA2stefano}
	\end{align}
	Since $0 <q  \gamma< 2d +1$, there are three possibilities: $q\gamma \in (2d+1 - q, 2d+1)$; $q\gamma =2d+1-q$; and $q\gamma < 2d+1 - q$. The proof ends by treating those cases separately.
\end{proof}

Recall the definitions of $\mcb S_{\beta,\gamma}$ in \eqref{defsbetagamma}, $\Theta(n)$ in \eqref{timescale}, $\mathbb{L}_{\alpha,\beta}^{ \gamma / 2 }$ in \eqref{deltaab} and $\mcb{K}_{n, \alpha, \beta}$ in \eqref{op_Knb}. Choosing $q=2$ in Proposition \ref{lemA2stefano} and combining it with Proposition \ref{convfrac}, we get next result, which was crucial to obtain Proposition \ref{convrem1}. 
\begin{cor} \label{convknbeta}
	Assume $(\alpha, \beta, \gamma) \in (0, \infty) \times \mathbb{R} \times (0,  2)$ and $G \in \mcb S_{\beta,\gamma}$. Then
	\begin{align*}
		\lim_{n \rightarrow \infty} \frac{1}{n-1} \sum _{x } \big[  \Theta(n)  \mcb {K}_{n, \alpha, \beta} G \left( \tfrac{x}{n} \right) - \mathbb{L}_{\alpha,\beta}^{ \gamma / 2 } G\left( \tfrac{x}{n} \right) \big]^2 = 0. 
	\end{align*}
\end{cor}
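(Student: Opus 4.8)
The plan is to prove the statement by splitting into the three regimes $\beta>0$, $\beta=0$ and $\beta<0$ that are selected by the indicator functions $\mathbbm{1}_{\{\beta\geq0\}}$ and $\mathbbm{1}_{\{\beta\leq0\}}$ appearing in both \eqref{deltaab} and \eqref{op_Knb}, and in each regime to reduce the discrete $L^2$ convergence to one (or both) of the two approximation results already available: Proposition \ref{convfrac} for the nonlocal part and Proposition \ref{lemA2stefano} (with $q=2$) for the boundary reaction part.

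First, for $\beta>0$ I would observe that $\mathbbm{1}_{\{\beta\geq0\}}=1$, $\mathbbm{1}_{\{\beta\leq0\}}=0$ and $\Theta(n)=n^{\gamma}$ by \eqref{timescale}, so that $\mcb{K}_{n,\alpha,\beta}G=\mcb{K}_nG$ and $\mathbb{L}_{\alpha,\beta}^{\gamma/2}G=\mathbb{L}^{\gamma/2}G$. Hence the quantity in the statement equals $\frac{1}{n-1}\sum_x[n^{\gamma}\mcb{K}_nG(\tfrac xn)-\mathbb{L}^{\gamma/2}G(\tfrac xn)]^2$, which is exactly the content of Proposition \ref{convfrac}. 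To invoke it I would check through \eqref{defsbetagamma} that for $\beta>0$ the space $\mcb{S}_{\beta,\gamma}$ is contained in $C^{\infty}([0,1])$ when $\gamma\in(0,3/2)$ and in $\mcb S_{Neu}$ when $\gamma\in[3/2,2)$, which matches precisely the two hypotheses of Proposition \ref{convfrac}. This regime is clean because the reaction term is entirely absent.

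Next, for $\beta<0$ one has $\mathbbm{1}_{\{\beta\geq0\}}=0$, $\mathbbm{1}_{\{\beta\leq0\}}=1$ and $\Theta(n)=n^{\gamma+\beta}$, so that $\Theta(n)\mcb{K}_{n,\alpha,\beta}G=n^{\gamma+\beta}\cdot\alpha n^{-\beta}(r_n^{-}+r_n^{+})G=\alpha n^{\gamma}(r_n^{-}+r_n^{+})G$, while $\mathbb{L}_{\alpha,\beta}^{\gamma/2}G=\alpha(r^{-}+r^{+})G$. The statement then becomes the assertion that $\frac{1}{n-1}\sum_x\alpha^2\big[(n^{\gamma}r_n^{-}-r^{-})+(n^{\gamma}r_n^{+}-r^{+})\big]^2(\tfrac xn)\,G^2(\tfrac xn)\to0$, and after $(a+b)^2\leq2a^2+2b^2$ this is precisely Proposition \ref{lemA2stefano} with $q=2$ applied separately to $r^{-}$ and $r^{+}$. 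Since $\mcb{S}_{\beta,\gamma}=\mcb S=\cap_k\mathcal{S}_k$ for $\beta<0$, the test function lies in $\mathcal{S}_d$ for every $d$, so the admissibility condition $q\gamma<2d+1$ is met by taking $d$ large, and this regime closes immediately.

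Finally, for $\beta=0$ both indicators equal $1$, so $\Theta(n)=n^{\gamma}$ and the difference splits as the sum of the nonlocal discrepancy $n^{\gamma}\mcb{K}_nG-\mathbb{L}^{\gamma/2}G$ and the reaction discrepancy $\alpha[(n^{\gamma}r_n^{-}-r^{-})+(n^{\gamma}r_n^{+}-r^{+})]G$; I would again use $(a+b)^2\leq2a^2+2b^2$ and control the two pieces by Propositions \ref{convfrac} and \ref{lemA2stefano} respectively. The delicate point, and what I expect to be the main obstacle, is that Proposition \ref{lemA2stefano} with $q=2$ requires $2\gamma<2d+1$ for $G\in\mathcal{S}_d$, i.e. the reaction part forces the test functions to vanish to sufficiently high order at the endpoints (equivalently, to make $r^{\pm}G^2$ integrable and the Riemann-sum estimate \eqref{bndlemA2stefano} actually vanish). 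This is exactly why \eqref{defsbetagamma} prescribes, for $\beta=0$, the spaces $\mcb{S}_{\beta,\gamma}=\mcb S_{Dir}=\mathcal{S}_1$ when $\gamma\in[1,3/2)$ (so $2\gamma<3$) and $\mcb{S}_{\beta,\gamma}=\mcb S_{Dir}\cap\mcb S_{Neu}=\mathcal{S}_2$ when $\gamma\in[3/2,2)$ (so $2\gamma<5$). I would therefore close the proof by verifying, regime by regime in \eqref{defsbetagamma}, that the boundary-vanishing order $d$ of $\mcb{S}_{\beta,\gamma}$ together with the value of $\gamma$ indeed satisfies $2\gamma<2d+1$; this compatibility, rather than any single estimate, is the heart of the matter and is precisely the reason the family of test-function spaces was designed as in \eqref{defsbetagamma}.
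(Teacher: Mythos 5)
Your proposal is correct and takes essentially the same approach as the paper, whose entire proof of Corollary \ref{convknbeta} is the one-line argument ``choose $q=2$ in Proposition \ref{lemA2stefano} and combine it with Proposition \ref{convfrac}''. Your case split on the sign of $\beta$ merely unpacks the indicator functions in \eqref{deltaab} and \eqref{op_Knb}, and the regime-by-regime compatibility check of $2\gamma<2d+1$ against \eqref{defsbetagamma} that you identify as the heart of the matter is exactly the verification the paper leaves implicit (note only that your list of $\beta=0$ regimes omits $\gamma\in(0,1)$, where $\mcb S_{\beta,\gamma}=C^{\infty}([0,1])$ forces $d=0$ and hence requires $\gamma<1/2$ in Proposition \ref{lemA2stefano} --- a delicate point the paper's one-line proof glosses over in the same way).
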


\section{Useful computations} \label{useful} 
\begin{prop} \label{contlemA2stef}
	Recall the definition of $r^{-}$ and $r^{+}$ in \eqref{defrpm}. Assume that $\gamma \in (0,2)$, $q \in \{1,2\}$, $d \in \mathbb{N}$, $q \gamma < 2 d+1$ and $G \in \mathcal{S}_d$. Then, 
	\begin{align*}
		\int_{0}^1 [r^{-}(u) + r^{+}(u) ]q |G(u)|^{2} du \lesssim \| G^{(d)} \|^{2}_{\infty} < \infty.
	\end{align*}
\end{prop}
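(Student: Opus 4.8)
The plan is to reduce the estimate to the pointwise decay of $G$ near the two endpoints, which is exactly where the weights $r^{\pm}$ are singular. Since $G \in \mathcal{S}_d$ we have $G^{(j)}(0)=0=G^{(j)}(1)$ for every $j \in \{0,\ldots,d-1\}$, so the order-$d$ Taylor estimate \eqref{expGSd} gives $|G(u)| \leq (d!)^{-1}\|G^{(d)}\|_{\infty}\,u^{d}$ and $|G(u)| \leq (d!)^{-1}\|G^{(d)}\|_{\infty}\,(1-u)^{d}$ for all $u \in (0,1)$. Squaring, $|G(u)|^{2} \lesssim \|G^{(d)}\|_{\infty}^{2}\,u^{2d}$ near $0$ and $|G(u)|^{2} \lesssim \|G^{(d)}\|_{\infty}^{2}\,(1-u)^{2d}$ near $1$.

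Next I would insert the explicit forms from \eqref{defrpm}, namely $r^{-}(u)=\frac{c^{+}+c^{-}}{2\gamma}u^{-\gamma}$ and $r^{+}(u)=\frac{c^{+}+c^{-}}{2\gamma}(1-u)^{-\gamma}$, and use the elementary inequality $(a+b)^{q}\leq 2^{q-1}(a^{q}+b^{q})$ (valid for $q\in\{1,2\}$) to split $[r^{-}(u)+r^{+}(u)]^{q}$ into the two pure powers $[r^{-}(u)]^{q}$ and $[r^{+}(u)]^{q}$. It then suffices to bound each of the four integrals obtained by pairing these with $|G(u)|^{2}$ over the halves $[0,1/2]$ and $[1/2,1]$.

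On $[0,1/2]$ the only singular term is $[r^{-}]^{q}$, and combining its explicit form with the estimate near $0$ yields $[r^{-}(u)]^{q}|G(u)|^{2} \lesssim \|G^{(d)}\|_{\infty}^{2}\,u^{2d-q\gamma}$; the hypothesis $q\gamma<2d+1$ is precisely $2d-q\gamma>-1$, so $\int_{0}^{1/2}u^{2d-q\gamma}\,du<\infty$. The weight $r^{+}$ is bounded on $[0,1/2]$ (since $1-u\geq 1/2$ there), so $[r^{+}(u)]^{q}|G(u)|^{2}\lesssim\|G^{(d)}\|_{\infty}^{2}\,u^{2d}$ is trivially integrable. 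The interval $[1/2,1]$ is handled by the mirror argument: $r^{+}$ is now the singular weight and one uses the second bound $|G(u)|\lesssim\|G^{(d)}\|_{\infty}(1-u)^{d}$ from \eqref{expGSd}, producing the integrand $(1-u)^{2d-q\gamma}$, integrable for the same reason, while $r^{-}$ is bounded there. Summing the four finite contributions gives the bound $\lesssim\|G^{(d)}\|_{\infty}^{2}$, and in particular finiteness.

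The only genuinely quantitative point — and the reason the sharp hypothesis $q\gamma<2d+1$ is needed rather than merely $\gamma<2d+1$ — is the exponent bookkeeping at the singular endpoints, where the vanishing order $2d$ of $|G|^{2}$ must strictly dominate the singular order $q\gamma$ of the weight, minus one. I do not anticipate any further obstacle: all implicit constants depend only on the fixed parameters $q,\gamma,d$ and $c^{\pm}$, and are absorbed into $\lesssim$.
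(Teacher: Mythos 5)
Your proof is correct and follows essentially the same route as the paper, whose own proof is a one-line citation of the Taylor bound \eqref{expGSd}, the explicit weights \eqref{defrpm}, and the elementary splitting inequality \eqref{CSdisc} (your $(a+b)^q \leq 2^{q-1}(a^q+b^q)$ plays the role of \eqref{CSdisc}). Your exponent bookkeeping $2d-q\gamma>-1$ at each endpoint is exactly the point of the hypothesis $q\gamma<2d+1$, and the argument covers the $d=0$ case trivially since the order-zero Taylor bound is just $|G(u)|\leq\|G\|_{\infty}$.
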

\begin{proof}
	The proof is a direct consequence of \eqref{CSdisc}, \eqref{expGSd} and \eqref{defrpm}.
\end{proof}

Recall  \eqref{defsbetagamma}. Next we present the proof of Proposition \ref{norcont}.
\begin{proof} [Proof for Proposition \ref{norcont}]
Recall  \eqref{seminorm}. From the Mean Value Theorem
\begin{equation} \label{semnormfrac}
	\forall G \in  C^{\infty}([0,1]),  \quad \| G \|^2_{\gamma/2} \leq \| G^{(1)} \|^2_{\infty}  \frac{c^{+}+c^{-}}{2(2-\gamma)(3-\gamma)}.
\end{equation}
The proof ends by combining \eqref{semnormfrac} with Proposition \ref{contlemA2stef}.
\end{proof}

Now we present a result which is useful to treat $\hat{\mathcal{A}}_{n,\beta} (G)$, given by \eqref{op_Anb}.

\begin{prop} \label{prop1lem1convmart}
	Let $(\beta,\gamma) \in  \mathbb{R} \times (0,2)$ and $G \in C^{\infty}([0,1])$. Then
	\begin{align*}
		\lim_{n \rightarrow \infty} \hat{\mathcal{A}}_{n,\beta} (G) =  \mathbbm{1}_{  \{ \beta \geq 0 \} } \frac{c^{+}+c^{-}}{2}  \iint_{(0,1)^2} \frac{[G(v)-G(u)]^2}{|v-u|^{1+\gamma}} \, du \, dv.
	\end{align*}
\end{prop}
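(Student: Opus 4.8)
The plan is to reduce the statement to a single Riemann-sum convergence and then to tame the diagonal singularity by a near/far splitting. First I would insert the explicit form of the symmetric kernel: from \eqref{prob} and \eqref{defsa} one has $s(z) = \tfrac{c^{+}+c^{-}}{2}|z|^{-1-\gamma}$ for $z \neq 0$, and the diagonal terms $x=y$ contribute nothing to \eqref{op_Anb}. Writing $|y-x|^{1+\gamma} = n^{1+\gamma}|\tfrac{y}{n}-\tfrac{x}{n}|^{1+\gamma}$ and recalling \eqref{timescale}, a direct computation gives
\[
\hat{\mathcal{A}}_{n,\beta}(G) = \frac{c^{+}+c^{-}}{2}\,\frac{\Theta(n)}{n^{\gamma}}\,S_n,
\qquad
S_n := \frac{1}{(n-1)n}\sum_{x\neq y}\frac{[G(\tfrac{y}{n})-G(\tfrac{x}{n})]^2}{|\tfrac{y}{n}-\tfrac{x}{n}|^{1+\gamma}},
\]
where $\Theta(n)n^{-\gamma} = \mathbbm{1}_{\{\beta\geq 0\}} + n^{\beta}\mathbbm{1}_{\{\beta<0\}} \to \mathbbm{1}_{\{\beta\geq 0\}}$. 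Thus it suffices to prove that $S_n$ converges to the finite integral $I := \iint_{(0,1)^2}\frac{[G(v)-G(u)]^2}{|v-u|^{1+\gamma}}\,du\,dv$: once this is known, the $\beta\geq 0$ case follows because the prefactor tends to $1$, and the $\beta<0$ case follows because $S_n$ is bounded while $n^{\beta}\to 0$.

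Finiteness of $I$ and the smallness of all near-diagonal contributions rest on the same elementary estimate: by the Mean Value Theorem $[G(v)-G(u)]^2 \leq \|\nabla G\|_{\infty}^2\,|v-u|^2$, so the integrand is dominated by $\|\nabla G\|_{\infty}^2\,|v-u|^{1-\gamma}$, which is integrable on $(0,1)^2$ precisely because $1-\gamma > -1$ when $\gamma < 2$. I would then fix $\varepsilon \in (0,1)$ and split both $S_n$ and $I$ according to whether $|x-y| \geq \varepsilon n$ (equivalently $|\tfrac{x}{n}-\tfrac{y}{n}|\geq\varepsilon$) or $0 < |x-y| < \varepsilon n$.

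On the far region the truncated integrand $f_{\varepsilon}(u,v) := \frac{[G(v)-G(u)]^2}{|v-u|^{1+\gamma}}\mathbbm{1}_{\{|u-v|\geq\varepsilon\}}$ is bounded and continuous off the null set $\{|u-v|=\varepsilon\}$, hence Riemann integrable; since the points $\tfrac{x}{n}$ form the uniform grid of mesh $1/n$ and $(n-1)n \sim n^2$, the standard convergence of Riemann sums gives $\frac{1}{(n-1)n}\sum_{|x-y|\geq\varepsilon n}f_{\varepsilon}(\tfrac{x}{n},\tfrac{y}{n}) \to \iint_{(0,1)^2}f_{\varepsilon}$ as $n\to\infty$. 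On the near region I would apply the Mean Value bound again: substituting $z = y-x$ and summing the free index $x$ over its $O(n)$ admissible values, the discrete near-diagonal sum is bounded by a constant times $\|\nabla G\|_{\infty}^2\,\tfrac1n\sum_{z=1}^{\varepsilon n}(\tfrac{z}{n})^{1-\gamma}$, itself controlled uniformly in $n$ by $\|\nabla G\|_{\infty}^2\int_0^{\varepsilon}u^{1-\gamma}\,du \lesssim \varepsilon^{2-\gamma}$; the near-diagonal piece of $I$ satisfies the same $O(\varepsilon^{2-\gamma})$ bound. Sending first $n\to\infty$ and then $\varepsilon\to 0^+$ yields $S_n \to I$, which completes the proof.

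The one genuinely delicate point is the interchange of the two limits, forced by the singularity of $|v-u|^{-1-\gamma}$ along the diagonal. The whole weight of the argument therefore lies in the uniform-in-$n$ control $O(\varepsilon^{2-\gamma})$ of the near-diagonal sum; this is exactly what the Mean Value bound supplies, using that the effective exponent $1-\gamma$ exceeds $-1$ for $\gamma < 2$. Everything else is routine bookkeeping of Riemann sums.
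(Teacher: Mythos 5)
Your proposal is correct and follows exactly the route the paper intends: its proof of Proposition \ref{prop1lem1convmart} is a one-line appeal to \eqref{op_Anb}, \eqref{defsa} and \eqref{timescale}, which is precisely your reduction to the prefactor $\Theta(n)n^{-\gamma}=\mathbbm{1}_{\{\beta\geq 0\}}+n^{\beta}\mathbbm{1}_{\{\beta<0\}}$ times a Riemann sum for the singular double integral. Your near/far splitting with the Mean Value bound $|v-u|^{1-\gamma}$ (integrable since $\gamma<2$) correctly supplies the details the paper leaves to the reader, including the uniform-in-$n$ $O(\varepsilon^{2-\gamma})$ control of the near-diagonal terms that justifies interchanging the limits.
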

\begin{proof}
The proof is a direct consequence of \eqref{op_Anb}, \eqref{defsa} and \eqref{timescale}.
\end{proof}
Next we present a result which is useful to treat $\mathcal{B}_{n,\beta} (G)$, given by \eqref{op_Bnb}.
\begin{prop} \label{prop2lem1convmart}
	Let $(\beta,\gamma) \in  \mathbb{R} \times (0,2)$. Then 
	\begin{align*}
		\lim_{n \rightarrow \infty} \mathcal{B}_{n,\beta} (G) =  
		\begin{cases}
			\int_0^1 [r^{-}(u) + r^{+}(u)] G^2(u) \, du, \quad &(\beta,\gamma) \in (- \infty,0] \times (0, 1), \; G \in C^{\infty}([0,1]); \\
			\int_0^1 [r^{-}(u) + r^{+}(u)] G^2(u) \, du, \quad &(\beta,\gamma) \in (- \infty,0] \times [1, 2), \; G \in \mcb S_{Dir}; \\
			0,  \quad  & \gamma \in (1,2), \; \beta > \gamma - 1>0, \;  G \in C^{\infty}([0,1]); \\
			0,  \quad  & (\beta,\gamma) \in  (0, \infty) \times (0, 1], \;  G \in C^{\infty}([0,1]); \\
			0,  \quad  & (\beta,\gamma) \in  (0, \infty) \times (1, 2), \;  G \in  \mcb S_{Dir}.
		\end{cases}
	\end{align*}
\end{prop}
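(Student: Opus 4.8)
The plan is to first absorb the prefactor into the time scale and reduce everything to a single singular Riemann sum. Recalling \eqref{timescale}, in both regimes one has $\Theta(n)\, n^{-\beta}(n-1)^{-1} = n^{\gamma}\, n^{-\beta^{+}}(n-1)^{-1}$ with $\beta^{+}:=\max\{\beta,0\}$, so that from \eqref{op_Bnb}
\begin{equation*}
\mathcal{B}_{n,\beta}(G) = \frac{1}{n^{\beta^{+}}}\, S_n(G), \qquad S_n(G):=\frac{1}{n-1}\sum_{x}\big[ n^{\gamma} r_n^{-}(\tfrac{x}{n}) + n^{\gamma} r_n^{+}(\tfrac{x}{n})\big] G^2(\tfrac{x}{n}).
\end{equation*}
Thus the problem reduces to understanding $S_n(G)$ and then multiplying by $n^{-\beta^{+}}$.

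For the two cases with $\beta\le 0$ (so $\beta^{+}=0$ and $\mathcal{B}_{n,\beta}(G)=S_n(G)$), first I would replace the discrete kernels $n^{\gamma} r_n^{\pm}$ by their continuous counterparts $r^{\pm}$. By Proposition \ref{lemA2stefano} with $q=1$ this replacement costs an error tending to $0$, provided $\gamma<2d+1$ with $G\in\mathcal{S}_d$; this holds in case~1 ($d=0$, $\gamma\in(0,1)$) and in case~2 ($d=1$, $\gamma\in[1,2)$, so $\gamma<3$). It then remains to prove the Riemann-sum convergence $\tfrac{1}{n-1}\sum_x [r^{-}(\tfrac{x}{n})+r^{+}(\tfrac{x}{n})]G^2(\tfrac{x}{n}) \to \int_0^1 [r^{-}(u)+r^{+}(u)]G^2(u)\,du$, the right-hand side being finite by Proposition \ref{contlemA2stef}. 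Since $\tfrac{1}{n-1}=\tfrac{n}{n-1}\cdot\tfrac1n$ with $\tfrac{n}{n-1}\to1$, I would treat this as a genuine Riemann sum of mesh $1/n$: the integrand $f:=(r^{-}+r^{+})G^2$ is continuous and bounded on every $[\varepsilon,1-\varepsilon]$, where standard Riemann-sum convergence applies, while near the endpoints I would use $r^{-}(u)\asymp u^{-\gamma}$ with $r^{+}$ bounded (and symmetrically at $1$) together with $|G(u)|\lesssim u^{d}\wedge(1-u)^{d}$ from \eqref{expGSd} to get $f(u)\lesssim u^{2d-\gamma}+(1-u)^{2d-\gamma}$ with exponent $2d-\gamma>-1$. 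This makes both the boundary portion of the sum and the tail $\int_0^\varepsilon f$ vanish uniformly in $n$ as $\varepsilon\to0^{+}$, upgrading the bulk convergence to the full improper integral.

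For the three cases with $\beta>0$ (so $\beta^{+}=\beta$), the goal is $n^{-\beta}S_n(G)\to 0$, for which it suffices to bound $S_n(G)$ by a power of $n$ strictly below $n^{\beta}$. Here I would use the elementary bound $n^{\gamma}r_n^{-}(\tfrac{x}{n})\lesssim (x/n)^{-\gamma}$ (and the symmetric one for $r^{+}$), which follows from \eqref{defrnpm} by comparison with $\int^{\infty} y^{-1-\gamma}\,dy$ (this is \eqref{boundrnpm}), combined with \eqref{expGSd}. In case~4, using $G^2\le\|G\|_\infty^2$ gives $S_n(G)\lesssim \tfrac1n\sum_x (x/n)^{-\gamma}$, which converges to $\int_0^1 u^{-\gamma}\,du<\infty$ when $\gamma\in(0,1)$ and is $O(\log n)$ when $\gamma=1$; in either case $n^{-\beta}S_n(G)\to0$. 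In case~3 ($\gamma\in(1,2)$, $G\in C^{\infty}([0,1])$) the same sum is $\asymp n^{\gamma-1}$, so $n^{-\beta}S_n(G)\lesssim n^{\gamma-1-\beta}\to0$ because $\beta>\gamma-1$. In case~5 ($\gamma\in(1,2)$, $G\in\mcb S_{Dir}=\mathcal{S}_1$) the Dirichlet condition yields $G^2(u)\lesssim u^2\wedge(1-u)^2$, hence $f(u)\lesssim u^{2-\gamma}+(1-u)^{2-\gamma}$ is integrable and $S_n(G)=O(1)$, whence $n^{-\beta}S_n(G)\to0$.

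The hard part will be the uniform-in-$n$ control of the boundary layers in the $\beta\le0$ cases, i.e.\ showing that the singular Riemann sums converge to the improper integral rather than only to its truncation; once the endpoint contributions are shown to be uniformly small, the remaining arguments are straightforward power counting combined with the already-established Proposition \ref{lemA2stefano} and the decay estimate \eqref{expGSd}.
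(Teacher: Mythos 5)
Your proposal is correct and follows essentially the same route as the paper: Proposition \ref{lemA2stefano} with $q=1$ (and $d=0$, resp.\ $d=1$) plus Proposition \ref{contlemA2stef} for the two cases with $\beta\le 0$, and the bound \eqref{boundrnpm} combined with \eqref{expGSd} and power counting in $n^{\gamma-1-\beta}$ for the three cases with $\beta>0$. The only difference is that you spell out the convergence of the singular Riemann sum to the improper integral (bulk on $[\varepsilon,1-\varepsilon]$ plus uniformly small boundary layers using $2d-\gamma>-1$), a step the paper's proof leaves implicit, and your factorization $\mathcal{B}_{n,\beta}(G)=n^{-\beta^{+}}S_n(G)$ is a cosmetic repackaging of \eqref{timescale}.
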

\begin{proof}
By combining \eqref{timescale} and \eqref{op_Bnb} with an application of Propositions \ref{lemA2stefano} and \ref{contlemA2stef} for $q=1$ and $d=0$, resp. $q=1$ and $d=1$, we get the desired result for the first, resp. second case, in last display.
	Next we observe that from \eqref{defrnpm} and \eqref{defsa}, we have that	
\begin{equation} \label{boundrnpm}
 \forall n \geq 2, \forall x \in \Lambda_n, \quad r_n^{-}(\tfrac{x}{n}) \leq \frac{(c^{+}+c^{-})(\gamma+1)}{2 \gamma} x^{-\gamma}, \quad \; r_n^{+}(\tfrac{x}{n}) \leq \frac{(c^{+}+c^{-})(\gamma+1)}{2 \gamma} (n-x)^{-\gamma}.
	\end{equation} 		
In the remaining three cases in the statement of Proposition \ref{prop2lem1convmart}, we have $\beta > 0$. Thus, combining \eqref{timescale} with \eqref{op_Bnb} and \eqref{boundrnpm}, we get	
\begin{align} \label{boundB01}
		\mathcal{B}_{n,\beta} (G) \lesssim n^{\gamma - 1 - \beta} \sum_x \big[ x^{-\gamma} + (n-x)^{-\gamma} \big] \| G \|^2_{\infty} \lesssim n^{\gamma - 1 - \beta} \sum_x x^{-\gamma}.
	\end{align}	
From \eqref{boundB01}, the proof ends for the third and fourth cases in the statement of Proposition \ref{prop2lem1convmart}.	
Finally, by combining \eqref{timescale} with \eqref{op_Bnb}, \eqref{boundrnpm} and an application \eqref{expGSd} for $d=1$, we obtain the desired result for the case $(\beta,\gamma) \in  (0, \infty) \times (1, 2)$ and $G \in \mcb S_{Dir}=\mathcal{S}_1$.
	
\end{proof}
We end this section by presenting the proof of Proposition \ref{aproxL2gen}.

\begin{proof}[Proof for Proposition \ref{aproxL2gen}  ]
	Let $j \geq 1$. Combining the assumption that $\widetilde{G} \in L^2([0,1])$ with the definition of $A_{\widetilde{G}}$ in \eqref{extL1} and Proposition 4.20 of \cite{brezis2010functional}, we have that $\phi_{j}* A_{\widetilde{G}} \in C^{\infty}(\mathbb{R})$. Thus, from \eqref{defpsij}, we have that $\psi_{j} \cdot (\phi_{j}* A_{\widetilde{G}} ) \in C_c^{\infty}(\mathbb{R})$ and $\psi_{j} \cdot (\phi_{j}* A_{\widetilde{G}} ) \equiv 0$ on $(-\infty, \, 1/j]$ and on $[1 - 1/j, \, \infty)$, concluding that $H^{\widetilde{G}}_j \in \mcb S$, for any $j \geq 1$.
	
	Next we will prove that $H_j$ converges to $\widetilde{G}$ in $L^2([0,1])$ as $j \rightarrow \infty$. In order to do so, let $\varepsilon >0$. From Theorem 4.22 of \cite{brezis2010functional}, we can fix $j_0 \geq 1$ such that
	\begin{align} \label{convL2a}
		\forall j \geq j_0, \quad \int_{ 0 }^1 [(\phi_{j}* A_{\widetilde{G}} ) - A_{\widetilde{G}} ]^2 (u) du \leq  \int_{ \mathbb{R} } [(\phi_{j}* A_{\widetilde{G}} ) - A_{\widetilde{G}} ]^2 (u) du < \varepsilon.
	\end{align}
	From \eqref{defpsij}, we have that $\psi_j \equiv 1$ on $[2/j, \, (j-2)/j]$, which leads to
	\begin{equation} \label{convL2b}
		\forall j \geq 1, \quad \int_{ 2/j }^{(j-2)/j} [ \psi_{j} \cdot (\phi_{j}* A_{\widetilde{G}}) -  ( \phi_{j}* A_{\widetilde{G}}) ]^2 (u) du =0.
	\end{equation}
The proof ends by combining \eqref{defphij}, \eqref{extL1} and H\"older's inequality with \eqref{defpsij}, \eqref{convL2b} and \eqref{convL2a}.
\end{proof}

\subsection*{Acknowledgements}
P.C. was funded by the Deutsche Forschungsgemeinschaft (DFG, German Research Foundation) under Germany’s Excellence Strategy – EXC-2047/1 – 390685813. P.G. thanks Funda\c c\~ao para a Ci\^encia e Tecnologia FCT/Portugal for financial support through the projects UIDB/04459/2020 and UIDP/04459/2020.  The authors thank Nicolas Perkowski for the nice discussions related to the uniqueness of energy solutions.

\bibliographystyle{plain}
\bibliography{bibliografia}

\end{document}